\newcommandx{\unsure}[2][1=]{\todo[linecolor=red,backgroundcolor=red!25,bordercolor=red,#1]{#2}}
\newcommandx{\change}[2][1=]{\todo[linecolor=blue,backgroundcolor=blue!25,bordercolor=blue,#1]{#2}}
\newcommandx{\info}[2][1=]{\todo[linecolor=OliveGreen,backgroundcolor=OliveGreen!25,bordercolor=OliveGreen,#1]{#2}}
\newcommandx{\improvement}[2][1=]{\todo[linecolor=Plum,backgroundcolor=Plum!25,bordercolor=Plum,#1]{#2}}
\numberwithin{equation}{section}
\theoremstyle{plain}
\newtheorem{theorem}{Theorem}[section]
\newtheorem{prop}[theorem]{Proposition}
\newtheorem{lem}[theorem]{Lemma}
\newtheorem{cor}[theorem]{Corollary}
\newtheorem{example}{Example}
\newtheorem{question}[theorem]{Question}
\newtheorem*{question*}{Question}
\newtheorem{rmk}[theorem]{Remark}
\newtheorem{conj}[theorem]{Conjecture}
\theoremstyle{definition}
\newtheorem{defn}[theorem]{Definition}
\newtheorem{remark}[theorem]{Remark}
\theoremstyle{definition}
\newtheorem{thmx}{Theorem}
\newcommand{\R}{\mathbb{R}}
\newcommand{\C}{\mathbb{C}}
\newcommand{\Proj}{\mathbb{P}}
\newcommand{\Z}{\mathbb{Z}}
\newcommand{\Hyp}{\mathbb{H}}
\newcommand{\D}{\mathbb{D}}
\newcommand{\E}{\mathscr{E}}
\newcommand{\cG}{\mathcal{G}}
\newcommand{\cH}{\mathcal{H}}
\newcommand{\cK}{\mathcal{K}}
\newcommand{\cL}{\mathcal{L}}
\newcommand{\cD}{\mathcal{D}}
\newcommand{\cT}{\mathcal{T}}
\newcommand{\cU}{\mathcal{U}}
\newcommand{\RT}{\mathscr{T}}
\newcommand{\RV}{\mathscr{V}}
\DeclareMathOperator{\Rat}{Rat}
\DeclareMathOperator{\Cor}{Cor}
\DeclareMathOperator{\PSL}{PSL}
\DeclareMathOperator{\Res}{Res}
\DeclareMathOperator{\Aut}{Aut}
\DeclareMathOperator{\Int}{Int}
\pgfplotsset{compat=1.18} 
\numberwithin{figure}{section}
\DeclareFontFamily{U}{tipa}{}
\DeclareFontShape{U}{tipa}{m}{n}{<->tipa10}{}
\newcommand{\arc@char}{{\usefont{U}{tipa}{m}{n}\symbol{62}}}
\newcommand{\arc}[1]{\mathpalette\arc@arc{#1}}
\newcommand{\arc@arc}[2]{
  \sbox0{$\m@th#1#2$}
  \vbox{
    \hbox{\resizebox{\wd0}{\height}{\arc@char}}
    \nointerlineskip
    \box0
  }
}
\date{\today}
\begin{document}

\title[Degeneration in spaces of algebraic correspondences]{Teichm{\"u}ller spaces, polynomial loci,\\ and degeneration in spaces of\\ algebraic correspondences}

\begin{abstract}
    We develop an analog of the notion of a character variety in the context of algebraic correspondences. It turns out that matings of certain Fuchsian groups and polynomials are contained in this ambient character variety. This gives rise to two different analogs of the Bers slice by fixing either the polynomial or the Fuchsian group. The Bers-like slices are homeomorphic copies of Teichm{\"u}ller spaces or combinatorial copies of polynomial connectedness loci. We show that these slices are bounded in the character variety, thus proving the analog of a theorem of Bers. To produce compactifications of the Bers-like slices, we initiate a study of degeneration of algebraic correspondences on trees of Riemann spheres, revealing a new degeneration phenomenon in conformal dynamics. There is no available analog of Sullivan's `no invariant line field' theorem in our context. Nevertheless, for the four times punctured sphere, we show that the compactifications of Teichm{\"u}ller spaces are naturally homeomorphic.
\end{abstract}

\begin{author}[Y.~Luo]{Yusheng Luo}
\address{Department of Mathematics, Cornell University, 212 Garden Ave, Ithaca, NY 14853, USA}
\email{yl3769@cornell.edu, yusheng.s.luo@gmail.com}
\thanks{Y.L. was partially supported by NSF Grant DMS-2349929.}
\end{author}

\begin{author}[M.~Mj]{Mahan Mj}
\address{School of Mathematics, Tata Institute of Fundamental Research, 1 Homi Bhabha Road, Mumbai 400005, India}
\email{mahan@math.tifr.res.in, mahan.mj@gmail.com}
\thanks{M.M. was partially supported by  the Department of Atomic Energy, Government of India, under Project Identification No. RTI 4014, an endowment of the Infosys Foundation, and a DST JC Bose Fellowship.}
\end{author}

\begin{author}[S.~Mukherjee]{Sabyasachi Mukherjee}
\address{School of Mathematics, Tata Institute of Fundamental Research, 1 Homi Bhabha Road, Mumbai 400005, India}
\email{sabya@math.tifr.res.in, mukherjee.sabya86@gmail.com}
\thanks{S.M. was partially supported by the Department of Atomic Energy, Government of India, under Project Identification No. RTI 4014, an endowment of the Infosys Foundation, and SERB research project grant MTR/2022/000248.}
\end{author}

\date{\today}

\maketitle

\setcounter{tocdepth}{1}
\tableofcontents

\section{Introduction}

About a hundred years ago, Fatou and Julia laid the foundations of the theory of iterated rational maps on the Riemann sphere \cite{Fat19,Fat26,Jul18,Jul22}. Prior to this, Poincar{\'e} and Klein studied the geometry and dynamics of Fuchsian and Kleinian groups (discrete subgroups of $\PSL_2(\R)$ and $\PSL_2(\C)$, respectively) acting on the sphere \cite{Poi82,Poi83,Kle83}. Based on various empirical similarities between these two branches of conformal dynamics, Fatou put forward a suggestion that the dynamics of Kleinian groups and rational maps can be studied in the common framework of iterated algebraic correspondences (finite-to-finite multi-valued maps with holomorphic local branches) \cite{Fat29}. Several decades later, Sullivan proposed a more systematic dictionary (a collection of analogies between results and proof techniques) between rational dynamics and Kleinian groups \cite{Sul85a}. This is now known as the \emph{Sullivan dictionary}.

The construction of such algebraic correspondences was initiated in \cite{BP94} for quadratic polynomials and the modular group, and has been further developed in the recent years \cite{BL20,BL24,LLMM3,LMM24, MM2, LLM24, BLLM24}. We refer the reader to §\ref{subsec:background} for a detailed discussion of the historical development of these constructions.

Among the recent works mentioned above, the \emph{single-valued mating framework} developed in \cite{MM2, LLM24} is of particular importance to the theme of the current paper. This framework gives rise to matings between Fuchsian genus zero orbifold groups and polynomials with connected Julia sets. The resulting space of correspondences has a product structure. The horizontal and vertical fibers in this product space are `copies' of Teichm{\"u}ller spaces and polynomial connectedness loci respectively.

\subsection*{Bers compactifications in spaces of correspondences}
%As an application of the single-valued mating framework, a biholomorphic embedding of the Teichm{\"u}ller space of $S_{0,d+1}$ (sphere with $d+1\geq 3$ punctures) into a suitable complex $(d-2)$-dimensional space of bi-degree $(2d-1,2d-1)$ correspondences on $\widehat{\C}$ was constructed in \cite{MM2}. 
As an application of the single-valued mating framework, {a biholomorphism from the Teichm{\"u}ller space of $S_{0,d+1}$ (sphere with $d+1\geq 3$ punctures) onto a domain in $\C^{d-2}$ was constructed in \cite{MM2}. This domain is the parameter space of a family of bi-degree $(2d-1,2d-1)$ correspondences on $\widehat{\C}$, such that each correspondence is the mating of a Fuchsian group $\Gamma\in\mathrm{Teich}(S_{0,d+1})$ and the fixed polynomial $z^{2d-1}$.}
This embedding can be regarded as an analog of the \emph{Bers embedding} of the Teichm{\"u}ller space in the world of algebraic correspondences. Motivated by well-known results in Teichm{\"u}ller theory and Kleinian groups, the following questions were raised in \cite{MM2}.

\begin{question}\cite[Question~7.3]{MM2}\label{qn1}\\ 
Let $\mathscr{E}:\mathrm{Teich}(S_{0,d+1})\longrightarrow\C^{d-2}$ be the biholomorphic embedding mentioned above.
\noindent\begin{enumerate}
\item Is the image $\E(\mathrm{Teich}(S_{0,d+1}))$ pre-compact in $\C^{d-2}$?

\item Describe the dynamics of the correspondences lying on the boundary of $\E(\mathrm{Teich}(\Sigma))$. In particular, do such boundary correspondences furnish new examples of matings between polynomials and Kleinian groups?
\end{enumerate}
\end{question}

 We answer the first question affirmatively in a more general setting. Let $\cH_{n}$ be the principal hyperbolic component in the space of {monic, centered}, degree $n\geq 2$ polynomials. For a fixed $P\in\cH_{2d-1}$, we denote by $\mathfrak{B}(P)$ the moduli space of correspondences arising as matings between all Fuchsian groups $\Gamma\in\mathrm{Teich}(S_{0,d+1})$ and $P$. The space $\mathfrak{B}(P)$ is called the \emph{Bers slice} of $(d+1)$-punctured spheres passing through $P$ (see \S~\ref{qf_bers_slice_subsec} for a formal definition).
In this generality, the role of the ambient space $\C^{d-2}$ of the Bers slice $\mathfrak{B}(z^{2d-1})$ is played by what we call the \emph{character variety} in the space of bi-degree $(2d-1,2d-1)$ correspondences. The character variety serves as a natural habitat for all the Bers slices. The precise notion of the character variety for correspondences and the description of the topology on it needs some doing. For now, we refer the reader to \S~\ref{char_var_subsec}.

\begin{thmx}\label{bers_pre_comp_thm_intro}
Let $P\in\cH_{2d-1}$. Then $\mathfrak{B}(P)$ is pre-compact in the character variety of bi-degree $(2d-1,2d-1)$ algebraic correspondences on $\widehat{\C}$. 
\end{thmx}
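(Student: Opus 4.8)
The plan is to prove pre-compactness in the only form it can take inside a finite-dimensional variety, namely \emph{boundedness}: since the character variety $\mathcal{X}$ of bi-degree $(2d-1,2d-1)$ correspondences is (by its construction in Section~\ref{sec:cv}) cut out by polynomial conditions in an affine space, a subset of $\mathcal{X}$ is pre-compact precisely when it is bounded and its limit points do not force a drop of bi-degree. Accordingly I would fix $P\in\cH_{2d-1}$, write $\mathcal{C}(\Gamma)$ for the mating of $\Gamma\in\mathrm{Teich}(S_{0,d+1})$ with $P$, and argue sequentially: given $\Gamma_n$, produce a subsequence for which $\mathcal{C}(\Gamma_n)$ converges to an honest point of $\mathcal{X}$. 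The argument splits according to whether $\{\Gamma_n\}$ stays in a compact part of $\mathrm{Teich}(S_{0,d+1})$ or leaves every compact subset. In the first case, continuity of the mating construction from the single-valued mating framework immediately yields a subsequential limit lying in $\mathfrak{B}(P)$ itself, so there is nothing to do; the entire content is in the degenerating case, where one or more simple closed geodesics on $S_{0,d+1}$ are being pinched.

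The key is that the polynomial side is held fixed, exactly as the fixed conformal end anchors a classical Bers slice. Concretely, I would realize the character-variety coordinate of $\mathcal{C}(\Gamma_n)$ as the Schwarzian data of the univalent map uniformizing the $\Gamma_n$-invariant half of the mating over the \emph{fixed} conformal structure supplied by $P$. Because $P$ lies in the principal hyperbolic component, its B\"ottcher/external structure and the attracting dynamics near its Julia set are rigid, so the relevant uniformizing maps form a family of univalent functions on one and the same domain. The Kraus--Nehari bound then caps their Schwarzian derivatives in the hyperbolic sup-norm by a universal constant (the $3/2$ of the inequality), independent of $n$ and hence of how far $\Gamma_n$ has degenerated. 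Transporting this bound through the algebraic description of $\mathcal{X}$ gives a uniform bound on the coordinates of $\mathcal{C}(\Gamma_n)$; this is the correspondence analog of Bers' observation that his embedding lands in a bounded ball.

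With uniform bounds in hand I would extract a subsequence along which the defining polynomials of $\mathcal{C}(\Gamma_n)$ converge coefficientwise, and then verify that the limit is a genuine element of $\mathcal{X}$. Two closedness checks remain. First, the bi-degree must not drop: here I again use that the $P$-side of every $\mathcal{C}(\Gamma_n)$ is identical, so the leading terms contributing degree $2d-1$ in each variable persist in the limit and cannot all cancel. Second, the defining relations and the normalization that carve $\mathcal{X}$ out of affine space are closed conditions, hence survive passage to the limit. Together these place the limiting correspondence in $\mathcal{X}$, completing the sequential compactness argument.

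The main obstacle is the degenerating case, and within it the step of producing the uniform bound. The difficulty is twofold. One must identify, inside the mating, the correct simply connected (or finitely connected) domain carrying the varying Fuchsian dynamics and check that the map developing it over the fixed $P$-structure is genuinely univalent, so that the classical bounds apply; as the pinching proceeds this domain degenerates and the univalent map can develop cusps, so the bound must be uniform up to these cusp formations rather than merely locally uniform on $\mathrm{Teich}(S_{0,d+1})$. One must also ensure the bound is \emph{proper} for the algebra of $\mathcal{X}$, i.e.\ that no character-variety coordinate can escape to infinity while the Schwarzian data stay bounded; this amounts to checking that the passage from Schwarzian data to the affine coordinates on $\mathcal{X}$ is continuous and bounded on the relevant region. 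I expect the first of these---the uniform univalence bound surviving the pinching degeneration---to be the crux, precisely because it is the point at which the fixed polynomial, rather than any compactness of Teichm\"uller space, must do all the work.
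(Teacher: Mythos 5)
Your proposal contains a genuine gap, and it sits exactly where you have placed the least weight. The Schwarzian/Kraus--Nehari mechanism does not transport to this setting in the way you suggest. In the classical Bers slice, the slice is \emph{literally parametrized} by the Schwarzians of the univalent developing maps over the fixed surface; these Schwarzians are invariant under the Fuchsian group, hence live in a finite-dimensional space of quadratic differentials, and Nehari's universal bound then gives boundedness of the embedding with no further work. Here the natural univalent maps on a fixed domain are the conjugacies $\mathfrak{X}_{P,n}:\Int{\cK(P)}\to\widehat{\C}$, but their Schwarzians are \emph{not} invariant under the dynamics: conjugating $P$ to $F_n$ yields only a cocycle relation involving $S_{F_n}$, so the ``Schwarzian data'' lives in an infinite-dimensional space and does not parametrize $\mathfrak{B}(P)$. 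More importantly, the coordinates on the character variety are the coefficients of the degree-$2d$ uniformizing rational maps $R_n$ (modulo M\"obius equivalence), and your proposal never constructs the continuous, proper passage from bounded Schwarzian data to bounded coefficients; you flag this as a ``check,'' but it is the entire content of the theorem. Note also that compactness of suitably normalized univalent maps already follows from standard normal-family/Carath\'eodory arguments (and is indeed used as an ingredient in the paper, e.g. in Lemma~\ref{lem:Rinfty}); the Nehari bound buys nothing beyond this, so your identified ``crux'' --- uniformity of the univalence bound under pinching --- is not where the difficulty lies, since that bound is universal and trivially uniform.

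The second gap is that you dismiss the no-degree-drop step as a routine closedness check: ``the leading terms contributing degree $2d-1$ in each variable persist in the limit and cannot all cancel'' because the $P$-side is fixed. This is an assertion, not a proof, and fixing the internal conformal class of the polynomial piece does not by itself prevent the normalized maps $R_n$ from converging in $\overline{\Rat_{2d}(\C)}=\Proj^{4d+1}_\C$ to a point with a common factor (holes), which is precisely the degeneration that must be excluded. The paper's proof of Theorem~\ref{pre_comp_thm_1} is devoted almost entirely to this point: assuming divergence, it shows $\vert R_n'(\infty)\vert\to\infty$ (Lemma~\ref{lem:Rinfty}), extracts two rescaling limits at the scales $\vert R_n'(\infty)\vert^{\pm1}$ --- a degree-$1$ limit $G_1$ with hole at $0$ and a degree-$(2d-1)$ limit $G_2$ with hole at $\infty$ and a $(2d-2)$-fold critical point there (Lemmas~\ref{lem:tworescalinglimits} and~\ref{lem:2d-2}) --- proves $G_1(0)=G_2(\infty)=:A$, and then obtains a contradiction of dynamical nature: the marked \emph{repelling} fixed point $R_n(1)$ of the mating converges to $A$, where the limiting map $G_2\circ\eta\circ G_1^{-1}$ has a \emph{superattracting} fixed point of local degree $2d-1$, which is incompatible with an invariant ray of the polynomial landing at that repelling point (Lemma~\ref{lem:ld} and the concluding argument of Section~\ref{pre_comp_bers_sec}). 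Your proposal contains no substitute for this rescaling-plus-dynamics mechanism, and without it the sequential compactness argument cannot be closed.
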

\noindent (See Theorem~\ref{pre_comp_thm_1} for a more precise statement.)

\subsection*{Uniqueness of Bers compactifications}
A celebrated theorem of Kerckhoff and Thurston states that in general, the natural homeomorphism between two (classical) Bers slices does not extend continuously to the boundaries \cite{KT90}. However, in the special case of $4$-times punctured spheres (or once punctured tori), all the Bers compactifications turn out to be naturally homeomorphic \cite{Ber81}. The last statement is a consequence of rigidity of triply punctured spheres and Sullivan's \emph{no invariant line field} theorem for Kleinian groups. It is natural to seek analogs of these results for the compactifications of the Bers slices $\mathfrak{B}(P)$, $P\in\cH_{2d-1}$, constructed in Theorem~\ref{bers_pre_comp_thm_intro}.
The case of $4$-times punctured spheres turns out to be special in our case as well. 

\begin{thmx}\label{bers_homeo_thm_intro}
Let $P,Q\in\cH_{2d-1}$. Then, the Bers slices of $(d+1)$-punctured spheres $\mathfrak{B}(P)$ and $\mathfrak{B}(Q)$ (passing through $P,Q$, respectively) are naturally homeomorphic.
\end{thmx}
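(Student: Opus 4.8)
The plan is to exploit the fact that, by the single-valued mating framework of \cite{MM2, LLM24}, each Bers slice carries a canonical parametrization by $\mathrm{Teich}(S_{0,d+1})$. Write $\iota_P\colon \mathrm{Teich}(S_{0,d+1})\to\mathfrak{B}(P)$ for the homeomorphism sending a marked Fuchsian group $\Gamma$ to the conjugacy class of the correspondence obtained by mating $\Gamma$ with $P$, and similarly for $\iota_Q$. The candidate for the asserted natural homeomorphism is then $\Phi_{P,Q} := \iota_Q\circ\iota_P^{-1}\colon \mathfrak{B}(P)\to\mathfrak{B}(Q)$, which by construction is a bijection intertwining the two Teichm\"uller markings. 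The content of the theorem is that this marking-preserving bijection is a homeomorphism for the subspace topologies inherited from the character variety, and that it is induced by an intrinsic, dynamically natural deformation rather than merely the abstract composition of two parametrizations.

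To see that $\Phi_{P,Q}$ is geometrically natural, I would realize it through a quasiconformal conjugacy between $P$ and $Q$. Because $P,Q\in\cH_{2d-1}$ lie in the principal hyperbolic component, they are quasiconformally conjugate, the conjugacy being conformal on the basin of infinity; moreover the boundary dynamics on the Julia set is canonically semiconjugate, via external (B\"ottcher) angles, to angle multiplication $t\mapsto (2d-1)t$ on $\R/\Z$, independently of the choice of $P\in\cH_{2d-1}$. Since the mating construction glues the Fuchsian side to the polynomial side along precisely this canonically parametrized boundary circle, passing from $P$ to $Q$ leaves the gluing data and the entire group side untouched and only alters the conformal structure on the polynomial side. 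The quasiconformal conjugacy therefore descends to a quasiconformal conjugacy of correspondences carrying $\iota_P(\Gamma)=\mathrm{mating}(\Gamma,P)$ to $\mathrm{mating}(\Gamma,Q)=\iota_Q(\Gamma)$ while fixing $\Gamma$, which both identifies $\Phi_{P,Q}$ with a canonical dynamical operation and shows it lands in $\mathfrak{B}(Q)$ with the correct marking.

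Continuity of $\Phi_{P,Q}$, and of its inverse $\Phi_{Q,P}$ by symmetry, would then follow from the continuity of the mating construction in the group parameter established in the framework, which already underlies the fact that $\iota_P$ and $\iota_Q$ are homeomorphisms; one also records the cocycle identity $\Phi_{Q,R}\circ\Phi_{P,Q}=\Phi_{P,R}$ so that the family of natural maps is coherent. The main obstacle I anticipate is not the abstract bijection, which is immediate from the two parametrizations, but verifying that the quasiconformal conjugacy between $P$ and $Q$ can be chosen compatibly with the welding and quotient procedure defining the correspondences, so that it genuinely descends to the quotient and respects the bi-degree $(2d-1,2d-1)$ structure in the character variety; this is exactly what upgrades the soft statement ``both slices are copies of $\mathrm{Teich}(S_{0,d+1})$'' to the assertion that the identification is natural. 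By contrast, the analogous statement for the \emph{compactifications} is far more delicate and, in line with the Bers--Kerckhoff--Thurston phenomena recalled above, is expected to hold only for the four-times punctured sphere; there the obstacle shifts to controlling boundary correspondences in the absence of an analog of Sullivan's no-invariant-line-field theorem, where one must instead leverage the rigidity of triply punctured spheres.
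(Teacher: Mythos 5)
Your outline coincides with the paper's: the map is defined as $\iota_Q\circ\iota_P^{-1}$ (the paper's $\chi$, induced by restricting $\mathfrak{G}$ to horizontal slices), and it is realized dynamically by a quasiconformal modification of the polynomial side. However, your central geometric claim contains a genuine error. Two polynomials $P,Q\in\cH_{2d-1}$ are in general \emph{not} globally quasiconformally conjugate, and consequently the matings $\mathfrak{C}_{\Gamma,P}$ and $\mathfrak{C}_{\Gamma,Q}$ are not either: topological (hence quasiconformal) conjugacy preserves the local degree at fixed points, and the center $P(z)=z^{2d-1}$ has a superattracting fixed point of local degree $2d-1$, while a generic $Q\in\cH_{2d-1}$ has an attracting fixed point of local degree $1$. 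The same obstruction sits inside the non-escaping set of the conformal matings and inside the $P$-components of the correspondences, so your assertion that the quasiconformal conjugacy ``descends to a quasiconformal conjugacy of correspondences carrying $\iota_P(\Gamma)$ to $\iota_Q(\Gamma)$'' cannot hold. What is true, and what the paper proves in Lemma~\ref{blaschke_conjugacy_lem}, is that the Blaschke models $B_1,B_2$ of $P\vert_{\cK(P)}$ and $Q\vert_{\cK(Q)}$ are quasiconformally conjugate only on a relative neighborhood of $\mathbb{S}^1$ in $\overline{\D}$; one must then perform quasiconformal \emph{surgery} (excise the $P$-dynamics, glue in the $Q$-dynamics, straighten an invariant almost complex structure, and lift to the correspondence plane via the construction of Section~\ref{qc_surgery_subsec}), not conjugation. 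The output is a conjugacy between $\mathfrak{C}_{\Gamma,P}$ and $\mathfrak{C}_{\Gamma,Q}$ only on neighborhoods of their filled limit sets, which is exactly how Theorem~\ref{bers_slice_hoemo_thm} is phrased; the distinction between this ``hybrid'' conjugacy and a global one is essential, since quasiconformal rigidity arguments on the Bers boundary later hinge on it.

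Second, your continuity argument is circular for general $P$: you take as given that $\iota_P$ is a homeomorphism onto its image \emph{in the subspace topology of the character variety}, but for $P\neq z^{2d-1}$ this is precisely Proposition~\ref{Xi_homeo_prop} of the paper, not a quotable fact from the framework. Prior work gives injectivity and holomorphicity of $\Gamma\mapsto[R_{\Gamma,P}]$, but an injective holomorphic map into a higher-dimensional parameter space (here the $(3d-4)$-dimensional variety $V_d/\sim$, against a $(d-2)$-dimensional source) need not be a homeomorphism onto its image. The paper closes this gap by proving joint continuity of $(\Gamma,P)\mapsto[R_{\Gamma,P}]$ -- which itself requires the surgery construction with continuous dependence and the parametric Measurable Riemann Mapping Theorem -- and then applying Invariance of Domain to the equidimensional product map $\mathfrak{G}:\mathrm{Teich}(S_{0,d+1})\times\cH_{2d-1}\to V_d/\sim$, only afterwards restricting to horizontal slices. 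So the quasiconformal surgery is needed not merely to make the bijection ``natural'' but to establish the homeomorphism statement at all.
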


Though an analog of Sullivan's no invariant line field theorem is unavailable to us at this stage, we nevertheless prove the following.
\begin{thmx}\label{bers_closure_homeo_thm_intro}
Let $P,Q\in\cH_{5}$, and $\mathfrak{B}(P), \mathfrak{B}(Q)$ be the Bers slices of $4$-times punctured spheres passing through $P,Q$, respectively. Then, the natural homeomorphism between $\mathfrak{B}(P)$ and $\mathfrak{B}(Q)$ extends to a homeomorphism between their closures.
\end{thmx}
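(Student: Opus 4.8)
The plan is to extend the natural homeomorphism $\Phi\colon\mathfrak{B}(P)\to\mathfrak{B}(Q)$ of Theorem~\ref{bers_homeo_thm_intro} — which by construction fixes the Fuchsian parameter $\tau\in\mathrm{Teich}(S_{0,4})$ and replaces the polynomial $P$ by $Q$ — to the compact closures furnished by Theorem~\ref{bers_pre_comp_thm_intro}. First I would isolate the two features that make the case $d+1=4$ tractable. Since $\dim_{\C}\mathrm{Teich}(S_{0,4})=d-2=1$, the slices $\mathfrak{B}(P),\mathfrak{B}(Q)$ are complex one-dimensional, and the only degenerations available in $\mathrm{Teich}(S_{0,4})$ are pinchings of a single essential simple closed curve $\gamma$, of some slope $p/q\in\Q\cup\{\infty\}$, each of which splits $S_{0,4}$ into two thrice-punctured spheres. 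The second feature is the rigidity of the thrice-punctured sphere: it carries a unique complex structure and has trivial Teichm{\"u}ller space. This rigidity is the substitute for the missing no-invariant-line-field theorem, and it is precisely what forces the one-dimensional boundary behaviour to be rigid.

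Next I would analyze the boundary through the degeneration theory on trees of Riemann spheres developed in the body of the paper. Along a sequence $\tau_n\to\infty$ in $\mathrm{Teich}(S_{0,4})$ pinching $\gamma$, the Fuchsian side of the mating $\E_P(\tau_n)$ degenerates onto a tree of Riemann spheres whose vertex data are matings over the two rigid thrice-punctured pieces, while the polynomial side remains the fixed $P\in\cH_5$. Because the thrice-punctured spheres have no moduli, the limiting vertex configuration is determined entirely by the combinatorial pinching datum $p/q$ and is independent of the approaching sequence; I would package these \emph{cusp} limits as a well-defined collection of boundary correspondences $\{C_P(p/q)\}\subset\partial\mathfrak{B}(P)$ indexed by $p/q\in\Q\cup\{\infty\}$, and likewise $\{C_Q(p/q)\}\subset\partial\mathfrak{B}(Q)$. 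Since $P$ and $Q$ lie in the same principal hyperbolic component $\cH_5$, they have conjugate external dynamics, so the polynomial side transfers continuously; I would then set $\overline{\Phi}(C_P(p/q)):=C_Q(p/q)$, matching combinatorial data while swapping $P$ for $Q$, and check that this is compatible with $\Phi$ by continuity of the mating and degeneration constructions.

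I would then promote this to a homeomorphism of closures by a density-plus-compactness argument. The cusps $\{C_P(p/q)\}$ should be shown to be dense in $\partial\mathfrak{B}(P)$ — an analog of McMullen's ``cusps are dense'', here made accessible by one-dimensionality and by the explicit tree-of-spheres limits — so that $\overline{\Phi}$ is defined on a dense subset of the compact metric space $\overline{\mathfrak{B}(P)}$, where it is a bijection onto a dense subset of $\overline{\mathfrak{B}(Q)}$ intertwining $\Phi$. Using compactness to upgrade this to uniform continuity, $\overline{\Phi}$ extends continuously to all of $\overline{\mathfrak{B}(P)}$; the construction is symmetric in $P,Q$, so it yields a continuous inverse. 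A continuous bijection between compact Hausdorff spaces is a homeomorphism, which finishes the proof.

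The hard part will be the middle step. Without a no-invariant-line-field theorem one cannot a priori guarantee that the correspondences limiting onto a non-cuspidal (irrational-slope) boundary point are rigidly determined by their end data, so the extension cannot simply be defined pointwise there. The argument must instead route the entire boundary through the cusps, which means genuinely establishing their density and, more delicately, verifying that the map defined on cusps is uniformly continuous for the ambient metric on the character variety, so that its continuous extension is automatically injective on the irrational boundary as well. Controlling the tree-of-spheres limits uniformly — showing that nearby boundary directions produce nearby degenerate correspondences regardless of whether the polynomial side is $P$ or $Q$ — is where the bulk of the technical work lies, and where the rigidity of the thrice-punctured pieces must be exploited quantitatively rather than merely pointwise.
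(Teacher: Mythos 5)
Your proposal rests on a picture of $\partial\mathfrak{B}(P)$ that the paper's own Theorem~\ref{bers_pre_comp_thm_intro} rules out. Pre-compactness of $\mathfrak{B}(P)$ is proved \emph{in} $\mathrm{Rat}_{2d}(\C)/\sim$, the stratum of the character variety over the trivial tree: every boundary point of a horizontal Bers slice is a correspondence defined by an honest degree-$6$ rational map on a \emph{single} sphere, with no degree drop and no nodal degeneration (the paper contrasts this explicitly with the external fibers $\mathscr{B}_\Gamma$, which are where tree-of-spheres limits genuinely occur). So your ``cusp'' correspondences $C_P(p/q)$, described as limits living on trees of Riemann spheres with thrice-punctured-sphere vertex data, do not exist in the form you need. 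Worse, whether boundary correspondences are matings of $P$ with pinched/degenerate groups at all is listed as an open problem (Question~\ref{qn2}), and the paper notes that most Bers boundary groups admit no Bowen--Series map, so the group-side combinatorics you want to transport is not available. Beyond this, your plan hinges on two statements that are each at least as hard as the theorem: (a) that the pinching limit is independent of the approaching sequence --- but that well-definedness \emph{is} the continuity problem, and it is precisely what fails in the Kerckhoff--Thurston setting; and (b) density of cusps in $\partial\mathfrak{B}(P)$, an analog of a deep theorem of McMullen that the paper neither proves nor uses. The uniform-continuity upgrade from a dense set is likewise asserted, not argued, and thrice-punctured-sphere rigidity cannot be ``exploited quantitatively'' on objects that are not known to carry any noded-surface structure.

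The paper's actual mechanism is different and avoids all of this. One-dimensionality enters not through pinching combinatorics of $\mathrm{Teich}(S_{0,4})$ but through the parameter space of normalized rational maps: every correspondence quasiconformally conjugate to a point of $\overline{\mathfrak{B}(z^5)}$ is defined by a map of the explicit one-parameter form $\widetilde{R}_c(z)=z+\frac{c}{z}-\frac{c}{3z^3}+\frac{1}{5z^5}$ (Lemma~\ref{special_plane_lem}). If a boundary correspondence admitted a nontrivial invariant Beltrami differential, the associated Beltrami disk would give a nonconstant holomorphic family of quasiconformally conjugate parameters inside this one-dimensional family; such a disk must contain interior Bers-slice parameters, which are not conjugate to boundary points --- contradiction. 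This quasiconformal rigidity of boundary points is the substitute for the missing no-invariant-line-field theorem (in the spirit of Douady--Hubbard's continuity of straightening for baby Mandelbrot sets), not the rigidity of thrice-punctured spheres. The extension is then pointwise and soft: hybrid conjugation via surgery (Lemma~\ref{surgery_replace_blaschke_lem}) and compactness show that all subsequential limits of $\chi$ along sequences converging to a fixed boundary point are globally quasiconformally conjugate (Lemmas~\ref{conjugacy_extension_lem} and~\ref{limit_of_str_corr_lem}), and rigidity collapses them to a single point, yielding a continuous extension with a continuous inverse constructed symmetrically (Lemmas~\ref{bers_slice_cont_ext_lem}--\ref{bers_slice_bdry_hoemo_lem}). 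If you want to salvage your approach, the honest statement is that it requires first classifying $\partial\mathfrak{B}(P)$ dynamically and proving a cusp-density theorem --- both open problems in the paper --- whereas the rigidity route needs neither.
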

\noindent (See Theorems~\ref{bers_slice_hoemo_thm} and ~\ref{bers_slice_closure_hoemo_thm} for more precise statements.)

\subsection*{Compactness of external fibers and a new degeneration phenomenon for correspondences}
So far, we have focused on Bers slices obtained by fixing a polynomial in the principal hyperbolic component and varying the group in a Teichm{\"u}ller space. We now turn to the complementary perspective: fixing a Fuchsian group $\Gamma$ and studying the associated parameter slice, known as the \emph{external fiber} $\mathscr{B}_\Gamma$, which is transverse to the Bers slices. 
In the single-valued mating framework of \cite{MM2, LLM24}, a conformal mating between a polynomial $P$ and $\Gamma$ yields a \emph{$B$-involution}; i.e.\ a meromorphic map defined on a subset {$\Omega$ of $\widehat{\C}$ (with piecewise-analytic boundary) inducing an involution on the boundary of $\Omega$}.  
For the $B$-involution,
\begin{itemize}[leftmargin=*]
    \item the non-escaping dynamics mirrors $P|_{\mathcal{K}(P)}$, and
    \item the escaping dynamics is conjugate to the Bowen–Series map of $\Gamma$.
\end{itemize}
(See \S~\ref{subsec:background} for a brief summary and \S~\ref{corr_mating_subsec} for a detailed account of this framework). The space $\mathscr{B}_\Gamma$, defined by fixing the Bowen-Series map of $\Gamma$ as the {\em external map}, contains such $B$-involutions and is naturally interpreted as a space of algebraic correspondences on trees of spheres (see \S~\ref{degen_poly_like_map_ext_fiber_subsec}, cf. \cite[\S 15, Appendix~A]{LLM24}). As such, it embeds into the character variety of bi-degree~$(2d-1, 2d-1)$ correspondences (see \S~\ref{char_var_subsec}), and we show that the external fiber $\mathscr{B}_\Gamma$ is compact under this embedding.

\begin{thmx}\label{vert_slice_comp_thm_intro}
Let $\Gamma\in\mathrm{Teich}(S_{0,d+1})$. Then the external fiber $\mathscr{B}_\Gamma$ is compact.
\end{thmx}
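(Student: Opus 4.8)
The plan is to establish compactness by showing that $\mathscr{B}_\Gamma$ is both pre-compact and closed in the character variety. Pre-compactness should follow from the same mechanism that yields Theorem~\ref{bers_pre_comp_thm_intro}: since $\mathscr{B}_\Gamma$ is transverse to the Bers slices and embeds into the character variety of bi-degree $(2d-1,2d-1)$ correspondences, one expects uniform bounds on the defining data of the correspondences. Concretely, fixing the Fuchsian group $\Gamma$ fixes the Bowen–Series map as the external map, so the escaping dynamics of every $B$-involution in $\mathscr{B}_\Gamma$ is rigidly prescribed. The varying data is then only the ``interior'' polynomial-like part $P|_{\mathcal{K}(P)}$, which ranges over a connectedness-type locus. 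First I would make precise which coordinates on the character variety parametrize $\mathscr{B}_\Gamma$ and verify that the fixed external map forces these coordinates to lie in a bounded region, invoking the precompactness machinery already developed for Theorem~\ref{bers_pre_comp_thm_intro}.

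The essential difference from Theorem~\ref{bers_pre_comp_thm_intro}, and the reason $\mathscr{B}_\Gamma$ is genuinely \emph{compact} rather than merely pre-compact, is that one must show the fiber is closed: the limit of any degenerating sequence of correspondences in $\mathscr{B}_\Gamma$ stays inside $\mathscr{B}_\Gamma$. This is where the ``degeneration of algebraic correspondences on trees of Riemann spheres'' enters. I would take a sequence $(\mathcal{C}_n)\subset\mathscr{B}_\Gamma$ and analyze its limiting behavior using the tree-of-spheres compactification described in Section~\ref{degen_poly_like_map_ext_fiber_subsec}. The key point is that because the external map (the Bowen–Series map of $\Gamma$) is held fixed throughout, the escaping part cannot degenerate; only the non-escaping polynomial-like dynamics can break up, and when it does, the limiting object reassembles into a correspondence on a tree of spheres that is again a legitimate element of $\mathscr{B}_\Gamma$. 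Thus the space is closed under these degenerations precisely because the tree-of-spheres model is built into the definition of the external fiber.

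The main obstacle I anticipate is controlling the degeneration of the interior dynamics and showing that no mass or dynamical information ``escapes to infinity'' in a way that leaves the fiber. Concretely, I would need to rule out that a sequence of $B$-involutions degenerates to a correspondence whose external map is no longer conjugate to the Bowen–Series map of $\Gamma$, or whose domain of definition degenerates pathologically. The technical heart is a compactness/normal-families argument for the holomorphic branches of the correspondences, combined with a careful bookkeeping of how the domain pieces and the involution structure on their boundaries survive under Gromov–Hausdorff-type limits on the tree of spheres. I would handle this by establishing uniform modulus bounds on the annuli separating the dynamically relevant pieces, so that the conformal structure does not fully collapse, and then identifying the limit via the rigidity of the fixed external map. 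Assembling these limits into a bona fide correspondence on a tree of spheres, and verifying that it satisfies the defining constraints of $\mathscr{B}_\Gamma$, is the step I expect to require the most care.
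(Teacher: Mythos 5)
Your second and third paragraphs capture the correct high-level skeleton, and it matches the paper's: take a sequence in $\mathscr{B}_\Gamma$, build a limiting correspondence on a (possibly larger) tree of spheres, and use the fixed external map to identify the limit as an element of $\mathscr{B}_\Gamma$ (in the paper, the conformal conjugacies $\Phi_n:(\D,\Pi)\to(\Delta_n,Q_n)$ between $A_\Gamma$ and the B-involutions converge to a conjugacy between $A_\Gamma$ and the limiting B-involution, which is exactly the closedness step). However, your first paragraph contains a genuine gap: pre-compactness cannot be obtained by ``invoking the precompactness machinery already developed for Theorem~\ref{pre_comp_thm_1}''. That machinery works by deriving a contradiction from \emph{any} degeneration of the uniformizing rational maps — it prevents degree drop and signature change — and it crucially uses that the polynomial is fixed in $\cH_{2d-1}$ (an invariant ray landing at the marked repelling fixed point of $P$ is incompatible with the superattracting behavior that degeneration would create). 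In the external fiber the situation is the opposite: sequences in $\mathscr{B}_\Gamma$ genuinely degenerate, the rational maps on each sphere do drop degree, and the limits live on strictly larger trees of spheres; this is precisely the new phenomenon the paper emphasizes (see Section~\ref{comparison_proof_subsec}). So there is no bounded region of a fixed stratum containing $\mathscr{B}_\Gamma$, and pre-compactness must instead be proved by \emph{constructing} the limit. The paper does this in Proposition~\ref{limit_corr_prop} via Carath\'eodory limits of the tiling-set components — each conformally equivalent to $(\D,\Pi)$ because the group is fixed, which is where the rigidity of the external map actually enters the argument — together with rescalings made compatible with the involution $\eta$ (Lemma~\ref{lem_1}).

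Relatedly, your proposed technical tool, ``uniform modulus bounds on the annuli separating the dynamically relevant pieces, so that the conformal structure does not fully collapse'', points in the wrong direction: the conformal structure \emph{does} collapse, and the content of the proof is to show the collapse is controlled rather than to prevent it. Concretely, the paper must show (i) the limiting configuration of spheres is a finite tree and the limiting signature is simple, proved by a flat-metric area estimate near the nodes using univalence of $\pmb{R}_n$ on $\pmb{\mathfrak{D}}_n$ and the near-isometric behavior of $\eta$ at its fixed points (Lemma~\ref{lem_2}); (ii) the limiting domains are pinched polygons and the B-involutions converge, proved by a geodesic-landing argument (Lemma~\ref{lem_3}); and (iii) the signature domination is \emph{regular}, which is what makes the limit a convergence in the topology of $\mathcal{CV}_{2d}$ at all, proved by counting cusp critical points (Lemma~\ref{lem_4}). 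None of these steps is supplied by modulus bounds or normal-family arguments, and without (iii) in particular your limiting object would not even qualify as a limit point of the sequence in the character variety as the paper defines it.
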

\noindent (See Theorem~\ref{vert_slice_comp_thm} for a precise statement.)
\smallskip

We refer the reader to \cite[\S 6.2, \S 9]{LLM24} for a parallel compactness result in the antiholomorphic setting and its role in the proof of the fact that antiholomorphic analogs of $\mathscr{B}_\Gamma$ (spaces of Schwarz reflections having the \emph{Nielsen map} of an ideal $(n+1)$-gon reflection group as their external map) bear strong resemblances with the connectedness locus of degree $n$ antiholomorphic polynomials.

\subsection*{The motivating example}
    We remark that the compactness of the external fiber $\mathscr{B}_\Gamma$ stated in Theorem~\ref{vert_slice_comp_thm_intro} relies crucially on working in the space of algebraic correspondences on trees of spheres (see Example~\ref{degree_three_signature_example} and Example~\ref{degree_four_signature_example} in \S~\ref{corr_mating_subsec} and the corresponding Figure~\ref{mating_1_fig} and Figure~\ref{mating_2_fig}).

    We now present an explicit example of a family of correspondences $\mathfrak{C}_t \in \mathscr{B}_\Gamma, t\in (0,1)$, defined on a single Riemann sphere, which converges, under suitable rescaling, to a correspondence on a tree of Riemann spheres as $t \to 0$.

    We note that some of the technical notation used in the example is introduced later in \S~\ref{degenerations_sec} and \S~\ref{sec:cv}. The reader may find it helpful to return to this example after reading those sections. We include it here because it serves as the motivating model for many of the technical developments in \S~\ref{degenerations_sec} and \S~\ref{sec:cv}, and it illustrates several of the subtleties underlying the compactness result.

    \begin{example}\label{DegenerationFamily}
    Consider the degree $3$ rational map 
    $$
    R_t(z) = \frac{\frac12 z^3 - \frac{3-3t}{2(3-2t)}z^2-\frac{1}{1-t} z}{z+1-t}.
    $$
    We remark that for each $t \in (0,1)$, $R_t$ is univalent on $\D$ and it has three simple critical points on $\mathbb{S}^1 = \partial \D$ (see Figure~\ref{fig:SRD}).
    
    Consider the associated family of correspondences $\mathfrak{C}_t, t\in (0,1)$, defined by
    $$
    \mathfrak{C}_t:=\{(x, y) \in \widehat{\C} \times \widehat{\C}: \frac{R_t(x) - R_t\circ \eta(y)}{x - \eta(y)} = 0\},
    $$
    where $\eta(y) = \frac1y$ (see \S~\ref{corr_mating_subsec} for motivation behind this definition).
    
    One can show that {for each $t \in (0,1)$,}
    $\mathfrak{C}_t$ is the mating between the unique Fuchsian group $\Gamma$ associated to the sphere with 2 punctures and an order 2 orbifold point and the real quadratic polynomial $z^2+ c(t)$, where $c(t)$ is a continuous strictly increasing function satisfying $\lim_{t\to 0^+} c(t) = -\frac34$ and $\lim_{t\to 1^-} c(t) = 0$.

    As $t \to 0^+$, we have
    $$
    R_t(z) = \frac{\frac12 z^3 - \frac{3-3t}{2(3-2t)}z^2-\frac{1}{1-t} z}{z+1-t} \to \frac{\frac12 z^3 - \frac{1}{2}z^2-z}{z+1} = \frac12z^2-z.
    $$ 
    Thus, the rational map $R_t$ converges compactly to the degree $2$ rational map $\frac12z^2-z$ away from the hole $-1\in \widehat{\C}$ (see the discussion in \S~\ref{rat_map_subsubsec}). 
    
    On the other hand, let $M_t(z) = tz-1$. Then as $t \to 0^+$,
    $$
    R_t \circ M_t (z) = \frac{\frac12 (tz-1)^3 - \frac{3-3t}{2(3-2t)}(tz-1)^2-\frac{1}{1-t} (tz-1)}{(tz-1)+1-t} \to \frac{9z+7}{6(z-1)}.
    $$
    Thus, the rational map $R_t \circ M_t$ converges compactly to the degree $1$ rational map $\frac{9z+7}{6(z-1)}$ away from the hole $\infty \in \widehat{\C}$.

    Therefore, as $t\to 0^+$, $R_t$ converges to a rational map $\pmb{R}_0: (\RT, \widehat\C^\RV) \longrightarrow \widehat{\C}$ from a tree of Riemann spheres $(\RT, \widehat\C^\RV)$ consisting of two spheres to $\widehat{\C}$ in the sense of Definition~\ref{defn:cvrationalmap}.
    The correspondence $\mathfrak{C}_t$ converges to a correspondence $\pmb{\mathfrak{C}}_0$ on $(\RT, \widehat\C^\RV)$. One can show that this correspondence $\pmb{\mathfrak{C}}_0$ is the mating between $\Gamma$ and the fat Basilica quadratic polynomial $z^2-\frac34$ (c.f. Example~\ref{degree_three_signature_example} and Figure~\ref{mating_1_fig} for the mating between $\Gamma$ and a post-critically finite Basilica quadratic polynomial $z^2-1$).
    \end{example}

\begin{figure}[ht]
\captionsetup{width=0.96\linewidth}
  \centering
  \includegraphics[height=5cm]{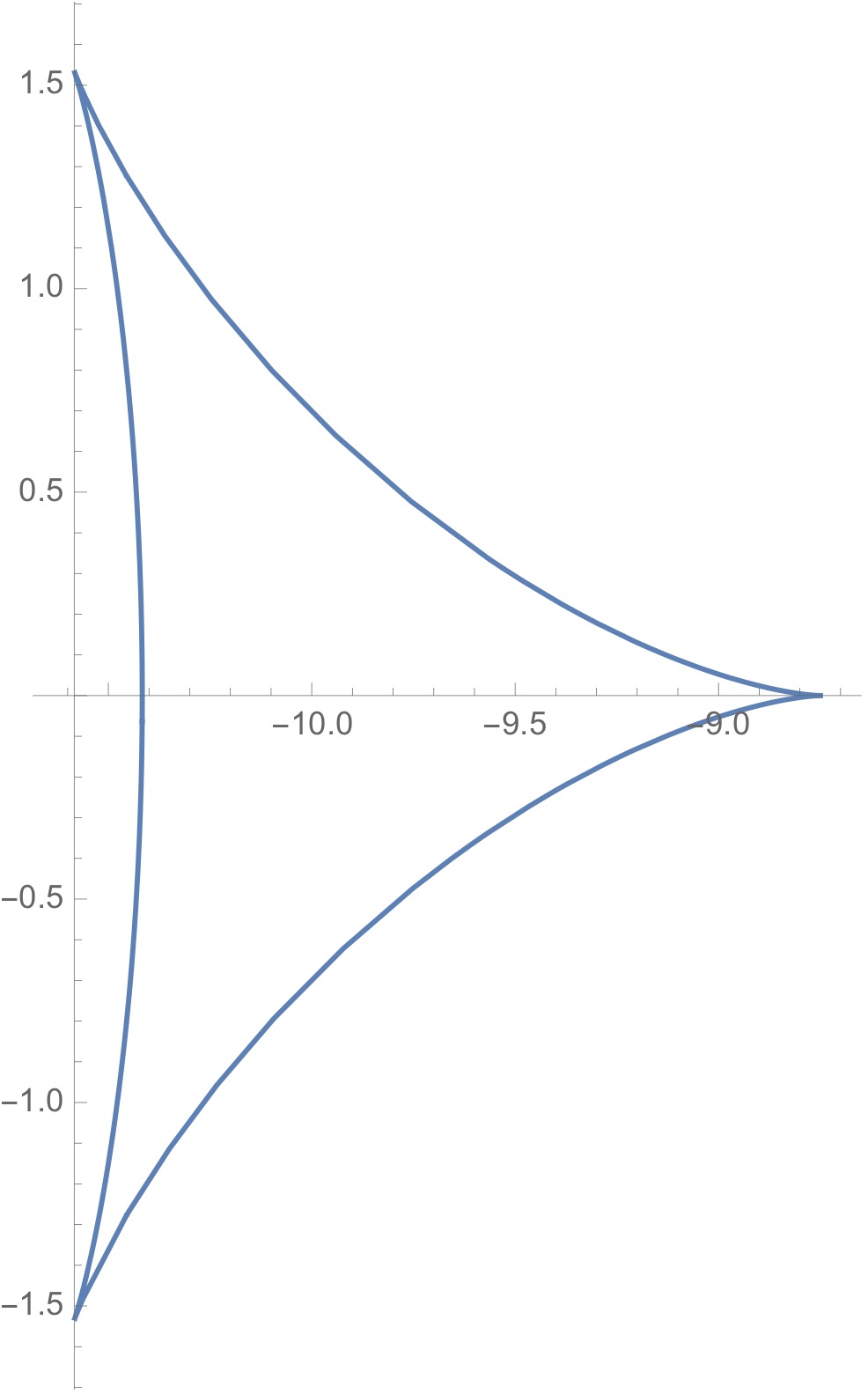}
  \includegraphics[height=5cm]{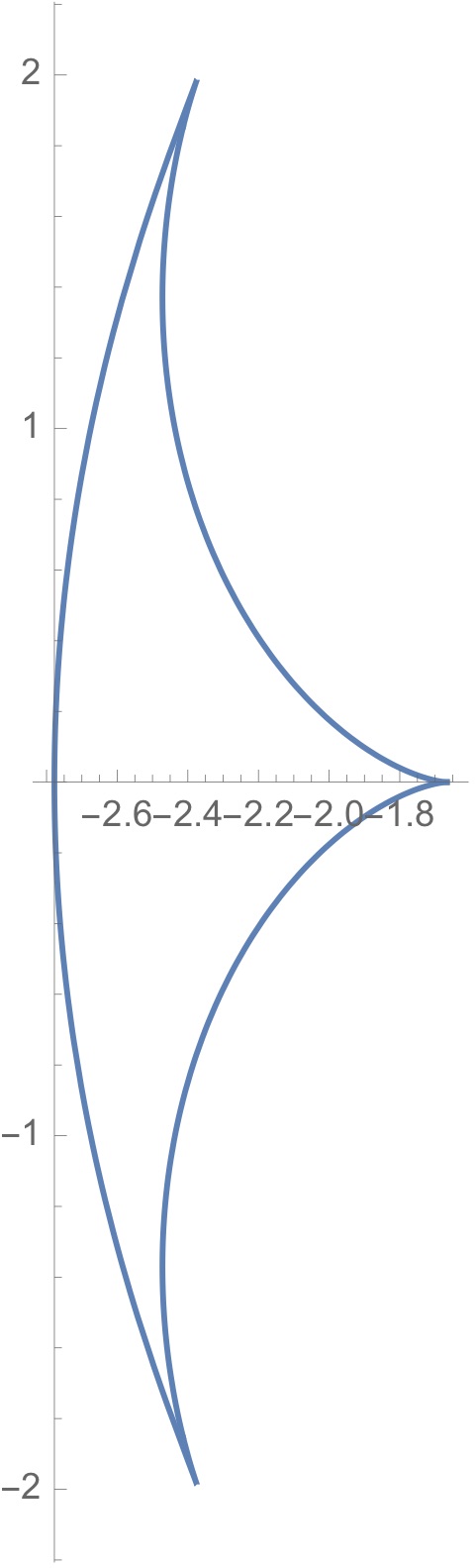}
  \includegraphics[height=5cm]{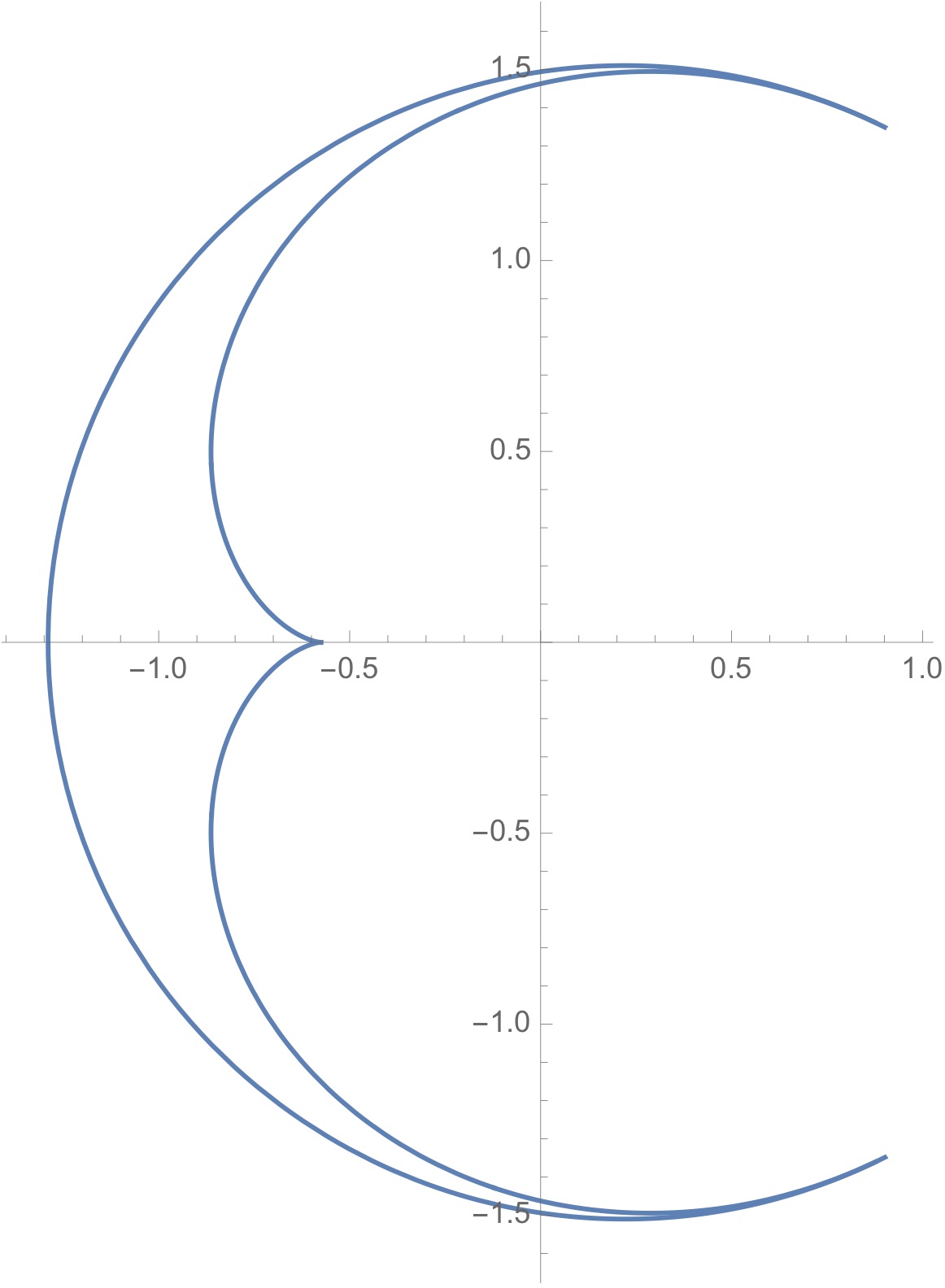}
  \includegraphics[height=5cm]{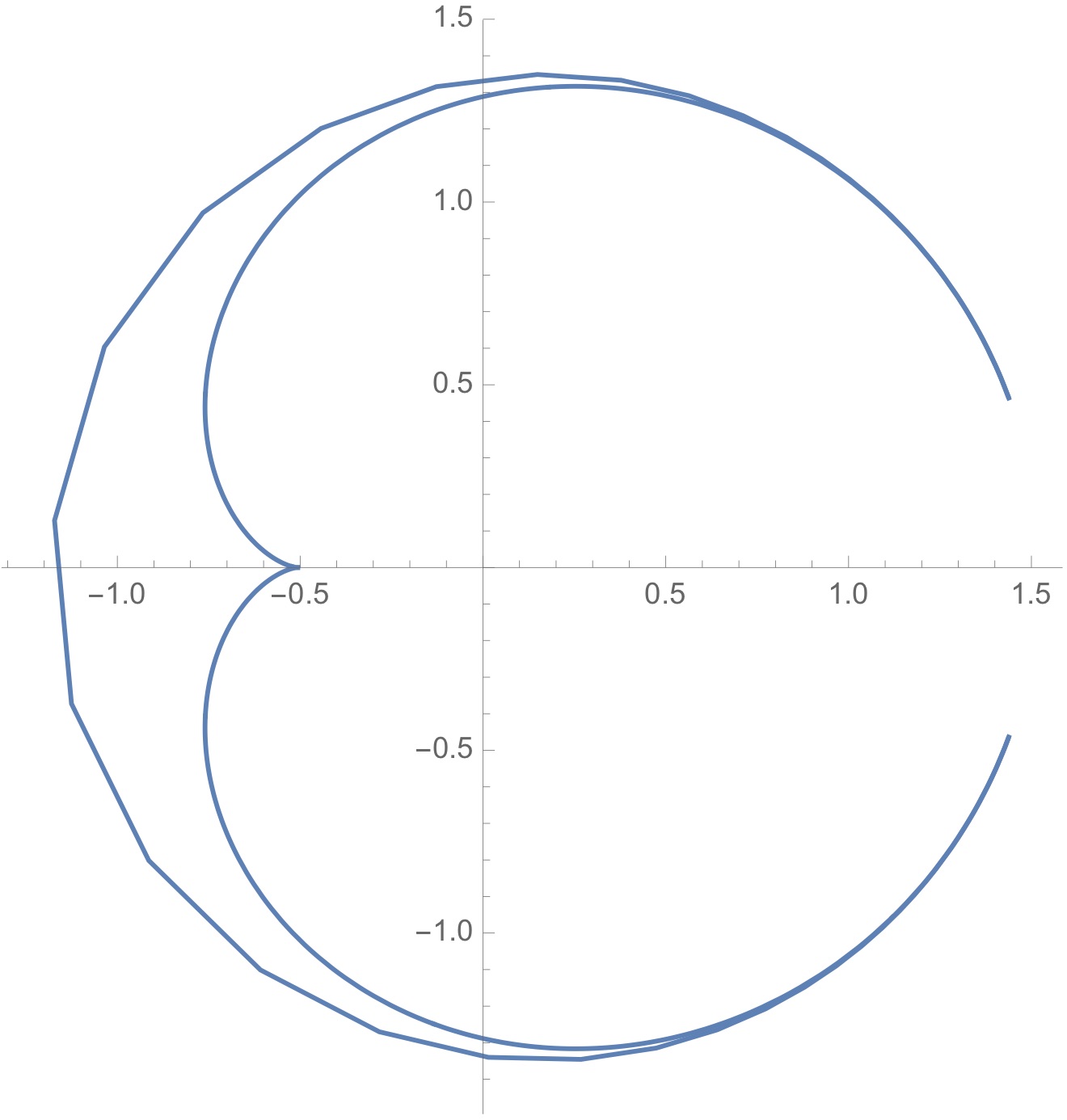}
  \includegraphics[height=5cm]{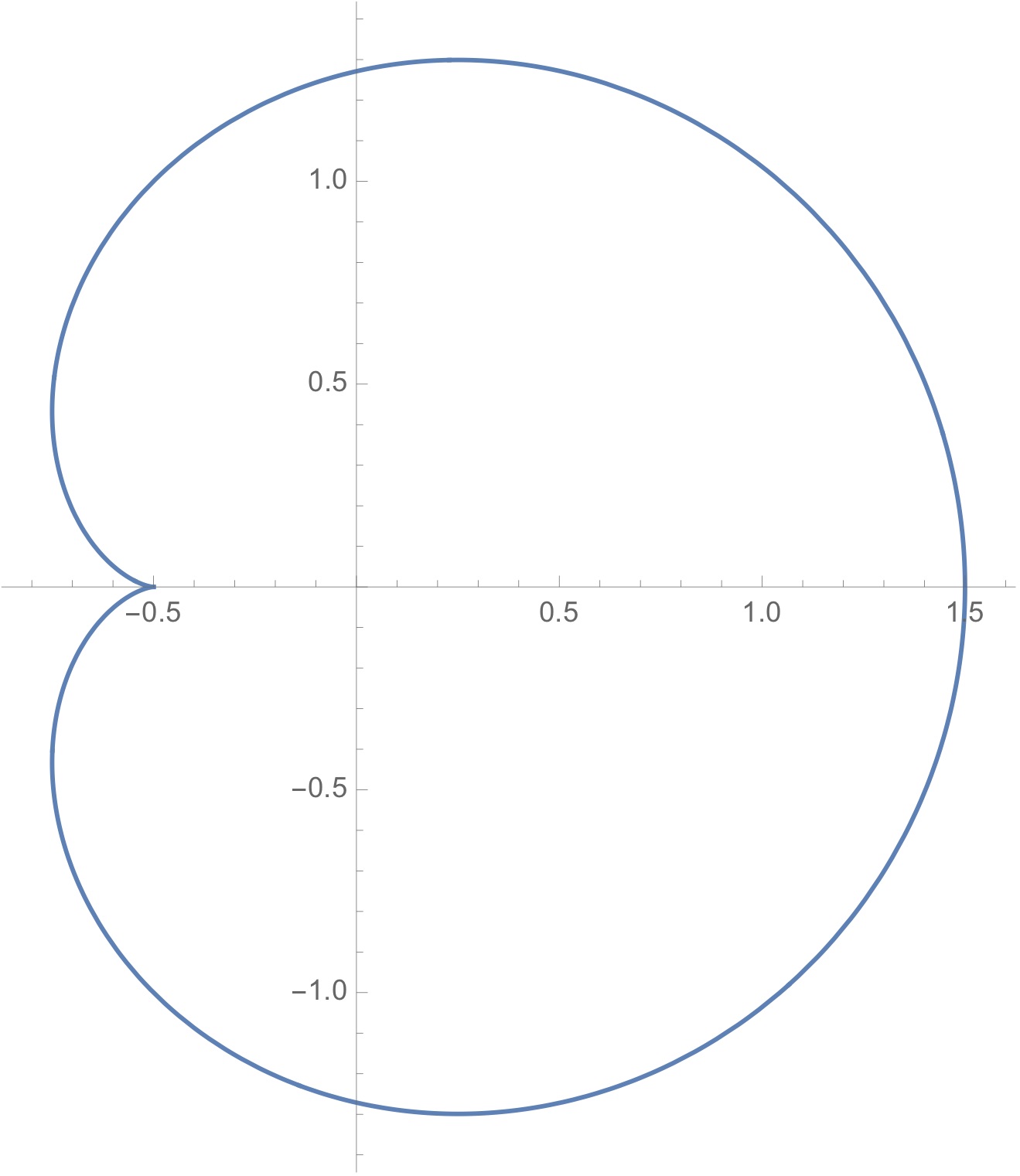}
  \caption{The deltoid family and circle-and-cardioid family of matings have been studied extensively in \cite{LLMM1,LLMM2}. This figure illustrates how the deltoid family converges to the circle-and-cardioid family (c.f. Figure~\ref{mating_1_fig}).
  These are images of the unit circle $\mathbb{S}^1$ under the rational map $R_t$ in Example~\ref{DegenerationFamily} as $t \to 0^+$. The outward cusps are the images of the three critical points on $\mathbb{S}^1$. The last figure depicts only the cardioid, which is the image of $\mathbb{S}^1$ under the limiting quadratic rational map $\frac12z^2-z$. The image of $i\R \cup \{\infty\} = \lim_{t\to 0^+} M_t^{-1}(\mathbb{S}^1)$ under the other (degree one) limiting map $\frac{9z+7}{6(z-1)}$ gives the {smallest} circle tangent to the cardioid at $\frac32$ 
  {and containing it. This circle is realized as a limit of the \emph{outer} non-singular arc in the picture to its left.}}
  \label{fig:SRD}
\end{figure}

\subsection*{Analytic compactifications of Teichm{\"u}ller and Blaschke product spaces}
Embeddings of Teichm{\"u}ller spaces in $\PSL_2(\C)$-character varieties and that of expanding Blaschke products in spaces of rational maps yield various compactifications of Teichm{\"u}ller and Blaschke product spaces. Analyzing the interrelationships among these compactifications is an important problem in conformal dynamics (cf. \cite{McM91}). Our main results allow one to ask and study such questions in a unified framework.

\subsection*{Remark on genus 0 orbifold matings}
The construction of matings between groups and polynomials has been extended to various genus 0 orbifold groups (see \cite{MM2, LLM24}). The construction of the character variety in \S~\ref{sec:cv} for these correspondences proceeds identically to the manifold case. Our main results — Theorem \ref{bers_pre_comp_thm_intro}, Theorem \ref{bers_homeo_thm_intro}, and Theorem \ref{vert_slice_comp_thm_intro} — also generalize to the orbifold setting, with proofs requiring only minor modifications (see Remark \ref{rmk:CVOrbifold}, Remark \ref{rmk:orbifoldModification} and Remark \ref{rmk:orbifoldModification_1}).

\subsection{Comments on the proofs}
Theorem~\ref{bers_pre_comp_thm_intro}, one of the main results of the paper, concerns the pre-compactness of Bers slices. While classical Bers slices are pre-compact in the $\PSL_2(\C)$-character variety due to compactness of normalized conformal maps \cite{Ber70,Mas70}, our setting requires a more delicate approach. We first define an appropriate character variety for bi-degree $(n,n)$ correspondences in \S~\ref{sec:cv}, and show that its \emph{regular part} is Hausdorff. Then, to prove pre-compactness of $\mathfrak{B}(P)$ in this character variety, we must prevent `degree drop' in the limit. To this end, we adapt rescaling techniques from rational dynamics to study degenerations of correspondences (\S~\ref{degenerations_sec}) and use this to rule out such degenerations in $\mathfrak{B}(P)$ (\S~\ref{pre_comp_bers_sec}). Preparations from \S~\ref{char_var_subsec} also help us show that the closure $\overline{\mathfrak{B}(P)}$ lies in the Hausdorff part of the character variety for correspondences.

It is worth mentioning that this pre-compactness result sets the stage for the dynamical investigation of the `Bers boundary' correspondences and its topology in the parameter space.

The proof of Theorem~\ref{bers_homeo_thm_intro} is an application of quasiconformal surgery adapted to the setting of correspondences. After preparing the necessary tools in \S~\ref{qc_def_sec}, we construct a homeomorphism between Bers slices by replacing the polynomial 
$P$-dynamics in a correspondence $\mathfrak{C}_{P,\Gamma}\in\mathfrak{B}(P)$ (that is a mating between $P$ and a Fuchsian group $\Gamma\in\mathrm{Teich}(S_{0,d+1})$) with that of another polynomial $Q$, producing a correspondence $\mathfrak{C}_{Q,\Gamma}\in\mathfrak{B}(Q)$. This surgery defines the desired homeomorphism, as shown in \S~\ref{bers_slice_homeo_sec}.

Theorem~\ref{bers_closure_homeo_thm_intro} relies crucially on the one-dimensionality of the Teichm{\"u}ller space. In the absence of a general `no invariant line field' theorem for correspondences, we instead employ techniques from the theory of {\em straightening maps}, in the spirit of continuity results for baby Mandelbrot sets (cf. \cite{DH85}), where the parameter space's one-dimensionality plays a pivotal role.

Finally, Theorem~\ref{vert_slice_comp_thm_intro} is proved using the degeneration framework developed in \S~\ref{degenerations_sec}. A key distinction emerges between Bers slices and external fibers: while the closure $\overline{\mathfrak{B}(P)}$, $P\in\cH_{2d-1}$, consists of correspondences defined on a single sphere (with no degenerations), the external fiber $\mathscr{B}_\Gamma$ admits controlled degenerations onto finite trees of spheres with varying combinatorics. This richer structure warrants additional care in managing the rescaling limits, defining the limiting correspondences, and showing that the limiting correspondences live in the Hausdorff part of the character variety (see \S~\ref{comparison_proof_subsec} for a detailed comparison). We emphasize, at the cost of being repetitive, that correspondences in $\mathscr{B}_\Gamma$ exhibit a degeneration phenomenon that is not observed in rational dynamics; namely, the convergence of a sequence of correspondences defined on $\widehat{\C}$ to a limiting correspondence of the \emph{same bi-degree} on a \emph{finite} tree of spheres.

\subsection{Connections with related work}\label{subsec:background}
\subsection*{Algebraic correspondences as matings: early examples} The world of algebraic correspondences is much larger and wilder than those of rational maps and Kleinian groups, and a general dynamical theory of correspondences seems out of reach as of now. To get a handle on the dynamics and parameter spaces of correspondences from the perspective of the Sullivan dictionary, it is important to produce large classes of algebraic correspondences that display features of Kleinian groups as well as of rational dynamics in their dynamical planes.
The first concrete step in this direction  was taken by Bullett and Penrose, who proved the existence of bi-degree $(2,2)$ algebraic correspondences on the sphere that combine the actions of certain quadratic polynomials and the modular group \cite{BP94}. Many more explicit examples of correspondences that realized such combinations were produced in the subsequent years \cite{BH00,BF03,BF05,BH07}, but a general theory remained lacking.

\subsection*{Algebraic correspondences as matings: a systematic theory}
A new chapter in this program started to emerge in the last decade with the introduction of new techniques and insights. This development occurred in two separate veins. 

The Bullett-Penrose family of bi-degree $(2,2)$ correspondences was studied systematically by Bullett and Lomonaco, and they proved that this family contains combinations/matings of every quadratic parabolic rational map (with a connected Julia set) with the modular group \cite{BL20}. They proceeded to demonstrate that the relevant parameter space of correspondences is in fact homeomorphic to the connectedness locus of parabolic quadratic rational maps \cite{BL24}. This produced an embedding of a rational parameter space in a family of correspondences (which are matings of varying rational maps with a fixed group) along the lines of Fatou's original suggestion. The related parameter space of algebraic
correspondences realizing matings between quadratic polynomials and discrete, faithful representations of the modular group in $\PSL_2(\C)$ (with connected domain of discontinuity) was recently studied in \cite{Lau24}.

The other thread in this story came from a rather unrelated and unexpected direction. It was observed that a class of antiholomorphic maps called \emph{Schwarz reflections} in \emph{quadrature domains}, which are important objects in various problems concerning statistical physics and complex analysis, often arise as amalgams of antiholomorphic rational maps and Kleinian reflection groups \cite{LM16,LLMM1,LLMM2,LMM21,LMM22}. Using the algebraicity of Schwarz reflection maps, antiholomorphic analogs of Bullett-Penrose correspondences (in arbitrary bi-degree) were constructed and their parameter spaces were investigated in \cite{LLMM3,LMM24}.

Motivated by these results in the antiholomorphic setting, a strategy to manufacture algebraic correspondences combining rational maps and suitable Fuchsian groups was devised in \cite{MM1,MM2} and further strengthened in \cite{LLM24,BLLM24}. The construction comprises four key steps. The first step is to extract a single-valued map from a Fuchsian group such that the map bears resemblance with polynomial maps, and yet remembers some of the key features of the group (e.g., a preferred generating set of the group). Examples of such maps are given by the so-called \emph{Bowen-Series maps} of punctured spheres \cite{BS79,MM1} and their generalizations termed as \emph{factor Bowen-Series maps} \cite{MM2}. One then mates such a map conformally with a polynomial or a rational map to produce a single-valued meromorphic map (which is called the \emph{conformal mating}). This requires a uniformization argument involving the \emph{David integrability theorem} that is a generalization of the classical \emph{measurable Riemann mapping theorem} (cf. \cite{LMMN25}). This is followed by the identification of the conformal mating as an algebraic function. 
To this end, a new class of algebraic functions called \emph{B-involutions} (holomorphic counterpart of Schwarz reflections) was introduced in \cite{LLM24,BLLM24}, and it was shown that the conformal matings produced in the previous step are B-involutions. Finally, one uses this algebraic description to lift the conformal mating to an algebraic correspondence that behaves like the polynomial in one part of its dynamical plane and captures the full Fuchsian group structure on the complementary part. We will refer to the above recipe as the \emph{single-valued mating framework}.

\subsection*{Capturing Bers boundary groups in the mating framework}
While the above approach allows one to combine large classes of Fuchsian genus zero orbifold groups with polynomials, the recipe has a limitation in that Kleinian groups lying on boundaries of Teichm{\"u}ller spaces of genus zero orbifolds (i.e., Bers boundary groups) cannot be thus mated with polynomials. This is related to the mismatch between group invariant geodesic laminations and polynomial laminations which are $z^d$-invariant (cf. \cite[\S 7.2]{MM1}).

One of the goals of the current paper is to surmount this obstacle by constructing compactifications of spaces of correspondences arising from the single-valued mating framework. On the one hand, this sheds new light on parameter spaces of algebraic correspondences, and on the other hand, furnishes new compactifications of Teichm{\"u}ller spaces of genus zero orbifolds thereby producing candidate correspondence which can potentially combine polynomial maps and Bers boundary groups. To this end, we will develop a general language to study degenerations/boundedness phenomena in parameter spaces of correspondences (modeled on a related theory for rational maps). Reaping benefits from this degeneration framework, we will study boundedness questions for various dynamically defined subsets/slices in spaces of correspondences, and also study interrelations between various compactifications.

\subsection*{Kerckhoff-Thurston discontinuity and straightening discontinuity in the same parameter space}
The total space
$$
\mathfrak{T}:=\bigsqcup_{\Gamma\in\mathrm{Teich}(S_{0,d+1})} \mathscr{B}_\Gamma
$$
houses all conformal matings between degree $2d-1$ polynomials with connected Julia sets and Fuchsian $(d+1)$-punctured sphere groups. As mentioned above, the space $\mathfrak{T}$ embeds into the character variety of of bi-degree $(2d-1,2d-1)$ correspondences (see \S~\ref{degen_poly_like_map_ext_fiber_subsec}). 

The space $\mathfrak{T}$ admits a natural Cartesian product structure 
$$
\mathfrak{T}\cong \mathrm{Teich}(S_{0,d+1})\times \mathscr{B}_{\Gamma_0},
$$
where $\Gamma_0$ is any base-point in $\mathrm{Teich}(S_{0,d+1})$ (by \cite[Theorem~A.3]{LLM24}). Fixing the first coordinate as $\Gamma\in\mathrm{Teich}(S_{0,d+1})$ in this product space yields the external fiber $\mathscr{B}_\Gamma$. On the other hand, when the second coordinate is fixed as the mating between $\Gamma_0$ and some $P\in\cH_{2d-1}$, one gets back the Bers slice $\mathfrak{B}(P)$. (See Figure~\ref{total_space_fig}.) {We note that the above dynamically arising product structure of $\mathfrak{T}$ is not necessarily topological, see Question~\ref{str_discont_qn} and Remark~\ref{cart_prod_rem}. However, one does obtain a topological product structure on the subset of $\mathfrak{T}$ containing matings between groups in $\mathrm{Teich}(S_{0,d+1})$ and polynomials in the principal hyperbolic component $\mathcal{H}_{2d-1}$, see \S~\ref{bers_slice_homeo_sec} (cf. \cite{Ber60} and \cite{HLP25} for similar results for quasi-Fuchsian groups and quasi-Blaschke products).} 

The total space $\mathfrak{T}$ captures the folklore line in the Sullivan dictionary between Kerckhoff-Thurston discontinuity phenomena and discontinuity of straightening maps in holomorphic dynamics. 
While these two are now established phenomena in both the Kleinian groups world \cite{KT90} and the holomorphic dynamics world \cite{Ino09,IM21}, analogous statements are conjectural in the hybrid world that is the theme of this paper.
Indeed, the (conjectural) lack of continuous boundary extension of the natural homeomorphism between two `horizontal slices' $\mathfrak{B}(P)$, $\mathfrak{B}(Q)$ (where $P,Q\in\cH_{2d-1}$) of $\mathfrak{T}$ is akin to the Kerckhoff-Thurston result, while `conjectural' discontinuity of the natural bijection between two `vertical slices' $\mathscr{B}_{\Gamma_1}$, $\mathscr{B}_{\Gamma_2}$ (where $\Gamma_1,\Gamma_2\in\mathrm{Teich}(S_{0,d+1})$) is the analog of straightening discontinuity in the current~setup.

\subsection{Open questions}
The analogous  theory in the context of Kleinian groups is essentially complete today. We give a quick survey of some of the highlights here. Bers \cite{Ber70} proved the pre-compactness of the classical Bers slice and the existence of Bers boundary groups. Thurston \cite{thurston-hypstr2,otal-book,kapovich-book}, via his Double Limit Theorem, identified the hyperbolic geometry of  these groups. The Ending Lamination Theorem of Brock-Canary-Minsky \cite{Min10,BCM12} completely classified Kleinian groups appearing on the Bers boundary. They also proved the Bers density conjecture proving that all Bers boundary groups may be approximated in a strong sense by groups lying in the Bers slice.
Mj \cite{mahan-split,mahan-kl} established the local connectivity of the limit sets of these groups. 
These lead to the following fundamental questions regarding the Bers compactifications constructed in Theorem~\ref{bers_pre_comp_thm_intro}.

\begin{question}\label{qn2} Assume the setup of Theorem~\ref{bers_pre_comp_thm_intro}.
\begin{enumerate}[leftmargin=*]\upshape
\item Does the dynamically natural homeomorphism between two Bers slices $\mathfrak{B}(P)$, $\mathfrak{B}(Q)$ extend continuously to the boundaries? A negative answer to this question would be  the analog of Kerckhoff-Thurston's result \cite{KT90}.

\item Classify the correspondences on the boundary $\partial\mathfrak{B}(P)$. This is an analog of the
Ending Lamination Theorem \cite{Min10,BCM12}.

\item Are the limit sets of the correspondences on $\partial\mathfrak{B}(P)$ locally connected? In particular, are the `geometrically infinite' correspondences on Bers boundaries matings of polynomials and singly degenerate groups?  This is the analog of \cite{mahan-split}.

\item Do the limit sets of correspondences on $\partial\mathfrak{B}(P)$ support non-trivial quasiconformal deformations? This is the analog of Sullivan's no invariant line field theorem \cite{Sul81,Sul85b}.

\item Do the limit sets of correspondences on $\partial\mathfrak{B}(P)$ have zero area? The analogous result for Kleinian groups was known as the `Ahlfors measure conjecture', and was answered positively by Brock-Canary-Minsky \cite{Min10,BCM12}, making essential use of work of Agol and Calegari-Gabai \cite{Ago04,CG06}.
Further, do the limit sets of `geometrically infinite' correspondences on Bers boundaries have Hausdorff dimension $2$? This question is motivated by an analogous result of Bishop and Jones \cite{BJ97}.

\item Are the Bers boundaries $\partial\mathfrak{B}(P)$ (for spheres with at least five punctures) non-locally connected? This question is motivated by a result of Bromberg \cite{Br11}.

\item Do there exist self-bumpings on the Bers boundaries $\partial\mathfrak{B}(P)$? This question is motivated by a result of McMullen \cite{McM97}.

\item Prove an analog of Thurston's double limit theorem for correspondences realizing matings of $P\in\cH_{2d-1}$ and $\Gamma\in\mathrm{Teich}(S_{0,d+1})$ (as the polynomial $P$ tends to $\partial\cH_{2d-1}$ and the group $\Gamma$ tends to $\partial\mathrm{Teich}(S_{0,d+1})$).
\end{enumerate}
\end{question}

We would like to point out that the proofs of many of the landmark results mentioned above use the hyperbolic geometry of the $3$-manifolds associated with Kleinian groups. In the correspondence setup, such techniques are not readily available.

\subsection{Future work}
According to \cite[\S 7]{MM1}, there are (only) finitely many quasiconformal deformation classes of Kleinian groups on the (usual) Bers boundary of $\mathrm{Teich}(S_{0,d+1})$ that admit Bowen-Series maps, and these groups can be conformally mated with polynomials lying in principal hyperbolic components. The resulting conformal matings admit algebraic descriptions, and hence give rise to algebraic correspondences realizing matings of the said groups and polynomial maps.

In a follow-up work, we will study the geometrically finite correspondences on Bers boundaries $\partial\mathfrak{B}(P)$, $P\in\cH_{2d-1}$, and show that they realize matings between the polynomials $P$ and arbitrary geometrically finite groups on classical Bers boundaries of $\mathrm{Teich}(S_{0,d+1})$.
 
In a sequel, we will also construct sequences of correspondences in $\mathfrak{B}(P)$ whose geometric limits are strictly larger than their algebraic limit. This phenomenon should give rise to analogs of the Kerckhoff-Thurston discontinuity result, non-local connectivity and self-bumpings on boundaries of Bers slices $\mathfrak{B}(P)$, etc.

\subsection*{Notation}
$\D:=\{z\in\C:\vert z\vert<1\}$,\quad $\D^*:=\{z\in\widehat{\C}:\vert z\vert>1\}$.

\subsection*{Acknowledgment} The authors would like to thank the referees for their diligent and careful reading. Comments and feedback from them on an earlier draft have been particularly helpful in improving the quality of the paper.

\section{Degeneration of rational maps and correspondences}\label{degenerations_sec}
In this section, we will first set up notation and discuss some general theory of degeneration for rational maps and correspondences, highlighting a fundamental difference between their limiting dynamics.
We will discuss a new phenomenon specifically for correspondences: the convergence of a sequence of holomorphic correspondences of bi-degree $(d_1, d_2)$ on the Riemann sphere $\widehat{\C}$ to a holomorphic correspondence of the same bi-degree on a nodal sphere, which is a finite tree of Riemann spheres.
We want to emphasize here that 
\begin{enumerate}[leftmargin=*]
    \item the nodal sphere has only finitely many singularities;
    \item the nodal sphere is totally invariant under the limiting correspondence;
    \item the limiting correspondence has the same bi-degree.
\end{enumerate}
This phenomenon can only happen if $\min\{d_1, d_2\} \geq 2$.

In contrast, consider a sequence $[f_n] \in \Rat_d(\C)/\PSL_2(\C)$. Here, we encounter a dichotomy: either $[f_n]$ is bounded in $\Rat_d(\C)/\PSL_2(\C)$, or it converges to a limiting branched covering of degree $d$ on an $\R$-tree with no totally invariant point (see \cite{Luo21, Luo22a}). In the latter case, any point in the $\R$-tree has infinite backward orbits. Thus, any totally invariant tree of Riemann spheres must consist of infinitely many spheres.

The new phenomenon for  correspondences mentioned above needs to be taken into consideration while setting up the appropriate ambient space for the study of the deformation space of matings of polynomials and groups in \S \ref{sec:cv}.

\subsection{The space of rational maps and correspondences}\label{rat_map_corr_subsec}

\subsubsection{Rational maps on $\widehat{\C}$}\label{rat_map_subsubsec}
Following the notation in \cite{DeM05}, the space $\Rat_d(\C)$ of rational maps of degree $d\geq 1$ is an open variety in $\Proj^{2d+1}_\C$.
More concretely, fixing a coordinate system on $\widehat\C \cong \Proj^1_\C$, a rational map can be expressed as a ratio of homogeneous polynomials 
$$
f(z:w) = (P(z,w): Q(z,w)),
$$ 
where $P$ and $Q$ have degree $d$ with no common divisors. Using the coefficients of $P$ and $Q$ as parameters, 
$$
\Rat_d(\C) = \Proj^{2d+1}_\C - V(\Res),
$$
where $V(\Res)$ is the vanishing locus for the resultant of $P$ and $Q$.

A natural compactification of $\Rat_d(\C)$ is $\overline{\Rat_d(\C)} = \Proj^{2d+1}_\C$. 
Every map $f\in \overline{\Rat_d(\C)}$ determines the coefficients of a pair of degree $d$ homogeneous polynomials. We write
$$
f= (P: Q) = (Hp:Hq)
$$
where $H = \gcd (P, Q)$. The map $\varphi_f = (p:q)$ is a rational map of degree at most $d$. {Note that if $\deg{H}\geq 1$, then $\phi_f$ is a rational map of degree less than $d$. 
A zero of $H$ is called a {\em hole} of $f$ and the set of zeros of $H$ is denoted by
$\mathcal{H}(f)$. While this lower degree map $\phi_f$ is defined everywhere on $\widehat{\C}$, the set $\cH(f)$ should be regarded as the locus of indeterminacy of $f\in\overline{\Rat_d(\C)}$.}
We define the degree of $f\in \overline{\Rat_d(\C)}$ as the degree of $\varphi_f$.
If $f_n \in \Rat_d(\C)$ converges to $f \in \overline{\Rat_d(\C)}$, we write this as
$$
f = \lim_{n\to\infty} f_n.
$$

The following lemma is well known (see \cite[Lemma 4.2]{DeM05}):
\begin{lem}\label{comcon}
Let $f_n\in\Rat_d(\C)$ converge to $f \in \overline{\Rat_d(\C)}$. Then $f_n$ converges compactly to $\varphi_f$ on $\widehat\C - \mathcal{H}(f)$.
\end{lem}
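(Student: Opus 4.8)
The plan is to reduce the statement to the continuity of the projective evaluation map, once a suitable normalization of homogeneous representatives is fixed. First I would use that convergence in $\overline{\Rat_d(\C)} = \Proj^{2d+1}_\C$ is convergence of coefficient vectors up to scale: after rescaling, I may choose homogeneous representatives $(P_n, Q_n)$ of $f_n$ and $(P,Q)$ of $f$ so that the coefficient vector of $(P_n, Q_n)$ converges in $\C^{2d+2}$ to that of $(P,Q) = (Hp, Hq)$. Coefficientwise convergence of these degree-$d$ homogeneous polynomials is the same as uniform convergence $P_n \to P$ and $Q_n \to Q$ (in any fixed affine chart, uniformly on compact subsets).

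Next I would introduce the evaluation map $\mathrm{ev}([z:w],(R,S)) = [R(z,w):S(z,w)]$, defined and continuous into $\Proj^1_\C$ (with its spherical metric) on the open locus where $(R(z,w),S(z,w)) \neq (0,0)$. The key observation is that, for the limiting polynomials, this nonvanishing condition holds exactly off the holes: if $[z_0:w_0] \in \widehat\C - \mathcal{H}(f)$ then $H(z_0,w_0) \neq 0$, and since $p = P/H$ and $q = Q/H$ have no common factor, $(p(z_0,w_0),q(z_0,w_0)) \neq (0,0)$; hence $(P,Q)(z_0,w_0) = H(z_0,w_0)\cdot(p,q)(z_0,w_0) \neq (0,0)$. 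At such a point $\mathrm{ev}([z_0:w_0],(P,Q)) = [Hp:Hq] = [p:q] = \varphi_f([z_0:w_0])$, which simultaneously identifies the limiting map as the reduced map $\varphi_f$ rather than the indeterminate ratio $[P:Q]$.

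Finally I would upgrade pointwise convergence to compact convergence. Given a compact $K \subset \widehat\C - \mathcal{H}(f)$, the limiting pair $(P,Q)$ is bounded away from $(0,0)$ on $K$; by the uniform convergence from the first step, $(P_n,Q_n)$ is then uniformly bounded away from $(0,0)$ on $K$ for all large $n$. Continuity of $\mathrm{ev}$ on this region (where, in the spherical metric, it is immaterial which homogeneous coordinate serves as denominator) then yields $f_n = \mathrm{ev}(\cdot,(P_n,Q_n)) \to \mathrm{ev}(\cdot,(P,Q)) = \varphi_f$ uniformly on $K$.

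The only genuine subtlety — which I regard as the main point rather than a true obstacle — lies in the normalization of Step~1 together with the recognition that the common factor $H$ records precisely the locus of degree loss: the honest degree-$d$ maps $f_n$ converge, off $\mathcal{H}(f)$, to the reduced map $\varphi_f$, whose degree may have dropped exactly because $H$ collapses at its zeros. Once coefficientwise convergence is secured, the remainder is the continuity-plus-compactness argument above, requiring no estimate beyond uniform nonvanishing of the denominators.
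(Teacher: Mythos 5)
Your proof is correct. The paper itself offers no argument for this lemma---it is quoted as well known, with a citation to \cite[Lemma~4.2]{DeM05}---and your normalization-plus-evaluation-map argument (rescale homogeneous representatives so coefficients converge, note that $(P,Q)=H\cdot(p,q)$ is nonvanishing off $\mathcal{H}(f)$ because $p,q$ share no root, then use uniform nonvanishing of the denominators on a compact set together with the chordal metric to upgrade to uniform convergence) is precisely the standard proof given in that reference.
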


The next lemma follows from the proof of \cite[Lemma 4.5 and 4.6]{DeM05}:
\begin{lem}\label{lem:ns}
{ Let $f_n\in\Rat_d(\C)$ converge to $f \in \overline{\Rat_d(\C)}$.
Let $a\in \mathcal{H}(f)$ and let
 $U$ be a neighborhood of $a$.}
\begin{itemize}
\item If $\deg(\varphi_f) \geq 1$, then there exists $K>1$ so that $f_n(U) = \widehat\C$ for all $n\geq K$. 
Moreover, there exists a sequence of critical points $c_n$ of $f_n$  converging to~$a$.
\item If $\varphi_f\equiv C$ is a constant map, then for any compact set $X\subseteq \widehat\C - \{C\}$, there exists $K>1$ so that $X \subseteq f_n(U)$ for all $n \geq K$.
\end{itemize}
\end{lem}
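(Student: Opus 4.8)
The plan is to localize at the hole $a$ and reduce everything to two standard complex-analytic tools: the argument principle, to count solutions of $f_n(z)=b$ near $a$, and Hurwitz's theorem, to count zeros of $P_n$ and $Q_n$ near $a$. First I would fix an affine coordinate with $a=0$ and write $f_n=P_n/Q_n$ with $P_n\to P=Hp$ and $Q_n\to Q=Hq$ coefficientwise, where $H=\gcd(P,Q)$, $m:=\mathrm{ord}_0 H\geq 1$, and $p,q$ have no common zero at $0$. I would then choose $r>0$ small enough that $\overline{D_r}\subseteq U$, that $\overline{D_r}$ contains no hole other than $a$, and that $\varphi_f$ has no zero or pole on $\partial D_r$. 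By Lemma~\ref{comcon}, $f_n\to\varphi_f$ uniformly on the compact set $\partial D_r$, which is what lets me compare winding numbers in the limit.

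Next I would set up the count. By Hurwitz applied to $Q_n\to Q=Hq$, for all large $n$ the map $f_n$ has exactly $\mathrm{ord}_0 Q=m+\mathrm{ord}_0 q$ poles in $D_r$. For a target $b$ lying off the limit loop $\varphi_f(\partial D_r)$, the argument principle gives
$$
N_n(b):=\#\{z\in D_r: f_n(z)=b\}=\big(\#\text{ poles of }f_n\text{ in }D_r\big)+\mathrm{wind}\big(f_n|_{\partial D_r},\,b\big),
$$
and uniform convergence on $\partial D_r$ forces $\mathrm{wind}(f_n|_{\partial D_r},b)=\mathrm{wind}(\varphi_f|_{\partial D_r},b)=\#\{z\in D_r:\varphi_f(z)=b\}-\mathrm{ord}_0 q$ for large $n$. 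The two $\mathrm{ord}_0 q$ contributions cancel, leaving the clean identity
$$
N_n(b)=m+\#\{z\in D_r:\varphi_f(z)=b\}\geq m\geq 1 .
$$
Thus every value off the nowhere-dense limit loop is attained inside $D_r$ with multiplicity at least $m$.

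For the first bullet ($\deg\varphi_f\geq 1$) this already places $\widehat\C$ minus a shrinking neighbourhood of the limit loop inside $f_n(D_r)$; since $\varphi_f(a)=w_0$ is enclosed by $\varphi_f(\partial D_r)$ with winding number $\mathrm{ord}_0(\varphi_f-w_0)\geq 1$ — this is exactly where $\deg\varphi_f\geq 1$ enters — the value $w_0$ and its neighbourhood are covered as well, and shrinking $r$ upgrades this to $f_n(U)=\widehat\C$. For the critical points I would use the Wronskian identity $P_n'Q_n-P_nQ_n'\to (Hp)'(Hq)-(Hp)(Hq)'=H^2\,(p'q-pq')$, whose right-hand side vanishes to order $\geq 2m\geq 2$ at $a$, and which is not identically zero precisely because $\deg\varphi_f\geq 1$ makes $p'q-pq'\not\equiv 0$; by Hurwitz the zeros of the $f_n$-Wronskian, i.e. the critical points of $f_n$, accumulate at $a$, producing $c_n\to a$. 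For the second bullet ($\varphi_f\equiv C$) the same identity applies verbatim, but now $\varphi_f(\partial D_r)=\{C\}$ degenerates to a point: for any $b\neq C$ one has $\#\{\varphi_f=b\}=0$, so $N_n(b)=m\geq 1$, and uniformity over a compact $M\subseteq\widehat\C-\{C\}$ is automatic because $f_n(\partial D_r)\to\{C\}$ makes $f_n(\partial D_r)\cap M=\varnothing$ for large $n$; hence $M\subseteq f_n(D_r)\subseteq f_n(U)$, while $C$ itself may genuinely be omitted.

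The hard part will be the winding-number bookkeeping for target values $b$ lying in the thin region between $f_n(\partial D_r)$ and the limit loop $\varphi_f(\partial D_r)$, where the winding numbers of the two curves need not agree, so the clean formula can fail and a small omitted component could in principle hug the limit loop (the danger is sharpest when $m=1$). Obtaining a single $K$ valid simultaneously for all $b$ — equivalently, ruling out such an omitted component — is the delicate point, and I would handle it exactly as in \cite[Lemmas 4.5, 4.6]{DeM05}: for each fixed large $n$, use that $f_n(\overline{D_r})$ is compact and contains $\widehat\C$ minus this thin region, and then absorb the boundary curve by passing to a slightly larger radius and invoking the open mapping theorem, concluding $f_n(\overline{D_r})=\widehat\C$ for every large $n$.
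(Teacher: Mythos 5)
The paper itself supplies no argument for this lemma; it simply cites the proofs of Lemmas 4.5 and 4.6 of DeMarco's paper. So the question is whether your self-contained argument holds up. Most of it does, and it is in the same spirit as the cited source: the localization at the hole, the identity $N_n(b)=m+\#\{z\in D_r:\varphi_f(z)=b\}$ for targets $b$ at definite spherical distance from the loop $\varphi_f(\partial D_r)$ (argument principle, plus Hurwitz applied to $Q_n\to Hq$ to count poles, plus uniform convergence of $f_n$ on the circle), the treatment of the second bullet, and the Wronskian computation $P_n'Q_n-P_nQ_n'\to H^2(p'q-pq')$ producing critical points $c_n\to a$ are all correct; the last item is precisely the extra information the paper means when it says the statement ``follows from the proof'' of DeMarco's lemmas.

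The genuine gap is in the last step of the first bullet, which you flag as the hard part but then close with a mechanism that does not work. Knowing that $f_n(\overline{D_r})$ is compact and contains the complement of a thin neighborhood of $L=\varphi_f(\partial D_r)$, and that $f_n(D_{r'})$ is open for a larger radius $r'$, does not rule out a nonempty open omitted set hugging $L$: the complement of the image of a disk under an open map can perfectly well be a small region adjacent to the image of the boundary circle, and neither compactness nor the open mapping theorem excludes this. What closes the gap --- and this is where $\deg\varphi_f\geq 1$ truly enters --- is to run your counting identity on \emph{two} circles $\partial D_{r_1},\partial D_{r_2}$ whose image loops $L_1,L_2$ are \emph{disjoint}. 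This is possible exactly because $\varphi_f$ is nonconstant: writing $\varphi_f(z)=\varphi_f(a)+c(z-a)^k\bigl(1+O(z-a)\bigr)$ with $c\neq 0$, the loop $L_r$ lies in the round annulus $\bigl\{|c|r^k(1-O(r))\leq |w-\varphi_f(a)|\leq |c|r^k(1+O(r))\bigr\}$, so $L_{r_1}\cap L_{r_2}=\emptyset$ once the scales $r_1\ll r_2$ are definitely separated (note that a radius only ``slightly larger'' than a fixed $r$ need not achieve this). Then every $b\in\widehat\C$ is at distance at least $\tfrac12 d(L_1,L_2)>0$ from one of the two loops, so a single threshold $K$ serves for all $b$ simultaneously, and $b\in f_n(D_{r_1})\cup f_n(D_{r_2})\subseteq f_n(U)$, i.e.\ $f_n(U)=\widehat\C$. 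In the constant case this device is unavailable, since $L_r=\{C\}$ for every $r$; that is exactly why the value $C$ must be excluded in the second bullet.
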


\subsubsection{Correspondences on $\widehat{\C}$}\label{corr_subsubsec}
Let $\pi_1$ and $\pi_2$ be the two canonical projections from $\widehat{\C}\times \widehat{\C}$ to its factors. A \textit{holomorphic correspondence} on $\widehat{\C}$ is a subvariety $\mathfrak{C}$ in $\widehat{\C}\times \widehat{\C}$ of complex dimension $1$ containing no fibers of $\pi_1$ and $\pi_2$. Denote by $d_1$ and $d_2$ the degree of $\pi_1|_\Gamma$ and $\pi_2|_\Gamma$. We call $(d_1, d_2)$ the bi-degree of $\mathfrak{C}$.
Let $\Cor_{d_1, d_2}$ be the space of all bi-degree $(d_1, d_2)$ holomorphic correspondences on $\widehat{\C}$. We give $\Cor_{d_1, d_2}$ the Hausdorff metric induced by the product of spherical metrics on $\widehat{\C} \times \widehat{\C}$
$$
d(\mathfrak{C}, \mathfrak{C}') = \inf\{r>0 \colon \mathfrak{C}' \subseteq B(\mathfrak{C}, r), \mathfrak{C} \subseteq B(\mathfrak{C}', r)\}.
$$
Note that the correspondence can be viewed as a multi-valued map from $\widehat{\C}$ to itself. {As a convention, we will call the branches of the multi-valued map $\pi_2\circ\left(\pi_1\vert_{\Gamma}\right)^{-1}$ from the first coordinate to the second one (respectively, the branches of the multi-valued map $\pi_1\circ\left(\pi_2\vert_{\Gamma}\right)^{-1}$ from the second coordinate to the first one) the \emph{forward branches} (respectively, the \emph{backward branches}) of the correspondence.} The convergence in Hausdorff metric is equivalent to uniform convergence of multi-valued maps.
We also remark that the space $\Cor_{d_1, d_2}$ is not compact.
To deal with degenerations of correspondence, we need the following definition of convergence.
\begin{defn}\label{corr_conv_def}
    Let $\mathfrak{C}_n \in \Cor_{d_1, d_2}$ and $\mathfrak{C} \in \Cor_{e_1, e_2}$. Let $S_1$ and $S_2$ be two compact sets {in $\widehat{\C}$}. We say that $\mathfrak{C}_n$ converges compactly to $\mathfrak{C}$ away from $(S_1, S_2)$ if for any compact set $K \subseteq \widehat{\C}\times \widehat{\C} - (S_1 \times \widehat{\C}) - (\widehat{\C} \times S_2)$, we have
    $$
    d_K(\mathfrak{C}_n, \mathfrak{C}) \to 0,
    $$
    where $d_K$ is the Hausdorff metric on $K$.
\end{defn}

In this paper, we shall focus on the following construction of holomorphic correspondences, and some of its variations.
Let $f, g$ be two rational maps, with degrees $d_1$ and $d_2$ respectively.
Then the \textit{coincidence locus/{equalizer}} 
$$
\mathfrak{C}\coloneqq\{(x,y) \in \widehat{\C} \times \widehat{\C} \colon f(x) = g(y)\}
$$ 
is an algebraic correspondence of bi-degree $(d_1, d_2)$. For simplicity, we shall also denote $\mathfrak{C} = (f, g)$.
Therefore, we have a natural embedding of $\Rat_{d_1} \times \Rat_{d_2}$ into $\Cor_{d_1, d_2}$.
We denote its image by $\widehat{\Cor}_{d_1, d_2}$. In this way, we obtain a natural compactification
$$
\overline{\widehat{\Cor}_{d_1, d_2}} = \overline{\Rat_{d_1}(\C)} \times \overline{\Rat_{d_2}(\C)} \cong \Proj^{2d_1+1}_\C \times \Proj^{2d_2+1}_\C.
$$
The following lemma follows immediately from Lemma \ref{comcon}
\begin{lem}\label{comconCorrepondence}
Let $\mathfrak{C}_n =(f_n, g_n) \in \widehat{\Cor}(d_1, d_2)$ converge to $\mathfrak{C}\coloneqq (f, g) \in \overline{\Rat_{d_1}(\C)} \times \overline{\Rat_{d_2}(\C)}$. Let $\varphi_\mathfrak{C}\coloneqq (\varphi_f, \varphi_g)$. Then $\mathfrak{C}_n = (f_n, g_n)$ converges compactly to $\varphi_\mathfrak{C}$ away from $(\mathcal{H}(f), \mathcal{H}(g))$.
\end{lem}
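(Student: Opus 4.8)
The plan is to deduce the statement from Lemma~\ref{comcon} by reducing the Hausdorff convergence of the coincidence loci to the uniform (compact) convergence of the defining maps $f_n,g_n$. First I would fix a compact set $K\subseteq\widehat{\C}\times\widehat{\C}-(\mathcal{H}(f)\times\widehat{\C})-(\widehat{\C}\times\mathcal{H}(g))$ and pass to its projections: since every $(x,y)\in K$ satisfies $x\notin\mathcal{H}(f)$ and $y\notin\mathcal{H}(g)$, the images $\pi_1(K)$ and $\pi_2(K)$ are compact subsets of $\widehat{\C}-\mathcal{H}(f)$ and $\widehat{\C}-\mathcal{H}(g)$ respectively. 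Lemma~\ref{comcon} then yields uniform convergence $f_n\to\varphi_f$ on $\pi_1(K)$ and $g_n\to\varphi_g$ on $\pi_2(K)$. Writing $F_n(x,y)=\dist(f_n(x),g_n(y))$ and $F(x,y)=\dist(\varphi_f(x),\varphi_g(y))$ for the spherical distance $\dist$, the triangle inequality gives $|F_n-F|\le\dist(f_n(x),\varphi_f(x))+\dist(g_n(y),\varphi_g(y))$ on $K$, so $F_n\to F$ uniformly on $K$. Since $\mathfrak{C}_n\cap K=F_n^{-1}(0)$ and $\varphi_{\mathfrak{C}}\cap K=F^{-1}(0)$, it remains to show these zero sets converge in the Hausdorff metric $d_K$.

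One inclusion is immediate. If $\mathfrak{C}_n\cap K\not\subseteq B(\varphi_{\mathfrak{C}},\varepsilon)$ for infinitely many $n$, I would choose $(x_n,y_n)\in\mathfrak{C}_n\cap K$ with $\dist((x_n,y_n),\varphi_{\mathfrak{C}})\ge\varepsilon$ and, by compactness of $K$, a subsequential limit $(x_*,y_*)\in K$. Then $F_n(x_n,y_n)=0$, while $|F_n(x_n,y_n)-F(x_n,y_n)|\to0$ by uniform convergence and $F(x_n,y_n)\to F(x_*,y_*)$ by continuity of $F$; hence $F(x_*,y_*)=0$, i.e. $(x_*,y_*)\in\varphi_{\mathfrak{C}}$, contradicting the lower bound. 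This ``upper semicontinuity'' half uses only Lemma~\ref{comcon} and compactness.

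The hard part will be the reverse inclusion $\varphi_{\mathfrak{C}}\cap K\subseteq B(\mathfrak{C}_n,\varepsilon)$, i.e. showing that each coincidence point of $(\varphi_f,\varphi_g)$ is approximated by coincidence points of $(f_n,g_n)$; uniform convergence alone does not control zero sets from below, since zeros can fail to persist through a tangency. Here I would invoke Hurwitz's theorem. At $(x_*,y_*)$ with $\varphi_f(x_*)=\varphi_g(y_*)=:w_*$, at least one of $\varphi_f,\varphi_g$ is nonconstant in the nondegenerate situation relevant to the matings (where $\varphi_f,\varphi_g$ have positive degree); say $\varphi_g$ is. Then $y\mapsto\varphi_g(y)-w_*$ has an isolated zero at $y_*$, and since $g_n\to\varphi_g$ uniformly on a small closed disk about $y_*$ inside $\widehat{\C}-\mathcal{H}(g)$ while $f_n(x_*)\to w_*$, the argument principle shows $g_n(\cdot)-f_n(x_*)$ has a zero $y_n$ near $y_*$ for all large $n$; thus $(x_*,y_n)\in\mathfrak{C}_n$, and shrinking the disk (a diagonal Hurwitz argument) gives $y_n\to y_*$. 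Covering the compact set $\varphi_{\mathfrak{C}}\cap K$ by finitely many such neighborhoods upgrades this to the uniform statement for large $n$. I expect this Hurwitz step to be the only genuinely delicate point and would isolate it accordingly.

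Combining the two inclusions gives $d_K(\mathfrak{C}_n,\varphi_{\mathfrak{C}})\to0$ for every admissible $K$, which is exactly compact convergence away from $(\mathcal{H}(f),\mathcal{H}(g))$. The degenerate case in which both $\varphi_f$ and $\varphi_g$ are constant (so that $\varphi_{\mathfrak{C}}$ is empty or all of $\widehat{\C}\times\widehat{\C}$, and the zero set need not vary continuously) does not arise in the mating setting and can be excluded at the outset, so no separate treatment is needed.
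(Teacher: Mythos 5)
Your proof is correct in every case in which the lemma is actually applied, and it takes a genuinely different route from the paper for the simple reason that the paper offers no argument at all: Lemma~\ref{comconCorrepondence} is introduced only with the sentence that it ``follows immediately from Lemma~\ref{comcon}.'' Your write-up supplies exactly the content that this ``immediately'' suppresses, and you have isolated the right dividing line: uniform convergence of $f_n,g_n$ on the projections of $K$ (via Lemma~\ref{comcon}) makes the inclusion $\mathfrak{C}_n\cap K\subseteq B(\varphi_{\mathfrak{C}},\varepsilon)$ soft, whereas the reverse inclusion is a persistence-of-zeros statement that genuinely requires Hurwitz/Rouch\'e applied to $y\mapsto g_n(y)-f_n(x_*)$ (or symmetrically in $x$). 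Note also that you implicitly (and correctly) read $d_K$ as comparing $\mathfrak{C}_n\cap K$ and $\varphi_{\mathfrak{C}}\cap K$ against balls around the \emph{full} sets $\varphi_{\mathfrak{C}}$ and $\mathfrak{C}_n$; this is the only reading under which the statement can hold, since Hurwitz produces coincidence points near a point of $\varphi_{\mathfrak{C}}\cap\partial K$ that may lie just outside $K$. Two small repairs: when $w_*=\infty$ (or $y_*$ is a pole) one should first move $w_*$ by a M\"obius change of coordinate, or phrase Hurwitz spherically; and the final ``cover $\varphi_{\mathfrak{C}}\cap K$ by finitely many neighborhoods'' step is cleaner as a contradiction argument --- if the inclusion failed along a subsequence, pick witnesses $(x_k,y_k)\in\varphi_{\mathfrak{C}}\cap K$ with $\dist\left((x_k,y_k),\mathfrak{C}_{n_k}\right)\geq\varepsilon$, pass to a limit $(x_*,y_*)\in\varphi_{\mathfrak{C}}\cap K$, and apply Hurwitz once there.

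More importantly, your exclusion of the case where $\varphi_f$ and $\varphi_g$ are both constant is necessary, not merely convenient: the lemma as literally stated is \emph{false} there. Take $f_n(z)=z/n$ and $g_n(w)=w/n^{2}$, so that $\varphi_f\equiv\varphi_g\equiv 0$ with $\mathcal{H}(f)=\mathcal{H}(g)=\{\infty\}$ and $\varphi_{\mathfrak{C}}=\widehat{\C}\times\widehat{\C}$ as a set; but $\mathfrak{C}_n=\{w=nz\}$, so for $K=\overline{\D}\times\overline{\D}$ one has $\mathfrak{C}_n\cap K=\{(z,nz):\vert z\vert\leq 1/n\}$, which converges in the Hausdorff metric to $\{0\}\times\overline{\D}\neq K$, and $d_K(\mathfrak{C}_n,\varphi_{\mathfrak{C}})\not\to 0$. (When one of $\varphi_f,\varphi_g$ is nonconstant, your Hurwitz step goes through verbatim, so that is the exact dividing line.) Since the paper only ever invokes the lemma for rescaling limits of degree $\geq 1$ (Lemma~\ref{lem:rlc}, and implicitly Proposition~\ref{prop:convergenceCorrespondence}), no harm propagates, but your proof --- with the nondegeneracy hypothesis made explicit --- is the correct and more careful formulation of what the paper asserts as immediate.
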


{
\begin{remark}
We note that for two rational maps $f,g$, and a M{\"o}bius map $M$, the correspondences $(f,g)$ and $(M\circ f,M\circ g)$ are precisely the same subsets of $\widehat{\C} \times \widehat{\C}$.
\end{remark}}

\subsubsection{Rescaling limits}\label{rescaling_subsubsec}
In many applications, the compactification $\overline{\Rat_d(\C)} = \Proj^{2d+1}_\C$ or $\overline{\widehat{\Cor}_{d_1, d_2}} = \overline{\Rat_{d_1}(\C)} \times \overline{\Rat_{d_2}(\C)}$ is not sufficient to fully capture the global dynamics of how a rational map or a correspondence degenerates.
It is essential to consider all rescaling limits defined in the following.
{ We remark that a related notion of rescaling limit was introduced in \cite{Kiw15} that is more appropriate to study the degeneration of rational dynamics. In particular the degree of the limit is required to be at least two. Our rescaling limits are non-dynamical, and we only require the limit to be non-constant.}

\begin{defn}\label{def-equivalentscaling}
    Let $M_{a,n}, M_{b,n} \in \PSL_2(\C)$ be two sequences of M\"obius maps, which we also call rescalings. We say that they are \textit{equivalent} if the sequence $M_{b,n}^{-1} \circ M_{a,n} \to M \in \PSL_2(\C)$. 
    They are \textit{bounded equivalent} if $M_{b,n}^{-1} \circ M_{a,n}$ is bounded in $\PSL_2(\C)$. 
    They are \textit{independent} if $M_{b,n}^{-1} \circ M_{a,n} \to \infty$ in $\PSL_2(\C)$. 

    Let $f_n \in \Rat_d(\C)$, and $M_{a,n}, \widetilde{M}_{c,n} \in \PSL_2(\C)$ be two rescalings. We call $f \in \overline{\Rat_d(\C)}$ the rescaling limit  from $a$ to $c$ if
    $$
    f = \lim_{a\to c} f_n \coloneqq \lim \widetilde{M}_{c,n}^{-1} \circ f_n \circ M_{a,n}
    $$
    exists and has degree $\geq 1$.
\end{defn}

We will use the notation $M_{\bullet,n}$ to denote rescalings of the domain and $\widetilde{M}_{\bullet,n}$ for rescalings of the codomain.
In the following, we will associate rescalings as coordinates on a tree of Riemann spheres. The notation $\bullet$ in the subscript represents a vertex in the underlying tree (see \S~\ref{nodal_sphere_subsubsec}).

The following lemma follows from \cite[Lemma 2.1]{DF14} and \cite[Lemma 8.5, Lemma 8.8]{Luo21}.
\begin{lem}[Existence of rescaling limits]\label{lem:rl}
    Let $f_n \in \Rat_{d}(\C)$, and $M_{a,n} \in \PSL_2(\C)$. Then after passing to a subsequence, there exists {$\widetilde{M}_{c,n} \in \PSL_2(\C)$ so that
    $$
    f_{a\to c} = \lim_{a\to c} f_n
    $$
    exists and has degree $\geq 1$. Moreover, the sequence $\widetilde{M}_{c,n}$ is unique up to bounded equivalence.}

    On the other hand, let {$\widetilde{M}_{c,n} \in \PSL_2(\C)$. Then after passing to a subsequence, there exist pairwise independent $M_{a_1,n}, \dots, M_{a_k,n} \in \PSL_2(\C), k \leq d$, so that for each $i\in\{1,\cdots, k\}$, the limit 
    $$
    f_{a_i\to c} = \lim_{a_i\to c} f_n
    $$ 
    exists and has degree $d_i \geq 1$.} Moreover, $\sum_i^k d_i = d$, and the sequence $M_{a_i,n}$ is unique up to bounded equivalence and permutation of the {$a_i$'s}.
\end{lem}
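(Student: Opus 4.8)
The plan is to reduce both assertions to the compactness of $\overline{\Rat_d(\C)} = \Proj^{2d+1}_\C$, the hole analysis of Lemmas~\ref{comcon} and~\ref{lem:ns}, and a conservation-of-degree count of preimages. For the first assertion, I would set $g_n := f_n \circ M_{a,n} \in \Rat_d(\C)$ and pass to a subsequence so that $g_n \to \tilde g$ in $\overline{\Rat_d(\C)}$, with $\tilde g = H\varphi_{\tilde g}$. If $\deg \varphi_{\tilde g} \geq 1$ we are done with $M_{b,n} = \mathrm{id}$. If instead $\varphi_{\tilde g} \equiv C$ is constant, then by Lemma~\ref{comcon} the maps $g_n$ collapse to $C$ away from $\mathcal{H}(\tilde g)$, while by the second bullet of Lemma~\ref{lem:ns} their images cover every compact subset of $\widehat\C - \{C\}$. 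I would exploit this spreading to renormalize the target: choosing a regular base point and a scale $\rho_n \to 0$ governed by the spherical derivative of $g_n$, one sets $M_{b,n}(w) = C + \rho_n w$ in a chart centered at $C$, so that $M_{b,n}^{-1}\circ g_n$ has non-degenerate limiting behavior; after a further subsequence the limit is non-constant, i.e. has degree $\geq 1$.

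For the uniqueness up to bounded equivalence, suppose $M_{b,n}$ and $M'_{b,n}$ both yield limits $h,h'$ of degree $\geq 1$, and set $N_n := (M'_{b,n})^{-1}\circ M_{b,n}$. If $M_{b,n}$ and $M'_{b,n}$ were independent, then $N_n \to \infty$ in $\PSL_2(\C)$, so $N_n$ degenerates to a constant map $c$ off a single point. Writing $h' = \lim (M'_{b,n})^{-1}g_n = \lim N_n\circ(M_{b,n}^{-1}g_n) = \lim N_n\circ h$ on the complement of the relevant hole set, the right-hand side is forced to be the constant $c$, contradicting $\deg h' \geq 1$. Hence $N_n$ is bounded, giving uniqueness.

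For the second assertion, I would fix the target rescaling and set $h_n := M_{b,n}^{-1}\circ f_n$, then locate source rescalings one hole at a time. After a subsequence $h_n \to \tilde h = H\varphi_{\tilde h}$ with $M_{a_1,n} = \mathrm{id}$, and $\varphi_{\tilde h}$ has some degree $d_1 \geq 0$; its positive-degree part (when $d_1 \geq 1$) is the first rescaling limit. The remaining degree is concentrated on the finite set $\mathcal{H}(\tilde h)$: at each such hole $a$, the first bullet of Lemma~\ref{lem:ns} supplies critical points $c_n \to a$ and forces $h_n(U) = \widehat\C$, so zooming into $a$ with a shrinking source chart $M_{a_i,n}$ recovers a further limit $f_{a_i\to b}$ of positive degree $d_i$, which I analyze recursively.

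The crucial point, and the main obstacle, is the \emph{conservation of degree} $\sum_i d_i = d$ together with completeness, namely that these finitely many rescalings exhaust all the degree. I would establish this by a preimage count: for generic $w \in \widehat\C$, the $d$ solutions of $h_n(z) = w$ cluster, as $n\to\infty$, into finitely many groups, each concentrating at the location and scale of one rescaling $M_{a_i,n}$ and contributing exactly the local degree $d_i = \deg f_{a_i\to b}$ over $w$; since all $d$ preimages are accounted for and none escapes undetected, $\sum_i d_i = d$, whence $k \leq d$ and the process terminates. Pairwise independence holds by construction, since distinct holes and scales give independent chart sequences, and uniqueness up to bounded equivalence and permutation follows because two independent source rescalings detecting positive degree must correspond to distinct preimage clusters, while any two detecting the same cluster are bounded equivalent by the argument used in the first assertion applied on the source. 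These preimage-counting and exhaustion steps are precisely the technical content imported from \cite[Lemma~8.5, Lemma~8.8]{Luo21} and \cite[Lemma~2.1]{DF14}.
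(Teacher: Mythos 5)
Your proposal has a genuine gap, and it sits exactly at the crux of the lemma: the \emph{choice of the rescaling scale}. In the first assertion, your recipe is to zoom at the target by the spherical derivative of $g_n = f_n\circ M_{a,n}$ at a regular base point and claim the renormalized limit is non-constant. This is false. Take $d=2$, $g_n(z) = z/n^2 + z^2/n$, and base point $z_0=0$: here $g_n \to 0$ locally uniformly on $\C$, the limit in $\overline{\Rat_2(\C)}$ is the constant $0$ with a single (double) hole at $\infty$, so $z_0$ is regular, and $g_n^{\#}(0) = 1/n^2$. Your renormalization gives $h_n(z) = n^2 g_n(z) = z + n z^2$, which converges to the \emph{constant} $\infty$ with a hole at $z_0=0$ (in homogeneous coordinates $(zw + nz^2 : w^2)$, dividing by $n$, the limit is $(z^2:0)$). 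The correct target rescaling here is $M_{b,n}(w) = w/n$, yielding $n\,g_n \to z^2$ of degree $2$. The point is that a degenerating sequence carries several nested scales, and the derivative at a single point detects only the finest one, which may be strictly smaller than the scale at which degree becomes visible; no pointwise normalization of this kind can work, and this is precisely why the cited proofs resort to global tools (barycentric extensions in \cite{Luo21}, non-archimedean/Berkovich methods in \cite{DF14}). Your uniqueness argument via $N_n = (M_{b,n}')^{-1}\circ M_{b,n}$ degenerating to a constant off one point is fine, but it is the easy half.

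The second assertion has the same missing ingredient in a symmetric form: you say that zooming into each hole $a\in\mathcal{H}(\tilde h)$ "recovers a further limit of positive degree," but Lemma~\ref{lem:ns} only tells you that images of neighborhoods of $a$ blow up (and supplies critical points when $\deg\varphi_{\tilde h}\geq 1$); it does not tell you at which rate to shrink the source chart, and an arbitrary shrinking rate will generically produce a constant limit, exactly as above. Likewise, the exhaustion statement $\sum_i d_i = d$ and the bound $k\le d$ — which you correctly identify as the crucial point — are not proved by your preimage-clustering sketch but are explicitly "imported" from \cite[Lemmas 8.5, 8.8]{Luo21} and \cite[Lemma 2.1]{DF14}. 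Since the paper's own proof of Lemma~\ref{lem:rl} consists precisely of citing those results, everything your proposal adds beyond that citation is the explicit normalization scheme, and that is the part that fails. To repair the argument you would either have to reproduce the barycentric-extension (or Berkovich) machinery, or find a correct elementary characterization of the distinguished scale (e.g., via maximum modulus of suitable coefficient ratios), neither of which is present in the proposal.
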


See Example~\ref{DegenerationFamily} for a concrete example of such rescaling limits.
Let us briefly discuss the intuition behind the above lemma.  One can identify the Riemann sphere $\widehat{\C} \cong S^2$ as the boundary of the Poincar\'e ball model for hyperbolic $3$-space $\Hyp^3 \cong B({\bf 0}, 1)$. Note that a M\"obius map extends to an isometry of $\Hyp^3$ under this identification.
Via the barycentric extension, any rational map $f$ can be extended to a map $\E f: \Hyp^3 \longrightarrow \Hyp^3$. Let $\bf 0 \in \Hyp^3$ be the center of the ball model. Given the rescalings $M_{a,n}$,  the rescaling $\widetilde{M}_{c,n} \in \PSL_2(\C)$ in Lemma~\ref{lem:rl} can be chosen to satisfy
$$
\widetilde{M}_{c,n} ({\bf 0}) = \E f_n (M_{a,n}({\bf 0})).
$$
Note that such a sequence is well-defined up to pre-composition with the stabilizer of $\bf 0$, which is the compact group $SO(3)$.
Given the rescalings $\widetilde{M}_{c,n}$, the sequences $M_{a_i,n}$ in the second part of Lemma~\ref{lem:rl} can be chosen similarly so that
$$
\widetilde{M}_{c,n} ({\bf 0}) = \E f_n (M_{a_i,n}({\bf 0})).
$$
We refer the readers to \cite{Luo21} for a more detailed discussion.

Lemma~\ref{lem:rl} will be used below to zoom in at different points of the Riemann sphere to extract different conformal limits. One can think of this process as the conformal analog of the notion of geometric limits occurring in the theory of Kleinian groups \cite[Ch. 9]{thurstonnotes} in the following sense. One considers a rational map between Riemann spheres at a sequence of scales tending to zero, and by zooming in or out at appropriate locations, using the M\"obius maps $M_{a,n} \in \PSL_2(\C)$ and $\widetilde{M}_{c,n}\in \PSL_2(\C)$, one extracts a limiting map. These limiting maps are rational maps, but may have strictly smaller degree. Lemma~\ref{lem:rl}  allows one to extract these non-trivial 
(i.e.\ non-constant) limits.

This lemma will also play a key role in showing that the analog of the character variety in our setting is Hausdorff (Proposition~\ref{prop-charvarhausdorff}).

The following lemma follows immediately from Lemma \ref{comconCorrepondence}
\begin{lem}\label{lem:rlc}
    Let $\mathfrak{C}_n = (f_n, g_n) \in \widehat{\Cor}_{d_1,d_2}$. {Let $\widetilde{M}_{c, n} \in \PSL_2(\C)$.}
    Let $M_{a,n}, M_{b,n} \in \PSL_2(\C)$ be rescalings so that 
    $$
    f_{a\to c} = \lim_{a\to c} f_n \text{ and } g_{b\to c} = \lim_{b\to c} g_n
    $$
    exist, and have degree $\geq 1$. Then 
    \begin{align*}
    \mathfrak{C}_{a,b,n}&\coloneqq\{(x,y) \colon (M_{a,n}(x), M_{b,n}(y)) \in \mathfrak{C}_n\}\\
    &=\{(x,y) \colon f_n \circ M_{a,n}(x) = g_n\circ M_{b,n}(y)\}
    \end{align*}
    converges compactly to $\varphi_{\mathfrak{C}_{a,b}} = (\varphi_{f_{a\to c}}, \varphi_{g_{b\to c}})$ away from $(\mathcal{H}(f_{a\to c}), \mathcal{H}(g_{b\to c}))$.
\end{lem}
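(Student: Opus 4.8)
The plan is to reduce the statement directly to Lemma \ref{comconCorrepondence} by exhibiting $\mathfrak{C}_{a,b,n}$ as the coincidence locus of a convergent pair of rational maps in $\overline{\Rat_{d_1}(\C)} \times \overline{\Rat_{d_2}(\C)}$. The key observation, which carries the only real content, is that a coincidence locus is unchanged under post-composing both defining maps by a common M\"obius transformation: since $M_{c,n}^{-1} \in \PSL_2(\C)$ is a bijection of $\widehat{\C}$, for every $(x,y)$ one has $f_n \circ M_{a,n}(x) = g_n \circ M_{b,n}(y)$ if and only if $M_{c,n}^{-1} \circ f_n \circ M_{a,n}(x) = M_{c,n}^{-1} \circ g_n \circ M_{b,n}(y)$.

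First I would set $\tilde f_n := M_{c,n}^{-1} \circ f_n \circ M_{a,n}$ and $\tilde g_n := M_{c,n}^{-1} \circ g_n \circ M_{b,n}$, which lie in $\Rat_{d_1}(\C)$ and $\Rat_{d_2}(\C)$ respectively (composition with M\"obius maps preserves degree), so that $(\tilde f_n, \tilde g_n) \in \widehat{\Cor}_{d_1,d_2}$. By the observation above, $\mathfrak{C}_{a,b,n}$ is precisely the coincidence locus $(\tilde f_n, \tilde g_n)$. Next I would invoke the hypothesis: by the very definition of the rescaling limits from $a$ to $c$ and from $b$ to $c$ (both formed with the \emph{same} target rescaling $M_{c,n}$), we have $\tilde f_n \to f_{a\to c}$ and $\tilde g_n \to g_{b\to c}$ in $\overline{\Rat_{d_1}(\C)}$ and $\overline{\Rat_{d_2}(\C)}$, with both limits of degree $\geq 1$.

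Finally I would apply Lemma \ref{comconCorrepondence} to the convergent pair $(\tilde f_n, \tilde g_n) \to (f_{a\to c}, g_{b\to c})$, which yields compact convergence of $\mathfrak{C}_{a,b,n}$ to $\varphi_{\mathfrak{C}_{a,b}} = (\varphi_{f_{a\to c}}, \varphi_{g_{b\to c}})$ away from $(\mathcal{H}(f_{a\to c}), \mathcal{H}(g_{b\to c}))$, exactly as claimed. There is essentially no obstacle here; the only point that warrants care is that the two rescaling limits must share the common landing rescaling $M_{c,n}$, since it is precisely this sharing that lets a single post-composition by $M_{c,n}^{-1}$ simultaneously convert both $f_n \circ M_{a,n}$ and $g_n \circ M_{b,n}$ into the normalized convergent sequences. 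Were the two target rescalings independent, the coincidence locus would not rewrite in this clean form and the reduction would fail.
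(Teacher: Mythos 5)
Your proof is correct and is essentially the paper's own argument: the paper simply states that the lemma follows immediately from Lemma~\ref{comconCorrepondence}, and your reduction (post-composing both maps by the common $M_{c,n}^{-1}$, which leaves the coincidence locus unchanged, then applying Lemma~\ref{comconCorrepondence} to the convergent pair) is precisely the intended justification.
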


\subsubsection{Convergence on trees of spheres}\label{nodal_sphere_subsubsec}
We shall need 
to organize the 
set of rescalings of rational maps. Degenerations 
of rational maps are uniquely hierarchically organized, giving rise to a natural tree structure,
see \cite{Kiw15,arfeux,Luo22a} for instance.
This motivates the introduction of the notion of trees of spheres, a structure that will underpin this organization and allow us to make sense of 
a non-dynamical  rational map between trees of Riemann spheres (c.f. \cite{Luo22b} for a dynamical version).

\begin{defn}[Graph/tree of Riemann spheres]\label{defn:trs} A \emph{graph of Riemann spheres} is a graph of spaces
(cf.~\cite[p. 153-155]{scott-wall}), where 
\begin{enumerate}[leftmargin=*]
    \item vertex spaces are Riemann spheres with a spherical metric of diameter one, and 
    \item edge spaces are points.
\end{enumerate}
The underlying graph is denoted as $\mathscr G$, and it sets of vertices and edges are denoted as $\RV, \mathscr{E}$, respectively.

If $\mathscr G$ is a tree $\RT$, then we call it a \emph{tree of Riemann spheres} and denote it as $(\RT, \widehat\C^\RV)$.
Note that we have equipped $\RT$ with the simplicial metric, where each edge has length one.
We refer to the resulting metric $d_\RT$ on $\RT$ as the \emph{edge metric}.
 \end{defn}

A graph of Riemann spheres may thus be thought of as follows. It consists of
\begin{enumerate}[leftmargin=*]
    \item an abstract  simple graph $\mathscr{G}$ with vertex set $\RV$ and edge set $\mathscr{E}$,
    \item a collection $\{\widehat{\mathbb{C}}_a: a\in \RV\}$ of Riemann spheres equipped with spherical metrics (these are the the vertex spaces),
    \item a collection $\{I_e: e\in \mathscr{E}\}$, where each $I_e$ is isometric to $[0,1]$ and connects two
    \emph{unspecified} points of $\widehat{\C}_a$ and $\widehat{\C}_b$, where $a, b \in \RV$ are the two endpoints of $e$.  The intervals $I_e$ may thus be identified with  $\{e\} \times [0,1]$, $e\in \mathscr{E}$.
\end{enumerate} 

The metric $d_\RT$ is not important; we make a choice to streamline our discussion below. We now proceed to define a \emph{marking} on a tree of spheres, which specifies the points at which the edges are glued to the spheres. 

We elucidate Definition~\ref{defn:trs} and the above notation with the following example. Let $\RT$ be an interval with two vertices $\RV =\{a,b\}$ and one edge $\mathscr{E} = \{e\}$. Now connect $\widehat{\C}_a$ and $\widehat{\C}_b$ with an edge $I_e$ at $x \in \widehat{\C}_a$ and $y \in \widehat{\C}_b$. For any choice of such attaching points $x \in \widehat{\C}_a, y \in \widehat{\C}_b$, we  use the notation $(\RT, \widehat\C^\RV)$ to represent any such tree of spheres. However, different choices of $x, y$ give different markings as defined below.

\begin{defn}[Tangent space and marking]\label{def-tangentdirn}
   Let $(\RT, \widehat\C^\RV)$ be a tree of Riemann spheres.
   For a vertex $a$ of $\RT$, we define the \emph{tangent space} $T_a\RT$ at $a$ to be the set of edge segments in $B_{d_\RT}(a,\epsilon)-\{a\}$, for $\epsilon>0$ small enough.
   Further, if a component of $B_{d_\RT}(a,\epsilon)-\{a\}$ is contained, necessarily uniquely, in the edge $[a,b]$ of $\RT$, then it is said to be a \emph{tangent vector at $a$ 
   in the direction of $b$.}

   {A} {\em marking} of the tangent space $T_a \RT$ is an  injective map  $\xi_a: T_a\RT \xhookrightarrow{} \widehat\C_a$ so that the edge $[a,b]$ is attached to $\xi_a(v) \in \widehat\C_a$ where $v$ is the tangent vector at $a$ in the direction of $b$.
   The image $\Xi_a :=\xi_a(T_a\RT)$ is called the {\em singular set} of $\widehat{\C}_a$, $a\in\RV$. We define $\Xi = \bigcup_{a\in \RV}\Xi_a$, and call it the \emph{singular set} of $(\RT,\widehat{\C}^{\RV})$. A point in the complement $\widehat \C^\RV - \Xi$ is called a {\em smooth point}. A tree of spheres $(\RT, \widehat\C^\RV)$ equipped with markings $\{\xi_a:T_a\RT \xhookrightarrow{} \widehat\C_a: a\in\RV\}$ will be referred to as a tree of spheres marked by $\xi$ ($= \{\xi_a\}$), or simply a
   marked tree of spheres when the marking is clear from context.
\end{defn}
 Heuristically, $T_a\RT$ is the set of `tangent directions' of edges of $\RT$ emanating from $a$. Further, for a simplicial  map $\pmb{F}:\RT\to\widetilde{\RT}$
between trees, we define its \emph{derivative} at a vertex $a$ of $\RT$ to be the induced map $D\pmb{F}_a:T_a\RT\to T_{\pmb{F}(a)}\widetilde{\RT}$ on the tangent directions.

For the purposes of this paper, we focus on the case where the tree $\RT$ is finite: what this means is that we shall consider only finitely many independent rescalings.

\begin{remark}
    The above two-step definition 
    (Definitions~\ref{defn:trs} and ~\ref{def-tangentdirn}) will be important in \S~\ref{char_var_subsec}, where we will introduce suitable parameter spaces of rational maps defined on a given tree of spheres $(\RT,\widehat{\C}^\RV)$ whose markings will be allowed to vary.
\end{remark}

\begin{defn}[Rational map between trees of Riemann spheres]\label{defn:trs-map} 
Let $(\RT, \widehat\C^\RV)$ and $(\widetilde{\RT}, \widehat\C^{\widetilde{\RV}})$ be a pair of trees of Riemann spheres.
A {\em rational map} $(\pmb{F}, \pmb{R}): (\RT, \widehat\C^\RV) \longrightarrow (\widetilde{\RT}, \widehat\C^{\widetilde{\RV}})$ {consists of 
\begin{enumerate}
    \item a simplicial map 
$$
\pmb{F}: (\RT, \RV) \longrightarrow (\widetilde{\RT}, \widetilde{\RV}) \text{ that sends an edge to an edge or to a vertex},
$$ 
\item markings $\xi_a$, $\widetilde{\xi}_{\pmb{F}(a)}$
for vertices $a \in \RV, \pmb{F}(a) \in \widetilde{\RV}$, and
\item a union of maps $\pmb{R}:= \bigcup_{a\in \RV} R_a$
\end{enumerate}
 so that the following  hold.}
\begin{enumerate}[leftmargin=*]
\item $R_a: \widehat\C_a \longrightarrow \widehat \C_{\pmb{F}(a)}$ is a rational map of degree at least $1$. 
\item Let $v \in T_a\RT$, and $E=[a,b]$ be the edge of $\RT$ associated with the tangent direction $v$.
\begin{itemize}[leftmargin=*]
    \item If $\pmb{F}(E)$ is an edge, then
    $$
    R_a \circ \xi_a(v) = \widetilde{\xi}_{\pmb{F}(a)} \circ D\pmb{F}_a(v),
    $$
    where $D\pmb{F}_a(v) \in T_{\pmb{F}(a)}\widetilde{\RT}$ is the tangent direction at $\pmb{F}(a)$ associated with $\pmb{F}(E)$. 
    \item If $\pmb{F}(E)$ is a vertex, then
    $$R_a \circ \xi_a(v) = R_b \circ \xi_b(w),$$
    where $w \in T_b\RT$ is the tangent direction at $b$ associated with $E$.
\end{itemize}
\end{enumerate}
$(\pmb{F}, \pmb{R})$ is said to have degree $d$ if every smooth point in $\widehat\C^{\widetilde{\RV}}$ has exactly $d$ preimages counted with multiplicities.
We say that $(\pmb{F}, \pmb{R})$  is an {\em isomorphism} if $\pmb{F}$ is an isomorphism between trees and each rational map has degree $1$; i.e., is a M\"obius map. It is an {\em automorphism} if the domain and codomain are the same and the tree map is a {simplicial automorphism.} 
\end{defn}

\begin{defn}\label{defn:cvrationalmap}
    Let $(\RT, \widehat\C^\RV)$ be a  tree of Riemann spheres marked by $\{\xi_a : T_a\RT \xhookrightarrow{} \widehat{\C}_a\}$. A collection of M\"obius maps $M_{a,n} \in \PSL_2(\C), a\in \RV$, is said to be a set of rescalings for $(\RT, \widehat\C^\RV)$ if for any {distinct} pair $a, b \in \RV$, the map $M_{a,n}^{-1} \circ M_{b,n}(z)$ converges to the constant map $\xi_a(v)$, where $v\in T_a\RT$ is the tangent vector in the direction of $b$.
    
    A sequence $f_n:\widehat{\C}\to\widehat{\C}$ of degree $d$ rational maps is said to {\em converge} to a rational map $(\pmb{F}, \pmb{R}):(\RT, \widehat\C^\RV) \longrightarrow (\widetilde{\RT}, \widehat\C^{\widetilde{\RV}})$ with respect to rescalings $M_{a,n} \in \PSL_2(\C)$ and $\widetilde{M}_{c,n}\in \PSL_2(\C)$ if
    \begin{itemize}
        \item $M_{a,n}, \widetilde{M}_{c,n}$ are rescalings for $(\RT, \widehat\C^\RV)$ and $(\widetilde{\RT}, \widehat\C^{\widetilde{\RV}})$ respectively;
        \item $\widetilde{M}_{\pmb{F}(a),n}^{-1} \circ f_n \circ M_{a,n}(z) \to R_a(z)$ compactly on $\widehat \C_a- \Xi_a$.
    \end{itemize}
    We say $f_n$ {\em converge} to $(\pmb{F}, \pmb{R})$ if $f_n$ converges to $(\pmb{F}, \pmb{R})$ with respect to some rescalings.
\end{defn}

{
\begin{rmk}
We remark that it is convenient to regard the rescalings $M_{a,n}$ as maps between two different Riemann spheres $M_{a,n}: \widehat{\C}_a \longrightarrow \widehat{\C}$. With this in mind, $\widetilde{M}_{\pmb{F}(a),n}^{-1} \circ f_n \circ M_{a,n}$ is a map between $\widehat{\C}_a$ to $\widehat{\C}_{\pmb{F}(a)}$, which has the same domain and codomain as $R_a: \widehat{\C}_a \longrightarrow \widehat{\C}_{\pmb{F}(a)}$.
Similarly, $M_{a,n}^{-1} \circ M_{b,n} : \widehat{\C}_b \longrightarrow \widehat{\C}_a$ converges (compactly away from the hole) to the point $\zeta_a(v) \in \widehat{\C}_a$, where $v\in T_a\RT$ is the tangent vector in the direction of $b$ (see Figure~\ref{fig:TS}).

We also remark that the rational maps on tree of spheres does not provide a compactification of $\Rat_d$, but it provides a certain bordification.
\end{rmk}
}

\begin{figure}[ht]
\captionsetup{width=0.96\linewidth}
  \centering
  \includegraphics[width=0.7\linewidth]{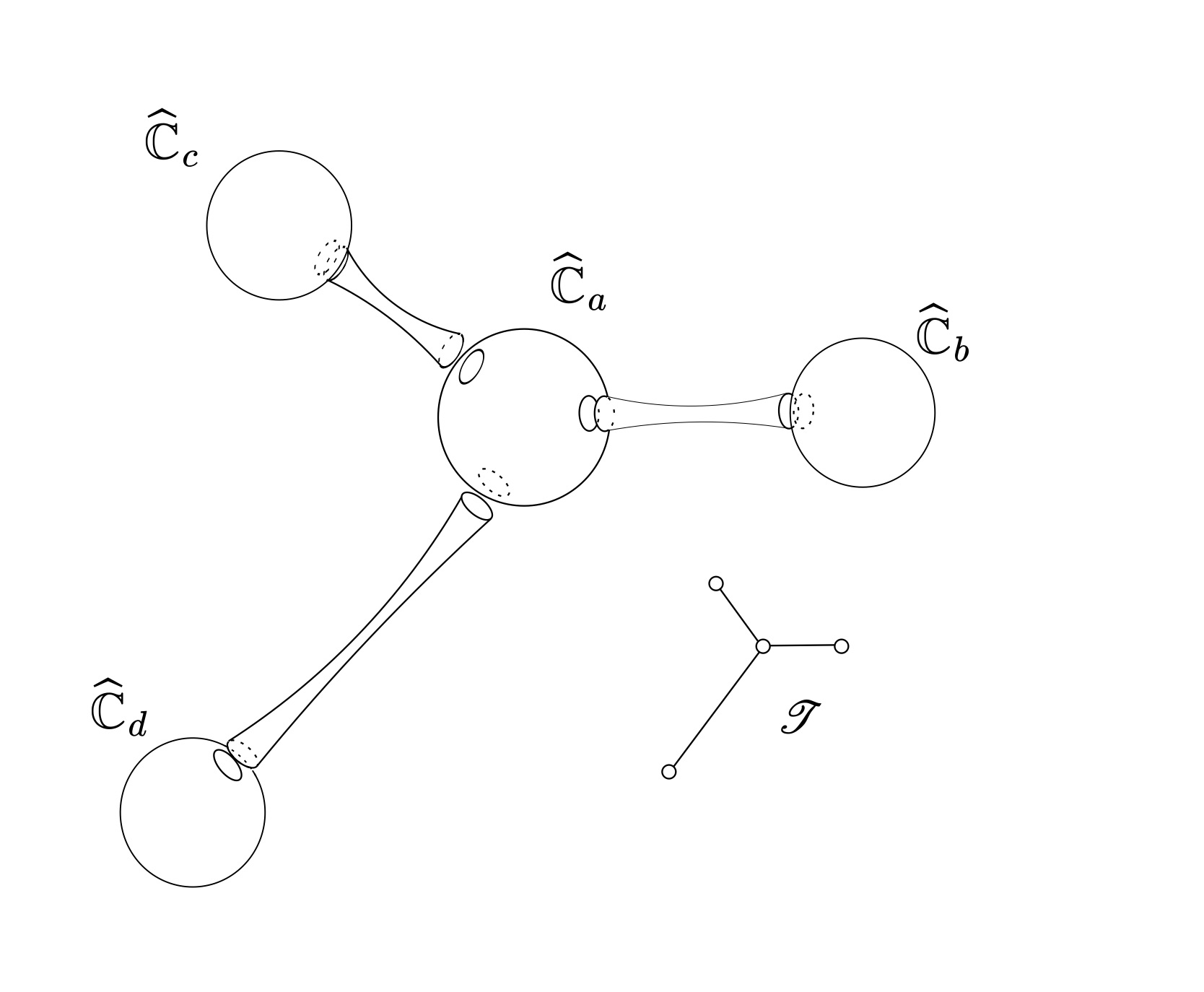}
  \caption{Illustrated is the degeneration of a Riemann sphere to a tree of spheres. The underlying tree is also depicted.}
  \label{fig:TS}
\end{figure}

The above definition naturally generalizes to correspondences.
\begin{defn}
    Let $(\RT, \widehat\C^\RV)$ be a  tree of Riemann spheres marked by $\{\xi_a : T_a\RT \xhookrightarrow{} \widehat{\C}_a\}$. A {\em holomorphic correspondence} on $(\RT, \widehat\C^\RV)$ is a subvariety $\mathfrak{C}$ in $\widehat\C^\RV\times \widehat\C^\RV$ of complex dimension $1$ with the property that there exist $d_1, d_2\geq 1$ so that the projection maps $\pi_1$ and $\pi_2$ have degree $d_1$ and $d_2$ over the set of smooth points $\widehat \C^\RV - \Xi$. We call $(d_1, d_2)$ the bi-degree of the correspondence $\mathfrak{C}$.

    A sequence of correspondences $\mathfrak{C}_n$ on $\widehat{\C}$ of bi-degree $(d_1, d_2)$ is said to converge to a bi-degree $(d_1, d_2)$ correspondence $\mathfrak{C}$ on $(\RT, \widehat\C^\RV)$ with respect to rescalings $M_{a,n} \in \PSL_2(\C)$ if $M_{a,n}$ are rescalings for $(\RT, \widehat\C^\RV)$ and for any pair $a, b \in \RV$, we have that
    $$
        \mathfrak{C}_{a,b,n}\coloneqq\{(x,y) \colon (M_{a,n}(x), M_{b,n}(y)) \in \mathfrak{C}_n\}
    $$
    converges to $\mathfrak{C} \cap (\widehat\C_a \times \widehat\C_b)$ compactly away from $(\Xi_a, \Xi_b)$. 
    Similarly, we say $\mathfrak{C}_n$ {\em converges} to $\mathfrak{C}$ if $\mathfrak{C}_n$ converges to $\mathfrak{C}$ with respect to some rescalings.
\end{defn}

Let $(\RT, \widehat\C^\RV)$ and $(\widetilde{\RT}, \widehat\C^{\widetilde{\RV}})$ be a pair of (marked) trees of Riemann spheres, and $(\pmb{F}, \pmb{R})$ and $(\pmb{G}, \pmb{S})$ be rational maps from $(\RT, \widehat\C^\RV)$ to $(\widetilde{\RT}, \widehat\C^{\widetilde{\RV}})$ of degree $d_1$ and $d_2$ respectively.
Then it follows from the definition that the \textit{coincidence locus} 
$$
\mathfrak{C}\coloneqq\{(x,y) \in \widehat\C^\RV \times \widehat\C^\RV\colon \pmb{R}(x) = \pmb{S}(y)\}
$$
gives a correspondence on $(\RT, \widehat\C^\RV)$.
\begin{prop}\label{prop:convergenceCorrespondence}
    Let $f_n, g_n:\widehat{\C}\to\widehat{\C}$ be sequences of rational maps of degree $d_1$ and $d_2$ respectively. Suppose there exist (marked) trees of spheres $(\RT, \widehat\C^\RV)$ and $(\widetilde{\RT}, \widehat\C^{\widetilde{\RV}})$ 
    equipped with rescalings $M_{a,n}$ and 
    $\widetilde{M}_{c,n}$ respectively
    so that $f_n$ and $g_n$ converge to rational maps $(\pmb{F},\pmb{f}), (\pmb{G},\pmb{g})$ of degree $d_1$ and $d_2$ from $(\RT, \widehat\C^\RV)$ to $(\widetilde{\RT}, \widehat\C^{\widetilde{\RV}})$ with respect to these rescalings.
    Then $\mathfrak{C}_n = (f_n, g_n)$ converges to a bi-degree $(d_1, d_2)$ correspondence $\mathfrak{C}$ on $(\RT, \widehat\C^\RV)$ with respect to rescalings $M_{a,n}$.

    Conversely, suppose $\mathfrak{C}_n = (f_n, g_n)$ converges to a bi-degree $(d_1, d_2)$ correspondence $\mathfrak{C} = (\pmb{f},\pmb{g})$ on $(\RT, \widehat\C^\RV)$ with respect to rescalings $M_{a,n}$. Then there exist 
    a (marked) tree of spheres $(\widetilde{\RT}, \widehat\C^{\widetilde{\RV}})$ and rescalings $\widetilde{M}_{c,n}$ for  $(\widetilde{\RT}, \widehat\C^{\widetilde{\RV}})$ such that $f_n$ and $g_n$ converge to $(\pmb{F},\pmb{f}), (\pmb{G},\pmb{g}):(\RT, \widehat\C^\RV)\longrightarrow (\widetilde{\RT}, \widehat\C^{\widetilde{\RV}})$ with respect to rescalings $M_{a,n}$ and~$\widetilde{M}_{c,n}$.
\end{prop}

{ Before proceeding with the proof, we point out that the rescalings  $M_{a,n}$ and $\widetilde{M}_{c,n}$ work \emph{simultaneously} for the sequences $f_n$ as well as $g_n$.

Note also that the convergence of the correspondences $\mathfrak{C}_n$ only needs rescalings $M_{a,n}$ in the domain. Hence the first statement is the easier one as one forgets information. For the converse direction, we need to recover the rescalings $\widetilde{M}_{c,n}$ of the codomain. This necessitates the use of Lemma \ref{lem:rl}.
}
\begin{proof}
    {Let $a, b \in \RV$ be such that $\pmb{F}(a)=\pmb{G}(b)=:c\in\widetilde{\RV}$.
    By assumption,
    $$
    \widetilde{M}_{c,n}^{-1} \circ f_n \circ M_{a,n}(z) \to f_a(z) \text{ and } \widetilde{M}_{c,n}^{-1} \circ g_n \circ M_{b,n}(z) \to g_b(z) 
    $$ compactly on $\widehat \C_a- \Xi_a$ and $\widehat \C_b- \Xi_b$.
    
    We define a correspondence $\mathfrak{C}$ on $(\RT, \widehat\C^\RV)$ so that $\mathfrak{C}_{a,b} = \{(x,y) \in \widehat\C_a \times \widehat\C_b\colon f_a(x) = g_b(y)\}$.
    Therefore, by Lemma~\ref{lem:rlc}, 
    $$
    \mathfrak{C}_{a,b,n}\coloneqq\{(x,y) \colon (M_{a,n}(x), M_{b,n}(y)) \in \mathfrak{C}_n\} \to \mathfrak{C}_{a,b}
    $$
    compactly away from $(\Xi_a, \Xi_b)$.

    Conversely, let $a, b \in \RV$ such that $\mathfrak{C}_{a,b}$ is non-empty; in particular, $\pmb{F}(a)=\pmb{G}(b)=:c\in\widetilde{\RV}$.
    By Lemma \ref{lem:rl}, there exists $\widetilde{M}_{c,n} \in \PSL_2(\C)$ so that after passing to a subsequence, $f_{a\to c}\coloneqq\lim_{a\to c} f_n$ exists and has degree $\geq 1$. Since $\mathfrak{C}_{a,b,n} \to \mathfrak{C}_{a,b}$, we conclude that $g_{b\to c}\coloneqq\lim_{b\to c} g_n$ exists and has degree $\geq 1$ with respect to the rescalings $M_{b,n}$ and $\widetilde{M}_{c,n}$. Moreover, after modifying $M_{b,n}$ in its bounded equivalence class, we can assume that $(f_{a\to c}, g_{b\to c}) = (\pmb{f}|_{\widehat{\C}_a}, \pmb{g}|_{\widehat{\C}_{b}})$. Since this is true for any subsequential limit and any $a, b\in\RV$ satisfying $\pmb{F}(a)=\pmb{G}(b)$, it follows that $f_n, g_n$ converges to~$\pmb{f}, \pmb{g}$.}
\end{proof}

\subsection{Pointed disks and Carath\'eodory convergence}\label{pointed_disk_subsec}
In this subsection, we summarize some well-known results for Carath\'eodory limits of pointed disks. These will be used to construct rescaling limits of rational maps and correspondences.

A {\em disk} is an open simply connected  proper subset in $\widehat\C$.
It is said to be {\em hyperbolic} if it admits a hyperbolic metric; i.e., if it is not of the form $\widehat{\C} - \{a\}$ for some $a \in \widehat\C$.
We have the following definition of Carath\'eodory convergence of pointed disks, and proper holomorphic maps (see \S 5 in \cite{McM94}).
\begin{defn}
Let $(U_n, u_n)$ be a sequence of pointed disks. We say $(U_n, u_n)$ converges to $(U, u)$ in Carath\'eodory topology if 
\begin{itemize}
\item $u_n \to u$;
\item For any compact set $K \subseteq U$, $K \subseteq U_n$ for all sufficiently large $n$;
\item For any open connected set $N$ containing $u$, if $N \subseteq U_n$ for all sufficiently large $n$, then $N \subseteq U$.
\end{itemize}

We say that a sequence of proper holomorphic maps between pointed disks $f_n: (U_n, u_n) \longrightarrow (V_n, v_n)$ converges to $f:(U, u) \longrightarrow (V, v)$ if 
\begin{itemize}
\item $(U_n, u_n), (V_n,v_n)$ converge respectively to $(U, u), (V, v)$ in Carath\'eodory topology;
\item For all sufficiently large $n$, $f_n$ converges to $f$ uniformly on compact subsets of $U$.
\end{itemize}
\end{defn}

We have the following important compactness result.
\begin{theorem}\cite[Theorem 5.2]{McM94}\label{thm:cmpc}
The set of disks $(U_n, 0)$ containing $B(0,r)$ for some $r>0$ is compact in Carath\'eodory topology.
\end{theorem}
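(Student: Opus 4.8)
The plan is to pass from pointed disks to the complements of their underlying domains, thereby reducing the statement to the compactness of the Hausdorff topology on closed subsets of $\widehat{\C}$. Here $r>0$ is fixed, and to each disk $(U,0)$ with $B(0,r)\subseteq U$ I associate the set $K_U := \widehat{\C}-U$. Since $U$ is a simply connected domain on the sphere, $K_U$ is a nonempty continuum (compact and connected), and the condition $B(0,r)\subseteq U$ forces $K_U$ into the fixed compact set $\widehat{\C}-B(0,r)$. Thus every disk in the family is recorded by a point of the space of nonempty compact subsets of $\widehat{\C}-B(0,r)$, equipped with the Hausdorff metric.

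Given a sequence $(U_n,0)$ in the family, write $K_n := \widehat{\C}-U_n$ and pass to a subsequence along which $K_n\to K$ in the Hausdorff metric; this is possible because the space of nonempty compact subsets of the compact set $\widehat{\C}-B(0,r)$ is itself compact in the Hausdorff metric. The limit $K$ is nonempty, is disjoint from $B(0,r)$ (any point of $K$ is a limit of points of the $K_n$, which avoid $B(0,r)$), and is connected, since a Hausdorff limit of continua in a compact space is again a continuum. I then take $U$ to be the connected component of $\widehat{\C}-K$ containing $0$ --- equivalently, the one containing the connected set $B(0,r)$. That $U$ is simply connected is a short topological point: writing $\widehat{\C}-U$ as $K$ together with the other components $V$ of $\widehat{\C}-K$, each such $V$ has $\partial V\subseteq K$, so each $V\cup K$ is connected and contains $K$; hence $\widehat{\C}-U$ is connected, and by the usual criterion $U$ is simply connected. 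As $K\neq\emptyset$, $U$ is a proper subdomain, so $(U,0)$ is a disk containing $B(0,r)$ and thus lies in the family.

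It remains to check that $(U_n,0)\to(U,0)$ in the Carath\'eodory topology, where the two defining conditions are matched against the two directions of the Hausdorff convergence $K_n\to K$ (the base-point condition is automatic, all base points being $0$). For the first condition, a compact $C\subseteq U$ is disjoint from $K$, hence at some distance $\delta>0$ from it; since $K_n$ lies in the $\delta/2$-neighborhood of $K$ for large $n$, we get $C\cap K_n=\emptyset$, i.e. $C\subseteq U_n$. For the second, suppose $N$ is open, connected, contains $0$, and satisfies $N\subseteq U_n$ (equivalently $N\cap K_n=\emptyset$) for all large $n$; if some $x\in N$ lay in $K$, then points $x_n\in K_n$ with $x_n\to x$ would eventually enter the open set $N$, contradicting $N\cap K_n=\emptyset$. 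Hence $N\cap K=\emptyset$, and as $N$ is connected and meets the component $U$ at $0$, we conclude $N\subseteq U$. Finally, since the Carath\'eodory topology is metrizable, sequential compactness gives compactness.

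I expect the main obstacle to be the topological bookkeeping in the passage from the Hausdorff limit $K$ to the limit disk $U$: one must single out the correct component of $\widehat{\C}-K$, verify it is simply connected (so that it is a genuine disk and not merely a multiply connected domain), and then confirm that this very component is the Carath\'eodory limit. The delicate matching is that the two halves of Hausdorff convergence feed the two Carath\'eodory conditions in complementary ways --- $K_n$ eventually inside a neighborhood of $K$ yields the inner approximation condition, while $K$ being approximable from the $K_n$ yields the kernel/maximality condition. The auxiliary fact that a Hausdorff limit of continua is a continuum, used to guarantee connectedness of $K$, is worth isolating as a lemma.
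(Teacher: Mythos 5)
The paper offers no proof of this statement at all: it is quoted directly from McMullen (\cite[Theorem 5.2]{McM94}), so there is no in-paper argument to measure your proposal against, and it must be judged on its own. It holds up. Your reduction to the Hausdorff topology on complements is the classical route (essentially the Carath\'eodory kernel theorem): the complement $K_n=\widehat{\C}-U_n$ of a disk is a continuum avoiding $B(0,r)$; Blaschke selection gives a Hausdorff-convergent subsequence; a Hausdorff limit of continua in a compact space is again a continuum; the component $U$ of $\widehat{\C}-K$ containing $0$ has connected complement (your observation that $V\cup K$ is connected for every other component $V$, because $\partial V\subseteq K$, is exactly the right point), hence is simply connected; and the two halves of Hausdorff convergence verify, respectively, the compact-exhaustion condition and the maximality condition in the paper's definition of Carath\'eodory convergence. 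You also read ``for some $r>0$'' correctly as ``for a fixed $r$'' (if $r$ were allowed to depend on the disk, the statement would be false, e.g.\ for $U_n=B(0,1/n)$), and your argument silently but correctly covers the degenerate case where $K$ is a single point, in which the limit is the non-hyperbolic disk $\widehat{\C}-\{a\}$ --- still a disk in the paper's sense, as required.

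Two minor remarks. The only step taken on faith is the metrizability of the Carath\'eodory topology, which you invoke to upgrade sequential compactness to compactness; this is true but is itself a small theorem deserving at least a reference. If you want to avoid it entirely, note that what the paper actually uses (in Proposition~\ref{prop:ConvergencePointedDisk}) is precisely sequential extraction of limits, which your argument establishes directly. For comparison, the other standard proof --- and the one suggested by the surrounding material in the cited source, whose relevant section concerns convergence of Riemann mappings --- runs through the uniformizations $f_n:(\D,0)\to(U_n,0)$, using Schwarz and Koebe to bound $|f_n'(0)|$ below and above, normality to extract a locally uniform limit, Hurwitz to keep the limit univalent, and the kernel theorem to translate back to Carath\'eodory convergence. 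Your complement-based argument reaches the same conclusion with purely point-set tools, avoiding uniformization, at the cost of the topological bookkeeping you correctly identified as the main burden.
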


For a sequence of pointed hyperbolic disks, we define the corresponding rescalings (c.f. \cite[Definition 4.2]{Luo22b}).
\begin{defn}\label{defn:rescalingPointedDisk}
    Let $(U_n, u_n)$ be a sequence of pointed hyperbolic disks. A sequence $M_{u,n} \in \PSL_2 (\C)$ is defined to be a {\em rescaling} for $(U_n, u_n)$ if $M_{u,n}^{-1}(U_n,u_n)$ converges to a pointed hyperbolic disk $(U,u)$ in the Carath\'eodory topology.
\end{defn}

\begin{prop}\label{prop:ConvergencePointedDisk}
Let $(U_n, u_n)$ be a sequence of pointed hyperbolic disks. Then after passing to a subsequence if necessary, there exist
{ rescalings} $M_{u,n}\in \PSL_2(\C)$ so that $M_{u,n}^{-1}(U_n, u_n)$ converges to a pointed hyperbolic disk.

Suppose $M_{u,n}, L_{u,n} \in \PSL_2(\C)$ are two rescalings for $(U_n, u_n)$. Then after passing to a subsequence if necessary, they are equivalent
{(in the sense of Definition~\ref{def-equivalentscaling})}.
\end{prop}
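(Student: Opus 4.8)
The plan is to prove existence by normalizing each pointed disk via its conformal radius, and to prove uniqueness by exploiting the fact that a M\"obius map carrying one Carath\'eodory-converging family of hyperbolic disks onto another is a hyperbolic isometry, which prevents it from degenerating in $\PSL_2(\C)$.

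\textbf{Existence.} First I would reduce to domains in $\C$. Since $(U_n,u_n)$ is a hyperbolic disk, its complement $\widehat{\C} - U_n$ contains at least two points; choosing $c_n \in \widehat{\C} - U_n$ and $A_n \in \PSL_2(\C)$ with $A_n(u_n)=0$ and $A_n(c_n)=\infty$, the image $\tilde U_n := A_n(U_n)$ is a simply connected hyperbolic domain in $\C$ containing $0$. Let $\rho_n>0$ be its conformal radius at $0$ and set $S_n(z)=z/\rho_n$, so that $V_n := S_n(\tilde U_n)$ has conformal radius $1$ at $0$ and, by the Koebe $1/4$-theorem, contains $B(0,1/4)$. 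The normalized Riemann maps $\psi_n\colon(\D,0)\to(V_n,0)$ with $\psi_n'(0)=1$ form a normal family, since by the Koebe distortion theorem they are uniformly bounded on compact subsets of $\D$; passing to a subsequence, $\psi_n\to\psi$ locally uniformly. As $\psi'(0)=1\neq 0$, the limit $\psi$ is non-constant, hence univalent, so $U:=\psi(\D)$ is a proper simply connected subdomain of $\C$, i.e. a hyperbolic disk. Convergence of the Riemann maps yields $(V_n,0)\to(U,0)$ in the Carath\'eodory topology (cf. \cite[\S 5]{McM94}). Setting $M_{u,n}:=A_n^{-1}\circ S_n^{-1}$ gives $M_{u,n}^{-1}(U_n,u_n)=(V_n,0)\to(U,0)$. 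One may alternatively invoke Theorem~\ref{thm:cmpc} for the compactness and use constancy of the conformal radius along the sequence to rule out the degenerate limit $\widehat{\C}-\{a\}$.

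\textbf{Uniqueness.} Suppose $M_{u,n}$ and $L_{u,n}$ are rescalings, with $M_{u,n}^{-1}(U_n,u_n)\to(U,u)$ and $L_{u,n}^{-1}(U_n,u_n)\to(U',u')$. Write $V_n:=M_{u,n}^{-1}(U_n)$, $V_n':=L_{u,n}^{-1}(U_n)$, and $N_n:=L_{u,n}^{-1}\circ M_{u,n}$. Then $N_n$ carries $V_n$ biholomorphically onto $V_n'$, hence is a hyperbolic isometry $(V_n,d_{V_n})\to(V_n',d_{V_n'})$, and it sends $M_{u,n}^{-1}(u_n)\to u$ to $L_{u,n}^{-1}(u_n)\to u'$. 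I would then show $N_n$ is precompact in $\PSL_2(\C)$. Fix three distinct points $a_1,a_2,a_3\in U$; for $n$ large they lie in $V_n$. Because the hyperbolic metric is continuous under Carath\'eodory convergence, $d_{V_n}(M_{u,n}^{-1}(u_n),a_i)\to d_U(u,a_i)<\infty$, and the isometry property gives $d_{V_n'}(L_{u,n}^{-1}(u_n),N_n(a_i))\to d_U(u,a_i)$; since $V_n'\to U'$ and $L_{u,n}^{-1}(u_n)\to u'\in U'$, the points $N_n(a_i)$ remain in a fixed compact subset of $U'$. Likewise $d_{V_n'}(N_n(a_i),N_n(a_j))=d_{V_n}(a_i,a_j)\to d_U(a_i,a_j)>0$, so the three images stay pairwise separated. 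Passing to a subsequence, $N_n(a_i)\to b_i$ with $b_1,b_2,b_3$ distinct; as the M\"obius map determined by three non-degenerate point pairs depends continuously on the data, $N_n\to N\in\PSL_2(\C)$. Thus $L_{u,n}^{-1}\circ M_{u,n}\to N$, i.e. the two rescalings are equivalent.

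\textbf{Main obstacle.} The crux is the uniqueness half: ruling out degeneration of $N_n=L_{u,n}^{-1}\circ M_{u,n}$. A degenerating sequence of M\"obius maps collapses all of $\widehat{\C}$ except one point onto a single point, which is incompatible with $N_n$ being a conformal isomorphism between two domains that both converge to genuine hyperbolic disks. The clean way to encode this rigidity is through the hyperbolic isometry property together with continuity of the hyperbolic metric under Carath\'eodory convergence; establishing that continuity, equivalently the local uniform convergence of the Riemann maps of $V_n$ and $V_n'$, is the technical heart of the argument.
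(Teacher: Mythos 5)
Your proof is correct, but it takes a genuinely different route from the paper's, which is only three lines long. For existence, the paper simply invokes Theorem~\ref{thm:cmpc} (with the conformal-radius normalization left implicit); your argument via Koebe's $1/4$- and distortion theorems and class-$S$ normal families essentially re-proves that cited compactness statement rather than using it as a black box, and you correctly note the alternative of quoting Theorem~\ref{thm:cmpc} directly. The real divergence is in the uniqueness half. The paper regards $N_n = L_{u,n}^{-1}\circ M_{u,n}$ as univalent maps between the converging pointed disks and extracts, by normal-family compactness of univalent maps (as in \cite[\S 5]{McM94}), a subsequential locally uniform limit that is a conformal map $(U,u)\to(\widehat U,\widehat u)$; since a sequence of M\"obius maps that degenerates in $\PSL_2(\C)$ converges locally uniformly to a constant away from one point, the nonconstant limit forces convergence in $\PSL_2(\C)$ (this last step is left implicit in the paper). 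You never extract a limit of the maps themselves: instead you use that each $N_n$ is a hyperbolic isometry $(V_n,d_{V_n})\to(V_n',d_{V_n'})$, the continuity of hyperbolic metrics under Carath\'eodory convergence, and the fact that a M\"obius transformation is continuously determined by three separated point pairs, to force $N_n\to N$ in $\PSL_2(\C)$ directly. Both arguments ultimately rest on the same underlying fact, namely locally uniform convergence of the Riemann maps of $V_n$ and $V_n'$; the paper's version buys brevity by quoting univalent-map compactness, while yours makes the non-degeneration of the M\"obius sequence completely explicit (three points staying in a compact set and pairwise separated), at the cost of having to establish, or quote from \cite[\S 5]{McM94}, the convergence of hyperbolic metrics under Carath\'eodory limits --- which, as you say, is the technical heart either way.
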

\begin{proof}
The first statement follows immediately from Theorem \ref{thm:cmpc} {(for instance, by requiring that $M_{u,n}^{-1}(u_n)=0$ and $M_{u,n}^{-1}(U_n)$ contains a definite round disk $B(0,r)$ for all $n$)}.

{Let $(U^1,u^1) = \lim M_{u,n}^{-1}(U_n, u_n)$ and $(U^2, u^2) = \lim L_{u,n}^{-1}(U_n, u_n)$. Then after passing to a subsequence if necessary, $L_{u,n}^{-1}\circ M_{u,n}$ converges compactly to a conformal map between $(U^1, u^1)$ and $(U^2, u^2)$, so they are equivalent.}
\end{proof}

The following proposition allows us to construct rescaling limits of rational maps from proper maps between pointed hyperbolic disks.
\begin{prop}\label{prop:ConvergenceProperMap}
    Let $f_n\colon(U_n, u_n)\longrightarrow (V_n, v_n)$ be a sequence of proper maps between pointed hyperbolic disks. 
    Let {$M_{u,n}$ and $\widetilde{M}_{v,n}$} be rescalings for $(U_n, u_n)$ and $(V_n, v_n)$ respectively.
    Then after passing to a subsequence if necessary, {the map $\widetilde{M}_{v,n}^{-1} \circ f_n \circ M_{u,n}$} converges compactly to a proper holomorphic map $f_{u\to v}: (U, u) \longrightarrow (V,v)$ between two pointed hyperbolic disks.

    Moreover, suppose that there are exactly $e-1$ critical points (counted with multiplicity) that are bounded hyperbolic distance away from $u_n$. Then $f_{u\to v}:(U, u) \longrightarrow (V,v)$ has degree $e$.
\end{prop}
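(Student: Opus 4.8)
The plan is to uniformize the two moving families of pointed disks by Riemann maps, thereby reducing the statement to a compactness statement about finite Blaschke products, and then to read off properness and the degree from the limiting Blaschke factorization together with Hurwitz's theorem applied to the derivatives. First I would record the effect of the rescalings: writing $(U_n', u_n') := M_{u,n}^{-1}(U_n, u_n)$ and $(V_n', v_n') := M_{v,n}^{-1}(V_n, v_n)$, the hypothesis that $M_{u,n}, M_{v,n}$ are rescalings means (Definition~\ref{defn:rescalingPointedDisk}) that $(U_n', u_n') \to (U, u)$ and $(V_n', v_n') \to (V, v)$ in the Carath\'eodory topology, with $(U,u),(V,v)$ pointed hyperbolic disks; existence after passing to a subsequence is Proposition~\ref{prop:ConvergencePointedDisk}. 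Let $\psi_n : (\D, 0) \to (U_n', u_n')$ and $\phi_n : (\D, 0) \to (V_n', v_n')$ be the Riemann maps with positive derivative at $0$. By the Carath\'eodory kernel theorem (see \cite[\S 5]{McM94}), Carath\'eodory convergence of the pointed disks is equivalent to $\psi_n \to \psi$ and $\phi_n \to \phi$ locally uniformly on $\D$, where $\psi,\phi$ are the normalized Riemann maps onto $U, V$.

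Next I would pass to Blaschke products. Set $g_n := M_{v,n}^{-1} \circ f_n \circ M_{u,n}$, a proper holomorphic map $(U_n', u_n') \to (V_n', v_n')$; since the $M$'s are M\"obius, it has the same degree $d_n := \deg f_n$ and the same critical points up to the biholomorphism $M_{u,n}$. Then
$$
h_n := \phi_n^{-1} \circ g_n \circ \psi_n : (\D, 0) \longrightarrow (\D, 0)
$$
is a proper holomorphic self-map of $\D$ fixing $0$, i.e.\ a finite Blaschke product of degree $d_n$ with $h_n(0)=0$. Since the target is $\D$, Montel's theorem lets me pass to a subsequence along which $h_n \to h$ locally uniformly, with $h:\D\to\overline{\D}$ and $h(0)=0$. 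As $\psi_n\to\psi$, $\phi_n\to\phi$ and $\psi_n^{-1}\to\psi^{-1}$ compactly on $U$, the map $f_{u\to v} := \phi \circ h \circ \psi^{-1}$ is the compact limit of $g_n$ and satisfies $f_{u\to v}(u)=\phi(h(0))=v$; so everything reduces to showing $h$ is a proper self-map of $\D$ of degree $e$.

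For properness I would analyze the Blaschke factorization of $h_n$. Grouping the zeros into those that remain in a fixed compact subset of $\D$ and those whose modulus tends to $1$, each escaping factor $\tfrac{z-a}{1-\bar a z}$ converges on compact subsets of $\D$ to the unimodular constant $-a$ (using $1-\bar a z=-\bar a(z-a)$ when $|a|=1$), while the finitely many surviving interior factors converge to genuine Blaschke factors. Hence $h$ is a finite Blaschke product of some degree $m$, and $m\geq 1$ because $h(0)=0$ forbids $h$ from being a unimodular constant; thus $f_{u\to v}=\phi\circ h\circ\psi^{-1}$ is proper, which is the first assertion. For the degree I would translate the critical point hypothesis: since M\"obius maps and the Riemann maps $\psi_n$ are isometries for the relevant hyperbolic metrics, a critical point of $f_n$ lies within bounded hyperbolic distance of $u_n$ if and only if the corresponding critical point of $h_n$ lies in a fixed compact subset of $\D$; so exactly $e-1$ critical points of $h_n$ stay in a compact subset of $\D$ while the rest approach $\partial\D$. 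Applying Hurwitz's theorem to $h_n'\to h'$ (with $h$ non-constant) yields exactly $e-1$ zeros of $h'$ in $\D$, and since a finite Blaschke product of degree $m$ has exactly $m-1$ critical points in $\D$, we conclude $m=e$, i.e.\ $\deg f_{u\to v}=e$.

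The main obstacle is the properness step: ensuring that degree is not lost to the boundary in an uncontrolled way, so that the limit is a genuine proper map rather than a non-proper limit (for instance an infinite Blaschke product, whose infinitely many critical points the hypothesis is designed to exclude). In all our applications the $f_n$ arise as pieces of rescaling limits of rational maps of a fixed degree (Lemma~\ref{lem:rl}), so $d_n$ is uniformly bounded; after a further subsequence one may take $d_n\equiv D$, the escaping factors are then finite in number and their product converges uniformly on compacts to a unimodular constant, giving that $h$ is Blaschke of degree $\leq D$ directly. The critical point hypothesis then pins the degree to $e$ and simultaneously rules out the degenerate possibility $h\equiv 0$.
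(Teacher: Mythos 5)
Your proof is correct, but it follows a genuinely different route from the paper's. The paper's entire proof is a two-line reduction: after observing that $M_{v,n}^{-1}\circ f_n\circ M_{u,n}$ is a proper map between the rescaled pointed disks, which converge in the Carath\'eodory topology (Proposition~\ref{prop:ConvergencePointedDisk}, or indeed by the very definition of rescalings), it invokes case (3) of McMullen's theorem on limits of proper maps \cite[Theorem~5.6]{McM94}, which already contains both the properness of the limit and the critical-point count of the degree. You instead inline a self-contained proof of essentially that theorem: uniformize by Riemann maps (via the kernel theorem, converting Carath\'eodory convergence into locally uniform convergence of the uniformizations), reduce to finite Blaschke products fixing the origin, observe that escaping zeros contribute unimodular constant factors while the zero pinned at the origin prevents total degeneration, and then pin down the degree by applying Hurwitz to the derivatives together with the fact that a degree $m$ Blaschke product has exactly $m-1$ critical points in $\D$. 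Each of these steps is sound. What the citation buys is brevity; what your argument buys is transparency, and in particular it surfaces a real subtlety that the paper's statement glosses over: with no uniform bound on $\deg f_n$, the proposition as literally stated is false --- e.g.\ $f_n(z)=z^n$ on $(\D,0)\to(\D,0)$ with identity rescalings converges to the constant $0$, and one can even arrange exactly one bounded critical point while the limit is a non-proper (singular inner) function, so the ``moreover'' clause fails too. A uniform degree bound is precisely the standing hypothesis of \cite[Theorem~5.6]{McM94} (stated for maps of a fixed degree $d$), so the paper's proof imports it implicitly; it holds in every application in the paper, where the $f_n$ are restrictions of rational maps of fixed degree (cf.\ Lemma~\ref{lem:rl} and Corollary~\ref{cor:caratheodorylimitandrescaling}). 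You correctly flag this and work under it, which is the right reading of the statement; just note that, strictly speaking, the bounded-degree hypothesis should be regarded as part of the proposition rather than as a feature only of the applications.
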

\begin{proof}
    By Proposition \ref{prop:ConvergencePointedDisk}, after passing to a subsequence if necessary, {$M_{u,n}^{-1}((U_n, u_n))$ and $\widetilde{M}_{v,n}^{-1}((V_n, v_n))$} converge in the Carath\'eodory topology to pointed hyperbolic disks $(U, u)$ and $(V, v)$ respectively. {Note that $\widetilde{M}_{v,n}^{-1} \circ f_n \circ M_{u,n}$ are proper maps from $M_{u,n}^{-1}((U_n, u_n))$ to $\widetilde{M}_{v,n}^{-1}((V_n, v_n))$}. Thus, the proposition follows from case (3) of \cite[Theorem 5.6]{McM94}.
\end{proof}

\begin{cor}\label{cor:caratheodorylimitandrescaling}
    Let $f_n \in \Rat_d(\C)$. Suppose $f_n$ restricts to a proper holomorphic map between pointed hyperbolic disks $(U_n, u_n)$ and $(V_n, v_n)$. Let {$M_{u,n}$ and $\widetilde{M}_{v,n}$} be the corresponding rescalings.
    Then after passing to a subsequence if necessary, the limit
   { $$
    f_{u\to v} = \lim_{u\to v} f_n = \lim \widetilde{M}_{v,n}^{-1} \circ f_n \circ M_{u,n}
    $$}
    exists, and has degree~$\geq 1$.

    Moreover, suppose that there are exactly $e-1$ critical points (counted with multiplicity) that are bounded hyperbolic distance away from $u_n$. Then $f_{u\to v}$ has degree~$\geq e$.
\end{cor}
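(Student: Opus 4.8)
The plan is to derive this corollary from Proposition~\ref{prop:ConvergenceProperMap}, which controls the limiting behavior on the fixed Carath\'eodory disk, together with the compactness of the projective compactification $\overline{\Rat_d(\C)} = \Proj^{2d+1}_\C$ and Lemma~\ref{comcon}, which governs the algebraic limit. Set $g_n := M_{v,n}^{-1}\circ f_n \circ M_{u,n} \in \Rat_d(\C)$. By Proposition~\ref{prop:ConvergenceProperMap}, after passing to a subsequence the disks $M_{u,n}^{-1}(U_n, u_n)$ and $M_{v,n}^{-1}(V_n, v_n)$ converge in the Carath\'eodory topology to pointed hyperbolic disks $(U, u)$ and $(V, v)$, and $g_n$ converges compactly on $U$ to a proper holomorphic map $f_{u\to v}\colon (U, u) \longrightarrow (V, v)$. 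In particular this limit is nonconstant, since a proper map between hyperbolic disks has degree $\geq 1$.

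Next, because $\overline{\Rat_d(\C)} = \Proj^{2d+1}_\C$ is compact, I would pass to a further subsequence so that $g_n$ converges to some $\hat f \in \overline{\Rat_d(\C)}$. By Lemma~\ref{comcon}, $g_n$ converges compactly to $\varphi_{\hat f}$ on $\widehat\C - \mathcal{H}(\hat f)$. Since $\mathcal{H}(\hat f)$ is finite, the open set $U - \mathcal{H}(\hat f)$ is nonempty, and on it $g_n$ converges compactly to both $f_{u\to v}$ and $\varphi_{\hat f}$. Hence $\varphi_{\hat f}$ and $f_{u\to v}$ agree on an open set and therefore coincide throughout $U$ by the identity theorem. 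As $f_{u\to v}$ is nonconstant, so is $\varphi_{\hat f}$; thus $\deg \varphi_{\hat f} \geq 1$, which by definition of the degree of an element of $\overline{\Rat_d(\C)}$ means the rescaling limit $f_{u\to v} = \hat f$ exists in the required sense and has degree $\geq 1$. This yields the first assertion.

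For the \emph{moreover} statement, I would invoke the degree count in Proposition~\ref{prop:ConvergenceProperMap}: if exactly $e-1$ critical points (counted with multiplicity) remain at bounded hyperbolic distance from $u_n$, then $f_{u\to v}\colon (U, u) \longrightarrow (V, v)$ is proper of degree exactly $e$. Since $\varphi_{\hat f}$ restricts on $U$ to a proper map of degree $e$ onto $V$, a generic point of $V$ already has at least $e$ preimages under $\varphi_{\hat f}$ inside $U$; consequently the global degree satisfies $\deg \varphi_{\hat f} \geq e$, which is precisely the claimed bound (the inequality rather than equality reflecting that $\varphi_{\hat f}$ may have additional preimages outside $U$).

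The main obstacle is the reconciliation step: Proposition~\ref{prop:ConvergenceProperMap} only produces convergence on the moving disk $U$, i.e.\ local uniform-on-compacta data, whereas the rescaling limit is an object in the global compactification $\overline{\Rat_d(\C)}$. Bridging these requires matching the Carath\'eodory (proper-map) limit on $U$ with the algebraic limit $\varphi_{\hat f}$ furnished by Lemma~\ref{comcon}, and the identity theorem is exactly what forces the two to agree, after which both the nonconstancy and the local-degree information transfer to the global rescaling limit. The only point demanding care is checking that the holes $\mathcal{H}(\hat f)$ do not swallow the disk $U$, which holds because there are only finitely many holes.
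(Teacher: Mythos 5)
Your proposal is correct and follows essentially the same route as the paper: both arguments combine Proposition~\ref{prop:ConvergenceProperMap} with compactness of $\overline{\Rat_d(\C)}$ to rule out a degree-zero (constant) limit, and both deduce the bound $\deg f_{u\to v}\geq e$ from the fact that the limiting proper map on $U$ has degree exactly $e$. The only difference is that you spell out explicitly (via Lemma~\ref{comcon} and the identity theorem) the reconciliation of the Carath\'eodory limit on $U$ with the algebraic limit $\varphi_{\hat f}$, a step the paper's proof leaves implicit in the phrase ``this is impossible if the limit has degree $0$.''
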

\begin{proof}
    After passing to a subsequence, we may assume $f_{u\to v} = \lim_{u\to v} f_n$ exists. {(Note that the limit is a priori possibly constant.)} By Proposition \ref{prop:ConvergenceProperMap}, after passing to a further subsequence if necessary, {$\widetilde{M}_{v,n}^{-1} \circ f_n \circ M_{u,n}$} converges to some proper holomorphic map between two pointed hyperbolic disks $(U, u)$ and $(V, v)$. This is impossible if the limit $f_{u\to v}$ has degree $0$. Therefore, $f_{u\to v}$ has degree $\geq 1$. 
    For the second part, note that by Proposition \ref{prop:ConvergenceProperMap}, $f_{u\to v}|_{U}$ has degree $e$. Therefore, $f_{u\to v}$ has degree $\geq e$ since the global degree of the rational map $f_{u\to v}$ is at least the {degree of the restriction $f_{u\to v}|_{U}$}.
\end{proof}

\section{A character variety of correspondences}\label{sec:cv}
In this section, we will briefly summarize the single mating construction of polynomials and Fuchsian groups. Such matings are realized as correspondences that satisfy some additional desirable structures. We will also introduce an ambient space to study these `mating correspondences'.

\subsection{Algebraic correspondences as matings of polynomials with punctured spheres}\label{corr_mating_subsec}
While the matings framework and the main result of this paper remain valid for a general class of genus zero orbifolds, we restrict ourselves to punctured spheres for simplicity of exposition.

In this subsection, we will briefly summarize the construction of such matings as algebraic correspondences. We refer the readers to \cite{MM2, LLM24} for more details.

\subsection*{Step I: Bowen-Series maps} The first step in the construction is to replace the action of a Fuchsian group uniformizing a punctured sphere with a piecewise Fuchsian, Markov, expansive, degree $k\geq 2$ circle covering map, called the \emph{Bowen-Series map}.

\begin{figure}[ht]
\captionsetup{width=0.98\linewidth}
	\begin{tikzpicture}
		\node[anchor=south west,inner sep=0] at (0.5,0) {\includegraphics[width=0.32\linewidth]{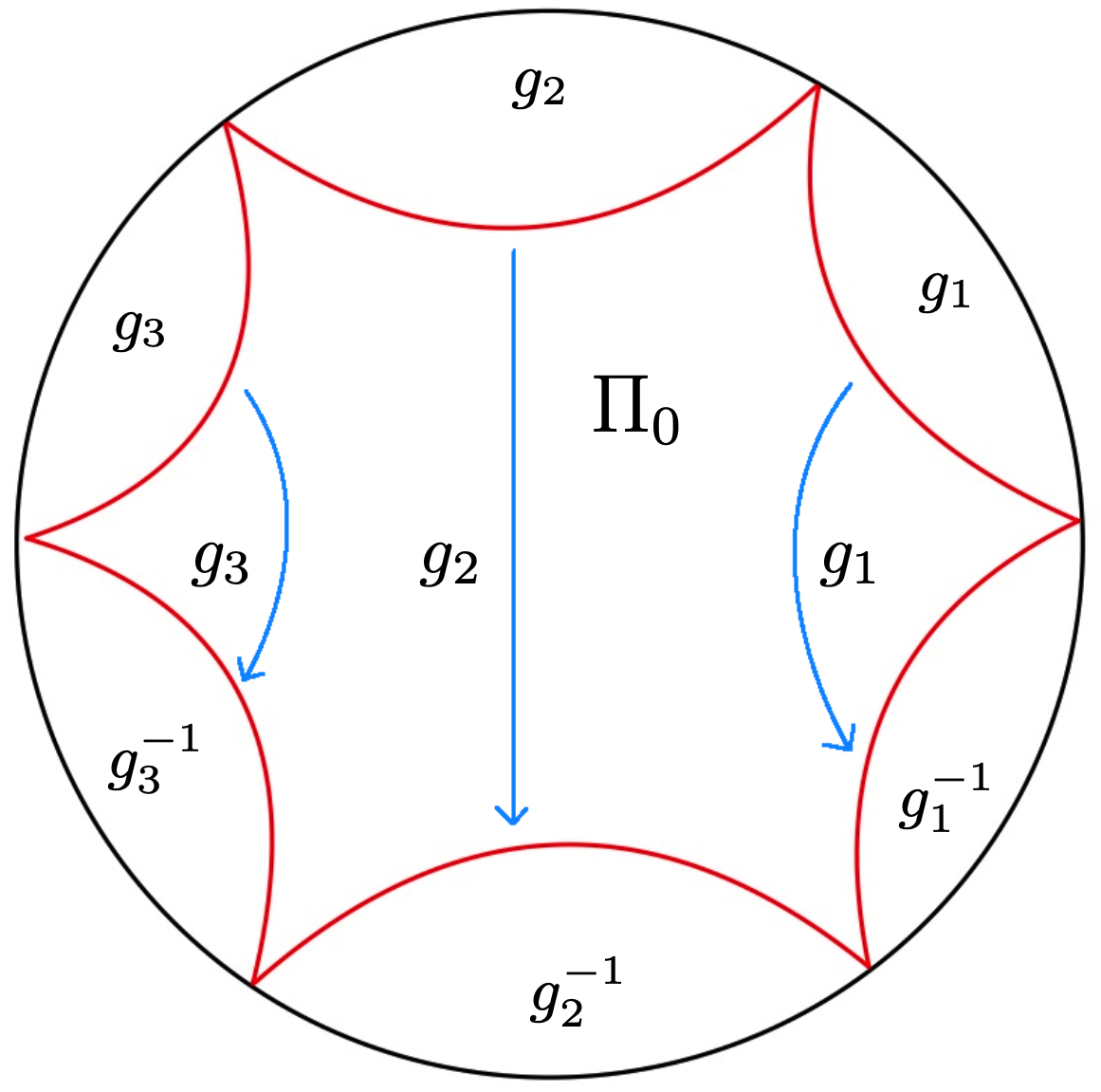}};
		%\node at (3.8,3.6) {\begin{Large}$\Pi_0$\end{Large}};
		%\node at (4.9,2.8) {$g_1$};
		%\node at (2.84,2.8) {$g_2$};
		%\node at (1.66,2.8) {$g_3$};
		%\node at (5.4,4.2) {\begin{small}$g_{1}$\end{small}};
		%\node at (5.4,1.6) {\begin{small}$g_{1}^{-1}$\end{small}};
		%\node at (3.3,5.25) {\begin{small}$g_{2}$\end{small}};
		%\node at (3.5,0.6) {\begin{small}$g_{2}^{-1}$\end{small}};
		%\node at (1.24,4) {\begin{small}$g_{3}$\end{small}};
		%\node at (1.32,1.8) {\begin{small}$g_{3}^{-1}$\end{small}};	
	\end{tikzpicture}
	\caption{Pictured is the preferred fundamental domain $\Pi_0$ and the action of the associated Bowen-Series map for a sphere with four punctures.}
	\label{punc_sphere_bs_fig}
\end{figure}

For $d\geq 2$, let $\Pi_0$ be a regular $2d-$sided closed ideal polygon in the unit disk with its vertices at the $2d$-th roots of unity. {As noted in \cite{MM1}, the degree $k$ of the associated Bowen-Series map will be 
equal to $2d-1$}. One can define side-pairing transformations for $\Pi_0$ that take each edge of $\Pi_0$ to its real-symmetric edge, and such that these side-pairing transformations generate a Fuchsian group $\Gamma_0$ which uniformizes a sphere with $d+1$ punctures. The action of the associated Bowen-Series map $A_{\Gamma_0}:\mathcal{D}_0:=\overline{\D}-\Int{\Pi_0}\to\overline{\D}$ is shown in Figure~\ref{punc_sphere_bs_fig}. We refer the reader to \cite{BS79,MM1} for general background on Bowen-Series maps and its special properties for punctured spheres.

Note that each element of the Teichm\"uller space $\mathrm{Teich}(S_{0,d+1})$ of spheres with $d+1$ punctures is realized by a discrete, faithful, strongly type-preserving representation $\rho:\Gamma_0\to\Gamma\leq\mathrm{PSL}_2(\R)$ such that
$$
\rho(g)=\phi_\rho\circ g \circ\phi_\rho^{-1},\  \forall g\in\Gamma_0,
$$
for some quasiconformal homeomorphism $\phi_\rho:\overline{\D}\to\overline{\D}$. We normalize $\phi_\rho$ such that $\phi_\rho(1)=1$. 
{ As noted in \cite[p. 1454]{MM1},
 the quasiconformal homeomorphism $\phi_\rho$ is not unique. However, any two normalized quasiconformal homeomorphisms that induce the same representation extend to the same quasisymmetric homeomorphism $\phi_\rho$ on the circle, and conjugate the marked generators of $\Gamma_0$ to those of $\Gamma$. In particular, the images of the ideal boundary points of $\Pi_0$ under $\phi_\rho$ are independent of the choice of $\phi_\rho$. We denote the hyperbolic convex hull (in $\D$) of the $\phi_\rho-$images of the ideal boundary points of $\Pi_0$ by $\Pi_\Gamma$, and note that $\Pi_\Gamma$ is an ideal polygon in $\D$ (i.e., it has geodesic edges) and is a preferred fundamental domain for the marked group $\Gamma$. One can now define the Bowen-Series map 
$$
A_{\Gamma}:\mathcal{D}_\Gamma:=\overline{\D}-  \Int{\Pi_\Gamma}\to\overline{\D}
$$ 
as a piecewise M{\"o}bius map whose pieces are given by the marked generators of $\Gamma$ on appropriate hyperbolic half-planes.

In fact, one can choose a representative quasiconformal map $\phi_\rho$ with the property $\phi_\rho(\Pi_0)=\Pi_\Gamma$. With such a choice, one has  
$$
\cD_\Gamma=\overline{\D}-\Int{\phi_\rho(\Pi_0)}=\phi_\rho(\cD_0),\qquad \textrm{and}\qquad A_\Gamma=\phi_\rho\circ A_{\Gamma_0}\circ \phi_\rho^{-1}.
$$}

\subsection*{Step II: Conformal matings} The Bowen-Series map $A_\Gamma$ can be conformally mated with complex polynomials. The resulting conformal mating is a meromorphic map in one complex variable that behaves like a polynomial on one part of its dynamical plane and resembles the Bowen-Series map on the remaining part.

More precisely, let $P$ be a monic, centered complex polynomial of degree $k$ with a connected and locally connected Julia set. We denote its basin of attraction of infinity by $\mathcal{B}_\infty(P)$ and its filled Julia set by $\mathcal{K}(P)$. The \emph{B{\"o}ttcher coordinate} of $P$ is the unique conformal map
$$
\psi_P:\widehat{\C}-\overline{\D}\to\mathcal{B}_\infty(P),
$$ 
that conjugates $z^{k}$ to $P$, and is normalized to be tangent to the identity map near infinity.
Due to local connectedness of the Julia set $\mathcal{J}(P)$, the map $\psi_P$ extends continuously to $\mathbb{S}^1$ to yield a semi-conjugacy between $z^k\vert_{\mathbb{S}^1}$ and $P\vert_{\mathcal{J}(P)}$. 
There exists a homeomorphism $\mathfrak{h}_\Gamma:\mathbb{S}^1\to\mathbb{S}^1$ that conjugates $z^k$ to $A_{\Gamma}$. We normalize $\mathfrak{h}_\Gamma$ so that it sends the fixed point $1$ of $z^k$ to the fixed point $1$ of $A_{\Gamma}$.

We define an equivalence relation $\sim$ on $\mathcal{K}(P)\bigsqcup \overline{\D}$ generated by 
\begin{equation}
\psi_{P}(\zeta)\sim \mathfrak{h}_\Gamma(\overline{\zeta}),\ \textrm{for all}\ {\zeta\in\mathbb{S}^1.}
\label{conf_mat_equiv_rel}
\end{equation}

\begin{defn}\label{conf_mat_def}
The maps $P$ and $A_\Gamma$ are said to be \emph{conformally mateable} if there exist a continuous map $F: \overline{\Omega}\to\widehat{\C}$ (called a \emph{conformal mating} of $A_\Gamma$ and $P$) that is complex-analytic in the interior $\Omega$ and continuous maps 
	$$
	\mathfrak{X}_P:\mathcal{K}(P)\to\widehat{\C}\ \textrm{and}\ \mathfrak{X}_\Gamma: \overline{\D}\to\widehat{\C},
	$$
	conformal on $\Int{\mathcal{K}(P)}$ and $\D$ (respectively), satisfying
	\begin{enumerate}[leftmargin=*]
		\item\label{topo_cond} $\mathfrak{X}_P\left(\mathcal{K}(P)\right)\cup \mathfrak{X}_\Gamma\left(\overline{\D}\right) = \widehat{\C}$,
		
		\item\label{dom_cond} $\Omega= \mathfrak{X}_P(\mathcal{K}(P))\cup\mathfrak{X}_\Gamma(\mathcal{D}_\Gamma)$,
		
		\item $\mathfrak{X}_P\circ P(z) = F\circ \mathfrak{X}_P(z),\quad \mathrm{for}\ z\in\mathcal{K}(P)$,
		
		\item $\mathfrak{X}_\Gamma\circ A_\Gamma(w) = F\circ \mathfrak{X}_\Gamma(w),\quad \mathrm{for}\ w\in
		\mathcal{D}_\Gamma$,\quad and

		\item\label{identifications} $\mathfrak{X}_P(z)=\mathfrak{X}_\Gamma(w)$ if and only if $z\sim w$ where $\sim$ is the equivalence relation on $\mathcal{K}(P)\sqcup \overline{\D}$ defined by Relation~\eqref{conf_mat_equiv_rel}.
	\end{enumerate}
\end{defn}

For a conformal mating $F$, we set
$$
\mathcal{K}(F):=\mathfrak{X}_P(\mathcal{K}(P)),\quad \mathcal{T}(F):=\mathfrak{X}_\Gamma(\D),\quad \mathrm{and}\quad \Lambda(F):=\mathfrak{X}_P(\mathbb{S}^1)=\mathfrak{X}_\Gamma(\mathbb{S}^1).
$$
We call the above sets the \emph{non-escaping set}, the \emph{tiling set}, and the \emph{limit set} of $F$, respectively.

\begin{theorem}\cite[Theorem~15.8, \S 15.1]{LLM24}\cite[Theorem~B, Proposition~4.1]{MM2}\label{conf_mating_bs_poly_thm}
Let $\Gamma$ be a (marked) Fuchsian group uniformizing a sphere with $d+1$ punctures.
Let $P$ be a degree $k=2d-1$ polynomial with connected Julia set which is either
\begin{itemize}
\item geometrically finite; or 
\item periodically repelling, finitely renormalizable.
\end{itemize}
Then there exists a conformal mating $F:\overline{\Omega}\to\widehat{\C}$, unique up to M{\"o}bius conjugation, between $P$ and $A_\Gamma$. Further, $\overline{\Omega}$ is homeomorphic to the quotient of $\overline{\D}$ by a finite geodesic lamination.
\end{theorem}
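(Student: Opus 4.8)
The plan is to construct the conformal mating $F$ via the quotient dynamical system and then uniformize using the David Integrability Theorem, following the strategy of the single-valued mating framework of \cite{MM2, LLM24}. First I would assemble the topological model: form the disjoint union $\mathcal{K}(P)\sqcup\overline{\D}$ and pass to the quotient $X := \left(\mathcal{K}(P)\sqcup\overline{\D}\right)/\!\sim$ under the equivalence relation of Relation~\eqref{conf_mat_equiv_rel}, where $\psi_P(\zeta)$ is glued to $\mathfrak{h}_\Gamma(\overline{\zeta})$ along the circle. Local connectedness of $\mathcal{J}(P)$ guarantees that $\psi_P$ extends continuously to $\mathbb{S}^1$, and the degree $k$ of $P$ matches the degree $k=2d-1$ of the Bowen--Series map $A_\Gamma$, so that $z^k|_{\mathbb{S}^1}$ provides a common semiconjugacy model. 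The boundary identification $\zeta\mapsto\overline{\zeta}$ intertwines the boundary values of $P|_{\mathcal{J}(P)}$ and $A_\Gamma|_{\mathbb{S}^1}$ through the conjugacies $\psi_P$ and $\mathfrak{h}_\Gamma$; this compatibility is exactly what makes the two partial dynamics descend to a single well-defined continuous map $F$ on the quotient. I would verify that $X$ is homeomorphic to $\widehat{\C}$ (or at least carries a natural sphere structure), which is where the description of $\overline{\Omega}$ as the quotient of $\overline{\D}$ by a finite geodesic lamination enters.

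Next I would put a conformal structure on $X$ that is $F$-invariant off the gluing locus. On the interiors $\Int{\mathcal{K}(P)}$ and $\D$ the structure is the standard one, and the maps $\mathfrak{X}_P$, $\mathfrak{X}_\Gamma$ are built as the uniformizing charts for these pieces. The key analytic step is to promote the topological gluing to a conformal one: one constructs a quasiconformal (or more precisely, a \emph{David}) homeomorphism straightening the welded structure. Since $A_\Gamma$ is only piecewise M\"obius and the conjugacy $\mathfrak{h}_\Gamma$ is merely a circle homeomorphism (not quasisymmetric, because parabolic cusps force the Beltrami coefficient to degenerate near the punctures), the classical Measurable Riemann Mapping Theorem is insufficient. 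Here I would invoke the David Integrability Theorem (cf. \cite{LMMN20}), checking that the relevant Beltrami coefficient satisfies the exponential David condition near the cusps. This produces an integrating map whose pushforward of the glued dynamics is a genuine meromorphic map $F$ on $\widehat{\C}$, complex-analytic on $\Omega$, and the charts $\mathfrak{X}_P$, $\mathfrak{X}_\Gamma$ become conformal on the respective interiors, yielding conditions \eqref{topo_cond}--\eqref{identifications} of Definition~\ref{conf_mat_def}.

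Uniqueness up to M\"obius conjugation follows by a standard pullback/rigidity argument: any two conformal matings induce the same boundary identifications and the same interior dynamics, so the map between them is conformal on each piece and continuous across the limit set $\Lambda(F)$, hence M\"obius by removability of the limit set (which has measure zero and is removable for the relevant David or quasiconformal class). The structural claim that $\overline{\Omega}$ is the quotient of $\overline{\D}$ by a finite geodesic lamination I would establish by identifying the lamination with the preimages under $\mathfrak{h}_\Gamma$ of the ray-equivalence classes of $P$; finiteness of the lamination reflects the finitely many cusps of $S_{0,d+1}$ together with the geometrically finite (resp. finitely renormalizable periodically repelling) hypothesis on $P$, which bounds the combinatorial complexity of the identifications.

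The main obstacle I expect is the David-regularity estimate near the parabolic cusps. The failure of $\mathfrak{h}_\Gamma$ to be quasisymmetric means the welding is genuinely non-quasiconformal, and one must carefully control the modulus of continuity of the conjugacy and the resulting distortion of the invariant Beltrami coefficient to confirm that the David condition (an $L^1$-exponential bound on the set where $\|\mu\|$ is close to $1$) actually holds. This is the technical heart cited from \cite{LLM24, MM2}; in the two allowed cases for $P$, the geometry of the cusps of $A_\Gamma$ matches the combinatorics of the Julia set well enough to guarantee integrability, but ruling out accumulation of distortion that would violate the David bound is precisely what requires the geometrically finite or finitely renormalizable hypotheses rather than a fully general $P$.
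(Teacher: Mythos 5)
The paper does not prove this result at all --- it is imported wholesale from \cite{LLM24} and \cite{MM2}, with only the strategy summarized in \S\ref{subsec:background} and \S\ref{corr_mating_subsec} --- and your sketch follows essentially that same route: the topological mating via Relation~\eqref{conf_mat_equiv_rel}, uniformization by the David integrability theorem (needed precisely because $\mathfrak{h}_\Gamma$ fails to be quasisymmetric at the parabolic points of $A_\Gamma$), and uniqueness via conformal removability of the relevant Julia/limit sets. The one mild misattribution is your explanation of the finiteness of the lamination: by Remark~\ref{fixed_ray_lami_rem} this is automatic, since only the finitely many angles $\pm j/(2d)$, $j\in\{0,\cdots,d\}$, can participate in the pinching of $\overline{\D}$; the hypotheses on $P$ (geometrically finite, or periodically repelling and finitely renormalizable) are what guarantee landing of rays, local connectivity, and the David estimates --- not a bound on the number of leaves.
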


\begin{remark}\label{fixed_ray_lami_rem}
The topology of the domain of definition $\overline{\Omega}$ of the conformal mating $F$ between a polynomial $P$ and a Fuchsian group $\Gamma$ only depends on the combinatorics of $P$, and not on $\Gamma$. Specifically, consider the set of angles $\mathcal{A}:=\{\pm\frac{j}{2d}:j\in\{0,\cdots,d\}\}\subset\R/\Z$ and the equivalence relation on $\mathcal{A}$ that identifies two angles if the associated external dynamical rays of $P$ land at the same point of $\mathcal{J}(P)$. {We can turn this  equivalence relation into a finite geodesic lamination on $\overline{\D}$ by connecting the points of an equivalence class by hyperbolic geodesics. We call this lamination a \emph{coarse lamination}.} The quotient of $\overline{\D}$ by this coarse lamination produces a topological model for $\overline{\Omega}$ (cf. \cite[\S 15.1, \S 15.5]{LLM24}).
\end{remark}

We reproduce \cite[Figures~8, 9]{MM2} here (see Figures~\ref{mating_1_fig} and~\ref{mating_2_fig}) to illustrate the topology of the domains of definition of matings between polynomials and Bowen-Series maps. Here,  $\overline{\Omega}$ is homeomorphic to the quotient of $\overline{\D}$ by a finite geodesic lamination as stated in Theorem~\ref{conf_mating_bs_poly_thm} above.
In the top figures, we depict the action of the Bowen-Series map $A_\Gamma$ (left) and the filled Julia set of the polynomial $P$ along with the corresponding dynamical rays of $P$ (right). In the bottom left figure, the domain of $A_\Gamma$ is shown outside $\D$ {(under conjugation of the upper figure by $z\mapsto 1/\overline{z}$)}. The boundary of this domain is marked in red. Inside the disk, we draw a blue hyperbolic geodesic connecting the ideal boundary points of the red polygon if the corresponding dynamical rays of $P$ land at the same point. In the bottom right figure, we show the domain of the conformal mating (boundary in red). Topologically, this domain is obtained by pinching the blue geodesics to points.

\begin{figure}[h!]
\captionsetup{width=0.96\linewidth}
\begin{tikzpicture}
\node[anchor=south west,inner sep=0] at (0,3.2) {\includegraphics[width=0.75\textwidth]{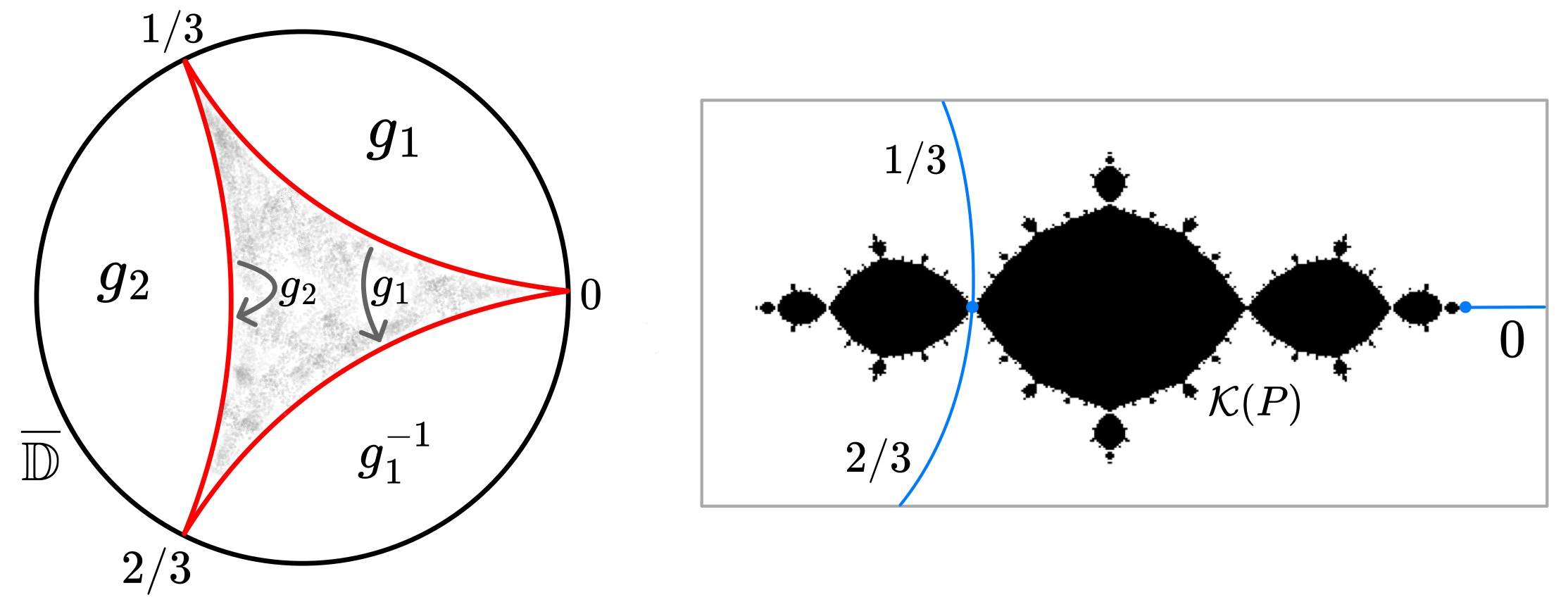}}; 
\node[anchor=south west,inner sep=0] at (0.6,0) {\includegraphics[width=0.66\textwidth]{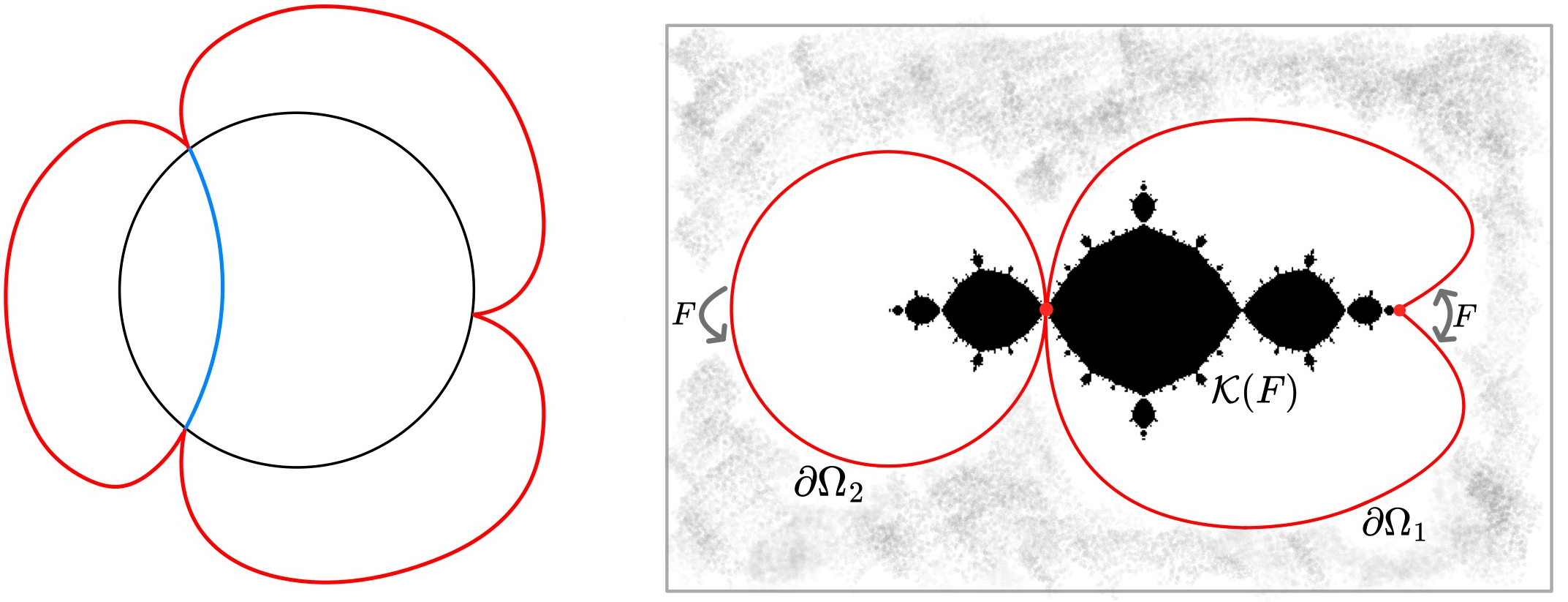}}; 
\end{tikzpicture}
\caption{The conformal mating of the Basilica polynomial $z^2-1$ with a Bowen-Series map of the sphere with $2$ punctures and an order $2$ orbifold point.}
\label{mating_1_fig}
\end{figure}

\begin{figure}[h!]
\captionsetup{width=0.96\linewidth}
\begin{tikzpicture}
\node[anchor=south west,inner sep=0] at (0,5.4) {\includegraphics[width=0.75\textwidth]{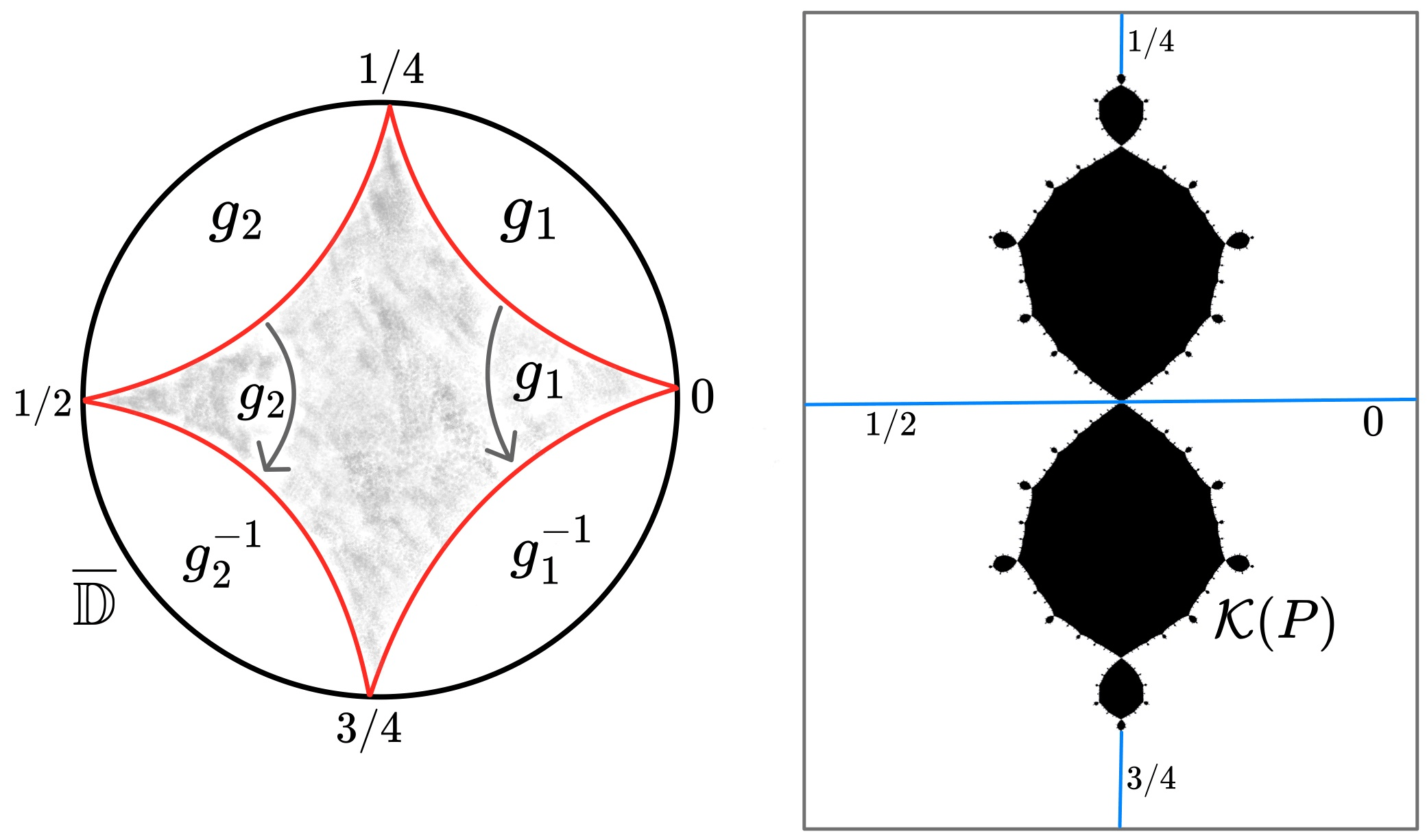}}; 
\node[anchor=south west,inner sep=0] at (0,0) {\includegraphics[width=0.75\textwidth]{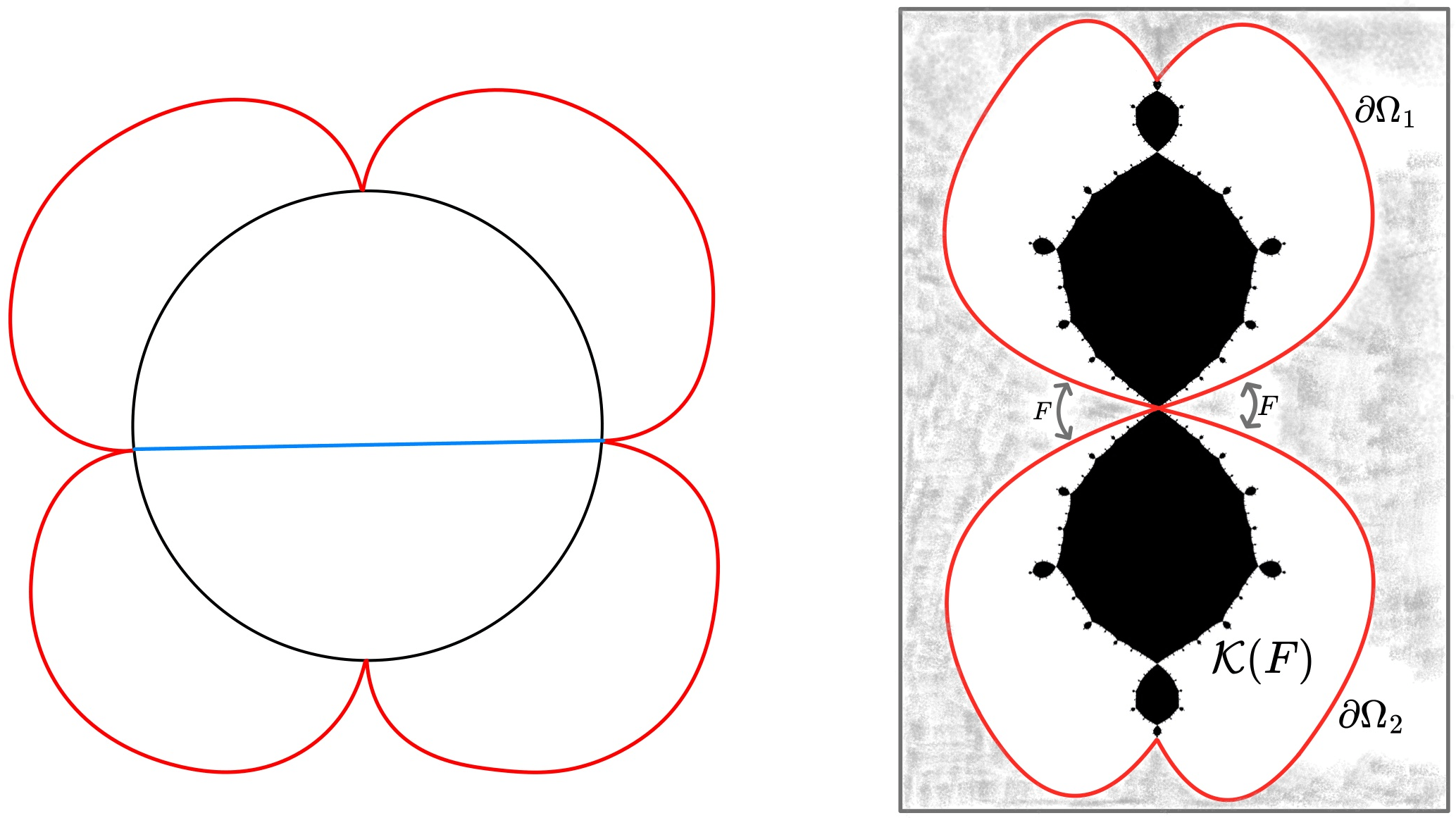}}; 
\end{tikzpicture}
\caption{The conformal mating of a cubic critically fixed polynomial with a Bowen-Series map of the thrice punctured sphere.}
\label{mating_2_fig}
\end{figure}

\subsection*{Step III: Algebraic description of conformal matings}
The promotion of the conformal matings described above to algebraic correspondences can be regarded as a process of globalization. To this end, one needs an algebraic description of the conformal matings. Heuristically speaking, the algebraicity of the matings is a consequence of algebraicity of rational maps and Kleinian groups.

Let us first remark that the domain of definition of a conformal mating $F$ between $P$ and $A_\Gamma$ may have more than one (but finitely many) interior components.
The following class of algebraic functions were introduced in \cite{LLM24}.

\begin{defn}\label{b_inv_def}
Let $\RV$ be a finite index set and $\{\Omega_a: a\in\RV\}$ be a disjoint collection of proper simply connected sub-domains of $\widehat{\C}$ such that $\Int{\overline{\Omega_a}}=\Omega_a$, $a\in\RV$, and let $\Omega:=\displaystyle\bigsqcup_{a\in\RV}\Omega_a$. Further, let $\mathfrak{S}\subset\partial\Omega$ be a (possibly empty) finite set such that $\partial^0\Omega:=\partial\Omega-\mathfrak{S}$ is a finite union of disjoint non-singular real-analytic curves.

\noindent The set $\Omega$ is called an \emph{inversive multi-domain} if it admits a continuous map $F:\overline{\Omega}\to\widehat{\C}$ satisfying the following properties.
\begin{enumerate}[leftmargin=*]
\item\label{mero_cond} $F$ is meromorphic on $\Omega$.
\item\label{permute_cond} For each $a\in\RV$, there exists $b\in\RV$ such that $F(\partial\Omega_a)=\partial\Omega_{b}$.
\item\label{inv_cond} $F:\partial\Omega\to\partial\Omega$ is an orientation-reversing involution preserving $\mathfrak{S}$; i.e., $F(\mathfrak{S})~=~\mathfrak{S}$.
\end{enumerate}
The map $F$ is called a \emph{B-involution} of the inversive multi-domain $\Omega$.

\noindent When $|\RV|=1$, the domain $\Omega$ is called an \emph{inversive domain}.
\end{defn}

We set $\eta(z):=1/z$.

\begin{theorem}\cite[Theorem~14.5, Proposition 15.4]{LLM24}\cite[Proposition~4.9]{MM2}\label{b_inv_thm}
\noindent\begin{enumerate}[leftmargin=*]
\item Let $F:\overline{\Omega}\to\widehat{\C}$ be a conformal mating of $P$ and $A_\Gamma$, where $\Gamma$ is a (marked) Fuchsian group uniformizing a sphere with $d+1$ punctures, and $P$ is a degree $2d-1$ polynomial with connected Julia set satisfying one of the conditions of Theorem~ \ref{conf_mating_bs_poly_thm}.

Then, {$\Omega$ is an inversive multi-domain, with set of connected components $\{\Omega_a:\ a\in\RV\}$ (where $\RV$ is a finite index set)}, and $F$ is a B-involution. 

\item For a B-involution $F:\overline{\Omega}=\displaystyle\overline{\bigsqcup_{a\in\RV}\Omega_a}\to\widehat{\C}$, there exist Jordan domains $\mathfrak{D}_a$ and rational maps $R_a$, $a\in\RV$, and an order two permutation $\tau$ of the set $\RV$,
such that the following hold.
\begin{enumerate}[leftmargin=*]
\item $\eta:\mathfrak{D}_a\to\widehat{\C}-\overline{\mathfrak{D}_{\tau(a)}}$ is a homeomorphism.
\item $\partial\mathfrak{D}_a$ is a piecewise non-singular real-analytic curve.
\item  $R_a:\mathfrak{D}_a\to\Omega_a$ is a conformal isomorphism.
\item $F\vert_{\Omega_a}\equiv R_{\tau(a)}\circ\eta\circ(R_a\vert_{\mathfrak{D}_a})^{-1}$.
\item $\displaystyle\sum_{a\in\RV} \deg(R_a)=2d$.
\end{enumerate}
\end{enumerate}
In particular, if $P$ lies in the principal hyperbolic component of {monic, centered,} degree $2d-1$ polynomials, then $F$ is a B-involution in an inversive domain and $1\in\partial\mathfrak{D}$.
\end{theorem}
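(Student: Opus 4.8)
The plan is to read off both assertions of the ``In particular'' clause — that $F$ is a B-involution of a single inversive domain, and that $1\in\partial\mathfrak{D}$ — from the structural facts already available, namely Theorem~\ref{conf_mating_bs_poly_thm}, Remark~\ref{fixed_ray_lami_rem}, and part~(2) of the present theorem. The one genuine input is that membership of $P$ in the principal hyperbolic component forces the coarse lamination of Remark~\ref{fixed_ray_lami_rem} to be trivial.

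First I would recall the standard description of $\cH_{2d-1}$: for $P\in\cH_{2d-1}$ all finite critical points lie in the immediate basin of the unique bounded attracting fixed point, so $P$ is hyperbolic (hence geometrically finite, and Theorem~\ref{conf_mating_bs_poly_thm} applies), $\mathcal{K}(P)$ is a closed quasidisk, and $\mathcal{J}(P)$ is a quasicircle. In particular the boundary extension of the B{\"o}ttcher coordinate $\psi_P\colon\mathbb{S}^1\to\mathcal{J}(P)$ is a homeomorphism, so no two distinct external angles co-land. Thus the identification relation on the angle set $\mathcal{A}=\{\pm j/2d : j\in\{0,\dots,d\}\}$ of Remark~\ref{fixed_ray_lami_rem} is trivial and the associated coarse lamination has no leaves.

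Next I would feed this into Theorem~\ref{conf_mating_bs_poly_thm}. Since $\overline{\Omega}$ is homeomorphic to the quotient of $\overline{\D}$ by this (now leafless) coarse lamination, it is homeomorphic to $\overline{\D}$ itself: a single closed topological disk. Hence $\Omega$ is connected, the index set $\RV$ is a singleton, and $F$ is a B-involution of an inversive (single) domain. Writing $\RV=\{a\}$, the order-two permutation $\tau$ of part~(2) must fix $a$, so the reflection statement in part~(2) specializes to saying that $\eta(z)=1/z$ restricts to a homeomorphism $\mathfrak{D}\to\widehat{\C}-\overline{\mathfrak{D}}$, where $\mathfrak{D}=\mathfrak{D}_a$ is a Jordan domain.

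Finally, to place the point $1$, I would use this self-reflection property directly. The Jordan domain gives a trichotomy $\widehat{\C}=\mathfrak{D}\sqcup\partial\mathfrak{D}\sqcup(\widehat{\C}-\overline{\mathfrak{D}})$, and $\eta$ swaps the two open pieces while preserving $\partial\mathfrak{D}$. As $\eta(z)=1/z$ fixes exactly $z=\pm1$, a fixed point can lie neither in $\mathfrak{D}$ (its image would then lie in both $\mathfrak{D}$ and the disjoint set $\widehat{\C}-\overline{\mathfrak{D}}$) nor in $\widehat{\C}-\overline{\mathfrak{D}}$ by the symmetric argument; hence $\pm1\in\partial\mathfrak{D}$, and in particular $1\in\partial\mathfrak{D}$. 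I expect the only real work to be Steps~1--2, i.e.\ converting ``principal hyperbolic component'' into ``leafless lamination'' into ``single component'' through the cited topological model; the statement about $1$ is then a formal consequence of the reflection structure, and the distinction between the two fixed points $\pm1$ can be settled, if desired, by matching $1$ with the distinguished $F$-fixed point coming from the normalizations $\mathfrak{h}_\Gamma(1)=1$ and $\psi_P(1)$ being the landing point of the $0$-ray.
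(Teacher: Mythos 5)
Your proposal is correct, but note that the paper itself offers no proof of this statement: it is imported wholesale from \cite[Theorem~14.5, Proposition~15.4]{LLM24} and \cite[Proposition~4.9]{MM2}, the latter covering precisely the principal-hyperbolic-component case, so the ``In particular'' clause is part of the citation rather than something the paper derives. What you do differently is to make the logical dependence explicit: you deduce the final clause from parts (1)--(2) together with Theorem~\ref{conf_mating_bs_poly_thm} and Remark~\ref{fixed_ray_lami_rem}. Each of your three steps is sound: for $P\in\cH_{2d-1}$ the Julia set is a quasicircle, so the landing map $\mathbb{S}^1\to\mathcal{J}(P)$ is injective and the identification relation on $\mathcal{A}$ is trivial; hence $\overline{\Omega}\cong\overline{\D}$, the index set $\RV$ is a singleton and $\tau$ is forced to be the identity; and since $\eta$ then interchanges $\mathfrak{D}$ with $\widehat{\C}-\overline{\mathfrak{D}}$ and is an involution, its fixed points $\pm 1$ can lie in neither open piece of the decomposition $\widehat{\C}=\mathfrak{D}\sqcup\partial\mathfrak{D}\sqcup(\widehat{\C}-\overline{\mathfrak{D}})$, forcing $\pm 1\in\partial\mathfrak{D}$. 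This buys a self-contained reduction of the special case to the general statement, whereas the references obtain the single-domain description directly in the course of constructing the uniformizing rational map. One cosmetic remark: your argument already places both $1$ and $-1$ on $\partial\mathfrak{D}$, so the closing discussion about distinguishing the two fixed points of $\eta$ via the normalizations $\mathfrak{h}_\Gamma(1)=1$ and the $0$-ray is superfluous for the claim as stated.
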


\subsection*{Step IV: Lifting conformal matings to algebraic correspondences}\label{corr_from_conf_mating_subsec}
Finally, one uses the algebraic description of conformal matings between polynomials and Fuchsian groups to define algebraic correspondences on trees of Riemann spheres. Thanks to the hybrid structure of the conformal matings, the correspondences
thus constructed turn out to be matings of polynomials and Fuchsian groups.

Let the polynomial $P$, the Fuchsian punctured sphere group $\Gamma$, the conformal mating $F:\overline{\Omega}=\overline{\bigsqcup_{a\in \RV}\Omega_a}\to\widehat{\C}$, the involution $\tau:\RV\to\RV$, the rational maps $R_a$, and the Jordan domains $\mathfrak{D}_a$, $a\in\RV$, be as in Theorem~\ref{b_inv_thm}.  In particular, $P$ has degree $(2d-1)$.
We set $\delta(a):=\deg(R_a)$, so $\sum_{a\in \RV} \delta(a)=2d$. The degrees of the rational maps $R_a$ can be read from the coarse lamination described in Remark~\ref{fixed_ray_lami_rem} (see \cite[\S 15.2]{LLM24}).

Let $\RT$ be the graph with vertex set $\RV$ such that $[a, b]$ is an edge in $\RT$ if and only if $\partial \Omega_a \cap \partial \Omega_b \neq \emptyset$.
Since $\overline{\Omega}$ is homeomorphic to the quotient of $\overline{\D}$ by a finite geodesic lamination by Theorem \ref{conf_mating_bs_poly_thm}, $\RT$ is a finite tree.
Let $(\RT, \widehat\C^\RV)$ be a marked tree of Riemann spheres with the markings defined as follows (cf. \cite[\S~15.5]{LLM24}). For two adjacent vertices $a,b\in\RV$, there exist unique points $x\in\partial\mathfrak{D}_a\subset\widehat{\C}_a$ and $y\in\partial\mathfrak{D}_b\subset\widehat{\C}_b$ such that $\{R_a(x)\}=\{R_b(y)\}=\partial\Omega_a\cap\partial\Omega_b$. We attach an edge between $\widehat{\C}_a$ and $\widehat{\C}_b$ (in $\widehat{\C}^\RV$) at the points $x, y$ (see Definition~\ref{defn:trs}). In the language of Definition~\ref{def-tangentdirn}, this is equivalent to saying that $\xi_a(v)=x$ and $\xi_b(w)=y$, where $v\in T_a\RT$ is the tangent vector at $a$ in the direction
of $b$, and $w\in T_b\RT$ is the tangent vector at $b$ in the direction
of $a$.
The involution $\tau$ on $\RV$ induces an involution $\pmb{\tau}$ on the tree $\RT$.
{We define an involution on the tree of Riemann spheres}
$$
(\pmb{\tau},\pmb{\eta}): (\RT, \widehat\C^\RV) \longrightarrow (\RT, \widehat\C^\RV)
$$
where
\begin{itemize}
    \item $\pmb{\tau}: (\RT, \RV)\longrightarrow (\RT, \RV)$ is the induced involution; and
    \item $\eta_a: \widehat{\C}_a \longrightarrow \widehat{\C}_{\tau(a)}$ is given by $\eta_a(z) = \frac{1}{z}$.
\end{itemize}
Let $\pmb{R}: (\RT, \widehat\C^\RV) \longrightarrow \widehat{\C}$ be the rational map from $(\RT, \widehat\C^\RV)$ to the trivial tree of Riemann spheres, defined as $R_a: \widehat{\C}_a \longrightarrow\widehat{\C}$. The markings on $(\RT,\widehat{\C}^\RV)$ specified above ensure that the map $\pmb{R}$ satisfies the conditions of Definition~\ref{defn:trs-map}. We refer to the rational map $\pmb{R}$ as the \emph{uniformizing rational map} for the B-involution $F$.
We define an associated correspondence on $(\RT, \widehat\C^\RV)$ by the equation
$$
\mathfrak{C} = \mathfrak{C}_{\pmb{R}}:= \{(x,y) \in \widehat\C^\RV \times \widehat\C^\RV: \frac{\pmb{R}(x) - \pmb{R}\circ \pmb{\eta}(y)}{x-\pmb{\eta}(y)} = 0\}.
$$
By construction, $\mathfrak{C}$ defines a correspondence of bi-degree $(2d-1, 2d-1)$ on the tree of spheres $(\RT, \widehat\C^\RV)$. We can partition $(\RT, \widehat\C^\RV)$ into
$$
\widetilde{\mathcal{K}}:=\pmb{R}^{-1}(\mathcal{K}(F))\quad \mathrm{and}\quad  \widetilde{\mathcal{T}}:= \pmb{R}^{-1}(\mathcal{T}(F)).
$$
These sets, called the \emph{non-escaping set} and the \emph{tiling set} of the correspondence, are completely invariant under $\mathfrak{C}$. 
Their common boundary is the \emph{limit set}
$\widetilde{\Lambda}:=\pmb{R}^{-1}(\Lambda(F))$.
\begin{theorem}\label{corr_mating_thm_1}
Let $F:\overline{\Omega}\to\widehat{\C}$ be a conformal mating of $P$ and $\Gamma$, where $\Gamma$ is a (marked) Fuchsian group uniformizing a sphere with $d+1$ punctures, and $P$ is a degree $2d-1$ polynomial with connected Julia set satisfying one of the conditions of Theorem~ \ref{conf_mating_bs_poly_thm}.
Then the correspondence $\mathfrak{C}$ on the tree of spheres $(\RT, \widehat\C^\RV)$ is a mating of $P$ and $\Gamma$; i.e.,
{\begin{enumerate}[leftmargin=*]
	\item On $\widetilde{\cT}$, the dynamics of $\mathfrak{C}$ is equivalent to the action of a group of conformal automorphisms acting properly discontinuously with $\cT/\mathfrak{C}$ biholomorphic to $\D/\Gamma$.
	
	\item $\widetilde{\cK}$ is the union of two copies $\widetilde{\cK}^+, \widetilde{\cK}^-$ of $\cK(P)$ (where $\cK(P)$ is the filled Julia set of $P$), such that 
    \begin{enumerate}
        \item $\widetilde{\cK}^+$ and $\widetilde{\cK}^-$ intersect in finitely many points, and 
        \item $\mathfrak{C}$ has a forward (respectively, backward) branch carrying $\widetilde{\cK}^+$ (respectively, $\widetilde{\cK}^-$) onto itself with degree $2d-1$, and this branch is topologically conjugate (conformally on the interior) to $P\vert_{\mathcal{K}(P)}$. 
    \end{enumerate} \end{enumerate} }
\end{theorem}
\noindent  We refer the reader to Figure~\ref{corr_fig} for an illustration and to \cite[Definition~1.8]{LLM24} for the precise definition of a correspondence realizing the mating of a polynomial and a Fuchsian group.
\begin{figure}[ht!]
\captionsetup{width=0.96\linewidth}
\begin{tikzpicture}
\node[anchor=south west,inner sep=0] at (0,0) {\includegraphics[width=0.98\textwidth]{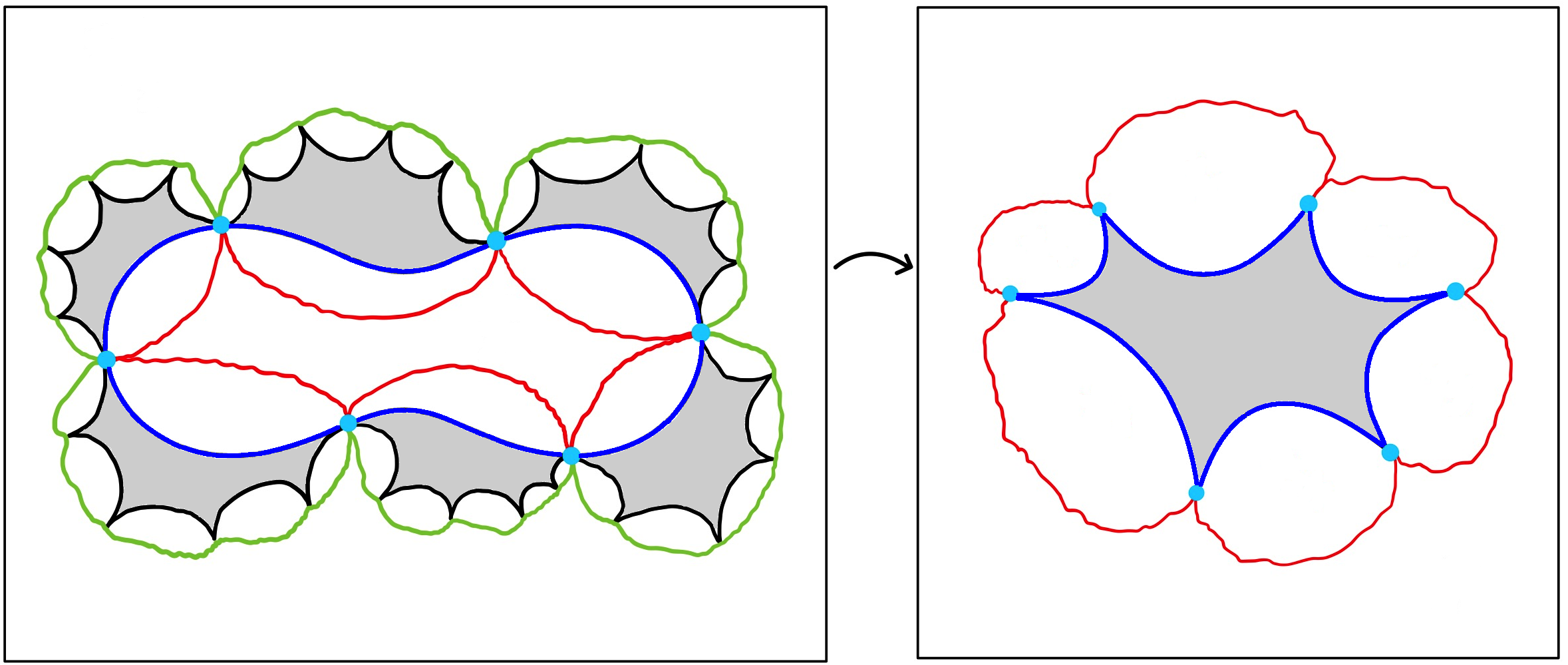}}; 
\node at (1.2,4.75) {\begin{large}$\widetilde{\mathcal{K}}^-$\end{large}};
\node at (4,2.64) {\begin{large}$\widetilde{\mathcal{K}}^+$\end{large}};
\node at (11.4,0.6) {\begin{large}$\mathcal{K}(F)$\end{large}};
\node at (9.8,2.64) {\begin{large}$\Omega^c$\end{large}};
\node at (10.8,4.44) {\begin{large}$\Lambda(F)$\end{large}};
\node at (5.84,2.675) {\begin{tiny}$1$\end{tiny}};
\node at (0.45,2.36) {\begin{tiny}$-1$\end{tiny}};
\node at (6.9,3.5) {\begin{large}$\pmb{R}$\end{large}};
\node at (1,0.4) {\begin{large}$\partial\mathfrak{D}$\end{large}};
\draw [thin,->] (1.05,0.6) -- (1.5,1.66);
\end{tikzpicture}
\caption{{The dynamical plane of the conformal mating $F$ {(right)} and the correspondence $\mathfrak{C}$ {(left)}, for some $P\in\cH_5$ and $\Gamma\in\mathrm{Teich}(S_{0,4})$, are displayed. The rational map $R$, that mediates between the $\mathfrak{C}$-plane and the $F$-plane, is injective on the interior of the blue curve (marked as $\partial\mathfrak{D}$).}}
\label{corr_fig}
\end{figure}
\begin{remark}\label{fixed_ray_lami_tree_rem}
As explained in Remark~\ref{fixed_ray_lami_rem} and \cite[Lemma~15.5]{LLM24},
the topology of $\overline{\Omega}$, and hence the topology of the tree of spheres $(\RT, \widehat\C^\RV)$, as well as the degree of the rational map $\pmb{R}$ on each sphere of $(\RT, \widehat\C^\RV)$ can be read off purely from the landing patterns of finitely many dynamical rays of $P$ (i.e., the coarse lamination).
\end{remark}

\begin{example}\label{degree_three_signature_example}
Consider the correspondence arising from the mating illustrated in Figure~\ref{mating_1_fig}. In this case, $(\RT, \widehat\C^\RV)$ consists of two spheres, the involution $(\pmb{\tau},\pmb{\eta})$ fixes each of these two spheres, and the rational map $\pmb{R}$ has degree $2$ on one sphere and degree $1$ on the other.   
\end{example}

\begin{example}\label{degree_four_signature_example}
Consider the correspondence arising from the mating illustrated in Figure~\ref{mating_2_fig}. Here also, $(\RT, \widehat\C^\RV)$ consists of two spheres, but the involution $(\pmb{\tau},\pmb{\eta})$ interchanges these two spheres. The rational map $\pmb{R}$ has degree $2$ on each of the two spheres. 
\end{example}

\subsection{Character variety}\label{char_var_subsec}
Motivated by the mating construction, in this subsection, we define the character variety which serves as the ambient space where all mating correspondences live. As explained above, the structure of the tree of spheres on which the mating correspondences are defined as well as the local/global degrees of the rational maps that define these correspondences depend on the combinatorics of the polynomials being mated with Fuchsian groups. We package this topological/combinatorial information in the so-called \emph{characteristic data} defined below. Finally, we assemble the parameter spaces of correspondences associated with various characteristic data to form a naturally stratified character variety that contains all the mating correspondences. This assembly  will primarily be used in \S~\ref{vertical_compact_subsec}.

Let $(\RT, \widehat\C^\RV)$ be a finite tree of Riemann spheres.
We {introduce a class of involutions} $(\pmb{\tau},\pmb{\eta}): (\RT, \widehat\C^\RV) \longrightarrow (\RT, \widehat\C^\RV)$ where
\begin{itemize}[leftmargin=*]
    \item $\pmb{\tau}: (\RT, \RV)\longrightarrow (\RT, \RV)$ is an involution; and
    \item $\eta_a: \widehat{\C}_a \longrightarrow \widehat{\C}_{\tau(a)}$ is given by $\eta_a(z) = 1/z$ (where $\tau:\RV\to\RV$ is the restriction of $\pmb{\tau}$ on the vertex set).
\end{itemize}
Let $\delta: \RV \longrightarrow \Z_{\geq 1}$ be a function, which we call the {\em degree function}.
Let $\overline{\delta}_a: T_a\RT \longrightarrow \{1,\cdots, \delta(a)\}, a\in \RV$, be a collection of functions, which we call the {\em local degree function}.
We define the {\em characteristic data} as the collection 
$$
\kappa = \left((\pmb{\tau}, \pmb{\eta}): (\RT, \widehat\C^\RV) \longrightarrow (\RT, \widehat\C^\RV), \delta, (\overline{\delta}_a)_{a\in \RV}\right).
$$
We remark that the characteristic data $\kappa$ encodes the following information.
\begin{enumerate}[leftmargin=*]
    \item the structure of the tree of spheres $(\RT, \widehat\C^\RV)$ (on which the relevant correspondences will be defined),
    \item the involution $(\pmb{\tau},\pmb{\kappa})$ on this tree of spheres (which goes into the definition of the correspondences),
    \item the positive integers $\delta(a)$, $a\in\RV$ (which will determine the degrees of the relevant rational maps on each sphere), and
    \item the positive integers $\overline{\delta}_a(v)$, $a\in\RV, v\in T_a\RT$ (which record the local degrees of the rational maps at the marked singular points on each sphere).
\end{enumerate}
We define the degree of the characteristic data $\kappa$ by
$$
\deg(\kappa) := \sum_{a \in \RV} \delta(a).
$$

\begin{defn}\label{rat_kappa_def}
Let us fix a characteristic data $\kappa$. We define $\Rat_\kappa(\C)$ to be the space of rational maps $\pmb{R}: (\RT, \widehat\C^\RV) \longrightarrow \widehat{\C}$ with $\deg(R_a) =\delta(a)$ and $\deg_{\xi_a(v)}(R_a) = \overline{\delta}_a(v)$, for $a\in\RV, v\in T_a\RT$, where $\xi_a: T_a \RT \longrightarrow \widehat{\C}_a$ is the marking of the singular points.
\end{defn}
We remark that here the codomain is the trivial tree of Riemann sphere. The tree map sends $\RT$ to the unique vertex. 

Let $\pmb{R} \in \Rat_\kappa(\C)$. We define the associated correspondence on $(\RT, \widehat\C^\RV)$ by the equation
\begin{equation}
\mathfrak{C} = \mathfrak{C}_{\pmb{R}}:= \{(x,y) \in \widehat\C^\RV \times \widehat\C^\RV: \frac{\pmb{R}(x) - \pmb{R}\circ\pmb{\eta}(y)}{x-\pmb{\eta}(y)} = 0\}.
\label{corr_eqn}
\end{equation}
We also call $\pmb{R}$ the {\em uniformizing rational map} for $\mathfrak{C}$.
Two such correspondences $\mathfrak{C}, \widetilde{\mathfrak{C}}$ are said to be \emph{conjugate} if there exists an automorphism $M$ on $(\RT, \widehat\C^\RV)$ 
{(necessarily acting on the underlying tree $\RT$ by a simplicial automorphism)} so that $\widetilde{\mathfrak{C}} = M^{-1} \circ \mathfrak{C} \circ M$; i.e.,
$$
\widetilde{\mathfrak{C}} = \{(x,y) \in \widehat\C^\RV \times \widehat\C^\RV: (M(x), M(y)) \in \mathfrak{C}\}.
$$

Denote by $\Aut_{(\pmb{\tau},\pmb{\eta})}(\RT, \widehat\C^\RV)$ the group of automorphisms of $(\RT, \widehat\C^\RV)$ that commutes with $(\pmb{\tau},\pmb{\eta})$.
We introduce an equivalence relation on $\Rat_\kappa(\C)$ by
$$
\pmb{R} \sim \widetilde{\pmb{R}} \quad \mathrm{if}\quad \widetilde{\pmb{R}}=M_2\circ \pmb{R}\circ M_1, 
$$
where $M_1\in \Aut_{(\pmb{\tau},\pmb{\eta})}(\RT, \widehat\C^\RV)$ and $M_2\in \Aut(\widehat{\C})$.
It follows from the definition that $\pmb{R} \sim \widetilde{\pmb{R}}$ if and only if $\mathfrak{C}_{\pmb{R}}$ and $\mathfrak{C}_{\widetilde{\pmb{R}}}$ are conjugate.
We refer to this moduli space of correspondences, or equivalently, its parameter space $\Rat_\kappa(\C)/\!\!\sim$, as the {\em character variety} with characteristic data $\kappa$.

{
\subsubsection{Simple, regular and exceptional locus}
We now define the regular and exceptional loci of the above character variety. These subsets will play an important role in establishing the Hausdorffness of the appropriate quotient topology. Characteristic data arising from mating correspondences satisfy the regularity and simplicity condition, stated below (see Proposition~\ref{total_space_regular_lem}).
\begin{defn}\label{simple_sign_def}
    Let $\kappa = \left((\pmb{\tau}, \pmb{\eta}): (\RT, \widehat\C^\RV) \longrightarrow (\RT, \widehat\C^\RV), \delta, (\overline{\delta}_a)_{a\in \RV}\right)$ be some given characteristic data.
    Then $\kappa$ is called \emph{regular} if 
    \begin{enumerate}[leftmargin=*]
%        \item there exists $a \in \RV$ with $\delta_a > 1$; and 
        \item\label{reg:item:1} for any two adjacent vertices $a, b\in \RV$, then $\max \{\delta_a, \delta_b\} \geq 2$; and
        \item\label{reg:item:2} for any $a \in \RV$ with $\delta_a > 1$ and any $v \in T_a \RT$, we have $\overline{\delta}_a(v) < \delta_a$.
    \end{enumerate}
    Also, $\kappa$ is called {\em exceptional} if it is not regular.
    It is called {\em simple} if for any $a \in \RV$, and $v \in T_a \RT$, we have $\overline{\delta}_a(v) = 1$.
    
    Let $\pmb{R} \in \Rat_\kappa(\C)$.
    We say that $\pmb{R}$ is {\em exceptional} if 
    \begin{itemize}[leftmargin=*]
        \item either $\kappa$ is exceptional; or
        \item $\RT$ is trivial, and the local degree of $\pmb{R}$ at a fixed point of $\pmb{\eta}(z)=\frac1z$ 
        is equal to the degree of $\pmb{R}$.
    \end{itemize}
    We say that $\pmb{R}$ is {\em regular} if it is not exceptional. We denote the set of regular maps by $\Rat^{reg}_\kappa(\C) \subseteq \Rat_\kappa(\C)$.
\end{defn}
\begin{rmk}\label{rem:regular}
Since $\Rat_\kappa(\C)/\!\!\sim$ is a singleton when $\deg(\kappa)=1$, we will always assume that $\deg(\kappa)\geq 2$. By definition, if $\RT$ is trivial, then $\kappa$ is regular. Further, if $\kappa$ is simple and satisfies condition \eqref{reg:item:1}, then $\kappa$ is regular. 
Note that by definition
$$
\Rat^{reg}_\kappa(\C)  = \begin{cases}
    \Rat_\kappa(\C) & \text{ if $\kappa$ is regular and $\RT$ is non-trivial,}\\
    \text{a proper subset of $\Rat_\kappa(\C)$} & \text{ if $\RT$ is trivial,}\\
    \emptyset & \text{ if $\kappa$ is exceptional.}
\end{cases}
$$
\end{rmk}
}

\begin{lem}\label{lem:HausdorffRat}
    Let $\pmb{R}\in \Rat_\kappa(\C)$ be regular. Then the orbit of $\pmb{R}$ under the group $\Aut(\widehat{\C}) \times \Aut_{(\pmb{\tau},\pmb{\eta})}(\RT, \widehat\C^\RV)$ is closed in $\Rat_\kappa(\C)$.
\end{lem}
\begin{proof}
    Suppose for contradiction that the orbit is not closed. Then there exists $(L_n, \pmb M_n) \in \Aut(\widehat{\C}) \times \Aut_{(\pmb{\tau},\pmb{\eta})}(\RT, \widehat\C^\RV)$ so that 
    \begin{itemize}
        \item $L_n \circ \pmb R \circ \pmb M_n \to \pmb S \in \Rat_\kappa(\C)$; and
        \item $\pmb S$ is not in the orbit of $\pmb R$.
    \end{itemize}

    \noindent \textbf{Case (I):} Suppose that $\RT$ is trivial. In this case, $\pmb R = R \in \Rat_d(\C)$ is a rational map on $\widehat{\C}$ and $\pmb M_n = M_n$ is a M\"obius map.
    After a change of coordinates, we assume that $\eta(z) = -z$.
    Since $M_n$ commutes with $\eta$, after passing to a subsequence if necessary, we conclude that $M_n(z) = \lambda_n z$ or $M_n(z) = \lambda_n/z$ for all $n$. 
    Let us assume that we are in the first case, the second case can be handled by applying the same argument to the regular rational map $R(\frac1z)$.
    Suppose $\lambda_n$ is contained in a compact set of $\C -\{0\}$. Then $M_n$ is bounded. Since $S \in \Rat_d(\C)$, $L_n$ is also bounded by Lemma~\ref{lem:rl}. Therefore, after passing to a subsequence, we may assume $M_n \to M$ and $L_n \to L$. Then $S = L \circ R \circ M$, which is a contradiction.
    Therefore, after passing to a subsequence, $\lambda_n \to 0$ or $\lambda_n \to \infty$.
    Let us assume that $\lambda_n \to 0$. (The case $\lambda_n \to \infty$ can be treated similarly.) Since $\lambda_n \to 0$, the rescaling $M_n$ amounts to zooming in at $0$. 
    Since $R$ is regular, by definition, the local degree $e$ of $R$ at $0$ is strictly less than $\deg(R) = d$. It is not hard to see, by an explicit computation using the Taylor expansion of $R$ at $0$, that the rescaling limit $S$ has degree equal to the local degree of $R$ at $0$. In fact, $S = A(z^e)$ for some M\"obius map $A$.
    This is a contradiction to the assumption $S \in \Rat_d(\C)$. Therefore, the orbit is closed.
\smallskip

\noindent \textbf{Case (II):} Suppose that $\RT$ is non-trivial.
    After passing to a subsequence, we assume that the maps induced by $\pmb M_n$ on $\RT$ are the same for all $n$.
    Since $\kappa$ is regular, let $a, b \in \RV$ be so that $\delta_b > 1$ and $M_{a,n}: \widehat{\C}_a \longrightarrow \widehat{\C}_b$.
\smallskip

\noindent \textbf{Case (IIa):} Suppose that the singular set $\Xi_a$ contains three or more points. Since $L_n \circ \pmb R \circ \pmb M_n \to \pmb S \in \Rat_\kappa(\C)$, we conclude that the limit of $M_{a,n}(\Xi_a)$ contains the same number of points. Thus, $M_{a,n}$ is bounded. This implies that $L_n$ is bounded too. Therefore, $M_{v,n}$ is bounded for all $v \in \RV$ by Lemma~\ref{lem:rl}. After passing to a subsequence, we assume that $L = \lim L_n$ and $\pmb M = \lim \pmb M_n$. Then $\pmb S = L \circ \pmb R \circ \pmb M$.
    Hence $\pmb S$ is in the orbit of $\pmb R$, which is a contradiction.  
\smallskip

\noindent \textbf{Case (IIb):} Suppose that the singular set $\Xi_a$ consists of two points. After a change of coordinate, we may assume that $\Xi_a = \{0, \infty\} \subseteq \widehat{\C}_a$ and $\Xi_b = \{0, \infty\} \subseteq \widehat{\C}_b$. Then $M_{a,n}(\{0, \infty\}) = \{0, \infty\}$. Thus, after passing to a subsequence, $M_{a,n} = \lambda_n z$ or $M_{a,n} = \lambda_n/z$ for all $n$. Since $\kappa$ is regular, a similar argument as in Case(I) gives the contradiction.
\smallskip

\noindent \textbf{Case (IIc):} Suppose that the singular set $\Xi_a$ consists of a single point. After a change of coordinates, we assume $\Xi_a = \{\infty\} \subseteq \widehat{\C}_a$ and $\Xi_b = \{\infty\}\subseteq \widehat{\C}_b$. Therefore, $M_{a,n}(z) = \lambda_n z + c_n$. 
    By the same argument as before, we can assume that $M_{a,n}$ is unbounded.
    Then a simple computation gives that $L_n \circ R_b \circ M_{a,n}(z)$ converges to $A(z^e)$ for some M\"obius map $A$ and $e \leq \delta_b$. Similarly, $L_n \circ R_a \circ M_{b,n}(z)$ converges to $B(z^f)$ for some M\"obius map $B$ and $f \leq \delta_a$.
    Since $S_a(z) = A(z^e)$, we have that $\delta_a = e \leq \delta_b$.
    Similarly, $\delta_b = f \leq \delta_a$.
    Therefore, $e = f = \delta_a = \delta_b$,
    and the local degree of $S_b$ at the singular point $\infty$ is $f = \delta_b > 1$. This is a contradiction {to the fact that $\pmb{S}\in\Rat_\kappa(\C)$ and that $\kappa$ is regular}.
\end{proof}

\begin{cor}\label{cor:HausdorffReg}
    The regular character variety $\Rat^{reg}_\kappa(\C)/\!\!\sim$ is Hausdorff.
\end{cor}

\begin{comment}
    The topology of $\Rat_\kappa(\C)/\!\!\sim$ will be  described in \S~\ref{singnature_order_subsec} and \S~\ref{sec-stratfn} below. Towards this end, we shall first 
describe a partial order on characteristic data in \S~\ref{singnature_order_subsec} in terms of a map $\pi$ between the associated trees of Riemann spheres. It will turn out that $\Rat_\kappa(\C)/\!\!\sim$ is naturally stratified
In constructing a Hausdorff topology on the (classical) character variety for surface group representations into an algebraic group, one needs to pass to a GIT quotient. In the present setup, we identify the property of $\pi$ that obstructs Hausdorffness in Definition~\ref{reg_dom_def} below. Maps $\pi$ that avoid this obstruction will be used to prove Hausdorffness below
\end{comment}

\subsubsection{A partial order on characteristic data}\label{singnature_order_subsec}
Let $\kappa_1$ and $\kappa_2$ be two characteristic data of the same degree. We say $\kappa_1$ is dominated by $\kappa_2$ under $\pi$, denoted by $\kappa_1 \prec_\pi \kappa_2$, if $\pi: (\RT_2, \RV_2) \longrightarrow (\RT_1, \RV_1)$ is a surjective map so that 
\begin{itemize}
    \item $\pmb{\tau}_1 \circ \pi = \pi \circ \pmb{\tau}_2$;
    \item for any vertex $a \in \RV_1$, $\pi^{-1}(a)$ is a subtree of $\RT_2$, and 
    $$
    \delta_1(a) = \sum_{b\in \RV_2\cap\pi^{-1}(a)} \delta_2(b).
    $$
\end{itemize}
We denote the fibers by
$$
\RT_a = \pi^{-1}(a) \quad \text{ and }\quad \RV_a = \RV_2 \cap \RT_a.
$$
We remark that the projection map between $\kappa_2$ and $\kappa_1$ may not be unique.

\begin{comment}

\begin{defn}\label{reg_dom_def}
    {Let $\kappa_i = ((\pmb{\tau}_i, \pmb{\eta}_i): (\RT_i, \widehat\C^{\RV_i}) \longrightarrow (\RT_i, \widehat\C^{\RV_i}), \delta_i, (\overline{\delta}_{i,a})_{a\in \RV_i})$, $i = 1,2$.}
    Suppose $\kappa_1 \prec_\pi \kappa_2$. We say $\pi$ is {\em exceptional} if there exists $c \in \RV_1$ so that
    \begin{enumerate}
      { \item $\RV_c$ consists of exactly two vertices $a, b$; and
        \item $\overline{\delta}_{2,a}(v_b) = \delta_2(a)$ and $\overline{\delta}_{2,b}(v_a) = \delta_2(b)$ where $v_b \in T_{a}\RT_2$ is the tangent vector in the direction of $b$, and $v_a \in T_{b}\RT_2$ is the tangent vector in the direction of $a$.}
    \end{enumerate}
    It is called {\em regular} otherwise.
\end{defn}

\end{comment}

\subsubsection{A stratification of character varieties}\label{sec-stratfn}
The character varieties for various characteristic data of the same degree are naturally stratified.
Suppose that $\kappa_1\prec_\pi~\kappa_2$. Then the character variety $\Rat_{\kappa_2}(\C)/\!\!\sim$ can be attached to the character variety $\Rat_{\kappa_1}(\C)/\!\!\sim$ at infinity.
We define the topology in the following way (c.f. Definition \ref{defn:cvrationalmap}). 

\begin{defn}\label{char_var_top_def}
Let $\kappa_1, \kappa_2$ be two characteristic data of the same degree. Suppose $\kappa_1 \prec_\pi \kappa_2$.
Let $\pmb{R}_n \in \Rat_{\kappa_1}(\C)$ and $\pmb{R} \in \Rat_{\kappa_2}(\C)$. We say $\pmb{R}_n$ converges to $\pmb{R}$ if for any vertex $a \in \RV_1$, $R_{a,n}: \widehat{\C}\longrightarrow \widehat{\C}$ converges to $\pmb{R}:(\RT_a, \widehat{\C}^{\RV_a}) \longrightarrow \widehat{\C}$. 
More precisely, this means that there exist sequences $(M_{b,n})_n, b\in\RV_2$, such that for each $n\geq 1$, the tuple $\{M_{b,n}: b\in\RV_2\}\in \Aut_{(\pmb{\tau_2},\pmb{\eta_2})}(\RT_2, \widehat\C^{\RV_2})$, and for any $a\in \RV_1$, 
\begin{itemize}
    \item the M{\"o}bius maps $M_{b,n}$, where $b \in \RV_a \subseteq \RV_2$, are rescalings for $(\RT_a, \widehat{\C}^{\RV_a})$ (see Definition~\ref{defn:cvrationalmap}); and
    \item $R_{a,n} \circ M_{b,n} \to R_b$ compactly away from the singular set .
\end{itemize}

We say that $[\pmb{R}_n] \in \Rat_{\kappa_1}(\C)/\!\!\sim$ converges to $[\pmb{R}] \in \Rat_{\kappa_2}(\C)/\!\!\sim$ if $\pmb{R}_n \to \pmb{R}$ for some representatives.

Similarly, let $\mathfrak{C}_n$ and $\mathfrak{C}$ be the associated correspondences for $\pmb{R}_n$ and $\pmb{R}$. We say that $\mathfrak{C}_n$ converges to $\mathfrak{C}$ if for any pair of vertices $a, a' \in \RV_1$, 
$$
\mathfrak{C}_{a, a',n} = \mathfrak{C}_{n} \cap \left( \widehat{\C}_a \times \widehat{\C}_{a'}\right)
$$
converges to the restriction of the correspondence $\mathfrak{C}$ on $(\RT_a, \widehat{\C}^{\RV_a}) \times (\RT_{a'}, \widehat{\C}^{\RV_{a'}})$.
Finally, we say that $[\mathfrak{C}_n]$ converges to $[\mathfrak{C}]$ if $\mathfrak{C}_n \to \mathfrak{C}$ for some representatives.
\end{defn}

\begin{defn}\label{char_var_def}
We define the \emph{character variety of degree $d$} as the union of all character varieties $\Rat_{\kappa}(\C)/\!\!\sim$, as $\kappa$ runs over all characteristic data of degree $d$. We denote this space (equipped with the topology defined above) by~$\mathcal{CV}_{d}$.

We define the \emph{regular character variety of degree $d$} as the union of all regular character varieties $\Rat^{reg}_{\kappa}(\C)/\!\!\sim$, as $\kappa$ runs over all characteristic data of degree $d$. We denote this space by~$\mathcal{CV}^{reg}_{d} \subseteq \mathcal{CV}_{d}$.

We define the \emph{simple character variety of degree $d$} as the union of all character varieties $\Rat_{\kappa}(\C)/\!\!\sim$, as $\kappa$ runs over all simple characteristic data of degree $d$. We denote this space  by~$\mathcal{CV}^{simp}_{d}\subseteq \mathcal{CV}_{d}$.
\end{defn}

\begin{prop}\label{prop-charvarhausdorff}
    The regular character variety $\mathcal{CV}^{reg}_{d}$ is Hausdorff.
\end{prop}
\begin{proof}
    Let $\pmb{R} \in \Rat^{reg}_{\kappa_1}(\C)$ and $\pmb{S} \in \Rat^{reg}_{\kappa_2}(\C)$.
    We assume that $[\pmb{R}]\neq [\pmb{S}]$.
    We will show that $[\pmb{R}]$ and $[\pmb{S}]$ can be separated by open sets.
    For simplicity of exposition, we assume that $\kappa_1$ is the trivial tree of spheres. The general case can be proved similarly by applying the same argument to each vertex of the tree for $\kappa_1$.

    We will argue by contradiction and borrow heavily from \cite{Luo21}. Suppose that there exists $R_n \in \Rat_{d}(\C)$, $M_{b,n}, L_n \in \Aut(\C)$ with $b \in \RV_2$ so that
    $$
    R_n \to R \text{ and } L_n \circ R_n \circ M_{b,n} \to S_b.
    $$
     Since $[\pmb{R}]\neq [\pmb{S}]$ and $\Rat^{reg}_{\kappa_1}(\C)/\!\!\sim$ is Hausdorff by Corollary~\ref{cor:HausdorffReg}, we conclude that $\kappa_2$ is non-trivial.

    Suppose $L_n$ is bounded. Possibly after passing to a subsequence, we assume that $L_n\to L_\infty$ in $\PSL_2(\C)$. As $R_n \to R $, it follows that $L_n\circ R_n\circ \widehat{M}_{b,n}$ converges to $L_\infty\circ R$ in $\PSL_2(\C)$, where $\widehat{M}_{b,n}=\mathrm{Id}$ for all $n$. Moreover, by assumption, $L_n \circ R_n \circ M_{b,n} \to S_b$ in $\PSL_2(\C)$. By the uniqueness part of the second part of Lemma~\ref{lem:rl}, the sequence $M_{b,n}$ is bounded for each $b\in \RV_2$. This contradicts the fact that $M_{b,n}$ is a rescaling for $\RT_2$ and that $\RT_2$ is non-trivial.
    Therefore, $L_n \to \infty$ in~$\PSL_2(\C)$.

    Note that $L_n$ extends as an isometry of $\Hyp^3$, where $\Hyp^3$ is the ball model for the hyperbolic $3$-space.
    Let $r_n = d_{\Hyp^3}(L_n(\mathbf{0}), \mathbf{0})$, where $\mathbf{0}$ is the origin of $\Hyp^3$. Then $r_n \to \infty$. By \cite{Luo21, Luo22a}, the barycentric extensions $\mathcal{E}(R_n): \Hyp^3 \longrightarrow\Hyp^3$ of $R_n:\widehat{\C}\to\widehat{\C}$ converges to a degree $d$ branched covering $F$ on the asymptotic cone $(^r\Hyp, p, d) = \lim (\Hyp^3, \mathbf{0}, d_{\Hyp^3}/r_n)$, which is an $\R$-tree. 
    Since $R_n \to R$, we conclude that $p$ is totally invariant under $F$. 
    Let $q = \lim L_n^{-1}(\mathbf{0})$, where the limit is taken in $^r\Hyp$ with respect to the metric $d$. We denote the geodesic in $\Hyp^3$ connecting $\mathbf{0}$ and $L_n^{-1}(\mathbf{0})$ by $\gamma_n$. Since $L_n$ is an isometry of $\Hyp^3$, we have $\ell_{\Hyp^3}(\gamma_n)= d_{\Hyp^3}(\mathbf{0},L_n^{-1}(\mathbf{0}))=d_{\Hyp^3}(L_n(\mathbf{0}), \mathbf{0})=r_n$.    
    Thus by construction, $\gamma_n$ converges to $[p,q]$ under rescaling, and $[p,q]$ is a geodesic of length $1$ in $^r\Hyp$. Since $p$ is totally invariant under $F$, by \cite[Proposition~7.2]{Luo21} $F^{-1}([p,q]) = \bigcup_{j=1}^k [p, s_j]$, where each geodesic $[p, s_j]$ is mapped homeomorphically to $[p,q]$. In particular, each $s_j$ is an end point of $F^{-1}([p,q])$. By \cite[Proposition~5.5]{Luo21}, each end point $s_j \in F^{-1}([p,q])$ corresponds to some $M_{b,n}$. More precisely, for each $j$, $s_j = \lim M_{b,n}(\mathbf{0})$, where the limit is taken in $^r\Hyp$. Conversely, for each $b \in \RV_2$, there exists $s_j$ so that $s_j = \lim M_{b,n}(\mathbf{0})$. Note that this also induces a surjective continuous map $\RT_2 \longrightarrow F^{-1}([p,q])$.
    
    Suppose $k \geq 3$. Then there exists a branch point in $F^{-1}([p,q])$ {(i.e., removal of the point from $F^{-1}([p,q])$ yields at least three components)}. Thus $\RT_2$ also has a branch point which does not correspond to any of the rescalings $M_{b,n}$, which is a contradiction. Therefore $k = 2$. Suppose that there exist multiple $b_1, b_2,..., b_l \in \RV_2,\ l \geq 2$, with $s_1 = \lim M_{b_i,n}(\mathbf{0}),\ i = 1, 2,\cdots, l$.
    Let 
    $$
    t_n := \max\{d_{\Hyp^3}(M_{b_i,n}(\mathbf{0}), M_{b_j, n}(\mathbf{0})),\ i \neq j,\ i,j\in\{1,2,\cdots,l\} \}.
    $$
    Then $t_n \to \infty$, but $t_n/r_n \to 0$. Then by a similar argument applied to the limit $L_n \circ R_n \circ M_{b_1,n}$ with rescaling factor $t_n$, one can show that $\RT_2$ contains a branch point which does not correspond to any of the rescalings $M_{b,n}$. Once again, this is a contradiction. 
    
    Therefore, $\RT_2$ has exactly two vertices, corresponding to $s_1$ and $s_2$. Note that the multiplicities of $F$ on $[p, s_j],\ j = 1, 2$; i.e., the scaling factors of $F:[p, s_j] \longrightarrow [p,q]$, add up to $d$ by (\cite[Proposition~7.2]{Luo21}, see \cite[Chapter 9]{BR10} for more details). This means that the local degrees of $R$ at the two corresponding points add up to $d$.  Thus, the two rescaling limits on the Riemann spheres $\widehat{\C}_{s_1}$ and $\widehat{\C}_{s_2}$ have local degree $d_1, d_2$ at the singular point respectively, where $d_i$ is the degree of the rescaling limit on $\widehat{\C}_{s_i}$. Therefore, we conclude that  $\kappa_2$ is exceptional. This is a contradiction.
\end{proof}

\begin{prop}
    Suppose $[\pmb{R}_n] \in \Rat_{\kappa_1}(\C)/\!\!\sim$ converges to $[\pmb{R}]\in \Rat_{\kappa_2}(\C)/\!\!\sim$. Then the associated correspondences $[\mathfrak{C}_n]$ converges to $[\mathfrak{C}]$.

    Conversely, suppose $[\mathfrak{C}_n]$ converges to $[\mathfrak{C}]$. Then the uniformizing rational maps $[\pmb{R}_n]$ converges to $[\pmb{R}]$.
\end{prop}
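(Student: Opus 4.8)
The plan is to derive both implications from the coincidence-locus convergence of Proposition~\ref{prop:convergenceCorrespondence}, by reading the defining equation~\eqref{corr_eqn} as the coincidence locus of $\pmb{R}$ and $\pmb{R}\circ\pmb{\eta}$ with the graph of the fixed involution $\pmb{\eta}$ divided out. As in the proof of Hausdorffness, I would first reduce to the case in which $\kappa_1$ is the trivial tree of spheres; since both notions of convergence in Definition~\ref{char_var_top_def} are phrased vertex-by-vertex (for $\pmb{R}_n\to\pmb{R}$) and pair-by-pair (for $\mathfrak{C}_n\to\mathfrak{C}$), the general statement follows by running the argument over each vertex $a\in\RV_1$ and each pair $(a,b)$.

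For the forward direction, fix representatives realizing $\pmb{R}_n\to\pmb{R}$, so that $R_n\to\pmb{R}:(\RT_2,\widehat\C^{\RV_2})\to\widehat\C$ with respect to a rescaling tuple $\{M_{b,n}:b\in\RV_2\}\in\Aut_{(\pmb{\tau_2},\pmb{\eta_2})}(\RT_2,\widehat\C^{\RV_2})$. Because this tuple commutes with $(\pmb{\tau_2},\pmb{\eta_2})$ and each local involution is $z\mapsto 1/z$, we have $\eta\circ M_{b,n}=M_{\tau_2(b),n}\circ\eta$, whence $R_n\circ\eta\circ M_{b,n}=(R_n\circ M_{\tau_2(b),n})\circ\eta\to R_{\tau_2(b)}\circ\eta$; that is, $R_n\circ\eta\to\pmb{R}\circ\pmb{\eta}$ with the same rescalings. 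Applying Proposition~\ref{prop:convergenceCorrespondence} to the pair $(R_n,\,R_n\circ\eta)$ then shows that the coincidence loci $\mathfrak{D}_n:=\{(x,y):R_n(x)=R_n(\eta(y))\}$ converge to the coincidence locus $\mathfrak{D}$ of $\pmb{R}$ and $\pmb{R}\circ\pmb{\eta}$ on $(\RT_2,\widehat\C^{\RV_2})$.

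It then remains to pass from $\mathfrak{D}_n$ to $\mathfrak{C}_n=\mathfrak{C}_{\pmb{R}_n}$, which is $\mathfrak{D}_n$ with the graph $\Delta=\{x=\eta(y)\}$ removed (this is the effect of dividing by $x-\pmb{\eta}(y)$). On a rescaled product $\widehat\C_a\times\widehat\C_b$ the graph appears as $\{x=M_{a,n}^{-1}\circ M_{\tau_2(b),n}\circ\eta(y)\}$: since $M_{a,n}^{-1}\circ M_{\tau_2(b),n}$ tends to the identity when $b=\tau_2(a)$ and escapes in $\PSL_2(\C)$ otherwise, the graph survives as the $(1,1)$-curve $\{x=\eta(y)\}$ precisely on the products $\widehat\C_a\times\widehat\C_{\tau_2(a)}$ and is pushed into the singular set on all other products. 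In either case the limit of the zero locus of the divided difference on $\widehat\C_a\times\widehat\C_b$ is $\mathfrak{D}\cap(\widehat\C_a\times\widehat\C_b)$ with the surviving diagonal deleted, which is exactly $\mathfrak{C}_{\pmb{R}}\cap(\widehat\C_a\times\widehat\C_b)$; hence $\mathfrak{C}_n\to\mathfrak{C}$ and $[\mathfrak{C}_n]\to[\mathfrak{C}]$. The main obstacle is precisely this last step: one must verify that subtracting the diagonal commutes with the degenerate limit, so that no branch of $\mathfrak{D}_n$ is absorbed into the escaping diagonal and the limiting correspondence retains the full bi-degree $(\deg(\kappa)-1,\deg(\kappa)-1)$. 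This is where the regularity of $\pi$ is used: the exceptional configuration of Definition~\ref{reg_dom_def} is the one in which the entire local degree concentrates along the node joining the two vertices, forcing the coincidence locus and the diagonal to share that branch and producing a spurious degree drop, and ruling it out yields the clean separation above.

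For the converse, I would reverse this chain. Given $[\mathfrak{C}_n]\to[\mathfrak{C}]$, fix representatives and the accompanying rescalings, so that $\mathfrak{C}_{a,b,n}\to\mathfrak{C}_{a,b}$ for every pair. Adding back the fixed diagonal $\Delta$, whose rescaled limits were identified above, recovers $\mathfrak{D}_n=\mathfrak{C}_n\cup\Delta\to\mathfrak{D}$, the coincidence locus on $(\RT_2,\widehat\C^{\RV_2})$. Now the converse half of Proposition~\ref{prop:convergenceCorrespondence} (together with the existence of rescaling limits, Lemma~\ref{lem:rl}) applies to $\mathfrak{D}_n=(R_n,R_n\circ\eta)$ and yields that $R_n$ converges to a tree map whose coincidence data is $\mathfrak{D}=(\pmb{R},\pmb{R}\circ\pmb{\eta})$. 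Since the factorization of a coincidence locus determines each factor up to post-composition with a M\"obius map, this limit agrees with $\pmb{R}$ up to the equivalence $\sim$, giving $[\pmb{R}_n]\to[\pmb{R}]$.
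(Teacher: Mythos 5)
Your overall route is the same as the paper's: work vertex-pair by vertex-pair, use the involution-equivariance of the rescaling tuple to deduce $R_n\circ\eta\to\pmb{R}\circ\pmb{\eta}$ with respect to the same rescalings, and feed the pair $(R_n,\,R_n\circ\eta)$ into Proposition~\ref{prop:convergenceCorrespondence}, with the converse obtained by running the same chain backwards. In fact the paper's proof is even terser than yours (it applies Proposition~\ref{prop:convergenceCorrespondence} directly and never comments on the diagonal), so your explicit analysis of the graph $\{x=\eta(y)\}$ under rescalings --- surviving as the diagonal exactly on the products $\widehat\C_a\times\widehat\C_{\tau_2(a)}$ and escaping into the singular set on all other products --- is a correct and genuine addition.

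The problem lies in how you justify what you yourself single out as the main obstacle. Regularity of $\pi$ has nothing to do with separating the correspondence from the diagonal: its role in the paper is to make the character variety Hausdorff (uniqueness of limits; see the remark following Definition~\ref{char_var_top_def}), and in the present proposition it is simply part of the hypothesis, since convergence is only defined along regular dominations. Moreover, the mechanism you invoke cannot occur, exceptional or not: the divided difference $\frac{R(x)-R(w)}{x-w}$ restricts on the diagonal $\{w=x\}$ to $R'(x)$, so the correspondence could share a branch with the diagonal only if $R'\equiv 0$, which is impossible for a nonconstant rational map. What the step actually requires is a local analytic fact. Writing $w=\eta(y)$ and $f_n:=R_{a,n}\circ M_{s,n}$, the hypothesis gives $f_n\to R_s$ compactly away from the singular set; hence the two-variable holomorphic functions $g_n(x,w)=\frac{f_n(x)-f_n(w)}{x-w}$, which extend holomorphically across the diagonal, converge compactly to $g(x,w)=\frac{R_s(x)-R_s(w)}{x-w}\not\equiv 0$, and a Hurwitz-type argument then yields Hausdorff convergence of the zero sets $Z(g_n)\to Z(g)$ on compact sets away from the singular products --- including near the diagonal, where consequently no branch of $\mathfrak{C}_n$ can be absorbed or created. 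Since $Z(g_n)$ and $Z(g)$ are precisely the rescaled correspondence pieces, this closes the gap; with the same replacement in the converse direction (where one should also note that the rescalings produced by Lemma~\ref{lem:rl} must be adjusted to be involution-equivariant, in the spirit of Lemma~\ref{lem_1}, before Definition~\ref{char_var_top_def} applies), your argument becomes complete.
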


\begin{proof}
    Suppose $[\pmb{R}_n] \in \Rat_{\kappa_1}(\C)/\!\!\sim$ converges to $[\pmb{R}]\in \Rat_{\kappa_2}(\C)/\!\!\sim$.
    Let $a, a' \in \RV_1$. Suppose that $R_{a,n}$ and $R_{a',n}$ converge to rational maps $\pmb{R}|_{(\RT_a, \widehat{\C}^{\RV_a})}: (\RT_a, \widehat{\C}^{\RV_a}) \longrightarrow \widehat{\C}$ and $\pmb{R}|_{(\RT_{a'}, \widehat{\C}^{\RV_{a'}})}: (\RT_{a'}, \widehat{\C}^{\RV_{a'}}) \longrightarrow \widehat{\C}$ with respect to rescalings $M_{s,n}$, $s \in \RV_a$ and $M_{t,n}$, $t \in \RV_{a'}$.
    Therefore, by Proposition \ref{prop:convergenceCorrespondence}, $\mathfrak{C}_{a, a',n} = \mathfrak{C}_{n} \cap \left( \widehat{\C}_a \times \widehat{\C}_{a'}\right)$ converges to the restriction of the correspondence $\mathfrak{C}$ on $(\RT_a, \widehat{\C}^{\RV_a}) \times (\RT_{a'}, \widehat{\C}^{\RV_{a'}})$. Therefore, $[\mathfrak{C}_n]$ converges to $[\mathfrak{C}]$ {in the sense of Definition~\ref{char_var_top_def}}. The proof of the converse is similar.
\end{proof}

\begin{remark}\label{rmk:CVOrbifold}
   1) Some comments regarding the nomenclature `character variety' are in order. One can embed the regular character variety $\Rat_\kappa^{reg}(\C)$ with characteristic data $\kappa$ into the GIT quotient of suitable spaces of rational maps under the action of the group $\Aut(\widehat{\C}) \times \Aut_{(\pmb{\tau},\pmb{\eta})}(\RT, \widehat\C^\RV)$, and this GIT quotient is an algebraic variety (cf. \cite{Mum65}). Further, in many concrete cases, parts of $\Rat_\kappa^{reg}(\C)$ containing matings between Fuchsian groups and polynomials carry explicitly describable algebraic structure (cf. \cite[\S 7]{MM2}). Finally, our choice of the word character variety is motivated in part by the closely related and more standard notion of $\PSL_2(\C)$-character variety (cf. \cite{Mar16}).

\noindent 2) As mentioned earlier, matings between more general genus $0$ orbifolds and polynomials can be realized as algebraic correspondences on trees of Riemann spheres (cf. \cite[Theorem~B]{MM2}, \cite[Theorem~1.9]{LMM24}). The character variety defined above contains such correspondences as well, so long as degrees are consistent.
\end{remark}

\section{Mating locus, Bers slices, and external fibers}\label{hor_vert_slices_sec}

\subsection{Hybrid simultaneous uniformization locus and Bers slices}\label{qf_bers_slice_subsec}
Let $\cH_{k}$ denote the principal hyperbolic component in the connectedness locus of degree $k$ monic, centered complex polynomials.
By Theorems~\ref{conf_mating_bs_poly_thm},~\ref{b_inv_thm}, and~\ref{corr_mating_thm_1}, for each $\Gamma\in\mathrm{Teich}(S_{0,d+1})$ and $P\in\cH_{2d-1}$, there exists a degree $2d$ rational map $R_{\Gamma,P}$ of $\widehat{\C}$ such that the associated correspondence $\mathfrak{C}_{\Gamma,P}$ (defined by Equation~\eqref{corr_eqn}) is a mating of $\Gamma$ and $P$. Note that in this case, the correspondence $\mathfrak{C}_{\Gamma,P}$ is defined on the trivial tree of spheres. There is  unique characteristic data of degree $2d$ on the trivial tree of spheres, and we denote the corresponding character variety by $\Rat_{2d}(\C)/\!\!\sim$ {and the set of regular maps in $\Rat_{2d}(\C)/\!\!\sim$ by $\Rat_{2d}^{reg}(\C)/\!\!\sim$} (see \S~\ref{char_var_subsec}). 

According to \cite[\S 6]{MM2}, the map
\begin{align*}
& \mathfrak{G}: \mathrm{Teich}(S_{0,d+1})\times \cH_{2d-1} \longrightarrow \mathrm{Rat}_{2d}(\C)/\!\!\sim \\
& \hspace{3.6cm} (\Gamma,P) \mapsto [R_{\Gamma,P}]
\end{align*}
is an injection. {Since each rational map $R\in\mathfrak{G}(\mathrm{Teich}(S_{0,d+1})\times \cH_{2d-1})$ has degree $2d\geq 4$ and has simple critical points at the fixed points $\pm 1$ of $\eta(z)=\frac1z$ (cf. \cite[Corollary~4.18]{MM2}), it follows that $\mathfrak{G}(\mathrm{Teich}(S_{0,d+1})\times \cH_{2d-1})$ is contained in $\Rat_{2d}^{reg}(\C)/\!\!\sim$.}

The image $\mathfrak{G}(\mathrm{Teich}(S_{0,d+1})\times \cH_{2d-1})$ is the analog of the classical quasi-Fuchsian locus (in the Kleinian world) and the space of quasi-Blaschke products (in the complex dynamics world). While quasi-Fuchsian groups (respectively, quasi-Blaschke products) simultaneously uniformize {a combination of} two surfaces/Fuchsian groups (respectively, combine two Blaschke products), a correspondence in $\mathfrak{G}(\mathrm{Teich}(S_{0,d+1})\times \cH_{2d-1})$ combines a Fuchsian group and a Blaschke product. We call the space $\mathfrak{G}(\mathrm{Teich}(S_{0,d+1})\times \cH_{2d-1})$ the \emph{Hybrid Simultaneous Uniformization} or the \emph{HSU~locus}.

The proof of \cite[Theorem~7.2]{MM2} shows that for each $P\in\cH_{2d-1}$, the map 
$$
\mathfrak{G}:\mathrm{Teich}(S_{0,d+1})\times \{P\}\to\mathrm{Rat}_{2d}^{reg}(\C)/\!\!\sim
$$ 
is holomorphic.

\begin{defn}\label{bers_def}
For $P\in\cH_{2d-1}$, we define its \emph{horizontal Bers slice} in the HSU locus $\mathfrak{G}(\mathrm{Teich}(S_{0,d+1})\times \cH_{2d-1})$ as
$$
\mathfrak{B}(P):=\mathfrak{G}\left(\mathrm{Teich}(S_{0,d+1})\times \{P\}\right).
$$ 
Similarly, for $\Gamma\in\mathrm{Teich}(S_{0,d+1})$, the \emph{vertical Bers slice} through $\Gamma$ is defined as
$$
\mathfrak{B}(\Gamma):=\mathfrak{G}\left(\{\Gamma\}\times \cH_{2d-1} \right).
$$
\end{defn}

\subsubsection{Dimensions of the spaces}\label{dim_count_subsec}
Note that $\mathrm{Teich}(S_{0,d+1})\times \cH_{2d-1}$ has complex dimension $(d-2)+(2d-2)=3d-4$. 

{We will now see that $\mathfrak{G}(\mathrm{Teich}(S_{0,d+1})\times \cH_{2d-1})$ is also contained in a complex $(3d-4)-$dimensional subspace of $\mathrm{Rat}_{2d}^{reg}(\C)/\!\!\sim$. Since this fact will not be used explicitly in the paper, we refrain from furnishing a complete proof.}
To this end, first observe that the complex dimension of $\mathrm{Rat}_{2d}(\C)$ is $4d+1$. According to \cite[Corollary~4.15]{MM2}, each rational map in the image of $\mathfrak{G}$ has $2d$ of its critical points at $\pm 1, \alpha_{1}^{\pm 1},\cdots,\alpha_{d-1}^{\pm 1}$ (for some $\alpha_1,\cdots,\alpha_{d-1}\in\C^*$). Such rational maps lie in the algebraic variety
$$
V_d :=\bigg\{R\in\mathrm{Rat}_{2d}(\C): R'(\pm 1)=0,\ \mathrm{sRes}_{2j}(R', R'\circ\eta) =0,\ j\in\{1,\cdots,d-1\}\bigg\},
$$
where $\mathrm{sRes}_j(R', R'\circ\eta)$ denotes the $j$-th principal subresultant coefficient of the numerators of $R'$ and $R'\circ\eta$ (note that if $x$ is a common zero of $R'$ and $R'\circ\eta$, then so {is} $\eta(x)$). {The $(d+1)$ algebraic equations defining $V_d$ are evidently independent; i.e., a rational map $R\in\mathrm{Rat}_{2d}(\C)$ satisfying a subset of the defining conditions does not necessarily satisfy the other conditions. Thus, standard transversality arguments can be used to deduce that $V_d$ has $\C$-dimension $(4d+1)-(d+1)=3d$.} Finally, since the equivalence relation $\sim$ is given by pre-composition with elements of the centralizer $C(\eta)$ 
of $\eta$ {in $\PSL_2(\C)$} and post-composition with elements of $\mathrm{PSL}_2(\C)$, the image of $\mathfrak{G}$ lies in the
$(3d-4)$ complex-dimensional space $V_d/\!\!\sim$.

{
\begin{example}\label{four_punc_example}
The product space $\mathrm{Teich}(S_{0,4})\times \cH_5$ of the Teichm{\"u}ller space of $4$-times punctured spheres and the principal hyperbolic component of {monic, centered,} degree $5$ polynomials embeds into the space of equivalence classes of degree $6$ rational maps (equivalently, in the space of bi-degree $(5,5)$ holomorphic correspondences on $\widehat{\C}$). The image of this embedding is the HSU locus in this case. The rational maps lying in the horizontal Bers slice $\mathfrak{B}(z^5)$ (in this HSU locus) have the following explicit form (see \cite[\S 7.1]{MM2}):
$$
R_c(z)=z+\frac{c}{z}-\frac{c}{3z^3}+\frac{1}{5z^5}.
$$
\end{example}
}

\subsection{Connection with degenerate polynomial-like maps, and external fibers}\label{degen_poly_like_map_ext_fiber_subsec}

For certain parameter space considerations, it is convenient to work with an appropriate class of maps that contains the conformal matings of polynomials and Bowen-Series maps. Recall from Theorem~\ref{conf_mating_bs_poly_thm} that the domain of definition of the conformal mating $F:\overline{\Omega}\to\widehat{\C}$ of $P$ and $A_\Gamma$ is a pinched disk. The restriction $F:\overline{F^{-1}(\Omega)}\to\overline{\Omega}$ can be viewed as a pinched/degenerate analog of classical polynomial-like maps. This point of view allows one to align the mating structure of $F$ with the classical description of polynomial-like maps as matings of internal classes and external maps. 

{Roughly speaking, the dynamics of a polynomial-like map splits into an \emph{internal class} (that records its dynamics on the non-escaping set) and an \emph{external map} (that models its action on the escaping set). A similar description is available for degenerate polynomial-like maps as well. As it so happens, for a conformal mating $F$ of $P$ and $A_\Gamma$, the internal class of the associated degenerate polynomial-like map is given by the polynomial $P$, while the Bowen-Series map $A_\Gamma$ plays the role of the external map.} We summarize the relevant parts of this theory from~\cite{LLM24}.

\subsubsection{Degenerate polynomial-like maps}\label{degen_poly_like map_subsubsec}

A set in $\widehat{\C}$ which is homeomorphic to a closed disk quotiented by a finite geodesic lamination, and which has a piecewise smooth boundary, is called a \textit{pinched polygon}.
The cut-points of a pinched polygon are called its \emph{pinched points}, and the non-cut points where two smooth local boundary arcs meet are called the \emph{corners} of the pinched polygon. 
\begin{figure}[ht]
\captionsetup{width=0.96\linewidth}
\begin{tikzpicture}
\node[anchor=south west,inner sep=0] at (0,0) {\includegraphics[width=0.8\textwidth]{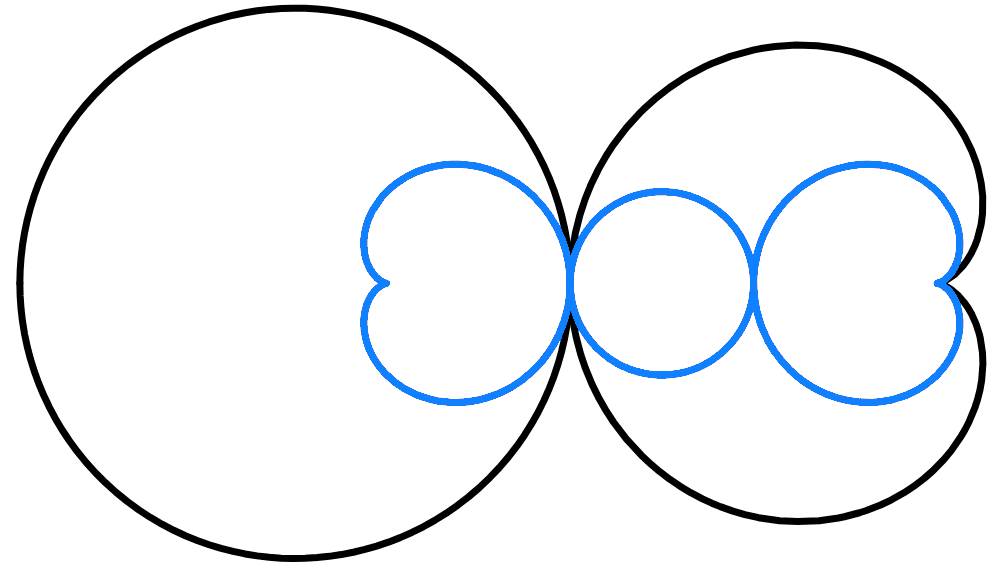}}; 
\node at (7,1.8) {\begin{small}$\partial P_1$\end{small}};
\node at (5.4,1) {\begin{small}$\partial P_2$\end{small}};
\end{tikzpicture}
\caption{Pictured is the domain and codomain of a degenerate polynomial-like map. {Here, the blue pinched polygon $P_1$ has two pinched points and two corners, while the black pinched polygon $P_2$ has one pinched point and one corner.}}
\label{degenerate_poly_like_fig}
\end{figure}

\begin{defn}\label{degenerate_poly_def}\cite[Definition~1.7]{LLM24}
Let $P_1, P_2\subset \widehat{\C}$ be pinched polygons such that 
\begin{enumerate}[leftmargin=*]
\item $P_1\subset P_2$,
\item the pinched points (respectively, corners) of $P_2$ are also pinched points (respectively, corners) of $P_1$, and
\item $\partial P_1\cap \partial P_2$ consists of the corners and pinched points of $P_2$.
\end{enumerate}
Suppose that there is a degree $d$ continuous map $g:P_1 \to P_2$ which is a proper holomorphic map from each component of $\Int{P_1}$ onto some component of $\Int{P_2}$, and such that the singular points of $\partial P_1$ are the $g$-preimages of the singular points of $\partial P_2$.

We then call the triple $(g, P_1, P_2)$ a \emph{degenerate polynomial-like map} of degree $d$.
\end{defn}
We refer the reader to \cite[\S 1.3, Appendix~A]{LLM24} for the notions of filled Julia sets/non-escaping sets, internal classes, external maps, and hybrid conjugacies of degenerate polynomial-like maps.

We remark that the map $F:\overline{F^{-1}(\Omega)}\to\overline{\Omega}$ (where $F$ is a conformal mating of $P$ and $A_\Gamma$) is a degenerate polynomial-like map of degree $2d-1$ which has $P$ as its internal class and $A_\Gamma$ as its external map.

\subsubsection{External/vertical fibers and a total space}\label{total_space_subsubsec}
With the above framework, the conformal matings between a fixed Fuchsian punctured sphere group $\Gamma$ and polynomials with connected Julia set lie in the following ambient space $\mathscr{B}_\Gamma$.
\begin{defn}\cite[\S 15]{LLM24}\label{vertical_fiber_def}
Let $\Gamma\in\mathrm{Teich}(S_{0,d+1})$. The space $\mathscr{B}_\Gamma$ is defined to be the collection of all degenerate polynomial-like maps with connected non-escaping set that admit the Bowen-Series map
$A_\Gamma$ as their external map. We call this space a \emph{external/vertical fiber} in the space of degenerate polynomial-like maps.
\end{defn}

The space $\mathscr{B}_\Gamma$ is equipped with an appropriate Carath{\'e}odory topology for holomorphic functions; see \cite[Definition~6.2, \S~15.3]{LLM24} for details.

The following result follows from \cite[Theorem~A.3 (part 2)]{LLM24}.

\begin{prop}\label{bdry_bijection_prop}
Let $\Gamma_1,\Gamma_2\in\mathrm{Teich}(S_{0,d+1})$. Then, there exists a dynamically natural bijection between $\mathscr{B}(\Gamma_1)$ and $\mathscr{B}(\Gamma_2)$.
\end{prop}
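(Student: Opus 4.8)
The plan is to read off the statement from the internal-class/external-map decomposition of degenerate polynomial-like maps and then appeal to \cite[Theorem~A.3 (part 2)]{LLM24}. Recall from \cite[\S 1.3, Appendix~A]{LLM24} that a degenerate polynomial-like map with connected non-escaping set carries two invariants: its \emph{external map}, recording the boundary (escaping) dynamics, and its \emph{internal class}, the hybrid-conjugacy class of the first-return map on the non-escaping set. By Definition~\ref{vertical_fiber_def}, every member of $\mathscr{B}(\Gamma_i)$ has external map equal to the Bowen-Series map $A_{\Gamma_i}$, so the only invariant that varies within the fiber $\mathscr{B}(\Gamma_i)$ is the internal class. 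The bijection I am after will be the one matching internal classes.

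First I would verify that the two external maps are compatible. By the construction in Step~I, each Bowen-Series map is a conjugate $A_{\Gamma_i}=\phi_{\rho_i}\circ A_{\Gamma_0}\circ\phi_{\rho_i}^{-1}$ of the base map $A_{\Gamma_0}$ by a quasiconformal homeomorphism $\phi_{\rho_i}$ of $\D$. Hence $A_{\Gamma_1}$ and $A_{\Gamma_2}$ are conjugate, as degree $2d-1$ circle coverings, by $\psi:=\phi_{\rho_1}\circ\phi_{\rho_2}^{-1}$, whose restriction to $\mathbb{S}^1$ is quasisymmetric. This is precisely the compatibility needed to transport an internal class from one external fiber to the other.

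With this in place, \cite[Theorem~A.3 (part 2)]{LLM24} supplies the key assertion: a map in $\mathscr{B}(\Gamma_i)$ is determined up to the natural equivalence by its internal class, and every admissible internal class is realized. This yields a well-defined assignment
$$
\Phi\colon \mathscr{B}(\Gamma_1)\longrightarrow\mathscr{B}(\Gamma_2),\qquad F_1\longmapsto F_2,
$$
where $F_2$ is the unique element of $\mathscr{B}(\Gamma_2)$ with the same internal class as $F_1$. The assignment in the opposite direction is a two-sided inverse, so $\Phi$ is a bijection; equivalently, $\Phi$ is the composition of the two canonical trivializations $\mathscr{B}(\Gamma_i)\cong\mathscr{B}_{\Gamma_0}$ furnished by the Cartesian product structure $\mathfrak{T}\cong\mathrm{Teich}(S_{0,d+1})\times\mathscr{B}_{\Gamma_0}$. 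Since $\Phi$ preserves the internal class, it conjugates the first-return dynamics on the non-escaping sets, and is therefore dynamically natural.

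The main obstacle, and the point where \cite[Theorem~A.3 (part 2)]{LLM24} does the substantive work, is the \emph{realizability} half of the bijection: one must know that every internal class occurring with external map $A_{\Gamma_1}$ also occurs with $A_{\Gamma_2}$, not merely that a realization is unique once it exists. This is handled by a quasiconformal surgery replacing the $A_{\Gamma_1}$-dynamics by the $A_{\Gamma_2}$-dynamics along the quasisymmetric conjugacy $\psi$ while leaving the hybrid class on the non-escaping set untouched, with the resulting almost-complex structure integrated via the David integrability theorem used to build the conformal matings. Granting this surgery, injectivity and dynamical naturality are then formal.
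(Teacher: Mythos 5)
Your proposal is correct and follows essentially the same route as the paper: the paper also deduces the statement directly from \cite[Theorem~A.3 (part 2)]{LLM24}, constructing the bijection by quasiconformally replacing the external map $A_{\Gamma_1}$ with $A_{\Gamma_2}$ while preserving the internal (hybrid) class. Your additional remarks on the quasisymmetric conjugacy of the Bowen-Series maps and the surgery/realizability step are just an unpacking of what that cited theorem provides.
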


The bijection of Proposition~\ref{bdry_bijection_prop} is constructed by quasiconformally replacing the external map $A_{\Gamma_1}$ of degenerate polynomial-like maps in $\mathscr{B}(\Gamma_1)$ with the external map $A_{\Gamma_2}$. In holomorphic dynamics, such maps between spaces of polynomial-like maps are known as \emph{straightening maps}, which are known to be discontinuous at quasiconformally non-rigid parameters in many cases (cf. \cite{Ino09,IM21}). 

\begin{question}\label{str_discont_qn}
Let $\Gamma_1,\Gamma_2\in\mathrm{Teich}(S_{0,d+1})$. Is the dynamically natural bijection between $\mathscr{B}(\Gamma_1)$ and $\mathscr{B}(\Gamma_2)$ discontinuous?
\end{question}

Let us now look at the total space 
$$
\mathfrak{T}:=\bigsqcup_{\Gamma\in\mathrm{Teich}(S_{0,d+1})} \mathscr{B}_\Gamma
$$
of degenerate polynomial-like maps having Bowen-Series maps of arbitrary $(d+1)$-times punctured spheres as external maps. It is an ambient space for all conformal matings between complex polynomials and Bowen-Series maps of $(d+1)$-times punctured spheres. According to \cite[Theorem~A.3]{LLM24}:

\begin{prop}\label{total_space_cart_prod_prop}
There is a dynamically natural bijection between the space $\mathfrak{T}$ and the product $\mathrm{Teich}(S_{0,d+1})\times\mathscr{B}_{\Gamma_0}$, for any $\Gamma_0\in\mathrm{Teich}(S_{0,d+1})$. 
\end{prop}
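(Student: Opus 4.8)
The plan is to realize the asserted bijection as a fiberwise straightening over $\mathrm{Teich}(S_{0,d+1})$, with the bulk of the analytic content imported from \cite[Theorem~A.3]{LLM24}. First I would set up the natural projection $\pi\colon \mathfrak{T}\to\mathrm{Teich}(S_{0,d+1})$ that sends a degenerate polynomial-like map $F\in\mathfrak{T}$ to the unique $\Gamma$ for which $A_\Gamma$ is the external map of $F$. This is well defined because the external map of a degenerate polynomial-like map is determined (up to the appropriate conjugacy) by the external-class theory of \cite[Appendix~A]{LLM24}, and because the assignment $\Gamma\mapsto A_\Gamma$ is injective: the Bowen--Series map retains the preferred generating set of the marked group, so it remembers $\Gamma$ (cf. Step~I of Section~\ref{corr_mating_subsec}). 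By Definition~\ref{vertical_fiber_def}, the fiber $\pi^{-1}(\Gamma)$ is precisely the external fiber $\mathscr{B}_\Gamma$, so $\pi$ exhibits $\mathfrak{T}$ as the disjoint union of its fibers.

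Next I would trivialize this fibration over the chosen base-point $\Gamma_0$. For each $\Gamma$, Proposition~\ref{bdry_bijection_prop} supplies a dynamically natural bijection $\Psi_{\Gamma\to\Gamma_0}\colon \mathscr{B}_\Gamma\to\mathscr{B}_{\Gamma_0}$, obtained by quasiconformally replacing the external map $A_\Gamma$ with $A_{\Gamma_0}$ while leaving the internal (hybrid) class of the degenerate polynomial-like map unchanged. I would then define
\[
\Phi\colon \mathfrak{T}\longrightarrow \mathrm{Teich}(S_{0,d+1})\times\mathscr{B}_{\Gamma_0},
\qquad
\Phi(F)=\bigl(\pi(F),\,\Psi_{\pi(F)\to\Gamma_0}(F)\bigr).
\]
Injectivity and surjectivity of $\Phi$ reduce fiberwise to the corresponding properties of each $\Psi_{\Gamma\to\Gamma_0}$: if $\Phi(F)=\Phi(F')$ then $\pi(F)=\pi(F')=:\Gamma$, so $F,F'\in\mathscr{B}_\Gamma$ and $\Psi_{\Gamma\to\Gamma_0}(F)=\Psi_{\Gamma\to\Gamma_0}(F')$ forces $F=F'$; and a given pair $(\Gamma,G)$ has preimage $\Psi_{\Gamma_0\to\Gamma}(G)\in\mathscr{B}_\Gamma\subset\mathfrak{T}$, where $\Psi_{\Gamma_0\to\Gamma}=\Psi_{\Gamma\to\Gamma_0}^{-1}$ is the straightening that reinstalls $A_\Gamma$ as the external map. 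Dynamical naturality of $\Phi$ is inherited from that of the $\Psi_{\Gamma\to\Gamma_0}$, since $\Phi$ records the external datum $\Gamma$ in the first coordinate and preserves the internal class in the second.

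The crux, and the main obstacle, lies entirely in the construction and well-definedness of the fiberwise straightenings $\Psi_{\Gamma\to\Gamma_0}$, which is exactly the content of \cite[Theorem~A.3]{LLM24}. The delicate point is to verify that the surgery interpolating between $A_\Gamma$ and $A_{\Gamma_0}$ on the tiling set produces a genuine degenerate polynomial-like map that (i) is independent of the choice of interpolating homeomorphism, (ii) again has connected non-escaping set, and (iii) has the same internal hybrid class. This is where the Bowen--Series maps are only David (not quasiconformal) near their singular/parabolic structure, so the integration of the transported Beltrami/David coefficient requires the David integrability theorem rather than the classical measurable Riemann mapping theorem, exactly as in the conformal mating construction recalled in Section~\ref{corr_mating_subsec}; the rigidity of the non-escaping dynamics is what pins down the internal class and makes the straightening canonical.

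Finally, I would record the coherence of the trivializations across base-points, which is what justifies the phrase ``for any $\Gamma_0$'' in the statement. The straightenings satisfy the cocycle identity $\Psi_{\Gamma\to\Gamma''}=\Psi_{\Gamma'\to\Gamma''}\circ\Psi_{\Gamma\to\Gamma'}$ together with $\Psi_{\Gamma\to\Gamma}=\mathrm{id}$, since each is realized by transporting the external-map replacement through the tiling set and these surgeries compose. Consequently the product decompositions built over two different base-points differ only by the fiberwise homeomorphism $\Psi_{\Gamma_0\to\Gamma_0'}$ on the second factor, so the Cartesian product structure on $\mathfrak{T}$ is well defined independently of the choice of $\Gamma_0$. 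I expect the verification of the cocycle identity to be routine once the single-surgery well-definedness in the previous paragraph is in hand.
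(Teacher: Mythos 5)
Your proposal is correct and takes essentially the same approach as the paper: the paper deduces the proposition directly from \cite[Theorem~A.3]{LLM24} and explains it (Remark~\ref{cart_prod_rem}) via precisely the fiberwise external-map replacement you describe — horizontal leaves $\Psi_F(\Gamma)$, hybrid conjugate to $F$ with external map $A_\Gamma$, whose holonomy maps trivialize the fibration over $\mathrm{Teich}(S_{0,d+1})$. One small correction: since $\Gamma$ and $\Gamma_0$ lie in the same Teichm{\"u}ller space, the circle conjugacy between $A_\Gamma$ and $A_{\Gamma_0}$ is quasisymmetric (it is the boundary extension of a quasiconformal conjugacy between the groups), so the external-map replacement is an honest quasiconformal surgery as stated in Proposition~\ref{bdry_bijection_prop} and Remark~\ref{cart_prod_rem}; your appeal to the David integrability theorem is unnecessary here — David theory is needed only for the mating construction itself, where $z^{2d-1}$ is glued to $A_\Gamma$, not for deforming one Bowen--Series map into another.
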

\begin{remark}\label{cart_prod_rem}
The existence of this Cartesian product structure can be explained as follows. For each $F\in\mathscr{B}_{\Gamma_0}$, there exists a continuous injection 
$$
\Psi_F:\left(\mathrm{Teich}(S_{0,d+1}),\Gamma_0\right)\to\left(\mathfrak{T},F\right)
$$ 
such that $\Psi_F(\Gamma)$ is hybrid conjugate to $F$ and has $A_\Gamma$ as its external map. The map $\Psi_F$ is defined by quasiconformal deformation of the external map (as in Proposition~\ref{bdry_bijection_prop}), and its image is called the \emph{horizontal leaf} through $F$ in $\mathfrak{T}$ (see Figure~\ref{total_space_fig}). Such
a horizontal leaf intersects each external fiber $\mathscr{B}_\Gamma$ at exactly one point. Flowing along such leaves gives a bijective holonomy map $\Psi_\Gamma$ from the base external fiber $\mathscr{B}_{\Gamma_0}$ onto any other external fiber $\mathscr{B}_\Gamma$. 
These holonomy maps fit together to produce the desired Cartesian product structure of $\mathfrak{T}$. Possible discontinuity of the holonomy maps between external fibers leads us to believe that this dynamically natural Cartesian product structure cannot be promoted to a topological product structure.
\end{remark}
\begin{figure}[ht!]
\captionsetup{width=0.98\linewidth}
\begin{tikzpicture}
\node[anchor=south west,inner sep=0] at (0,0) {\includegraphics[width=1\linewidth]{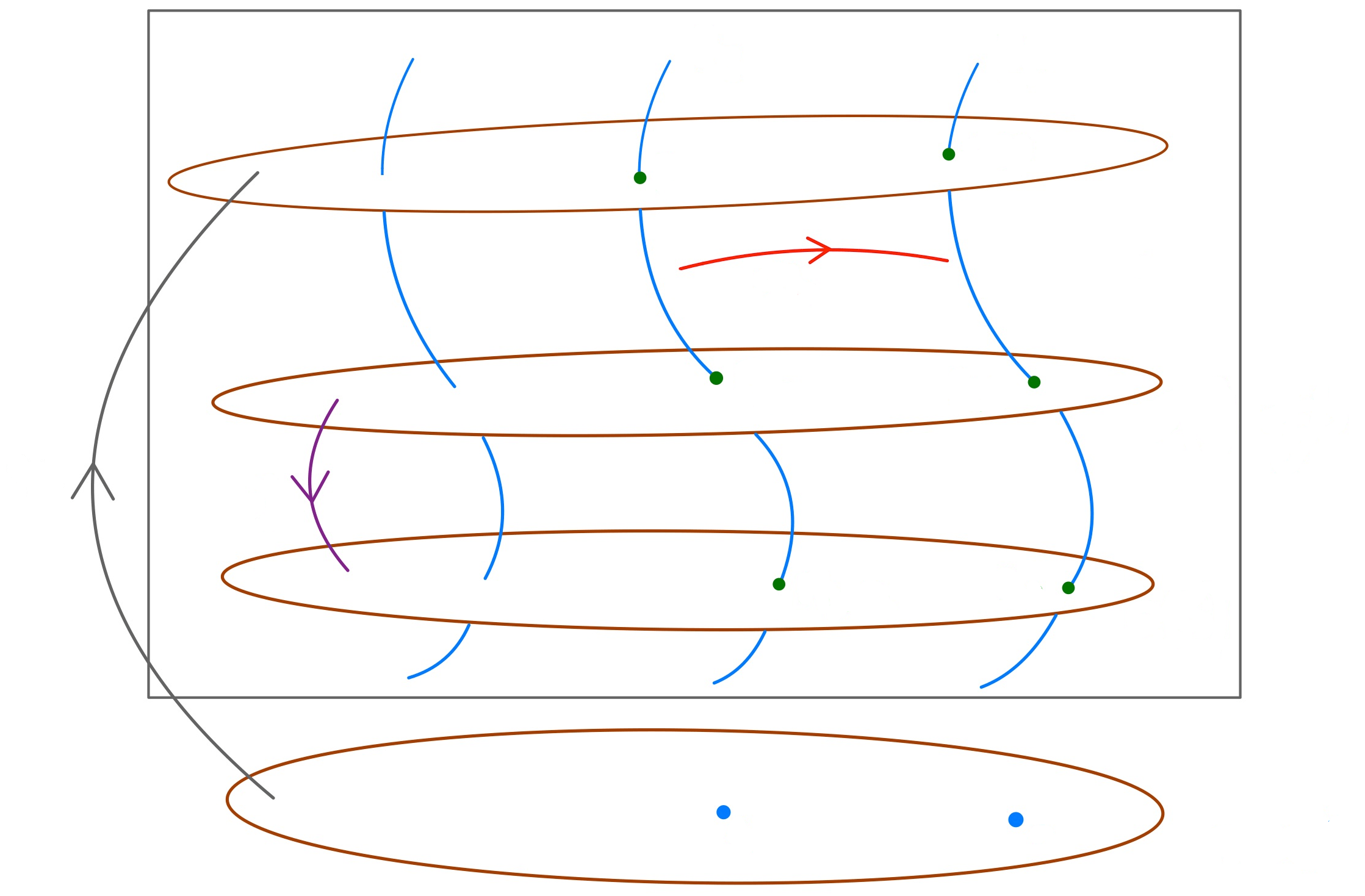}};
\node at (11.8,6.25) {\begin{Large}$\mathfrak{T}$\end{Large}};
\node at (4.4,0.75) {\begin{small}$\mathrm{Teich}(S_{0,d+1})$\end{small}};
\node at (0.48,3.28) {$\Psi_F$};
\node at (2.56,3.8) {$\chi$};
\node at (7,0.56) {\begin{scriptsize}$\Gamma_0$\end{scriptsize}};
\node at (9.6,0.56) {\begin{scriptsize}$\Gamma$\end{scriptsize}};
\node at (5.4,4.6) {\begin{small}$\mathfrak{B}(P_1)$\end{small}};
\node at (5.6,2.88) {\begin{small}$\mathfrak{B}(P_2)$\end{small}};
\node at (6.66,7.75) {$\mathscr{B}_{\Gamma_0}$};
\node at (9.42,7.75) {\begin{small}$\mathscr{B}_{\Gamma}$\end{small}};
\node at (7.75,5.66) {$\Psi_{\Gamma}$};
\node at (6.16,6.72) {\begin{small}$F$\end{small}};
\node at (9.36,6.9) {\begin{scriptsize}$\Psi_F(\Gamma)$\end{scriptsize}};
\node at (7.2,4.75) {\begin{scriptsize}$F_{\Gamma_0,P_1}$\end{scriptsize}};
\node at (9.2,4.75) {\begin{scriptsize}$F_{\Gamma,P_1}$\end{scriptsize}};
\node at (7.75,2.88) {\begin{scriptsize}$F_{\Gamma_0,P_2}$\end{scriptsize}};
\node at (9.4,2.88) {\begin{scriptsize}$F_{\Gamma,P_2}$\end{scriptsize}};
\end{tikzpicture}
\caption{Some vertical fibers and horizontal leaves in the total space $\mathfrak{T}$ are depicted. For $P_1,P_2\in\cH_{2d-1}$, the corresponding horizontal leaves are precisely the Bers slices $\mathfrak{B}(P_1), \mathfrak{B}(P_2)$. Lack of continuity of the bijective holonomy map $\Psi_\Gamma:\mathscr{B}_{\Gamma_0}\to\mathscr{B}_\Gamma$ represents discontinuity of straightening maps in holomorphic dynamics, and possible failure of continuous boundary extension of the homeomorphism $\chi:\mathfrak{B}(P_1)\to\mathfrak{B}(P_2)$ portrays the Kerckhoff-Thurston discontinuity phenomenon for Kleinian groups.}
\label{total_space_fig}
\end{figure}

\subsubsection{Identifying degenerate polynomial-like maps with correspondences}\label{identify_b_inv_corr_subsec}

It is easily seen that each $F\in\mathfrak{T}$ extends meromorphically to a B-involution in an inversive multi-domain (see \cite[Proposition~15.4]{LLM24}).
Thanks to the algebraic description of B-involutions given by Theorem~\ref{b_inv_thm}, the total space $\mathfrak{T}$
can be embedded into the character variety $\mathcal{CV}_{2d}$. {As the next result shows, under this embedding, the total space $\mathfrak{T}$ lives in a topologically well-behaved part of the character variety (see Definition~\ref{char_var_def}).}
{
\begin{prop}\label{total_space_regular_lem}
 The total space $\mathfrak{T}$ embeds in $\mathcal{CV}_{2d}^{reg}\cap\mathcal{CV}_{2d}^{simp}$.   
\end{prop}
\begin{proof}
Let $\pmb{R}:(\RT,\widehat{\C}^\RV)\to\widehat{\C}$ be the rational uniformizing map corresponding to a B-involution $(F:\overline{\Omega}\to\widehat{\C})\in\mathfrak{T}$. If $\vert\RV\vert=1$, then by \cite[Proposition~15.6 (part 3)]{LLM24}, the map $\pmb{R}$ has simple critical points at the fixed points $\pm 1$ of $\eta(z)=\frac1z$. Hence in this case, $\pmb{R}\in\Rat_{2d}^{reg}(\C)/\!\!\sim ~\subset\mathcal{CV}_{2d}^{reg}$. 

Now let $\pmb{R}\in\Rat_\kappa(\C)/\!\!\sim$ for some characteristic data $\kappa$ with $\vert\RV\vert>1$. 
According to \cite[Proposition~15.6 (part 3)]{LLM24}, the characteristic data $\kappa$ is simple. Hence, to prove that $\pmb{R}\in\Rat_\kappa^{reg}(\C)/\!\!\sim ~\subset\mathcal{CV}_{2d}^{reg}$, it suffices to verify condition~\eqref{reg:item:1} of regularity of characteristic data (see Definition~\ref{simple_sign_def} and Remark~\ref{rem:regular}). To this end, let $\cL$ be the coarse lamination associated with the B-involution $F:\overline{\Omega}\to\widehat{\C}$. By \cite[Proposition~15.6]{LLM24}, $\cL$ has no polygon. By \cite[\S 15.1]{LLM24}, two adjacent Riemann spheres of $\widehat{\C}^\RV$ correspond to a pair of touching components of $\Omega$, which in turn corresponds to two adjacent components of $\D-\cL$. Further, by \cite[Lemma~15.5]{LLM24}, the degrees of $\pmb{R}$ on two adjacent spheres $\widehat{\C}_a, \widehat{\C}_b$ (in $\widehat{\C}^\RV$) are both equal to $1$ if and only if for the corresponding adjacent components $\cG_a, \cG_b$ of $\D-\cL$, the intersections $\partial\cG_a\cap\partial\mathbb{S}^1, \partial\cG_b\cap\partial\mathbb{S}^1$ both have length $1/2d$ (where we identify $\mathbb{S}^1$ with $\R/\Z$). But the existence of such adjacent components is easily ruled out by the fact that $\cL$ has no polygon.
\end{proof}
}
Under the above embedding, the HSU locus $\mathfrak{G}(\mathrm{Teich}(S_{0,d+1})\times \cH_{2d-1})$ corresponds to the space of conformal matings between $A_\Gamma$, where $\Gamma\in\mathrm{Teich}(S_{0,d+1})$, and $P\in\cH_{2d-1}$.
In particular, the vertical Bers slice $\mathfrak{B}(\Gamma)$ (see Definition~\ref{bers_def}) can be identified with the principal hyperbolic component in the external fiber $\mathscr{B}_\Gamma$. Further, the horizontal Bers slice $\mathfrak{B}(P)$, where $P\in\cH_{2d-1}$, can be identified with the horizontal leaf in $\mathfrak{T}$ passing through the conformal mating of $P$ and $A_{\Gamma_0}$, for any $\Gamma_0\in\mathrm{Teich}(S_{0,d+1})$.

{
\begin{example}\label{quadratic_mating_vert_fiber_rem}
Let $\Sigma_0$ be the sphere with $2$ punctures and one order $2$ orbifold point. The Teichm{\"u}ller space of $\Sigma_0$ is a point, and hence the total space $\mathfrak{T}$ consists of a single vertical fiber; i.e., $\mathfrak{T}=\mathscr{B}_{\Sigma_0}$. This total space contains matings of quadratic polynomials and the Fuchsian group uniformizing $\Sigma_0$. There are two characteristic data for $\mathfrak{T}$: i) degree $3$ rational maps on a single Riemann sphere (which contains the HSU locus), and ii) rational maps $\pmb{R}:(\RT, \widehat\C^\RV)\to\widehat{\C}$, where $(\RT, \widehat\C^\RV)$ is a tree of two spheres,  $(\pmb{\tau},\pmb{\eta})$ is a M{\"o}bius involution fixing each of these two spheres, with $\pmb{R}$ having degree $2$ on one sphere and degree $1$ on the other (see Figure~\ref{mating_1_fig} for an example).   
\end{example}
}

\section{Quasiconformal surgery of B-involutions and correspondences}\label{qc_def_sec}

In this mostly technical section, we carry out a basic discussion of quasiconformal deformations and surgeries in the context of algebraic correspondences. This will be useful in our study of Bers slices and external fibers.

Let $\Omega=\bigsqcup_{a\in\RV}\Omega_a$ be an inversive multi-domain with associated B-involution $F:\overline{\Omega}\to\widehat{\C}$, where $\RV$ is a finite index set (see Definition~\ref{b_inv_def}). Let 
$$
\pmb{R}=\bigcup_{a\in\RV} R_a:(\RT, \widehat\C^\RV)\to\widehat{\C}
$$ 
be the uniformizing rational map for $F$. Let $\mathfrak{D}_a\subset\widehat{\C}_a$ be the Jordan domains given by Theorem~\ref{b_inv_thm}.  
In particular, $F\circ R_a= R_{\tau(a)}\circ\eta_a$ on $\mathfrak{D}_a$, $a\in\RV$ (where $\tau$ is an involution on $\RV$).

In this section, we will analyze the effect of quasiconformal deformations (respectively, surgeries) of the B-involution $F$ on the uniformizing rational maps $R_a$. It will turn out that the uniformizing rational maps of the new B-involution are related to those of the original B-involution by a non-dynamical quasiconformal deformation (respectively, surgery). We will also see how the algebraic correspondences associated with these B-involutions are related to each other.

\subsection{QC deformations of B-involutions and associated rational uniformizations}\label{qc_def_subsec}

Let $\mu$ be an $F$-invariant complex structure (i.e., measurable ellipse field) on $\widehat{\C}$, and $\Psi:\widehat{\C}\to\widehat{\C}$ a $K$-quasiconformal homeomorphism that sends $\mu$ to the standard complex structure. We define $\widetilde{F}:=\Psi\circ F\circ\Psi^{-1}:\overline{\widetilde{\Omega}}:=\Psi(\overline{\Omega})\to\widehat{\C}$. It is easy to see that $\widetilde{F}$ is also a B-involution in an inversive multi-domain. Our aim is to relate the rational uniformizing map for $\widetilde{F}$ to the rational map $\pmb{R}$. The description is implicit in the proof of \cite[Proposition~4.9]{MM2}. We present the core ideas of that proof here in a general form for later use.

{
\begin{lem}
Let the notation be as above. Then there exists a quasiconformal homeomorphism $\widetilde{\Psi}:(\RT, \widehat\C^\RV)\to(\RT, \widehat\C^\RV)$ such that the rational uniformizing map $\widetilde{\pmb{R}}$ of the $B$-involution $\widetilde{F}:=\Psi\circ F\circ\Psi^{-1}$ is given by $\widetilde{\pmb{R}}=\Psi\circ\pmb{R}\circ \widetilde{\Psi}^{-1}$.
\end{lem}
}
\begin{proof}
Define the complex structure
$$
\widetilde{\mu}:= \pmb{R}^*(\mu)
$$
on $(\RT, \widehat\C^\RV)$ as the pullback of $\mu$ under $\pmb{R}$. Let 
$$
\widetilde{\Psi}: (\RT, \widehat\C^\RV)\to(\RT, \widehat\C^\RV)
$$
be a $K$-quasiconformal homeomorphism where $\widetilde{\Psi}$ maps each $\widehat{\C}_a$ to itself, and straightens $\widetilde{\mu}$ to the standard complex structure on $(\RT, \widehat\C^\RV)$.
Then, the quasiregular map 
$$
\widetilde{\pmb{R}}:=\Psi\circ\pmb{R}\circ\widetilde{\Psi}^{-1}: (\RT, \widehat\C^\RV)\to\widehat{\C}
$$
preserves the standard complex structure. Hence, $\widetilde{\pmb{R}}$ is a rational map.

Note that the relation
$F\circ\pmb{R}=\pmb{R}\circ \pmb{\eta}$ on $\pmb{\mathfrak{D}}=\bigcup_{a\in\RV}\mathfrak{D}_a$, and $F$-invariance of $\mu$ together imply that the complex structure $\widetilde{\mu}$ on $(\RT, \widehat\C^\RV)$
is $\pmb{\eta}$-invariant. Therefore, 
$$
\widetilde{\Psi}\circ\pmb{\eta}\circ\widetilde{\Psi}^{-1}:(\RT, \widehat\C^\RV)\to(\RT, \widehat\C^\RV)
$$
is a conformal involution.
We can post-compose $\widetilde{\Psi}$ with a M{\"o}bius automorphism of $(\RT, \widehat\C^\RV)$ to ensure that $\widetilde{\Psi}\circ\pmb{\eta}\circ\widetilde{\Psi}^{-1}=\pmb{\eta}$.

We set $\widetilde{\pmb{\mathfrak{D}}}:=\widetilde{\Psi}(\pmb{\mathfrak{D}})\subset(\RT, \widehat\C^\RV)$, so $\widetilde{\Omega}=\Psi(\Omega)= \widetilde{\pmb{R}}(\widetilde{\mathfrak{D}})$. The fact that $\pmb{\mathfrak{D}}$ is mapped inside out by $\pmb{\eta}$ means that the same is true for $\widetilde{\pmb{\mathfrak{D}}}$. 
It is now easily checked from the above discussion that $\widetilde{\pmb{R}}$ is injective on $\widetilde{\pmb{\mathfrak{D}}}$, and
\begin{align*}
\widetilde{F} &= \Psi\circ F\circ\Psi^{-1}\\
&=\Psi\circ(\pmb{R}\circ\pmb{\eta}\circ(\pmb{R}\vert_{\pmb{\mathfrak{D}}})^{-{1}})\circ\Psi^{-1}\\
&= (\Psi\circ \pmb{R}\circ\widetilde{\Psi}^{-1})\circ(\widetilde{\Psi}\circ\pmb{\eta}\circ\widetilde{\Psi}^{-1})\circ(\widetilde{\Psi}\circ(\pmb{R}\vert_{\pmb{\mathfrak{D}}})^{-{1}}\circ\Psi^{-1})\\
&=\widetilde{\pmb{R}}\circ\pmb{\eta}\circ (\widetilde{\pmb{R}}\vert_{\widetilde{\pmb{\mathfrak{D}}}})^{-1},\quad \textrm{on}\ \overline{\widetilde{\Omega}}.
\end{align*} 
Thus, $\widetilde{\pmb{R}}:(\RT, \widehat\C^\RV)\to\widehat{\C}$ is the uniformizing rational map for the B-involution $\widetilde{F}$ (see \cite[Figure~9]{MM2} for an illustration).
\end{proof}

It also follows from the above analysis that if $\{\mu_\alpha:\alpha\in\Lambda\}$ (where $\Lambda$ is a topological/complex manifold) is a family of $F$-invariant complex structures depending continuously/holomorphically on $\alpha$, then the coefficients of the uniformizing rational map $\widetilde{\pmb{R}}_\alpha$ depend continuously/holomorphically on $\alpha$.

Finally, consider the algebraic correspondences $\mathfrak{C}$ and $\widetilde{\mathfrak{C}}$ defined by the branched coverings $\pmb{R}$ and $\widetilde{\pmb{R}}$ via Formula~\eqref{corr_eqn}.

{
\begin{cor}
   The $K$-quasiconformal homeomorphism $\widetilde{\Psi}:(\RT, \widehat\C^\RV)\to(\RT, \widehat\C^\RV)$ conjugates the correspondence $\mathfrak{C}$ to $\widetilde{\mathfrak{C}}$.
\end{cor} 
}

\subsection{Intrinsic QC deformation of correspondences}\label{qc_def_corr_subsec}

In this subsection, we will study quasiconformal deformations of a bi-degree $(n,n)$ algebraic correspondence $\mathfrak{C}$ defined by a branched covering $\pmb{R}:(\RT, \widehat\C^\RV)\to\widehat{\C}$ via Formula~\eqref{corr_eqn}, not necessarily coming from a B-involution. The purpose is to show that the conjugated correspondence is also of the same form.

Let $\widetilde{\mu}$ be a $\mathfrak{C}$-invariant almost complex structure on $(\RT, \widehat\C^\RV)$, and $\widetilde{\Psi}:(\RT, \widehat\C^\RV)\to(\RT, \widehat\C^\RV)$ be a quasiconformal homeomorphism straightening $\widetilde{\mu}$ to the standard complex structure. Then, $\widetilde{\mathfrak{C}}:=\widetilde{\Psi}\circ\mathfrak{C}\circ\widetilde{\Psi}^{-1}$ is a bi-degree $(n,n)$ holomorphic correspondence on $(\RT, \widehat\C^\RV)$. By Chow's theorem
{\cite[p.167]{GH78}}, $\widetilde{\mathfrak{C}}$ is an algebraic correspondence.

{
\begin{lem}\label{qc_def_corr_lem}
Let the notation be as above. Then there exist a quasiconformal homeomorphism $\Psi:\widehat\C\to\widehat\C$ and a rational map $\widetilde{\pmb{R}}:(\RT,\widehat{\C}^{\RV})\to\widehat{\C}$ such that $\widetilde{\pmb{R}}:=\Psi\circ\pmb{R}\circ\widetilde{\Psi}^{-1}$, and
$\widetilde{\mathfrak{C}}$ is defined by the rational map $\widetilde{\pmb{R}}$ via Formula~\eqref{corr_eqn}.
\end{lem}
}
\begin{proof}
Note that the local forward branches of $\mathfrak{C}$ are given by post-compositions of $\pmb{\eta}$ with local deck transformations of $\pmb{R}$. From this, it is easily checked that the grand orbits of $\mathfrak{C}$ contain $\pmb{\eta}$ and the local deck transformations of $\pmb{R}$. Hence, $\widetilde{\mu}$ is invariant under these maps. In particular, $\widetilde{\Psi}\circ\pmb{\eta}\circ\widetilde{\Psi}^{-1}$ is a conformal involution on $(\RT, \widehat\C^\RV)$. Possibly after post-composing $\widetilde{\Psi}$ with a M{\"o}bius automorphism, we can assume that $\widetilde{\Psi}$ conjugates $\pmb{\eta}$ to itself. Further, the fact that $\widetilde{\mu}$ is preserved by the local deck transformations of $\pmb{R}$ implies that $\mu:=\pmb{R}_*(\widetilde{\mu})$ is a well-defined almost complex structure on $\widehat{\C}$. (Note that the push-forward of a complex structure under a non-invertible map is in general not well-defined; the invariance of $\widetilde{\mu}$ under the local deck transformations of $\pmb{R}$ is a strong condition that makes this possible in the present situation.) Let $\Psi:\widehat{\C}\to\widehat{\C}$ be a quasiconformal homeomorphism straightening $\mu$ to the standard complex structure. Then, 
$$
\widetilde{\pmb{R}}:=\Psi\circ\pmb{R}\circ\widetilde{\Psi}^{-1}:(\RT, \widehat\C^\RV)\to\widehat{\C}
$$ 
is a holomorphic branched covering. Finally, as $\widetilde{\mathfrak{C}}=\widetilde{\Psi}\circ\mathfrak{C}\circ\widetilde{\Psi}^{-1}$, and
$$
\widetilde{\pmb{R}}^{-1}\circ\widetilde{\pmb{R}}\circ\pmb{\eta}=\widetilde{\Psi}\circ\left(\pmb{R}^{-1}\circ\pmb{R}\circ\pmb{\eta}\right)\circ\widetilde{\Psi}^{-1}
$$
(where we used the fact that $\widetilde{\Psi}\circ\pmb{\eta}=\pmb{\eta}\circ\widetilde{\Psi}$), it follows that the rational map $\widetilde{\pmb{R}}$ defines the correspondence $\widetilde{\mathfrak{C}}$ via Formula~\eqref{corr_eqn}.
\end{proof}

\subsection{QC surgery of B-involutions and associated rational uniformizations}\label{qc_surgery_subsec}

Let $X\subset\overline{\Omega}$ be a forward invariant subset of the B-involution $F:\overline{\Omega}\to\widehat{\C}$ {(with uniformizing rational map $\pmb{R}:(\RT,\widehat{\C}^{\RV})\to\widehat{\C}$)}. Assume further that $X\cap\partial\Omega$ is a finite set. {In the applications, the set $X$ will typically be chosen as the non-escaping set $\cK(F)$ of the B-involution $F$.} Let $\widetilde{F}:\overline{\widetilde{\Omega}}\to\widehat{\C}$ be a B-involution and $\Psi:\widehat{\C}\to\widehat{\C}$ a $K$-quasiconformal map such that $\Psi$ conjugates $F\vert_{\overline{\Omega}- X}$ to $\widetilde{F}\vert_{\overline{\widetilde{\Omega}}-\widetilde{X}}$, where $\widetilde{X}:=\Psi(X)$. By continuity, $\Psi$ conjugates $F\vert_{\partial X}$ to~$\widetilde{F}\vert_{\partial\widetilde{X}}$.

\begin{lem}\label{qc_surgery_lem}
Let the setup be as above. Then there exist a quasiconformal map $\widecheck{\Psi}:(\RT,\widehat{\C}^{\RV})\to(\RT,\widehat{\C}^{\RV})$ and a rational map $\widetilde{\pmb{R}}:(\RT,\widehat{\C}^{\RV})\to\widehat{\C}$ such that the following hold.
\begin{itemize}[leftmargin=*]
    \item $\widetilde{\pmb{R}}\equiv \Psi\circ\pmb{R}\circ\widecheck{\Psi}^{-1}$
outside $\widecheck{\Psi}(\pmb{R}^{-1}(X))$.
    \item $\widetilde{\pmb{R}}$ is the uniformizing rational map for the B-involution $\widetilde{F}$.
    \item The correspondences $\mathfrak{C}, \widetilde{\mathfrak{C}}$ defined by $\pmb{R}, \widetilde{\pmb{R}}$ (respectively) via Formula~\eqref{corr_eqn} are conjugate by $\widecheck{\Psi}$ outside of $\pmb{R}^{-1}(X)$.
\end{itemize}
\end{lem}
\begin{proof}
Define a $K$-quasiregular map
\begin{align*}
\widecheck{\pmb{R}}:(\RT, \widehat\C^\RV)\to\widehat{\C}, \hspace{1cm}
\widecheck{\pmb{R}}=
\begin{cases}
\Psi\circ \pmb{R}\quad \mathrm{on}\quad \overline{\pmb{\mathfrak{D}}},\\
\widetilde{F}\circ\Psi\circ \pmb{R}\circ\pmb{\eta} \quad \mathrm{on}\quad (\RT, \widehat\C^\RV)-\overline{\pmb{\mathfrak{D}}}.
\end{cases}
\end{align*}
The fact that the piecewise definitions given above match continuously follows from the fact that $F\circ\pmb{R}\circ\pmb{\eta}=\pmb{R}$ on $\partial\pmb{\mathfrak{D}}$, and $\Psi\circ F=\widetilde{F}\circ\Psi$ on~$\partial\Omega$. By construction, 
$$
\widetilde{F}\vert_{\widetilde{\Omega}}\equiv \widecheck{\pmb{R}}\circ\pmb{\eta}\circ\left(\widecheck{\pmb{R}}\vert_{\pmb{\mathfrak{D}}}\right)^{-1}.
$$
We now pull the standard complex structure on $\widehat{\C}$ back to $(\RT, \widehat\C^\RV)$ by $\widecheck{\pmb{R}}$, and call it $\widecheck{\mu}$. Since $\widetilde{F}$ is holomorphic by assumption, it follows that $\widecheck{\mu}$ is $\pmb{\eta}$-invariant.
Let $\widecheck{\Psi}:(\RT, \widehat\C^\RV)\to(\RT, \widehat\C^\RV)$ be a $K$-quasiconformal straightening map for $\widecheck{\mu}$. Then, 
$$
\widetilde{\pmb{R}}:(\RT, \widehat\C^\RV)\to\widehat{\C},\quad \widetilde{\pmb{R}}:=\widecheck{\pmb{R}}\circ\widecheck{\Psi}^{-1}$$ 
is a holomorphic branched covering. Moreover, $\widecheck{\Psi}\circ\pmb{\eta}\circ\widecheck{\Psi}^{-1}$ is a conformal involution of $(\RT, \widehat\C^\RV)$, which we can assume to be $\pmb{\eta}$ after appropriate normalizations.

Let us now set $\widetilde{\pmb{\mathfrak{D}}}:={\widecheck{\Psi}(\pmb{\mathfrak{D}})}$, and observe that 
$$
\widetilde{F}\vert_{\widetilde{\Omega}}\equiv \widetilde{\pmb{R}}\circ\pmb{\eta}\circ\left(\widetilde{\pmb{R}}\vert_{\widetilde{\pmb{\mathfrak{D}}}}\right)^{-1}.
$$
Thus, $\widetilde{\pmb{R}}$ is the uniformizing rational map for the B-involution $\widetilde{F}$.

Recall also that $\widetilde{F}\circ\Psi=\Psi\circ F$ outside $X$, and $F\circ\pmb{R}\circ\pmb{\eta}=\pmb{R}$ outside $\pmb{\mathfrak{D}}$. Hence, 
$$
\widetilde{\pmb{R}}\equiv \Psi\circ\pmb{R}\circ\widecheck{\Psi}^{-1}
$$ 
outside $\widecheck{\Psi}\left(\pmb{\eta}((\pmb{R}\vert_{\pmb{\mathfrak{D}}})^{-1}(X))\right)\subseteq \widecheck{\Psi}(\pmb{R}^{-1}(X))$ (where the containment follows from forward invariance of $X$ under $F$).

{The last statement follows from the definitions of $\mathfrak{C}, \widetilde{\mathfrak{C}}$ in terms of the rational maps $\pmb{R}$ and $\widetilde{\pmb{R}}$ (via Formula~\eqref{corr_eqn}), the equality $\widetilde{\pmb{R}}\equiv \Psi\circ\pmb{R}\circ\widecheck{\Psi}^{-1}$ outside $\widecheck{\Psi}(\pmb{R}^{-1}(X))$, and the conjugacy relation $\widecheck{\Psi}\circ\eta=\eta\circ\widecheck{\Psi}$.}
\end{proof}

\section{Homeomorphism between Bers slices and its regularity}\label{bers_slice_homeo_sec}

We will now show that the map $\mathfrak{G}$ (see \S~\ref{qf_bers_slice_subsec}) induces a homeomorphism between the topological product of the Teichm{\"u}ller space and principal hyperbolic component, and the HSU locus.

\begin{prop}\label{Xi_homeo_prop}
$\mathfrak{G}: \mathrm{Teich}(S_{0,d+1})\times \cH_{2d-1} \longrightarrow \mathrm{Rat}_{2d}^{reg}(\C)/\!\!\sim$ is a homeomorphism onto its image.
\end{prop}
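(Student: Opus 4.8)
The plan is to verify that $\mathfrak{G}$ is a continuous injection whose inverse on the image is also continuous; injectivity is already recorded in Section~\ref{qf_bers_slice_subsec} (following \cite[\S 6]{MM2}), so it remains to establish continuity of $\mathfrak{G}$ and of $\mathfrak{G}^{-1}$. Since $\mathrm{Teich}(S_{0,d+1})\times\cH_{2d-1}$ is first countable, I would argue both directions sequentially: for $\mathfrak{G}$, that $(\Gamma_n,P_n)\to(\Gamma,P)$ forces $[R_{\Gamma_n,P_n}]\to[R_{\Gamma,P}]$; and for the inverse, that a convergence $[R_{\Gamma_n,P_n}]\to[R_{\Gamma,P}]$ taking place \emph{within} the image forces $(\Gamma_n,P_n)\to(\Gamma,P)$.

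For continuity of $\mathfrak{G}$ I would realize $R_{\Gamma_n,P_n}$ as a quasiconformal deformation of $R_{\Gamma,P}$ and invoke the continuous-dependence statement at the end of Section~\ref{qc_def_subsec}. Concretely, each $P_n\in\cH_{2d-1}$ is quasiconformally conjugate to $P$ by a conjugacy whose Beltrami coefficient varies continuously with $P_n$ (B\"ottcher/holomorphic-motion data on the principal hyperbolic component), and each $\Gamma_n$ is obtained from $\Gamma$ by the quasiconformal homeomorphism $\phi_{\rho_n}$ of Step~I, with $\phi_{\rho_n}\to\phi_\rho$ as $\Gamma_n\to\Gamma$. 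These two deformations assemble into an $F_{\Gamma,P}$-invariant complex structure $\mu_n$ on $\widehat{\C}$ depending continuously on $(\Gamma_n,P_n)$, and Section~\ref{qc_def_subsec} shows that the coefficients of the resulting uniformizing rational map depend continuously on $\mu_n$. Hence $[R_{\Gamma_n,P_n}]\to[R_{\Gamma,P}]$.

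The substance of the argument, and the main obstacle, is continuity of $\mathfrak{G}^{-1}$. Starting from $[R_{\Gamma_n,P_n}]\to[R_{\Gamma,P}]$ I would first pass to honest representatives: because every map in the image carries $2d$ of its critical points at the normalized positions $\pm 1,\alpha_i^{\pm 1}$ and satisfies $1\in\partial\mathfrak{D}$ (Theorem~\ref{b_inv_thm}, \cite[Corollary~4.15]{MM2}), the equivalence $\sim$ reduces to a compact (indeed finite) ambiguity on these parameters, so that after normalization $R_{\Gamma_n,P_n}\to R_{\Gamma,P}$ in $\Rat_{2d}(\C)$ with no degree drop. Convergence of the $R_{\Gamma_n,P_n}$ then yields convergence of the associated correspondences $\mathfrak{C}_{\Gamma_n,P_n}$, hence of the B-involutions/degenerate polynomial-like maps $F_{\Gamma_n,P_n}\to F_{\Gamma,P}$ on their domains. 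From this dynamical convergence I would extract the two coordinates separately: the external maps converge, $A_{\Gamma_n}\to A_\Gamma$ uniformly on the circle, and since $\Gamma\mapsto A_\Gamma$ (equivalently $\Gamma\mapsto\phi_\rho|_{\mathbb{S}^1}$) is a homeomorphism onto its image, this forces $\Gamma_n\to\Gamma$; simultaneously the internal hybrid classes converge, and because $P$ is recovered from the hybrid class of $F_{\Gamma,P}$ inside the principal hyperbolic component (straightening, with $P_n,P$ monic and centered), this forces $P_n\to P$.

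The delicate points I expect to dwell on are exactly these recovery steps. First, one must argue that convergence in $\Rat_{2d}(\C)/\sim$ genuinely lifts to non-degenerating convergence in $\Rat_{2d}(\C)$: a priori the normalizers realizing the quotient could escape in $\PSL_2(\C)$, and ruling this out requires the critical-point and boundary normalizations above together with the fact that the limit $[R_{\Gamma,P}]$ lies on the trivial tree, so no rescaling limit onto a nontrivial tree of spheres occurs (in contrast with the degenerations of Section~\ref{degenerations_sec}). Second, the passage from convergence of the correspondences to uniform convergence of the external Bowen--Series maps and of the internal polynomial dynamics must be made precise using the hybrid description of Section~\ref{degen_poly_like_map_ext_fiber_subsec}, the key inputs being that the external map is a complete conjugacy invariant for the tiling-set dynamics and that hybrid equivalence pins down the polynomial in $\cH_{2d-1}$. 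A streamlined alternative, which I would keep in reserve, is to upgrade the separate holomorphicity of $\mathfrak{G}(\cdot,P)$ to joint continuity and then apply invariance of domain, using the equality of dimensions $3d-4$ computed in Section~\ref{dim_count_subsec}; this avoids the dynamical recovery but demands that the target slice $V_d/\sim$ be a manifold of that dimension near the image.
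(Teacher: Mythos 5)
Your fallback plan is, in fact, the paper's actual proof: injectivity from \cite[\S 6]{MM2}, joint continuity of $\mathfrak{G}$ via quasiconformal methods, and then the dimension count of Section~\ref{dim_count_subsec} together with the Invariance of Domain Theorem. The paper never argues continuity of $\mathfrak{G}^{-1}$ directly, so the route you make primary is where the problems lie. Already in the forward direction your argument has a flaw: you realize $R_{\Gamma_n,P_n}$ as a quasiconformal \emph{deformation} of $R_{\Gamma,P}$, i.e., via an $F_{\Gamma,P}$-invariant complex structure, citing only Section~\ref{qc_def_subsec}. But the polynomials in $\cH_{2d-1}$ do not form a single quasiconformal conjugacy class: the center $z^{2d-1}$ has a fixed critical point of maximal multiplicity, hence is not even topologically conjugate to any other map in the component (and critical orbit relations cause the same failure elsewhere), so no invariant Beltrami form on the $F_{\Gamma,P}$-plane can change the internal class from $P$ to $P_n$. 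This is exactly why the paper performs a quasiconformal \emph{surgery} on the non-escaping set (Section~\ref{qc_surgery_subsec}, taking $X$ to be the non-escaping set of $F$), replacing the internal dynamics by the Blaschke model of $\widetilde{P}$, and only then invokes the parametric Measurable Riemann Mapping Theorem.

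The deeper gaps are in your inverse-continuity argument, each of whose ``recovery'' steps is an unproven, and in this subject notoriously delicate, continuity statement. Convergence of coefficients $R_{\Gamma_n,P_n}\to R_{\Gamma,P}$ does not by itself give convergence of the dynamically defined data: the injectivity domains $\mathfrak{D}_n$, the tiling and non-escaping sets are a priori only semicontinuous, so ``$F_{\Gamma_n,P_n}\to F_{\Gamma,P}$ on their domains'' and convergence of the external maps require Carath\'eodory-limit arguments of the kind the paper develops in Sections~\ref{pre_comp_bers_sec} and~\ref{vertical_compact_subsec}, not a formal passage. Worse, ``the internal hybrid classes converge, hence $P_n\to P$'' is precisely a continuity-of-straightening claim; straightening maps can be discontinuous in higher degree (Inou \cite{Ino09}), the paper itself treats such statements as conjectural in its setting, and even restricted to $\cH_{2d-1}$ you would need to prove continuity (say via convergence of the Blaschke models on the basins), which runs into the same semicontinuity problem. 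Finally, the step ``$A_{\Gamma_n}\to A_\Gamma$ forces $\Gamma_n\to\Gamma$'' needs a properness argument for $\Gamma\mapsto A_\Gamma$ that you do not supply. None of these steps is obviously false, but each is a missing proof; the invariance-of-domain route---which does require, as you correctly note, that the target slice be a manifold of dimension $3d-4$ near the image, a point on which the paper itself is brief---is designed to bypass all of them.
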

\begin{proof}
As mentioned before, it follows from \cite[\S 6]{MM2} that $\mathfrak{G}$ is injective. We will now exhibit continuity and properness of the map $\mathfrak{G}$. Since $\Rat_{2d}^{reg}(\C)/\!\!\sim$ is Hausdorff (by Corollary~\ref{cor:HausdorffReg}), these properties will imply that $\mathfrak{G}$ is a homeomorphism onto its image.
\smallskip

\noindent\textbf{$\mathfrak{G}$ is continuous.} Let us fix {$[R_{\Gamma,P}]=\mathfrak{G}(\Gamma,P)$, for some $\Gamma\in\mathrm{Teich}(S_{0,d+1})$ and $P\in\cH_{2d-1}$. We denote the associated B-involution that is the mating of $A_\Gamma$ and $P$ by $F$.} For definiteness, let us assume that $P(z)=z^{2d-1}$. The conformal mating $\widetilde{F}$ of any pair $(\widetilde{\Gamma},\widetilde{P})\in\mathrm{Teich}(S_{0,d+1})\times\cH_{2d-1}$ can be constructed from $F$ by performing a quasiconformal deformation on the escaping set (where the dynamics is conformally conjugate to $A_{\Gamma}$) and a quasiconformal surgery on the non-escaping set (where the dynamics is conformally conjugate to $P$). Such a surgery deforms the conformal dynamics of $A_\Gamma:\cD_\Gamma\to\overline{\D}$ on the escaping set to that of $A_{\widetilde{\Gamma}}:\cD_{\widetilde{\Gamma}}\to\overline{\D}$, and replaces
the conformal dynamics of $z^{2d-1}\vert_{\overline{\D}}$ on the non-escaping set with that of $\widetilde{B}\vert_{\overline{\D}}$, where $\widetilde{B}$ is the unique normalized Blaschke product such that $\widetilde{B}\vert_{\overline{\D}}$ is conformally conjugate to  $\widetilde{P}\vert_{\cK(\widetilde{P})}$ (cf. \cite{McM88}, \cite[\S 4.1.3, \S 4.2]{BF14}). In fact, the above procedure can be carried out so that the resulting conformal mating $\widetilde{F}$ as well as the quasiconformal conjugacy $\Psi_{\widetilde{\Gamma},\widetilde{P}}$ between the quasiregular modification of $F$ and the conformal mating $\widetilde{F}$ depend continuously on $\widetilde{\Gamma}, \widetilde{P}$ (cf. \cite[\S 5]{Mil12}).

Let us denote the rational uniformizing map for $F$ by $R$.
To show that the rational uniformizing map $R_{\widetilde{\Gamma},\widetilde{P}}$ associated with $\widetilde{F}$ can also be chosen continuously, we will appeal to the  analysis of \S~\ref{qc_surgery_subsec}. In the notation of that subsection, the set $X$ can be chosen to be the non-escaping set of $F$. It follows from the preceding discussion that the quasiregular modification $\widecheck{R}_{\widetilde{\Gamma},\widetilde{P}}$ of $R$ constructed in the proof of Lemma~\ref{qc_surgery_lem} and the almost complex structure $\widecheck{\mu}_{\widetilde{\Gamma},\widetilde{P}}:=\widecheck{R}_{\widetilde{\Gamma},\widetilde{P}}^*(\mu_0)$ (where $\mu_0$ is the standard complex structure) depend continuously on $\widetilde{\Gamma}, \widetilde{P}$. By the parametric version of the Measurable Riemann Mapping Theorem, the quasiconformal homeomorphism $\widecheck{\Psi}_{\widetilde{\Gamma},\widetilde{P}}$ that straightens $\widecheck{\mu}_{\widetilde{\Gamma},\widetilde{P}}$ to the standard complex structure also depends continuously on $\widetilde{\Gamma},\widetilde{P}$. In light of the relation (see Lemma~\ref{qc_surgery_lem}) 
$$
R_{\widetilde{\Gamma},\widetilde{P}}=\widecheck{R}_{\widetilde{\Gamma},\widetilde{P}}\circ \widecheck{\Psi}_{\widetilde{\Gamma},\widetilde{P}}^{-1},
$$
we conclude that $R_{\widetilde{\Gamma},\widetilde{P}}$ depends continuously on $\widetilde{\Gamma}, \widetilde{P}$.
\smallskip

\noindent\textbf{$\mathfrak{G}$ is proper.}
By way of contradiction, suppose that $R_0=\mathfrak{G}(\Gamma_0,P_0)$ for some $(\Gamma_0,P_0)\in\mathrm{Teich}(S_{0,d+1})\times \cH_{2d-1}$, and $\{R_n=\mathfrak{G}(\Gamma_n,P_n)\}_{n\geq 1}\xrightarrow[n\to\infty]{} R_0$, for an escaping sequence $(\Gamma_n,P_n)$ in $\mathrm{Teich}(S_{0,d+1})\times \cH_{2d-1}$. For $n\geq 0$, let $F_n:\overline{\Omega}_n\to\widehat{\C}$ be the conformal mating of $A_{\Gamma_n}$ with $P_n$. Let $\mathfrak{D}_n$ be the Jordan domain such that $\eta(\overline{\mathfrak{D}}_n)=\widehat{\C}-\mathfrak{D}_n$, $R_n:\overline{\mathfrak{D}}_n\to\overline{\Omega}_n$ is a conformal homeomorphism, and $F_n\vert_{\overline{\Omega}_n}\equiv R_n\circ\eta\circ\left(R_n\vert_{\overline{\mathfrak{D}}_n}\right)^{-1}$ (see Theorem~\ref{b_inv_thm}). Further, $\partial\mathfrak{D}_n$ contains $2d$ simple critical points of $R_n$ which are  mapped by $R_n$ onto the non-singular points of $\partial\Omega_n$. We denote the correspondence on $\widehat{\C}$ associated with $R_n$ by $\mathfrak{C}_n$ (defined via Equation~\eqref{corr_eqn}). By \cite[\S 6.4]{MM2}, the domain $\mathfrak{D}_n$ as well as the B-involution $F_n$ can be determined uniquely from the correspondence $\mathfrak{C}_n$, or equivalently, from the rational map $R_n$.

For $n$ sufficiently large, the rational map $R_n$ is a small perturbation of $R_0$. The discussion in the previous paragraph now implies that the domain $\mathfrak{D}_n$ as well as the B-involution $F_n$ are small perturbations of $\mathfrak{D}_0, F_0$, respectively. Since $F_0$ has an attracting fixed point in $\cK(F_0)$, it follows that for $n$ large, the multiplier of the attracting fixed point of $F_n$ in $\cK(F_n)$ stays away from $\mathbb{S}^1$. As $F_n\vert_{\cK(F_n)}$ is conformally conjugate to $P_n\vert_{\cK(P_n)}$, we conclude that $P_n$ does not go to the boundary of $\cH_{2d-1}$.

The $2d$-gon $\widehat{\C}-\Omega_n$, with vertices at the marked critical values of $R_n$, is biholomorphic to a fundamental domain for the $\Gamma_n$-action on $\D$. For $n$ large, the domain $\mathfrak{D}_n$ is a small perturbation of $\mathfrak{D}_0$; hence, the extremal length of the path family (in $\widehat{\C}-\mathfrak{D}_n$) connecting any pair of non-adjacent sides of $\partial\mathfrak{D}_n$ is bounded away from $0$. So the marked fundamental domains of $\Gamma_n$ have the same property, and it follows that $\Gamma_n$ does not go to the boundary of $\mathrm{Teich}(S_{0,d+1})$. This contradicts the assumption that $(\Gamma_n,P_n)$ is an escaping sequence in $\mathrm{Teich}(S_{0,d+1})\times \cH_{2d-1}$.
\end{proof}

{
\begin{remark}
In the notation of Proposition~\ref{Xi_homeo_prop}, let $\widetilde{B}$ be the unique normalized Blaschke product conformally conjugate to $\widetilde{P}\vert_{\cK(\widetilde{P})}$ on $\overline{\D}$. The limit set of the conformal mating $\widetilde{F}$ of $A_{\widetilde{\Gamma}}$ and $\widetilde{P}$ is the welding curve for the circle homeomorphism conjugating $\widetilde{B}\vert_{\mathbb{S}^1}$ to the Bowen-Series map $A_{\widetilde{\Gamma}}\vert_{\mathbb{S}^1}$. These welding homeomorphisms depend continuously on $\widetilde{\Gamma}$ and $\widetilde{P}$ but are not quasisymmetric as they carry hyperbolic fixed points to parabolic ones (cf. \cite[Lemma~3.4]{MM2}, \cite[\S~5]{LMMN25}). Consequently, the parametric Measurable Riemann Mapping Theorem cannot be used directly to deduce the continuous dependence of $\widetilde{F}$ on $\widetilde{\Gamma}$ and $\widetilde{P}$. Instead, the continuity of $\mathfrak{G}$ is established by starting with a base conformal mating and realizing all others as its quasiconformal deformations.
\end{remark}}

As a consequence of the above result, we have that the complex dimension of each horizontal Bers slice is $d-2$ (cf. \S~\ref{dim_count_subsec}).

As in the classical Bers theory of Kleinian groups, the various Bers slices $\mathfrak{B}(P)$, $P\in\cH_{2d-1}$, are naturally homeomorphic via restrictions of the map $\mathfrak{G}$ on horizontal slices. In order to study boundaries of Bers slices, we will need more control on the regularity of such a homeomorphism. We recall the notation $\mathfrak{C}_{\Gamma,P}$, which is the correspondence arising as the mating of $\Gamma\in\mathrm{Teich}(S_{0,d+1})$ and $P\in\cH_{2d-1}$. 

\begin{theorem}\label{bers_slice_hoemo_thm}
Let $P_1,P_2\in\cH_{2d-1}$. Then, the map 
\begin{align*}
\chi:\mathfrak{B}(P_1)\longrightarrow\mathfrak{B}(P_2)\\
\mathfrak{C}_{\Gamma,P_1}\mapsto \mathfrak{C}_{\Gamma,P_2} & \quad \forall\ \Gamma\in\mathrm{Teich}(S_{0,d+1}),
\end{align*}
is a homeomorphism. Moreover, the correspondences $\mathfrak{C}_{\Gamma,P_1}$ and $\mathfrak{C}_{\Gamma,P_2}$ are $K$-quasiconformally conjugate 
{(for a constant $K>0$ that is independent of $\Gamma$ but depends on $P_1, P_2$)} in a neighborhood of their filled limit sets (preserving the marked fixed points on the limit sets) such that
\begin{enumerate}[leftmargin=*]
    \item the conjugacy is conformal on the filled limit set, and
    \item the {domains of definition of the quasiconformal conjugacies can be chosen as closed neighborhoods of the filled limit sets that depend continuously on $\Gamma$ in the Hausdorff topology on compact subsets of $\widehat{\C}$}.
\end{enumerate}
\end{theorem}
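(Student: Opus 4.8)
The plan is to deduce the topological assertion from results already in hand and then establish the finer quasiconformal regularity by a surgery on the polynomial dynamics. For the homeomorphism statement, recall from Proposition~\ref{Xi_homeo_prop} that $\mathfrak{G}$ is a homeomorphism onto its image, so each horizontal restriction $\mathfrak{G}(\cdot,P_i)\colon\mathrm{Teich}(S_{0,d+1})\to\mathfrak{B}(P_i)$ is a homeomorphism. Since $\mathfrak{C}_{\Gamma,P_i}$ is the correspondence attached to $\mathfrak{G}(\Gamma,P_i)$, the map $\chi$ is precisely the composition $\mathfrak{G}(\cdot,P_2)\circ\mathfrak{G}(\cdot,P_1)^{-1}$, and hence a homeomorphism.

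For the quasiconformal statement, fix $\Gamma$ and let $F_i$ be the B-involution mating $A_\Gamma$ and $P_i$, with uniformizing rational map $\pmb{R}_i$ and Jordan domain $\mathfrak{D}_i$ as in Theorem~\ref{b_inv_thm}. I would set $X:=\mathcal{K}(F_1)$ (the forward invariant non-escaping set, which meets $\partial\Omega$ in a finite set) and build a global $K$-quasiconformal map $\Psi\colon\widehat{\C}\to\widehat{\C}$ conjugating $F_1$ to $F_2$ on $\overline{\Omega}-X$. On the tiling set both maps are conformally conjugate to $A_\Gamma$, so this conjugacy can—and will—be chosen conformal there; its non-conformality is confined to $X$, where it effects the change of internal class from $P_1$ to $P_2$. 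Here one uses that $P_1,P_2\in\cH_{2d-1}$ are both hybrid equivalent to normalized Blaschke products modelling $P_i\vert_{\mathcal{K}(P_i)}$, and that these two Blaschke products are quasiconformally conjugate by a homeomorphism whose Beltrami coefficient is supported in $\D$ and depends only on the pair $(P_1,P_2)$.

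Feeding $F_1$, $F_2$, $\Psi$, and $X$ into the surgery of Section~\ref{qc_surgery_subsec} yields the quasiregular modification $\widecheck{\pmb{R}}$, the $\pmb\eta$-invariant structure $\widecheck\mu$ obtained by pulling back the standard structure under $\widecheck{\pmb{R}}$, its straightening $\widecheck\Psi$, and the uniformizing rational map $\widetilde{\pmb{R}}=\widecheck{\pmb{R}}\circ\widecheck\Psi^{-1}$ of $F_2$. By the concluding paragraph of that subsection, $\widecheck\Psi$ conjugates $\mathfrak{C}_{\Gamma,P_1}$ to $\mathfrak{C}_{\Gamma,P_2}$ outside $\pmb{R}_1^{-1}(X)=\widetilde{\mathcal{K}}$, hence on a neighborhood of the filled limit set. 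Because $\Psi$ is conformal on the tiling set and $\widecheck{\pmb{R}}$ is holomorphic over it, $\widecheck\mu$ is standard there, so $\widecheck\Psi$ is conformal on the filled limit set, which is claim~(1): the Bowen--Series dynamics of the fixed group $\Gamma$ is conjugated conformally, while the entire dilatation is confined to $\widetilde{\mathcal{K}}$, where the internal class is altered. Since that dilatation equals the dilatation of the fixed Blaschke conjugacy transported through the holomorphic maps $\pmb{R}_i$, it is bounded by a constant $K=K(P_1,P_2)$ independent of $\Gamma$. Finally, normalizing $\Psi$, hence $\widecheck\Psi$, to fix the marked boundary point $1\in\partial\mathfrak{D}$ of Theorem~\ref{b_inv_thm} preserves the marked fixed points on the limit sets.

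For claim~(2) I would run the argument of Proposition~\ref{Xi_homeo_prop} in families: $\widecheck{\pmb{R}}_\Gamma$ and $\widecheck\mu_\Gamma$ can be arranged to depend continuously on $\Gamma$, so the parametric Measurable Riemann Mapping Theorem gives continuous dependence of $\widecheck\Psi_\Gamma$, and therefore of the conjugation domains, on $\Gamma$. The hard part will be the uniformity of $K$: one must verify that the full Beltrami coefficient of the surgery originates in the $\Gamma$-independent conjugacy between $P_1$ and $P_2$ and is carried into $(\RT,\widehat{\C}^{\RV})$ only through holomorphic maps and the conformal part of $\Psi$, so that no $\Gamma$-dependent distortion enters. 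The companion technical point—upgrading a conjugacy of B-involutions to a conjugacy of the associated correspondences near the filled limit set—is exactly what is packaged in Section~\ref{qc_surgery_subsec}.
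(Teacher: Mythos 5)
Your reduction of the homeomorphism statement to Proposition~\ref{Xi_homeo_prop}, and your overall surgery strategy (replace the internal class via Blaschke models, lift to the correspondence plane via Section~\ref{qc_surgery_subsec}, extract uniform $K$ from the $\Gamma$-independence of the Blaschke conjugacy, get continuity from the parametric Measurable Riemann Mapping Theorem) are the same as the paper's. But there is a genuine gap at exactly the step that carries the content of the theorem. With $X:=\mathcal{K}(F_1)$, the concluding statement of Section~\ref{qc_surgery_subsec} gives a conjugacy of the correspondences only \emph{outside} $\pmb{R}_1^{-1}(X)=\widetilde{\mathcal{K}}$, i.e., on the open tiling set $\widetilde{\mathcal{T}}$. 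The filled limit set, however, is the complement of the open $P$-components, i.e., $\widetilde{\mathcal{T}}\cup\widetilde{\Lambda}=\overline{\widetilde{\mathcal{T}}}$, and points of the limit set $\widetilde{\Lambda}=\partial\widetilde{\mathcal{T}}$ have no neighborhood contained in $\widetilde{\mathcal{T}}$. So "conjugate outside $\widetilde{\mathcal{K}}$" yields, at best (by continuity), a conjugacy on the closed filled limit set itself --- not on a \emph{neighborhood} of it, which is what the theorem asserts and what the later arguments built on it (Definition~\ref{hybrid_conj_def}, Lemmas~\ref{conjugacy_extension_lem} and~\ref{limit_of_str_corr_lem}) actually require.

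The repair forces $X$ to be strictly smaller than $\mathcal{K}(F_1)$, so that the conjugacy region bites into the $P$-components across the limit set; this is precisely the role of Lemma~\ref{blaschke_conjugacy_lem}. That lemma produces a quasiconformal conjugacy $\mathfrak{h}$ between $B_1$ and $B_2$ on a relative neighborhood $N_1$ of $\mathbb{S}^1$ in $\overline{\D}$ --- and in general \emph{only} there: your stronger claim that the two Blaschke products are quasiconformally conjugate with Beltrami coefficient supported in $\D$ (i.e., conjugate on all of $\overline{\D}$) is false, because a topological conjugacy preserves local degrees, so for instance $B_1(z)=z^{2d-1}$, whose critical point has local degree $2d-1$, cannot be conjugate near that point to a $B_2$ with only simple critical points. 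Transporting the collar conjugacy $\mathfrak{h}\vert_{N_1}$ by the uniformizations $\mathfrak{g}_{\Gamma,j}$ of the non-escaping sets, the quasiregular interpolation agrees with $F_{\Gamma,P_1}$ on $\mathcal{T}(F_{\Gamma,P_1})\cup\mathfrak{g}_{\Gamma,1}(N_1)$; equivalently, one runs Section~\ref{qc_surgery_subsec} with the forward-invariant compact set $X=\mathfrak{g}_{\Gamma,1}(\overline{\D}-N_1)$, whose $\pmb{R}_1$-preimage is compactly contained in the $P$-components. The complement of that preimage is then a genuine neighborhood of the filled limit set, and the remainder of your argument (uniform $K$, continuity in $\Gamma$, preservation of the markings) goes through as in the paper.
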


The first statement of Theorem~\ref{bers_slice_hoemo_thm} follows from the fact that 
$$
\mathfrak{G}:\mathrm{Teich}(S_{0,d+1})\times\{P_j\}\to\mathfrak{B}(P_j),\ j\in\{1,2\},
$$
are homeomorphisms (see Proposition~\ref{Xi_homeo_prop}). However, we need to furnish the details of the quasiconformal surgical procedure to pass from $\mathfrak{C}_{\Gamma,P_1}$ to $\mathfrak{C}_{\Gamma,P_2}$ in order to deduce the additional control on $\chi$. This extra regularity of $\chi$ will be important in studying its boundary behavior.

Note that each $P\in\cH_{2d-1}$ has a marked repelling fixed point on its Julia set that is the landing point of the dynamical ray at angle $0$. Thus, {$P\vert_{\Int{\cK(P)}}$} is conformally conjugate to a Blaschke product $B$ (restricted to $\D$) that has an attracting fixed point in $\D$ and has a marked repelling fixed point at $1$. 

\begin{lem}\label{blaschke_conjugacy_lem}
Let $B_1,B_2$ be degree $n\geq 2$ Blaschke products each having an attracting fixed point in $\D$ and a repelling fixed point at $1$. Then, there exists a quasiconformal homeomorphism $\mathfrak{h}:\overline{\D}\to\overline{\D}$ that conjugates $B_1$ to $B_2$ on a relative neighborhood of $\mathbb{S}^1$ in $\overline{\D}$, and sends $1$ to $1$.    
\end{lem}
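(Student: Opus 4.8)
The plan is to construct $\mathfrak{h}$ in two stages: first a quasisymmetric conjugacy on $\mathbb{S}^1$ between the boundary circle maps, and then a dynamical quasiconformal extension into a one-sided collar of $\mathbb{S}^1$ in $\overline{\D}$ via a standard pullback (quasiconformal surgery) argument, finished off by an arbitrary quasiconformal extension to the rest of $\overline{\D}$.

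First I would record that each $B_i\vert_{\mathbb{S}^1}$ is an orientation-preserving, real-analytic, degree $n$ \emph{expanding} covering map of the circle fixing $1$. Expansion holds because the attracting fixed point in $\D$ (together with, by the symmetry $B_i(1/\overline z)=1/\overline{B_i(z)}$, its mirror in $\D^*$) forces $\mathbb{S}^1\subset\mathcal{J}(B_i)$ with no critical points on $\mathbb{S}^1$, so that $\mathbb{S}^1$ is a hyperbolic repeller. Each such map is topologically conjugate to $m_n\colon z\mapsto z^n$ by a homeomorphism $\phi_i$ normalized so that $\phi_i(1)=1$, and $\phi_i$ is quasisymmetric by the standard regularity theory for $C^{1+\alpha}$ expanding circle maps. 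Setting $h:=\phi_2^{-1}\circ\phi_1$ then yields a quasisymmetric circle homeomorphism with $h(1)=1$ satisfying $h\circ B_1=B_2\circ h$ on $\mathbb{S}^1$ (one checks directly that the commuting relations for $\phi_1,\phi_2$ with $m_n$ force this, so no uniqueness of $\phi_i$ is needed).

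Next I would extend $h$ dynamically. Fix a smooth Jordan curve $\gamma_1\subset\D$ close to $\mathbb{S}^1$, bounding with $\mathbb{S}^1$ a collar $C_1$, chosen so that the boundary-adjacent branch of $B_1^{-1}$ maps $C_1$ compactly into itself with $\bigcap_k B_1^{-k}(C_1)=\mathbb{S}^1$; let $\gamma_2,C_2$ be the analogues for $B_2$. On the fundamental annulus $F_1$ bounded by $\gamma_1$ and $B_1^{-1}(\gamma_1)$, prescribe $\mathfrak{h}=H$ on $\gamma_1$ (where $H$ is any quasiconformal extension of $h$ to $\overline{\D}$, e.g.\ the Douady--Earle extension, so that $H(1)=1$) and $\mathfrak{h}=B_2^{-1}\circ H\circ B_1$ on $B_1^{-1}(\gamma_1)$ (with the branch of $B_2^{-1}$ compatible with $h$), and interpolate by any quasiconformal homeomorphism $F_1\to F_2$ with these boundary values; call its maximal dilatation $K_0$. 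Then propagate by the functional equation, setting $\mathfrak{h}:=B_2^{-k}\circ\mathfrak{h}\circ B_1^{k}$ on $B_1^{-k}(F_1)$. Since $B_1,B_2$ are holomorphic, every piece has dilatation $K_0$; the pieces glue continuously across the curves $B_1^{-k}(\gamma_1)$ by the matching boundary condition; and expansion gives $\diam B_1^{-k}(F_1)\to 0$, so $\mathfrak{h}$ extends continuously to $\mathbb{S}^1$ with trace $h$. This produces a $K_0$-quasiconformal homeomorphism $C_1\to C_2$ conjugating $B_1$ to $B_2$ and fixing $1$. Finally, glue with an arbitrary quasiconformal extension of $\mathfrak{h}\vert_{\gamma_1}$ across the Jordan domains bounded by $\gamma_1$ and $\gamma_2$ to obtain a global quasiconformal self-homeomorphism of $\overline{\D}$ that is a conjugacy on the relative neighborhood $C_1$ of $\mathbb{S}^1$.

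The main obstacle is Step one: verifying that $B_i$ is expanding on $\mathbb{S}^1$, so that $\mathbb{S}^1$ is a genuine hyperbolic repeller and the marked conjugacy $h$ to $m_n$ is quasisymmetric. Once expansion and the quasisymmetry of $h$ are in hand, the collar construction is routine quasiconformal surgery; the only delicate points there are the choice of the matching branch of $B_2^{-1}$ guaranteeing that the pulled-back pieces glue, and the continuity of the resulting map up to $\mathbb{S}^1$, both of which are controlled by expansion.
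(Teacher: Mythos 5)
Your proposal is correct in outline, but it runs the pullback in the opposite direction from the paper's argument and consequently requires a different (and heavier) input. The paper works from the inside out: it takes disks $B(0,r_j)$ around the attracting fixed points, large enough to contain all $n-1$ critical points of $B_j$ in $\D$, starts from a \emph{conformal} isomorphism $B(0,r_1)\to B(0,r_2)$, lifts it under the degree $n$ branched covers $B_j\colon B_j^{-1}(B(0,r_j))\to B(0,r_j)$, interpolates in the resulting fundamental annuli, and then lifts iteratively outward; the quasiconformal limit map on $\D$ extends to $\mathbb{S}^1$ as a quasisymmetry, so the boundary conjugacy is an \emph{output} of the construction, and the marking at $1$ is arranged by choosing the lifts. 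You instead work from the boundary inward: you first manufacture a quasisymmetric conjugacy $h$ on $\mathbb{S}^1$ by establishing hyperbolicity of $B_i$ (your justification is sound: $\D$ and $\D^*$ lie in attracting basins, so $\mathcal{J}(B_i)=\mathbb{S}^1$; Riemann--Hurwitz puts all $2n-2$ critical points off the circle; hence the restrictions are expanding) and then invoking the regularity theory for conjugacies of expanding circle maps, after which you pull a quasiconformal interpolation back into a collar. The paper's route buys self-containedness: it never needs the nontrivial fact that topological conjugacies of real-analytic expanding circle maps are quasisymmetric, nor the Douady--Earle extension. Your route buys an explicit identification of the boundary trace as $h$ from the start, which makes $\mathfrak{h}(1)=1$ automatic. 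The underlying engine (interpolate in a fundamental annulus, propagate by holomorphic lifts of uniformly bounded dilatation, use expansion for continuity at the circle) is the same in both.

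Two details in your write-up need patching, though neither is a genuine gap. First, you choose $\gamma_2$ independently of $H$, but your prescribed boundary values are $H$ on $\gamma_1$ and $B_2^{-1}\circ H\circ B_1$ on $B_1^{-1}(\gamma_1)$; for these to be boundary values of a homeomorphism $F_1\to F_2$ you must take $\gamma_2:=H(\gamma_1)$, choosing $\gamma_1$ so close to $\mathbb{S}^1$ that $H(\gamma_1)$ lies in the expansion collar of $B_2$ and its collar contains no critical values. Second, the assertion $\diam B_1^{-k}(F_1)\to 0$ is false as stated: these preimages are annuli encircling $\mathbb{S}^1$, of definite diameter. What expansion actually gives, and what you need for continuity up to $\mathbb{S}^1$ with trace $h$, is that these annuli accumulate on $\mathbb{S}^1$ and that the cells of the inverse-branch (Markov) decomposition of each annulus shrink uniformly, so that $\mathfrak{h}$ on $B_1^{-k}(F_1)$ is uniformly close to $h$ composed with a projection to the circle. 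Both fixes are routine.
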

\begin{proof}
For each $j\in\{1,2\}$, there exists $r_j\in(0,1)$ such that the ball $B(0,r_j)$ contains all $n-1$ critical points of $B_j$ (in $\D$), $B_j^{-1}(B(0,r_j))$ is a topological disk compactly containing $B(0,r_j)$, and $B_j:B_j^{-1}(B(0,r_j))\to B(0,r_j)$ is a degree $n$ branched cover. Consider a conformal isomorphism $\mathfrak{h}:B(0,r_1)\to B(0,r_2)$. It extends continuously to a diffeomorphism between the boundaries. Using the degree $n$ covering maps $B_j:\partial B_j^{-1}(B(0,r_j))\to \partial B(0,r_j)$, we lift $\mathfrak{h}:\partial B(0,r_1)\to \partial B(0,r_2)$ to a diffeomorphism $\mathfrak{h}:\partial B_1^{-1}(B(0,r_1))\to \partial B_2^{-1}(B(0,r_2))$. We then interpolate in the annulus to get a diffeomorphism $\mathfrak{h}:\overline{B_1^{-1}(B(0,r_1))}\to\overline{B_2^{-1}(B(0,r_2))}$. Since these annuli are fundamental domains for the Blaschke products, we can now use the dynamics to lift $\mathfrak{h}$ iteratively. This produces a quasiconformal diffeomorphism of $\D$ that extends to a quasisymmetry $\mathfrak{h}$ of $\mathbb{S}^1$. By construction, the map $\mathfrak{h}$ conjugates $B_1$ to $B_2$ on a relative neighborhood of $\mathbb{S}^1$ in $\overline{\D}$. The lifts can be chosen such that the map $\mathfrak{h}$ maps the fixed point $1$ of $B_1$ to the fixed point $1$ of $B_2$.
\end{proof}

\begin{proof}[Proof of Theorem~\ref{bers_slice_hoemo_thm}]
Let $B_1, B_2$ be degree $2d-1$ Blaschke products each having an attracting fixed point in $\D$ and a repelling fixed point at $1$, such that $P_j\vert_{\cK(P_j)}$ is conformally conjugate to $B_j\vert_{\overline{\D}}$, $j\in\{1,2\}$.

By Lemma~\ref{blaschke_conjugacy_lem}, there exist a quasiconformal homeomorphism $\mathfrak{h}:(\overline{\D},1)\to(\overline{\D},1)$ and relative closed neighborhoods $N_1, N_2$ of $\mathbb{S}^1$ in $\overline{\D}$ such that $\mathfrak{h}(N_1)=N_2$, and $\mathfrak{h}$ conjugates $B_1:B_1^{-1}(N_1)\to N_1$ to $B_2:B_2^{-1}(N_2)\to N_2$.

For $\Gamma\in\mathrm{Teich}(S_{0,d+1})$, let $F_{\Gamma, P_j}$ be the conformal mating of $A_\Gamma$ and $P_j$, $j\in\{1,2\}$. 
Then, there exist conformal maps $\mathfrak{g}_{\Gamma,j}:\overline{\D}\to \cK(F_{\Gamma, P_j})$ that conjugate $B_j$ to $F_{\Gamma, P_j}$, and carry the fixed point $1$ of $B_j$ to the marked fixed point of $F_{\Gamma, P_j}$ on their limit set.
We recall that since all the conformal matings $F_{\Gamma, P_j}$ are quasiconformally conjugate to any fixed $F_{\Gamma_0, P_j}$ with the quasiconformal conjugacies depending continuously on $\Gamma$, the conformal matings $F_{\Gamma, P_j}$ as well as the conformal maps $\mathfrak{g}_{\Gamma,j}$ depend continuously on $\Gamma$.

We will now construct the conformal mating $F_{\Gamma,P_2}$ from $F_{\Gamma,P_1}$ using quasiconformal surgery, for $\Gamma\in\mathrm{Teich}(S_{0,d+1})$. To this end, define a quasiregular map
\begin{align*}
&\widetilde{F}_{\Gamma,P_1}=
\begin{cases}
F_{\Gamma,P_1}\quad \mathrm{on}\quad \cT(F_{\Gamma,P_1}),\\
\mathfrak{g}_{\Gamma,1}\circ\mathfrak{h}^{-1}\circ B_2\circ\mathfrak{h}\circ\mathfrak{g}_{\Gamma,1}^{-1} \quad \mathrm{on}\quad \cK(F_{\Gamma,P_1}).
\end{cases}    
\end{align*}
Note that due to the equivariance property of $\mathfrak{h}$ with respect to $B_1$ and $B_2$, the piecewise definitions given above agree on $\mathfrak{g}_{\Gamma,1}(N_1)$. We define an $\widetilde{F}_{\Gamma,P_1}$-invariant almost complex structure $\mu_\Gamma$ on $\widehat{\C}$ by setting it to be the standard complex structure $\mu_0$ on $\cT(F_{\Gamma,P_1})$ and setting it to be $\left(\mathfrak{h}\circ\mathfrak{g}_{\Gamma,1}^{-1}\right)^*(\mu_0)$ on $\cK(F_{\Gamma,P_1})$. We now conjugate $\widetilde{F}_{\Gamma,P_1}$ by the quasiconformal map $\mathfrak{Q}_\Gamma$ that straightens the almost complex structure $\mu_\Gamma$ to the standard complex structure $\mu_0$, and obtain a holomorphic map $\mathfrak{Q}_\Gamma\circ\widetilde{F}_{\Gamma,P_1}\circ\mathfrak{Q}_\Gamma^{-1}$. 
Since $\widetilde{F}_{\Gamma,P_1}$ and $F_{\Gamma,P_1}$ agree on $\cT(F_{\Gamma,P_1})$ and $\mathfrak{Q}_\Gamma$ is conformal there, it follows that $\mathfrak{Q}_\Gamma\circ\widetilde{F}_{\Gamma,P_1}\circ\mathfrak{Q}_\Gamma^{-1}$ is conformally conjugate to the Bowen-Series map $A_{\Gamma}$ on $\mathfrak{Q}_\Gamma(\cT(F_{\Gamma,P_1}))$. On the other hand, the map $\mathfrak{Q}_\Gamma\circ\mathfrak{g}_{\Gamma,1}\circ\mathfrak{h}^{-1}:\overline{\D}\to\mathfrak{Q}_\Gamma(\cK(F_{\Gamma,P_1}))$ is a conformal conjugacy between $B_2$ and $\mathfrak{Q}_\Gamma\circ\widetilde{F}_{\Gamma,P_1}\circ\mathfrak{Q}_\Gamma^{-1}$. Therefore, we have that 
$$
F_{\Gamma,P_2}\equiv \mathfrak{Q}_\Gamma\circ\widetilde{F}_{\Gamma,P_1}\circ\mathfrak{Q}_\Gamma^{-1},\ \mathrm{and}\ \mathfrak{g}_{\Gamma,2}\equiv \mathfrak{Q}_\Gamma\circ\mathfrak{g}_{\Gamma,1}\circ\mathfrak{h}^{-1}.
$$
By construction, $F_{\Gamma,P_1}$ and $F_{\Gamma,P_2}$ are quasiconformally conjugate on their restrictions to the preimages of $\cT(F_{\Gamma,P_1})\cup\mathfrak{g}_{\Gamma,1}(N_1)$ and $\cT(F_{\Gamma,P_2})\cup\mathfrak{g}_{\Gamma,2}(N_2)$, respectively. We remark that $\mathfrak{Q}_\Gamma$ is $K$-quasiconformal, where $K$ only depends on the quasiconformality constant of $\mathfrak{h}$, and not on $\Gamma$.
Finally, the continuous dependence of $\widetilde{F}_{\Gamma,P_1}, \mu_\Gamma, \mathfrak{Q}_\Gamma$, and $\mathfrak{g}_{\Gamma,j}$ on $\Gamma$ guarantee that the domains of the quasiconformal conjugations between $F_{\Gamma,P_1}$ and $F_{\Gamma,P_2}$ also depend continuously on $\Gamma$.

Thanks to {Lemma~\ref{qc_surgery_lem}}, the above quasiconformal surgery between $F_{\Gamma,P_1}$ and $F_{\Gamma,P_2}$ can be lifted to the correspondence planes yielding a $K$-quasiconformal conjugacy between $\mathfrak{C}_{\Gamma,P_1}$ and $\mathfrak{C}_{\Gamma,P_2}$ in a neighborhood of their filled limit sets such that the conjugacy is conformal on the filled limit sets and the domains of the conjugacies vary continuously with $\Gamma\in\mathrm{Teich}(S_{0,d+1})$. To complete the proof, note that the marked fixed points of the correspondences on their limit sets come from the marked fixed points of the conformal matings $F_{\Gamma,P_j}$ corresponding to the marked fixed points (at $1$) of the Bowen-Series maps $A_\Gamma$. Since the surgery procedure carried out above preserves these marked fixed points of the conformal matings, the quasiconformal conjugacy between $\mathfrak{C}_{\Gamma,P_1}$ and $\mathfrak{C}_{\Gamma,P_2}$ also preserves the marked fixed points on the limit sets.
\end{proof}

\section{Pre-compactness of horizontal Bers slices}\label{pre_comp_bers_sec}

The main result of this section is the following:

\begin{theorem}\label{pre_comp_thm_1}
Let $P\in\cH_{2d-1}$. Then $\mathfrak{B}(P)$ is pre-compact in $\mathrm{Rat}_{2d}(\C)/\!\!\sim$. {Further, $\overline{\mathfrak{B}(P)}\subset \mathrm{Rat}_{2d}^{reg}(\C)/\!\!\sim$}.
\end{theorem}

\begin{remark}
The above theorem underscores the importance of algebraic correspondences. Indeed, according to \cite[Theorem~C]{MM1}, there are only finitely many quasiconformal conjugacy classes of Kleinian groups on the Bers boundary of $\mathrm{Teich}(S_{0,d+1})$ that admit continuous Bowen-Series maps. In other words, the single-valued mating framework (as described in \S~\ref{corr_mating_subsec}) does not cover most groups on the Bers boundary of $\mathrm{Teich}(S_{0,d+1})$. On the other hand, as the closure $\overline{\mathfrak{B}(P)}$ is compact in the {regular character variety $\mathrm{Rat}_{2d}^{reg}(\C)/\!\!\sim$} of algebraic correspondences, it is conceivable that such boundary points arise as matings of polynomials with a large collection of (all?) Bers boundary groups. 
\end{remark}

\subsection{Normalization}
Let $B:\D \longrightarrow \D$ be a marked Blaschke product of degree $2d-1$ with a (super-)attracting fixed point at $0$  such that $B\vert_{\overline{\D}}$ is conformally conjugate to $P\vert_{\cK(P)}$, where the conjugacy sends the marked repelling fixed point of $B$ to the landing point of the dynamical $0-$ray of $P$. For definiteness, we assume that $1\in \partial \D$ is the marked repelling fixed point of $B$.
For $\Gamma\in\mathrm{Teich}(S_{0,d+1})$, we denote the conformal mating between $A_\Gamma$ and $B\vert_{\overline{\D}}$ by $F_\Gamma:\overline{\Omega_\Gamma}\to\widehat{\C}$ (see Theorem~\ref{conf_mating_bs_poly_thm}). Theorem~\ref{b_inv_thm}, applied to the mating $F_\Gamma:\overline{\Omega_\Gamma}\to\widehat{\C}$, gives us a degree $2d$ rational map $R_\Gamma$ and a Jordan domain $\mathfrak{D}_\Gamma$ such that $\partial\mathfrak{D}_\Gamma$ is $\eta$-invariant, $1\in\partial\mathfrak{D}_\Gamma$, the map $R_\Gamma$ carries $\overline{\mathfrak{D}_\Gamma}$ homeomorphically onto $\overline{\Omega_\Gamma}$, and
\begin{align}
F_\Gamma\vert_{\overline{\Omega_\Gamma}}\equiv R_\Gamma\circ\eta\circ(R_\Gamma\vert_{\overline{\mathfrak{D}_\Gamma}})^{-1}.
\label{b_inv_formula_1}
\end{align}
We denote the algebraic correspondence given by~\eqref{corr_eqn}, associated with $R_\Gamma$, by $\mathfrak{C}_\Gamma$.

Note that the map $F_\Gamma$ has a unique (super-)attracting fixed point. 
Recall from \S~\ref{char_var_subsec} that the map $R_\Gamma$ is unique up to post-composition with an arbitrary M{\"o}bius map and pre-composition with a M{\"o}bius map commuting with $\eta$. 
Using this freedom of normalization, we can assume that
\begin{enumerate}[leftmargin=*]
    \item $\infty$ is the unique (super-)attracting fixed point of the conformal mating $F_\Gamma$, with its immediate basin of attraction denoted by $\mathcal{B}_\infty(F_\Gamma)$;
    \item $1 \in \partial {\mathfrak{D}_\Gamma}$ and $1$ is a simple critical point of $R_\Gamma$;
    \item the conformal conjugacy $\phi_\Gamma:(\D,0)\to(\mathcal{B}_\infty(F_\Gamma),\infty)$
    between $B$ to $F_\Gamma$ has the asymptotics $\phi_\Gamma(\zeta)=1/\zeta+O(\zeta)$ near $0$, and $\phi_\Gamma(1) = R_\Gamma(1)$;
    \item $\infty\in\mathfrak{D}_\Gamma$ with $R_\Gamma(\infty)=\infty$.
\end{enumerate}

\subsection{Degeneration}
To prove Theorem \ref{pre_comp_thm_1}, we will argue by contradiction and assume that $\mathfrak{B}(P)$ is not pre-compact.
Let $\mathfrak{C}_{n}$ be a sequence of correspondences in $\mathfrak{B}(P)$. Let $R_{n}$ be the corresponding rational maps with the above normalization.
Recall that a sequence $\mathfrak{C}_{n}$ \textit{diverges} in $\mathrm{Rat}_{2d}(\C)/\!\!\sim$ if it escapes every compact set of $\mathrm{Rat}_{2d}(\C)/\!\!\sim$; or equivalently, there is no subsequence of $\mathfrak{C}_{n}$ that converges in $\mathrm{Rat}_{2d}(\C)/\!\!\sim$.

\begin{lem}\label{lem:Rinfty}
    Suppose that $\mathfrak{C}_{n}$ diverges in $\mathrm{Rat}_{2d}(\C)/\!\!\sim$. Then $\vert R_n'(\infty)\vert \to \infty$.
\end{lem}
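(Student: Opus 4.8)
The plan is to prove the contrapositive: if $|R_n'(\infty)|$ stays bounded along a subsequence, then $\mathfrak{C}_n$ has a convergent subsequence in $\mathrm{Rat}_{2d}(\C)/\sim$, and hence does not diverge. The first step is to reinterpret $R_n'(\infty)$ conformally. Since $\infty\in\mathfrak{D}_n$ and $R_n|_{\mathfrak{D}_n}$ is injective, $\infty$ is a non-critical fixed point of $R_n$, and $R_n'(\infty)$ is its multiplier computed in the chart $1/z$. I would feed the normalized B\"ottcher coordinate $\phi_n$ through $R_n$: the map $\Phi_n:=(R_n|_{\mathfrak{D}_n})^{-1}\circ\phi_n\colon(\D,0)\to(\mathfrak{D}_n,\infty)$ is univalent, and writing $\Phi_n(\zeta)=a_n/\zeta+O(1)$, the identity $R_n\circ\Phi_n=\phi_n$ together with the normalization $\phi_n(\zeta)=1/\zeta+O(\zeta)$ forces $R_n'(\infty)=a_n$. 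Passing to $\psi_n:=\eta\circ\Phi_n=1/\Phi_n\colon(\D,0)\to(U_n,0)$, where $U_n:=\widehat{\C}-\overline{\mathfrak{D}_n}=\eta(\mathfrak{D}_n)$ is a bounded Jordan domain with $1\in\partial U_n$, one reads off $\psi_n'(0)=1/a_n$. The Koebe theorem then yields $|R_n'(\infty)|\asymp 1/\operatorname{dist}(0,\partial\mathfrak{D}_n)$, so that boundedness of $|R_n'(\infty)|$ is equivalent to $\mathfrak{D}_n$ containing a fixed round neighborhood $\{|z|>1/\epsilon\}$ of $\infty$, with $\partial\mathfrak{D}_n$ confined to a fixed annulus $\{\epsilon\le|z|\le 1/\epsilon\}$ (in particular $|R_n'(\infty)|\ge 1/4$ always).

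Assume now $|R_n'(\infty)|\le C$ along a subsequence. By the above, $(\mathfrak{D}_n,\infty)$ contains $\{|z|>1/\epsilon\}$, so the Carath\'eodory compactness theorem (Theorem~\ref{thm:cmpc}) lets me pass to a further subsequence along which $(\mathfrak{D}_n,\infty)$ converges in the Carath\'eodory topology to a pointed disk $(\mathfrak{D}_\infty,\infty)$. Since $R_n|_{\mathfrak{D}_n}$ is univalent with multiplier $a_n$ at $\infty$ satisfying $1/4\le |a_n|\le C$, Proposition~\ref{prop:ConvergenceProperMap}, applied to these proper degree-one maps, gives, after a further subsequence, convergence of $R_n|_{\mathfrak{D}_n}$ to a univalent limit on $\mathfrak{D}_\infty$ with non-zero finite derivative at $\infty$. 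This controls $R_n$ on one of its $2d$ sheets, and via $\Omega_n=R_n(\mathfrak{D}_n)$ the image domain $\Omega_n$ as well.

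The heart of the argument is to upgrade this to convergence of $R_n$ as a degree-$2d$ rational map, i.e.\ to rule out any degree drop in the subsequential limit $R=\lim R_n\in\overline{\mathrm{Rat}_{2d}(\C)}=\Proj^{4d+1}_\C$. Here I would exploit the B-involution factorization of Theorem~\ref{b_inv_thm}: on the complementary domain $U_n=\eta(\mathfrak{D}_n)$ the remaining $2d-1$ sheets of $R_n$ carry branching that is rigidly conjugate to that of the \emph{fixed} internal Blaschke product $B$, while the normalized B\"ottcher coordinate $\phi_n$ — itself precompact, since its $\eta$-normalization is univalent and tangent to the identity at $0$, hence a normal family — pins down the internal critical values. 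Moreover the $2d$ non-internal critical points $\pm 1,\alpha_{i,n}^{\pm 1}$ lie on the $\eta$-invariant curve $\partial\mathfrak{D}_n$, which by Step~1 is trapped in the fixed annulus, bounded away from the fixed points $0,\infty$. I would then argue that, since all critical points of $R_n$ are thereby confined to a fixed compact subset of $\C^*$ and all critical values remain controlled, $R$ can have no holes: by Lemma~\ref{lem:ns} a hole would attract critical points and force $R_n(U)=\widehat{\C}$ on a neighborhood $U$ of it, which is incompatible with the univalence and controlled image of $R_n$ near $\infty$ (ruling out a hole at $\infty$, and, via $R_n(0)=\infty$, at $0$) and with the rigidity of the internal branching (ruling out interior holes). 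Hence $\deg\varphi_R=2d$, so by Lemma~\ref{comcon} the convergence is in $\mathrm{Rat}_{2d}(\C)$ and $\mathfrak{C}_n$ does not diverge.

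The main obstacle is precisely this last step: showing that no degree drop can occur \emph{away} from $\infty$. Boundedness of the multiplier at $\infty$ directly controls only the single univalent sheet over $\mathfrak{D}_n$ and the geometry of $\partial\mathfrak{D}_n$; the substantive point is that the degeneration of the group $\Gamma_n\to\partial\mathrm{Teich}(S_{0,d+1})$ cannot manufacture an interior hole while the internal class $B$ is held fixed. I expect the verification to rely on the rigidity of the fixed internal dynamics (through Theorems~\ref{conf_mating_bs_poly_thm} and~\ref{b_inv_thm}) together with the confinement of the boundary critical points, so that any putative interior collision of critical points would force $\partial\mathfrak{D}_n$ to pinch at a point of $\C^*$, contradicting the Carath\'eodory convergence of $(\mathfrak{D}_n,\infty)$ from Step~2. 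Establishing this incompatibility is the crux, and is what genuinely uses that the polynomial factor in the Bers slice is fixed.
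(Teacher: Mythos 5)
Your contrapositive setup and Steps 1--2 are essentially correct, and they parallel the first half of the paper's own proof (which also works with $\Phi_n=(R_n|_{\mathfrak{D}_n})^{-1}\circ\phi_n$, uses $0,1\notin\Phi_n(\D)$ and the normalization of $\phi_n$ to get the lower bound on $|R_n'(\infty)|$ and precompactness of $\Phi_n$ and $\phi_n$); the only wrinkles there are that your two-sided comparison $|R_n'(\infty)|\asymp 1/\operatorname{dist}(0,\partial\mathfrak{D}_n)$ is really only one-sided (which is all you use), and that applying Proposition~\ref{prop:ConvergenceProperMap} with target $(\Omega_n,\infty)$ requires knowing the limit of $(\Omega_n,\infty)$ is hyperbolic — the paper avoids this by taking the basin $\mathcal{B}_\infty(F_n)=\phi_n(\D)$ as target. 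The genuine gap is Step 3, exactly where you flag it, and the mechanism you propose there fails. Carath\'eodory convergence of $(\mathfrak{D}_n,\infty)$ constrains only the kernel relative to the base point $\infty$; it does \emph{not} prevent $\partial\mathfrak{D}_n$ from pinching at a point of $\C^*$. This is not a hypothetical objection: in Section~\ref{comp_vert_fiber_subsec} of the paper, domains of precisely this kind do pinch while their pointed kernels converge — that is why external fibers degenerate onto trees of spheres — so no contradiction can come from Carath\'eodory convergence alone. Moreover, the $2d-2$ free critical points of $R_n$ lie inside $\eta(\mathfrak{D}_n)$ (in the $\eta$-image of the preimage of the basin), not on $\partial\mathfrak{D}_n$, so a collision among them would say nothing about pinching of $\partial\mathfrak{D}_n$ in the first place. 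Finally, confining all critical points to a fixed compact subset of $\C^*$ cannot by itself exclude holes: Lemma~\ref{lem:ns} says every hole attracts critical points, so confinement merely localizes the possible holes to that compact set; one still needs a positive argument at each accumulation point of critical points, and near limits of the cusp critical points this is delicate, since $R_n=F_n\circ R_n\circ\eta$ on $\eta(\mathfrak{D}_n)$, so small neighborhoods of such points have $R_n$-images that are not confined to $\Omega_n$ or to the basin.

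The paper closes this step with no hole-exclusion at all, by a degree count on two disjoint open sets — and you already name every ingredient of that count without assembling it. Let $U_n:=\Phi_n(\D)\subseteq\mathfrak{D}_n$ (note: smaller than your $U_n$) and $W_n:=\eta(U_n)$, which is disjoint from $U_n$. The B-involution identity shows that $R_n:(W_n,0)\to(\mathcal{B}_\infty(F_n),\infty)$ is proper of degree $2d-1$, and its critical values are exactly $\phi_n(V(B))$, the $\phi_n$-images of the critical values of the \emph{fixed} Blaschke product $B$; since $\phi_n$ converges, these stay in a compact subset of the limit basin $\mathcal{B}$, so the $2d-2$ critical points stay compactly inside $W=\eta(U)$, and the second part of Proposition~\ref{prop:ConvergenceProperMap} gives a limiting proper map of degree exactly $2d-1$ on $W$. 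Combined with the degree~$1$ limit on $U$ and $U\cap W=\emptyset$, a generic point of $\mathcal{B}$ has at least $2d$ preimages under the limit, so $\deg\varphi_R=2d$: there is no degree drop, hence no holes anywhere, and $R_n$ converges in $\Rat_{2d}(\C)$, contradicting divergence. This direct count is what genuinely uses the fixed internal class; as written, your proposal leaves its crux unproved, and the contradiction you hope to extract from Carath\'eodory convergence is not available.
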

\begin{proof}
    Suppose that $\vert R_n'(\infty)\vert$ does not go to infinity. Then after passing to a subsequence, we may assume that $R_n'(\infty)$ converges. After passing to a further subsequence, we can assume that $R_n$ converges to a rational map $R$ of degree potentially smaller than $2d$. By Lemma \ref{comcon}, $R_n$ converges compactly to $R$ away from finitely many points.
    
    Consider the univalent map $\Phi_n: (\D,0) \longrightarrow (\widehat{\C}, \infty)$ by $\Phi_n = R_n^{-1} \circ \phi_n$, where $\phi_n:(\D,0)\to(\mathcal{B}_\infty(F_n),\infty)$ is the conformal map conjugating $B$ to $F_n=R_n\circ\eta\circ(R_n\vert_{\mathfrak{D}_n})^{-1}$, and $R_n^{-1}$ is the inverse of the conformal map $R_n: (\mathfrak{D}_n, \infty) \longrightarrow (\Omega_n, \infty)$ {(see Figure~\ref{two_rescalings_fig}, top two figures for an illustration)}. 
    Since $0, 1 \notin$ { $\mathfrak{D}_n$} and $\Phi_n(\D) \subseteq$ {$\mathfrak{D}_n$}, we have that $0, 1 \notin \Phi_n(\D)$. {Thus $\Phi_n$ has bounded spherical derivative at $0$. In particular, $\vert R_n'(\infty)\vert$ is bounded away from $0$. Moreover, as $\vert R_n'(\infty)\vert$ is assumed to be bounded, it follows that $\Phi_n$ has spherical derivative comparable to $1$ at $0$. Thus, $\Phi_n$ has the asymptotics $\Phi_n(\zeta)=a_n/\zeta+b_n+O(\zeta)$ near $0$, where $a_n$ is comparable to $1$ and $b_n$ is a bounded sequence (cf. \cite[Theorem~1.4]{Pom75}).   Therefore, after passing to a subsequence, $\Phi_n$ converges compactly to a univalent map $\Phi: (\D,0) \longrightarrow (\widehat{\C}, \infty)$ (see \cite[Theorem~1.7]{Pom75} or \cite[Theorem 2.7]{McM94}).
    Once again, due to our normalization of $\phi_n$, after passing to a subsequence, $\phi_n$ converges compactly to a univalent map $\phi: (\D,0) \longrightarrow (\widehat{\C}, \infty)$ as well.}
    Let $U \coloneqq \Phi(\D)$ and $\mathcal{B} \coloneqq \phi(\D)$. Then $R_n$ converges compactly on $U$ to a conformal map $R: (U, \infty) \longrightarrow (\mathcal{B}, \infty)$. Note that $R: (U, \infty) \longrightarrow (\mathcal{B}, \infty)$ is necessarily the restriction of the limiting rational map $R$ on $U$, which justifies the use of the notation.
    
    Let $U_n \coloneqq \Phi_n(\D)$, and $W_n \coloneqq\eta(U_n)$. Then $R_n: (W_n, 0) \longrightarrow (\mathcal{B}_\infty(F_n), \infty)$ is a sequence of degree $2d-1$ proper maps. Since $(W_n, 0)$ and $(\mathcal{B}_\infty(F_n), \infty)$ converge to $(W{=\eta(U)}, 0)$ and $(\mathcal{B}, \infty)$ respectively, after passing to a subsequence, $R_n$ converges compactly to a proper map $R: (W, 0) \longrightarrow (\mathcal{B}, \infty)$, potentially of lower degree, by Proposition \ref{prop:ConvergenceProperMap}. Similarly as before, the proper map $R: (W, 0) \longrightarrow (\mathcal{B}, \infty)$ is necessarily the restriction of the limiting rational map $R$ on $W$. Note that 
    $$
    B(z) = \phi_n^{-1} \circ R_n \circ \eta \circ R_n^{-1} \circ \phi_n(z)
    $$
    for $z \in \D$. So the set of critical values of the map $R_n: (W_n, 0) \longrightarrow (\mathcal{B}_\infty(F_n), \infty)$ equals  $\phi_n(V(B))$ where $V(B) \subseteq \D$ is the set of critical values of $B: \D \longrightarrow \D$. Since the set $V(B)$ is independent of $n$, { the maximal distance between points in $\phi_n(V(B))$ and $\infty = \phi_n(0)$ in the hyperbolic metric of $\mathcal{B}_\infty(F_n) = \phi_n(\D)$ is constant.} It follows that $\phi_n(V(B))$ is contained in some compact subset of $\mathcal{B}$ for all sufficiently large $n$. Therefore, the critical points of $R_n: (W_n, 0) \longrightarrow (\mathcal{B}_\infty(F_n), \infty)$ are contained in some compact subset of $W$ for all sufficiently large $n$. Thus, $R: (W, 0) \longrightarrow (\mathcal{B}, \infty)$ has degree $2d-1$ by Proposition \ref{prop:ConvergenceProperMap}.
    
    Since $W_n=\eta(U_n)$ is disjoint from $U_n$ for all $n$, we conclude that $U$ is disjoint from $W= \eta(U)$. Thus $R_n$ converges compactly to a degree $2d$ proper map $R: U \sqcup W \longrightarrow \mathcal{B}$. 
    Therefore, the limiting rational map $R$ has degree $2d$. This means that $R_n$ converges in $\mathrm{Rat}_{2d}(\C)$. This is a contradiction, which proves that $\vert R_n'(\infty)\vert \longrightarrow~\infty$.
\end{proof}

\subsection{The special case of $z^{2d-1}$}\label{special_case_subsec}
Before proving pre-compactness of $\mathfrak{B}(P)$ for a general polynomial $P\in\cH_{2d-1}$, let us consider the special case $P(z) = z^{2d-1}$. In this case, we can directly compute $R_n'(\infty)$ and appeal to Lemma~\ref{lem:Rinfty} to conclude boundedness of $\mathfrak{B}(P)$ as follows.
By our normalization, $R_n$ has a simple pole at $\infty$ and an order $2d-1$ pole at the origin. Together with the fact that the critical points of $R_n$ are of the form 
$$
\{\pm 1, \alpha_{1,n}^{\pm 1},\cdots,\alpha_{d-1,n}^{\pm 1}\},
$$
we can write $R_n$ in the following explicit form:
\begin{align*}
\frac{R_n(z)}{R'_n(\infty)}=  \ z+\frac{a_{1,n}}{z}+\cdots+\frac{a_{d-2,n}}{z^{d-2}}+\frac{a_{d,n}}{z^d}+\cdots+\frac{a_{2d-3,n}}{z^{2d-3}}+\frac{1}{(2d-1)\cdot z^{2d-1}},\nonumber
\label{rat_norm}
\end{align*}
where
\begin{equation*}
a_{2d-j-1}=-\ \frac{(j-1)}{(2d-j-1)} a_{j-1},\quad j\in\{2,\cdots,d-1\}.
\end{equation*}
We refer the reader to \cite[\S 4.4, \S 7.1]{MM2} for details of the above computation. 
Thus, the Laurent series expansion near $\infty$ of the conformal mating $F_n=R_n\circ\eta\circ(R_n\vert_{\mathfrak{D}_n})^{-1}$ is given by
$$
F_n(z)=\frac{z^{2d-1}}{(2d-1)(R_n'(\infty))^{2d-2}}+O(z^{2d-2}).
$$
Let $\phi_n^*(\zeta) = \phi_n(1/\zeta)$ be the conformal map from $(\D^*, \infty)\to(\mathcal{B}_\infty(F_n),\infty)$ which conjugates $z^{2d-1}$ to $F_n$. 
By our normalization, we have
$$
\phi_n^*(\zeta)=\zeta +O(1/\zeta).
$$
Since $\phi_n^*$ conjugates $z^{2d-1}$ to $F_n$, and {has asymptotics $\zeta +O(1/\zeta)$ as $\zeta\to\infty$}, we have $(2d-1)^{\frac{1}{2d-2}}R_n'(\infty) = 1$.
Therefore, $R_n'(\infty)$ is bounded and $\mathfrak{B}(z^{2d-1})$ is bounded by Lemma \ref{lem:Rinfty}.

{
\begin{example}\label{four_punc_bounded_example}
For $d=3$, the Bers slice $\mathfrak{B}(z^5)$ of $4$-times punctured spheres consists of rational maps $R_c(z)=z+\frac{c}{z}-\frac{c}{3z^3}+\frac{1}{5z^5}$, where $c$ runs over a bounded set in $\C$.    
\end{example}
}

\subsection{The general case}\label{gen_case_subsec}
Let us now assume that $B$ is a general degree $2d-1$ marked Blaschke product with an attracting fixed point at $0$ such that $B\vert_{\overline{\D}}$ is conformally conjugate to $P\vert_{\cK(P)}$, for some $P\in\cH_{2d-1}$. As mentioned earlier, we will prove Theorem~\ref{pre_comp_thm_1} by method of contradiction. To this end, we assume that $\mathfrak{C}_{n}\in\mathfrak{B}(P)$ diverges in $\mathrm{Rat}_{2d}(\C)/\!\!\sim$. To obtain a contradiction, we will look at two rescaling limits of the uniformizing rational maps $R_n$ via the change of coordinates $L_n(z) = \frac{z}{|R'_n(\infty)|}$ and $M_n(z) = |R'_n(\infty)|\cdot z$ (which zoom near $0$ and $\infty$ respectively). One of these rescaling limits will have degree $1$, while the other one will be of degree $2d-1$ with a fully ramified critical point. These and other mapping properties of the two rescaling limits will be used to show that in the limit, the fixed point $R_n(1)=\phi_n(1)\in\partial\mathcal{B}_\infty(F_n)$ (of the conformal mating $F_n$) becomes a superattracting fixed point of a limiting dynamical system. But this would contradict the
fact that the limit set $\partial\mathcal{B}_\infty(F_n)$ is a repeller for $F_n$ in the direction of the basin $\mathcal{B}_\infty(F_n)$.

It is convenient to use $\widehat{\C}_L$ and $\widehat{\C}_M$ to denote the Riemann spheres in the $L_n$ and $M_n$ coordinates. The subscripts allow us to keep track of the coordinates we are using. We shall also sometimes use $x_L$ or $x_M$ to denote the point in $L$ or $M$ coordinates respectively.
\begin{lem}\label{lem:tworescalinglimits}
    Suppose that $\mathfrak{C}_{n}$ diverges in $\mathrm{Rat}_{2d}(\C)/\!\!\sim$. Then after passing to a subsequence,
    \begin{itemize}[leftmargin=4mm]
        \item $R_n \circ M_n: \widehat{\C}_M \to \widehat{\C}$ converges to a degree $1$ map $G_1$ with a unique hole at~$0_M$;
        \item $R_n \circ L_n: \widehat{\C}_L \to \widehat{\C}$ converges to a degree $2d-1$ map $G_2$ with a unique hole at~$\infty_L$.
    \end{itemize}
\end{lem}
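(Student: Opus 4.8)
The plan is to realize $G_1$ and $G_2$ as rescaling limits of two proper-map structures that $R_n$ carries naturally, and then to read off the degrees and the positions of the holes from the critical points. Throughout I assume, as in the statement, that $\mathfrak{C}_n$ diverges, so that $c_n := |R_n'(\infty)| \to \infty$ by Lemma~\ref{lem:Rinfty}. Set $U_n := (R_n|_{\mathfrak{D}_n})^{-1}(\mathcal{B}_\infty(F_n))$, so that $\infty \in U_n \subseteq \mathfrak{D}_n$ and, exactly as in the proof of Lemma~\ref{lem:Rinfty}, the restriction $R_n : (U_n, \infty) \to (\mathcal{B}_\infty(F_n), \infty)$ is \emph{conformal}, while $R_n : (W_n, 0) \to (\mathcal{B}_\infty(F_n), \infty)$ is proper of degree $2d-1$, where $W_n := \eta(U_n) \ni 0$. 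These two restrictions are interchanged by the involution, since $\eta \circ M_n = L_n \circ \eta$; this symmetry lets me treat the two limits in parallel and guarantees that the two degrees are complementary.

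For the first bullet I would apply Corollary~\ref{cor:caratheodorylimitandrescaling} to the conformal restriction $R_n|_{U_n}$. The rescaling for the pointed disk $(U_n, \infty)$ is, up to bounded equivalence, recorded by $R_n'(\infty)$ and hence by $M_n$ (Proposition~\ref{prop:ConvergencePointedDisk}), while the codomain $(\mathcal{B}_\infty(F_n), \infty)$ is normalized by its own Carath\'eodory rescaling, recorded by the convergent conjugacy $\phi_n$. Since $R_n|_{U_n}$ carries no critical points, the corollary yields a rescaling limit of degree exactly $1$. Reinterpreting this convergence inside $\overline{\Rat_{2d}(\C)}$ via Lemma~\ref{comcon}, the limit is a map $G_1$ with $\deg(\varphi_{G_1}) = 1$, and the remaining degree $2d-1$ is forced into the holes.

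It then remains to see that these $2d-1$ units of degree collapse onto the single point $0_M$. Under $M_n$ the complementary region $W_n$, which contains the order-$(2d-1)$ pole of $R_n$ at $0$ together with the marked finite critical points $\pm 1, \alpha_{k,n}^{\pm 1}$, is pushed to $M_n^{-1}(0) = 0_M$; applying Lemma~\ref{lem:ns} to $R_n \circ M_n$ shows that the critical points accumulating there produce a single hole of multiplicity $2d-1$ and no hole elsewhere, giving $G_1$ of degree $1$ with a unique hole at $0_M$. The second bullet is the mirror image under $\eta$: applying Corollary~\ref{cor:caratheodorylimitandrescaling} to the degree-$(2d-1)$ proper map $R_n|_{W_n}$ with rescaling $L_n = \eta \circ M_n \circ \eta$, the $2d-2$ critical points concentrated at the fully ramified pole at $0$ force the limit $G_2$ to have degree at least $2d-1$, and the complementary conformal piece near $\infty$ contributes the one remaining unit of degree, collapsing under $L_n$ to $L_n^{-1}(\infty) = \infty_L$ as a single hole of multiplicity $1$. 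Hence $G_2$ has degree exactly $2d-1$, fully ramified at $0_L$, with its unique hole at $\infty_L$.

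The main obstacle is precisely the \emph{exactness} of the two degrees together with the \emph{uniqueness} of the holes. Corollary~\ref{cor:caratheodorylimitandrescaling} a priori only gives lower bounds for the degrees, and Lemma~\ref{lem:ns} by itself permits the complementary degree to be spread over several holes; ruling this out requires the full critical-point bookkeeping (tracking the marked critical points $\pm 1, \alpha_{k,n}^{\pm 1}$ and the order-$(2d-1)$ pole), and crucially the $\eta$-symmetry, which forces the total degree $2d$ to split as exactly $1$ and $2d-1$ between the two limits. This complementary $1 + (2d-1)$ splitting is also what makes the induced domination regular in the sense of Definition~\ref{reg_dom_def}, so that the limiting objects genuinely lie in the character variety.
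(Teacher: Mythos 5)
Your setup is the paper's: you work with the conformal restriction $R_n:(U_n,\infty)\to(\mathcal{B}_\infty(F_n),\infty)$, its $\eta$-image $R_n:(W_n,0)\to(\mathcal{B}_\infty(F_n),\infty)$ of degree $2d-1$, and the rescalings $M_n$, $L_n$ adapted to these pointed disks, and this correctly yields the lower bounds $\deg G_1\geq 1$ and $\deg G_2\geq 2d-1$. But the two claims that actually constitute the lemma are not proven. First, degree exactness: you acknowledge that Corollary~\ref{cor:caratheodorylimitandrescaling} only gives lower bounds, but your proposed fix --- that ``$\eta$-symmetry plus critical-point bookkeeping forces the $1+(2d-1)$ splitting'' --- is an assertion, not an argument; the symmetry only relates the domains $U$ and $W=\eta(U)$ and gives no upper bound on $\deg G_1+\deg G_2$. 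The ingredient you are missing is Lemma~\ref{lem:rl}: since $|R_n'(\infty)|\to\infty$ (Lemma~\ref{lem:Rinfty}), the rescalings $M_n$ and $L_n$ are \emph{independent} ($L_n^{-1}\circ M_n(z)=|R_n'(\infty)|^2 z\to\infty$), and by Lemma~\ref{lem:rl} the degrees of rescaling limits along pairwise independent rescalings (with identity target) sum to at most $2d$; combined with the two lower bounds this pins down $\deg G_1=1$ and $\deg G_2=2d-1$. This is exactly how the paper closes the gap, and without it nothing you wrote excludes, say, $\deg G_1=2$.

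Second, the holes. You invoke Lemma~\ref{lem:ns} in the wrong direction: it says that holes attract critical points and that neighborhoods of holes are eventually surjective; it does \emph{not} say that an accumulation of critical points creates a hole, and it gives no bound on the number or location of other holes. Worse, your bookkeeping premise is false in the generality of this lemma (which is the general case of Section~\ref{gen_case_subsec}, not the special case $P=z^{2d-1}$): for general $P\in\cH_{2d-1}$ the pole of $R_n$ at $0$ has order equal to the local degree of $B$ at its attracting fixed point (typically $1$, not $2d-1$); the marked critical points $\pm 1,\alpha_{k,n}^{\pm 1}$ lie on $\partial\mathfrak{D}_n$ rather than in $W_n$, and the positions of $\alpha_{k,n}^{\pm 1}$ are \emph{a priori unknown}, so one cannot conclude that all critical points of $R_n\circ M_n$ accumulate only at $0_M$. (Also, the fully ramified point of $G_2$ is $\infty_L$, not $0_L$, and proving that is the content of the subsequent Lemma~\ref{lem:2d-2}, not an input here.) The paper's hole argument avoids tracking the $\alpha_{k,n}$ entirely: for a compact $K\subseteq\mathcal{B}$, the preimage $(R_n\circ M_n)^{-1}(K)$ splits as $K_{1,n}\sqcup\widehat{K}_{1,n}$, where $K_{1,n}\to G_1^{-1}(K)\Subset U$, and $\widehat{K}_{1,n}=(M_n^{-1}\circ L_n)(K_{2,n})$ collapses to $0_M$ because $G_2:W\to\mathcal{B}$ is proper of \emph{full} degree $2d-1$ (so $K_{2,n}$ stays uniformly bounded in $\C_L$) and $M_n^{-1}\circ L_n\to 0$; then the eventual surjectivity near any hole $a$ (Lemma~\ref{lem:ns}) forces every neighborhood of $a$ to meet $\widehat{K}_{1,n}$, whence $a=0_M$. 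Note that this step \emph{uses} degree exactness, so the logical order matters; your proposal inverts it.
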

\begin{figure}[ht!]
\captionsetup{width=0.98\linewidth}
\begin{tikzpicture}
\node[anchor=south west,inner sep=0] at (0,0) {\includegraphics[width=0.99\linewidth]{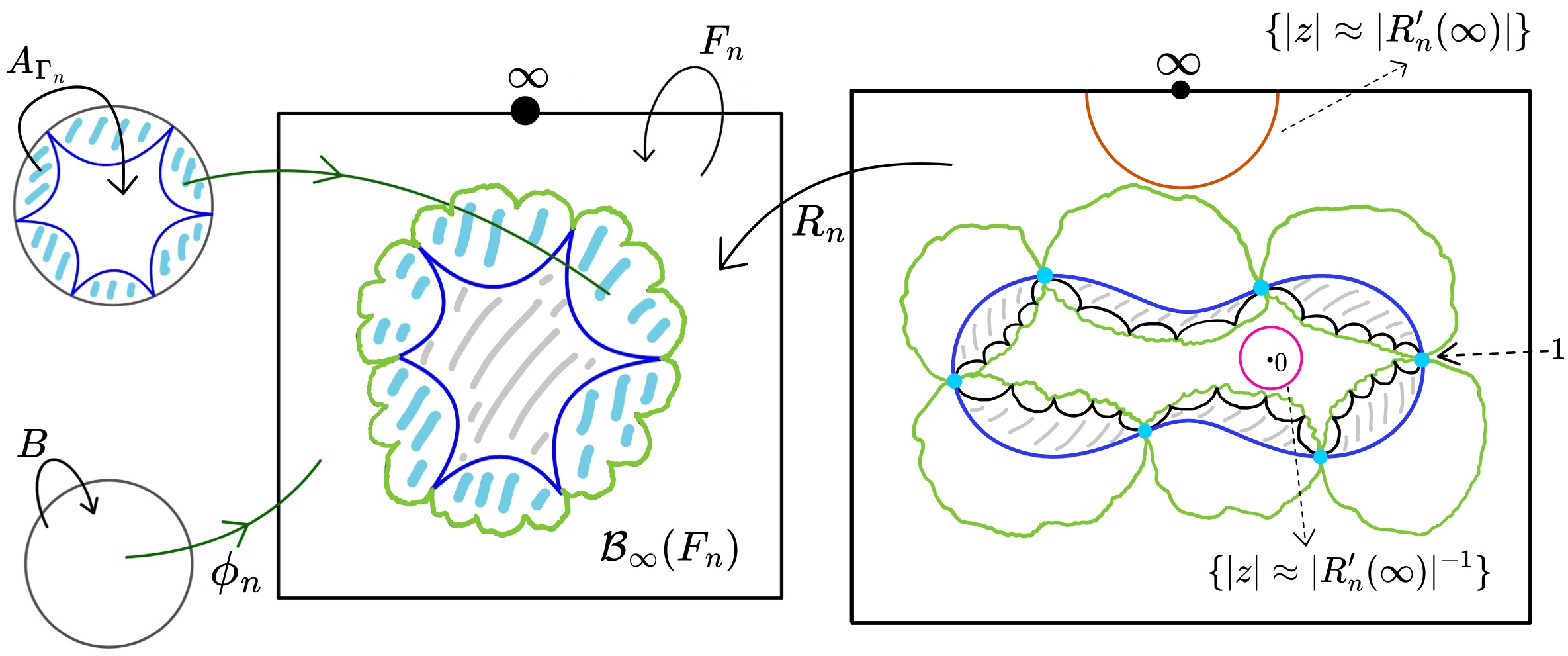}};
\node[anchor=south west,inner sep=0] at (1.5,-5.16) {\includegraphics[width=0.9\linewidth]{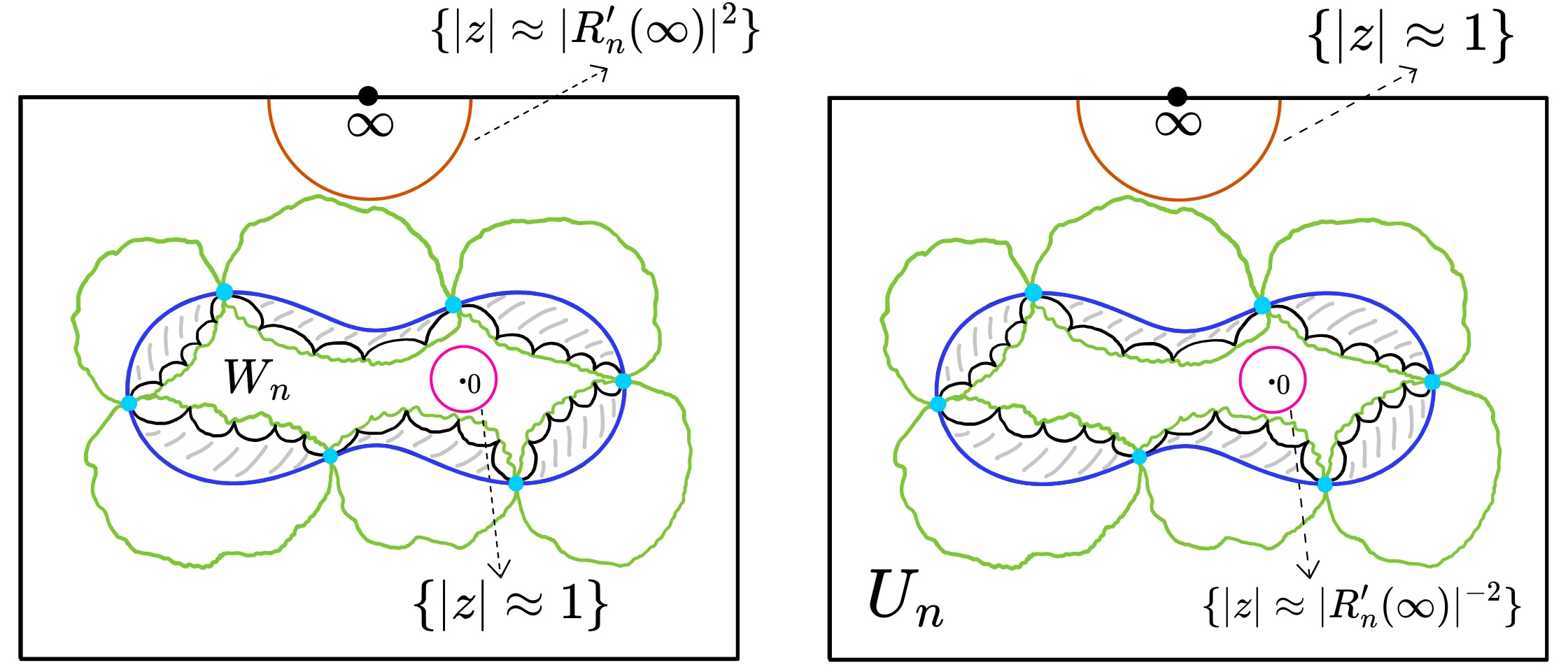}}; 
\node at (11.36,2.36)[circle,fill,inner sep=1.5pt]{};
\draw [->, line width=0.2mm] (8.8,-1.4) to (9.6,1);
\node at (9.56,-0.1) {\begin{small}$M_n$\end{small}};
\draw [->, line width=0.2mm] (6.6,-1.4) to (8.4,1);
\node at (7.96,-0.1) {\begin{small}$L_n$\end{small}};
\end{tikzpicture}
\caption{Illustrated are the two change of coordinates $L_n$ and $M_n$ appearing in Lemma~\ref{lem:tworescalinglimits}. On the top right figure, the exterior of the blue curve is the domain~$\mathfrak{D}_n$.}
\label{two_rescalings_fig}
\end{figure}
\begin{proof}
    After passing to a subsequence, we assume that $R_n \circ M_n$ and $R_n \circ L_n$ converge to some rational maps $G_1$ and $G_2$ of potentially lower degree.
    Consider the univalent map 
    $$
    {\Phi^M_n}: (\D,0) \longrightarrow (\widehat{\C}_M, \infty_M)
    $$ 
    defined as ${\Phi^M_n} = M_n^{-1}\circ R_n^{-1} \circ \phi_n$, where $\phi_n:(\D,0)\to(\mathcal{B}_\infty(F_n),\infty)$ is the {normalized} conformal conjugacy between $B$ and $F_n$, and $R_n^{-1}$ is the inverse of the conformal map $R_n: (\mathfrak{D}_n, \infty) \longrightarrow (\Omega_n, \infty)$.
    Since $M_n'(\infty) = \frac{1}{\vert R'_n(\infty)\vert}$, the spherical derivative of ${\Phi^M_n}$ at $0$ is $1$. {Further, as $0\notin U_n \coloneqq \Phi_n^M(\D)$, it follows that $\Phi_n^M(\zeta)=a_n/\zeta+b_n+O(\zeta)$ near $0$, where $\vert a_n\vert=1$ and $b_n$ is a bounded sequence (cf. \cite[Theorem~1.4]{Pom75}).} Thus $U_n$ converges to some domain $U$, and $G_1: U \longrightarrow \mathcal{B}$ is conformal, where $\mathcal{B}$ is the limit of $\mathcal{B}_\infty(F_n)$ (see Figure~\ref{two_rescalings_fig}). In particular, $G_1$ has degree at least $1$. 

    Let $W_n := L_n^{-1} \circ \eta \circ M_n(U_n) \subseteq \widehat{\C}_L$. Since $L_n^{-1} \circ \eta \circ M_n (z) = \frac{1}{z}$, the domain $W_n$ converges to $W = \eta(U) \subseteq \widehat{\C}_L$ (see Figure~\ref{two_rescalings_fig}). Note that $R_n \circ L_n$ is proper and has degree $2d-1$ on $W_n$.
    By Proposition \ref{prop:ConvergenceProperMap}, $R_n \circ L_n$ converges compactly on $W$ to $G_2$, and $G_2:  W \longrightarrow \mathcal{B}$ is proper and has degree $2d-1$.
    Therefore, $G_2$ has degree at least $2d-1$.
    Since $R_n$ {is a degenerating sequence in $\Rat_{2d}(\C)$}, we conclude that $G_1$ and $G_2$ have degree $1$ and $2d-1$ respectively by Lemma \ref{lem:rl}.
 
    {We will now prove the statements about the holes of $G_1$ and $G_2$.} Since $0 \notin U_n$ for all $n$, we conclude that $0 \notin U$. Thus $\infty = \eta(0) \notin W = \eta(U)$.
    Let $K$ be any compact subset of $\mathcal{B}$. Let $K_i \coloneqq G_i^{-1}(K)$, $i = 1, 2$. Note that $K_2$ is a bounded subset of $\C_L$; {indeed, as $G_2: W \longrightarrow \mathcal{B}$ is a proper map of degree $2d-1$, which is the global degree of $G_2$, we have that $K_2$ is a compact subset of $W$.}
    For all sufficiently large $n$, $K \subseteq \mathcal{B}_{\infty}(F_n)$. Let $K_{1, n} \coloneqq (R_n \circ M_n)^{-1}(K) \cap U_n$ and $K_{2, n} \coloneqq (R_n \circ L_n)^{-1}(K) \cap W_n$.
    Note that for all large $n$,
    {$$
    (R_n \circ M_n)^{-1}(K) = K_{1, n}  \sqcup \widehat{K}_{1,n},
    $$
    where $K_{1, n}$ is mapped univalently and $\widehat{K}_{1,n}:=(M_n^{-1}\circ L_n)(K_{2, n})$ is mapped with degree $2d-1$ onto $K$ under $R_n\circ M_n$.}
    Since $K_2$ is a bounded subset of $\C_L$, and $K_{2,n}$ converges to $K_2$, it follows that $K_{2, n}$ are uniformly bounded for all sufficiently large $n$. Thus, $\widehat{K}_{1,n}$ converges to $0_M$ {(recall that by Lemma~\ref{lem:Rinfty}, $\vert R_n'(\infty)\vert\to+\infty$)}. 
    Now let $a$ be a hole in $\widehat{\C}_M$ for $G_1$. By Lemma \ref{lem:ns}, {for any neighborhood $N$ of $a$, we have $R_n \circ M_n(N) = \widehat{\C}$ for $n$ large enough. In particular, $K\subseteq R_n \circ M_n(N)$ and hence $N\cap \widehat{K}_{1,n}\neq\emptyset$ for all sufficiently large $n$.}
    Thus  $a = 0$.
    A similar argument shows that $\infty_L$ is the unique hole in $\widehat{\C}_L$ for $G_2$.
\end{proof}

\begin{lem}\label{lem:2d-2}
    Let $G_1, G_2$ be the rational maps in Lemma \ref{lem:tworescalinglimits}. Then $G_1(0) = G_2(\infty)$, and $\infty$ is a critical point of multiplicity $2d-2$ for $G_2$.
\end{lem}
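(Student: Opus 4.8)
The plan is to leverage the two conjugacy relations that are built into the proof of Lemma~\ref{lem:tworescalinglimits}. Writing $\Phi_n^M=M_n^{-1}\circ R_n^{-1}\circ\phi_n$ and $\Psi_n:=\eta\circ\Phi_n^M$, and using $L_n\circ\eta=\eta\circ M_n$ together with the B-involution identity $R_n\circ\eta\circ(R_n|_{\mathfrak{D}_n})^{-1}=F_n$, one checks directly that
$$
R_n\circ M_n\circ\Phi_n^M=\phi_n\quad\text{and}\quad R_n\circ L_n\circ\Psi_n=\phi_n\circ B\qquad\text{on }\D .
$$
Passing to the limit (with $\Phi^M,\phi$ the univalent limits of $\Phi_n^M,\phi_n$, and $\Psi=\eta\circ\Phi^M$) gives $G_1\circ\Phi^M=\phi$ and $G_2\circ\Psi=\phi\circ B$. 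For the first assertion I would track the marked point: since $R_n(1)=\phi_n(1)=:q_n$, with $1\in\partial\mathfrak{D}_n$ the simple critical point and $R_n^{-1}(q_n)=1$, we get $\Phi_n^M(1)=M_n^{-1}(1)=1/|R_n'(\infty)|\to0$, so $\Phi^M(1)=0$ and $\Psi(1)=\eta(0)=\infty$. Evaluating the limiting relations at the boundary fixed point $\zeta=1$ (using $B(1)=1$) then yields $G_1(0)=\phi(1)=\phi(B(1))=G_2(\infty)=:q$. This is exactly the node-compatibility condition for the uniformizing map on the two-sphere tree $\widehat\C_M\cup\widehat\C_L$ glued along $0_M=\eta^{-1}(\infty_L)$.

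For the multiplicity statement I would count critical points. As $\deg\varphi_{G_2}=2d-1$, the map $\varphi_{G_2}$ carries $4d-4$ critical points. The relation $G_2\circ\Psi=\phi\circ B$ identifies $G_2|_W$ (with $W=\eta(U)$) conformally with $B|_\D$ via the univalent maps $\Psi,\phi$; since $B$ has exactly $2d-2$ critical points in $\D$, this produces $2d-2$ critical points of $\varphi_{G_2}$ in the interior of $W$, and simultaneously accounts for the $2d-2$ critical points of $R_n\circ L_n$ lying in $W_n$. The remaining critical points of $R_n$ are the $2d$ marked ones $\{\pm1,\alpha_{1,n}^{\pm1},\dots,\alpha_{d-1,n}^{\pm1}\}$, and the crux is that all of them escape to the node $\infty_L$ in the $L$-coordinate. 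To see this I would use that $G_1$ is a M\"obius transformation: by Hurwitz every critical point of $R_n\circ M_n$ accumulates at its unique hole $0_M$, so $M_n^{-1}(c)\to0_M$ for each marked $c$. Because the marked set is $\eta$-invariant and $L_n^{-1}=\eta\circ M_n^{-1}\circ\eta$, this gives $L_n^{-1}(c)=\eta\bigl(M_n^{-1}(\eta(c))\bigr)\to\eta(0_M)=\infty_L$. Hence the $2d$ marked critical points of $R_n\circ L_n$ converge to $\infty_L$, while the other $2d-2$ converge into the interior of $W$.

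I would finish by localizing with Hurwitz a second time: away from the hole $\infty_L$, $R_n\circ L_n$ converges compactly to $\varphi_{G_2}$, so any critical point of $\varphi_{G_2}$ off $\infty_L$ is a limit of critical points of $R_n\circ L_n$ and therefore lies in the interior of $W$; these are precisely the $2d-2$ critical points coming from $B$. Consequently $\varphi_{G_2}$ has no critical point off $W\cup\{\infty_L\}$, and the remaining $4d-4-(2d-2)=2d-2$ critical points must all sit at $\infty_L$, i.e. $\infty$ is a critical point of multiplicity $2d-2$ for $G_2$. I expect the main obstacle to be the bookkeeping that the $2d$ marked and the $2d-2$ Blaschke critical points are disjoint and exhaust all $4d-2$ critical points of $R_n$ (the marked ones sitting on $\partial\mathfrak{D}_n$, hence landing on $\partial W$ at the node $\infty_L$, and the Blaschke ones in the interior of $W_n$), together with the boundary regularity needed to evaluate $\phi,\Phi^M,\Psi$ at $\zeta=1$, which rests on local connectivity of the limit sets (Theorem~\ref{conf_mating_bs_poly_thm}) and on $1$ being the repelling fixed point. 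A related subtlety worth flagging is that the multiplicity forces $\partial W$ to pinch at $\infty_L$, so one cannot run a naive Riemann--Hurwitz count on $\widehat\C_L\setminus\overline W$; the Hurwitz localization above is the correct substitute.
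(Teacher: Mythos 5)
Your proof of the second assertion (that $\infty$ is a critical point of $G_2$ of multiplicity $2d-2$) is correct and is essentially the paper's own argument: the paper likewise uses the $\eta$-invariance of the marked critical set $\{\pm 1,\alpha_{i,n}^{\pm 1}\}$, the identity $L_n^{-1}=\eta\circ M_n^{-1}\circ\eta$, a Hurwitz-type transfer of critical points between the two rescaled coordinates (with $G_1$ being M\"obius ruling out critical points of $R_n\circ M_n$ accumulating away from the hole $0_M$), and the Riemann--Hurwitz count of $2d-2$ critical points of the proper degree $2d-1$ map $G_2:W\to\mathcal{B}$. One detail to make explicit: a limit of critical points $c_n'\in W_n$ a priori lies only in $\overline{W}$; to place it in $W$ you need that the critical points of the proper maps $R_n\circ L_n:W_n\to\mathcal{B}_\infty(F_n)$ stay in a fixed compact subset of $W$, which follows (as in the proof of Lemma~\ref{lem:Rinfty}) from the fact that their critical values $\phi_n(V(B))$ lie in a fixed compact subset of $\mathcal{B}$.

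The first assertion, $G_1(0)=G_2(\infty)$, is where your proposal has a genuine gap, and it is exactly the point you flag as a ``subtlety'': evaluating the limit maps at $\zeta=1$. Your per-$n$ computation $\Phi_n^M(1)=M_n^{-1}(1)\to 0$ is fine, but it does not yield $\Phi^M(1)=0$ (nor even that $\Phi^M$ or $\phi$ has a limit along a ray toward $1$): locally uniform convergence on $\D$ carries no information about boundary values, i.e., the iterated limits $\lim_n\lim_{\zeta\to 1}$ and $\lim_{\zeta\to 1}\lim_n$ need not agree. Local connectivity of each limit set (Theorem~\ref{conf_mating_bs_poly_thm}) gives continuity of each individual $\Phi_n^M$ on $\overline{\D}$, but nothing uniform in $n$; and the sequence is degenerating precisely because its geometry escapes to two different scales, so no equicontinuity near $\zeta=1$ is available --- it is not even clear that $0\in\partial U$ for the Carath\'eodory limit $U=\Phi^M(\D)$. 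A dynamical rescue also runs into circularity: your identities give $\widehat{G}\circ\phi=\phi\circ B$ on $\D$ with $\widehat{G}:=G_2\circ\eta\circ G_1^{-1}$ rational, so the snail lemma shows that the $\phi$-image of a $B$-invariant ray landing at $1$ lands at \emph{some} fixed point of $\widehat{G}$, but identifying that fixed point with $G_1(0)$ is the same interchange-of-limits problem. The paper's proof of this half is of a completely different nature and avoids boundary behavior altogether: assuming $G_1(0)\neq G_2(\infty)$, it chooses an open set $V$ avoiding both points and counts preimages of $V$ under $R_n$ --- one in the small disk bounded by $M_n(\alpha_n)$, $2d-1$ in the small disk bounded by $L_n(\beta_n)$, and at least one more in the annulus between them, because $R_n$ maps the boundary of that annulus into $\widetilde{\alpha}\cup\widetilde{\beta}$ --- for a total of at least $2d+1>\deg R_n=2d$, a contradiction. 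You would need to replace your boundary-evaluation step with a global counting argument of this kind (or supply genuinely new uniform boundary control); as written, the first half of the lemma is not proved.
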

\begin{proof}
    Suppose that $G_1(0) \neq G_2(\infty)$. Choose an open set $V \subseteq \widehat{\C} - \{G_1(0), G_2(\infty)\}$.
    Choose small simple closed curves $\widetilde{\alpha}, \widetilde{\beta}$ around $G_1(0)$ and $G_2(\infty)$ respectively that are disjoint from $V$. There exist small simple closed curves $\alpha \subseteq \widehat{\C}_M$ and $\beta \subseteq \widehat{\C}_L$ around $0_M$ and $\infty_L$ that are mapped to $\alpha, \beta$ by $G_1$ and $G_2$ respectively.
    Since $0_M$ and $\infty_L$ are the only holes of $G_1$ and $G_2$ respectively, for all sufficiently large $n$, there are approximations $\alpha_n$ and $\beta_n$ of $\alpha$ and $\beta$ so that
    $$
    R_n\circ M_n(\alpha_n) = \widetilde{\alpha} \text{ and } R_n \circ L_n(\beta_n) = \widetilde{\beta}.
    $$
    Note that $M_n(\alpha_n)$ and $L_n(\beta_n)$ divide the Riemann spheres into $3$ components. Denote these by $\mathcal{V}_{1,n}, \mathcal{A}_n, \mathcal{V}_{2,n}$ where 
    \begin{enumerate}[leftmargin=*]
        \item $\mathcal{V}_{1,n}$ $\mathcal{V}_{2,n}$ are topological disks with boundaries $M_n(\alpha_n)$, $L_n(\beta_n)$ respectively;
        \item $\mathcal{A}_n$ is the annulus bounded by $M_n(\alpha_n)$ and $L_n(\beta_n)$.
    \end{enumerate}
    Since $R_n\circ M_n$ converges to $G_1$ compactly away from $0$, and $G_1$ has degree $1$, we conclude that for sufficiently large $n$, $V$ has one preimage {(under $R_n$)} in $\mathcal{V}_{1,n}$ counted with multiplicity. Similarly, $V$ has $2d-1$ preimages in $\mathcal{V}_{2,n}$ counted with multiplicity.
    On the other hand, the boundary of the annulus $\mathcal{A}_n$ is mapped { by $R_n$} to the union of $\widetilde{\alpha}$ and $\widetilde{\beta}$ under $R_n$. Thus, there exists at least 1 preimage of $V$ in $\mathcal{A}_n$. This is a contradiction, as $R_n$ has degree $2d$. Therefore, $G_1(0) = G_2(\infty)$.
    
    Since $G_2$ has degree $2d-1$, it has $4d-4$ critical points counted with multiplicity.
    Since $G_2: W \longrightarrow \mathcal{B}$ is a proper map of degree $2d-1$, there are $2d-2$ critical points of $G_2$ in $W$ { (see the proof of Lemma~\ref{lem:tworescalinglimits} for the definitions of the sets $W, W_n$). We claim that the only critical point of $G_2$ outside $W$ is at $\infty$, whence the desired multiplicity of this critical point would follow.
    By way of contradiction,} suppose that $c$ is a critical point of $G_2$ in $\C - W$. Since $\infty$ is the only hole of $G_2$, there exists a sequence $c_n$ of critical points of $R_n$ so that $L_n^{-1}(c_n) \to c$. Since $c$ is not contained in $W$, $c_n$ is not in $W_n$ for $n$ sufficiently large. {Thus, $c_n\in\partial\mathfrak{D}_n$, and hence $\eta(c_n)$ is also a critical point of $R_n$ (cf. \cite[Proposition~4.14, Corollary~4.15]{MM2}}. Since $L_n^{-1}(c_n) \to c$, it follows that
    $$
    M_n^{-1}(\eta(c_n)) = M_n^{-1}\circ\eta\circ L_n (L_n^{-1}(c_n)) = \eta(L_n^{-1}(c_n)),
    $$
    {which is a critical point of $R_n\circ M_n$,} converges to $\eta(c) \in \widehat{\C}_M$. Further, $\eta(c) \neq 0$ as $c \neq \infty$. Since the only hole of $G_1$ is $0$, we conclude that $\eta(c)$ is a critical point of $G_1$. This is a contradiction, as $G_1$ has degree $1$.
    This completes the proof.
\end{proof}

\begin{lem}\label{lem:ld}
    {Let $G_1, G_2$ be the rational maps in Lemma \ref{lem:tworescalinglimits}, and $A \coloneqq G_1(0) = G_2(\infty)$.} Then {$R_n(1)\to A$, and} for all sufficiently small $r > 0$, there exists $N$ so that for all $n \geq N$, we have $F_n(\Omega_n \cap \partial B(A, r)) \subseteq B(A, r/2)$.
\end{lem}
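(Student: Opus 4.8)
The plan is to show that, in the two rescaled coordinate systems produced by Lemmas~\ref{lem:tworescalinglimits} and~\ref{lem:2d-2}, the dynamics of $F_n$ near $A$ converges to a single holomorphic germ fixing $A$ with local degree $2d-1\ge 3$; since such a germ is super-attracting, the contraction estimate is immediate for small $r$ and large $n$. The starting point is the exact intertwining relation: for any $\xi$ with $M_n(\xi)\in\mathfrak{D}_n$, using $L_n^{-1}\circ\eta\circ M_n=\eta$ together with $R_n^{-1}\circ R_n=\mathrm{id}$ on $\mathfrak{D}_n$, one computes
$$
F_n\circ(R_n\circ M_n)(\xi)=(R_n\circ L_n)(\eta(\xi)).
$$
By Lemma~\ref{comcon} (via Lemma~\ref{lem:tworescalinglimits}), $R_n\circ M_n\to\varphi_{G_1}$ compactly on $\widehat{\C}_M-\{0_M\}$ with $\varphi_{G_1}$ M\"obius, and $R_n\circ L_n\to\varphi_{G_2}$ compactly on $\widehat{\C}_L-\{\infty_L\}$ with $\deg\varphi_{G_2}=2d-1$. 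By Lemma~\ref{lem:2d-2}, $\varphi_{G_1}(0_M)=\varphi_{G_2}(\infty_L)=A$ and $\varphi_{G_2}$ has local degree $2d-1$ at $\infty_L$. Hence the germ $\Upsilon:=\varphi_{G_2}\circ\eta\circ\varphi_{G_1}^{-1}$ is holomorphic near $A$, fixes $A$, and has local degree $1\cdot1\cdot(2d-1)=2d-1$ there; so $A$ is super-attracting for $\Upsilon$ and $\Upsilon(\partial B(A,r))\subseteq B(A,r/4)$ for all small $r$.

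The key structural point that makes the limiting relation usable on \emph{all} of $\Omega_n$ (not merely the basin) is that $R_n\colon\overline{\mathfrak{D}_n}\to\overline{\Omega_n}$ is a homeomorphism. Thus every $w\in\Omega_n\cap B(A,r_0)$ is written uniquely as $w=R_n\circ M_n(\xi_n)$ with $z_n:=R_n^{-1}(w)\in\mathfrak{D}_n$ and $\xi_n:=M_n^{-1}(z_n)$, and the intertwining relation gives $F_n(w)=(R_n\circ L_n)(\eta(\xi_n))$ regardless of which component of $\Omega_n$ contains $w$. Since $0\notin\overline{\mathfrak{D}_n}$ and $R_n$ is univalent on $\mathfrak{D}_n$, the maps $R_n\circ M_n$ are univalent on $M_n^{-1}(\mathfrak{D}_n)$ (which omits $0_M$) and converge there to the univalent $\varphi_{G_1}$. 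First I would use this univalence, together with the fact that $\varphi_{G_1}(\xi)=A$ only at $\xi=0_M$, to pin down $\xi_n$: for $w\in\partial B(A,r)$ with $r$ fixed, the parameter $\xi_n$ is forced to sit at distance $\asymp r/|\varphi_{G_1}'(0_M)|$ from $0_M$, hence bounded away from $0_M$. Then $\eta(\xi_n)$ is bounded away from $\infty_L$, so compact convergence applies to the right-hand side and $F_n(w)=(R_n\circ L_n)(\eta(\xi_n))\to\varphi_{G_2}(\eta(\xi_\infty))=\Upsilon(w_\infty)$, with the quantitative bound $|F_n(w)-A|\asymp|\xi_n|^{2d-1}\asymp r^{2d-1}<r/2$ once $r$ is small enough (here $2d-1\ge 3$ is essential).

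For the first assertion $R_n(1)\to A$, I would combine two views of the marked point. On the one hand $R_n(1)=R_n\circ M_n(M_n^{-1}(1))$ with $M_n^{-1}(1)=1/|R_n'(\infty)|\to 0_M$ by Lemma~\ref{lem:Rinfty}, and since $R_n\circ M_n$ restricted to $\overline{M_n^{-1}(\mathfrak{D}_n)}$ is univalent with limit $\varphi_{G_1}$, continuity of the M\"obius map $\varphi_{G_1}$ at $0_M$ yields $R_n(1)\to\varphi_{G_1}(0_M)=A$. On the other hand $L_n^{-1}(1)=|R_n'(\infty)|\to\infty_L$ exhibits $1$ as a critical point of $R_n\circ L_n$ coalescing into the multiplicity-$(2d-2)$ critical point of $\varphi_{G_2}$, whose critical value is $\varphi_{G_2}(\infty_L)=A$; this second view identifies the same limit and reconciles the statements $G_1(0)=G_2(\infty)=A$ with the marked fixed point $R_n(1)=\phi_n(1)\in\partial\mathcal{B}_\infty(F_n)$.

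The hard part will be the behaviour at the holes. Both $\xi_n\to 0_M$ and $\eta(\xi_n)\to\infty_L$ drift toward the precise loci where, by Lemma~\ref{lem:ns}, $R_n\circ M_n$ and $R_n\circ L_n$ become surjective onto $\widehat{\C}$ and the compact convergence fails. The crux is therefore to prove, uniformly in $n$, that no point of $\Omega_n\cap\partial B(A,r)$ is produced by the ``wild sheet'' near a hole — equivalently, that for fixed small $r$ the preimages $z_n=R_n^{-1}(\Omega_n\cap\partial B(A,r))$ all cluster near $1$ and the corresponding $\xi_n$ stay at scale $\asymp r$ from $0_M$. This is exactly the place where the degree drop to $G_1$ (univalence of $R_n$ on $\mathfrak{D}_n$) and the properness of $R_n\circ L_n$ of degree $2d-1$ on $W_n$ must be used in tandem; once this pinning is in hand, the super-attracting normal form for $\Upsilon$ closes the argument.
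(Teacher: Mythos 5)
Your architecture for the contraction estimate is the same as the paper's: the paper also forms $\widehat{G}=G_2\circ\eta\circ G_1^{-1}$ (your $\Upsilon$), uses Lemma~\ref{lem:2d-2} to see a superattracting fixed point of local degree $2d-1$ at $A$, and transports this to $F_n$ via the identity $L_n^{-1}\circ\eta\circ M_n=\eta$. The problem is that both places where your write-up stops are exactly where the content of Lemma~\ref{lem:ld} lies. First, your proof of $R_n(1)\to A$ is invalid: you evaluate $R_n\circ M_n$ along $M_n^{-1}(1)\to 0_M$, which is precisely the hole of the convergence $R_n\circ M_n\to G_1$, and compact convergence away from a hole gives no control over values at points tending to the hole (by Lemma~\ref{lem:ns}, every neighborhood of $0_M$ is eventually mapped onto all of $\widehat{\C}$, so such values can a priori converge anywhere). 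Univalence of $R_n\circ M_n$ on $M_n^{-1}(\mathfrak{D}_n)$ does not repair this, because $M_n^{-1}(1)$ is a boundary point of that domain escaping every compact set on which convergence holds, and compactly convergent univalent maps can do anything to such points (e.g. $g_n(z)=z+\epsilon_n/(1-z)$ converges to the identity on $\D$ while $g_n(1-\epsilon_n)\to 2$); your ``second view'' fails for the same reason, since critical points merging into a hole need not have convergent critical values. The paper's actual argument here is a preimage count by contradiction: if $R_n(1)\to A'\neq A$, then $A'\notin\mathcal{B}$, so $A'$ is not a critical value of $G_2$, whence $R_n\circ L_n$ has $2d-1$ preimages of $R_n(1)$ near $G_2^{-1}(A')$ away from the hole, while the critical point $L_n^{-1}(1)\to\infty_L$ contributes at least two more, contradicting $\deg(R_n\circ L_n)=2d$. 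That idea is absent from your proposal.

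Second, for $F_n(\Omega_n\cap\partial B(A,r))\subseteq B(A,r/2)$ you correctly isolate the crux — that the preimage $(R_n|_{\mathfrak{D}_n})^{-1}(w)$ of each $w\in\Omega_n\cap\partial B(A,r)$ stays at scale $\asymp r$ from $0_M$ in the $M$-coordinate, rather than being one of the $2d-1$ preimages produced near the hole — but you never prove it; you only say that univalence and properness ``must be used in tandem.'' This is not a deferrable technicality: it is essentially equivalent to the conclusion. Indeed, if the preimage in $\mathfrak{D}_n$ had $L$-coordinate converging to a point $p\in G_2^{-1}(w)$, then $|p|\asymp r^{-1/(2d-1)}$ and $F_n(w)=(R_n\circ M_n)\bigl(\eta(L_n^{-1}((R_n|_{\mathfrak{D}_n})^{-1}(w)))\bigr)$ would converge to $G_1(\eta(p))$, which lies at distance $\asymp r^{1/(2d-1)}$ from $A$ — far outside $B(A,r/2)$ for small $r$ — so the lemma itself would be false in that scenario. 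A proof must therefore rule this scenario out (for instance, for points $w$ in the basin one can use the decomposition of $R_n^{-1}(\mathcal{B}_\infty(F_n))$ into the univalent piece inside $\mathfrak{D}_n$ and the degree-$(2d-1)$ piece inside $\eta(\mathfrak{D}_n)$, together with the Carath\'eodory convergence of their rescalings to $U$ and $W$); your proposal names this step but supplies no argument for it, so it does not constitute a proof.
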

\begin{proof}
{By the description of the critical points of $G_2$ given in Lemmas~\ref{lem:tworescalinglimits} and~\ref{lem:2d-2}, all the critical points of $R_n\circ L_n$ in $L_n^{-1}(\partial\mathfrak{D}_n)$ converge to the unique hole $\infty$ of $G_2$. In particular, $L_n^{-1}(1)\to\infty$. By way of contradiction, suppose that the associated critical value $R_n(1)$ does not converge to $A=G_2(\infty)$. After possibly passing to a subsequence, we may assume that $R_n(1)\to A'\neq A$. 
Note that as $R_n(1)\notin\mathcal{B}_\infty(F_n)$, it follows that $A'\notin\mathcal{B}$ (where $(\mathcal{B},\infty)$ is the Carath{\'e}odory limit of the pointed disks $(\mathcal{B}_\infty(F_n),\infty)$). Hence, $A'$ is not a critical value of $G_2$. So, for $\epsilon>0$ small enough, $G_2^{-1}(B(A',\epsilon))$ consists of $2d-1$ disjoint open sets $\mathcal{W}_1,\cdots,\mathcal{W}_{2d-1}$ in $\C$ (each of which is mapped homeomorphically). Since there is no hole of $G_2$ in any $\mathcal{W}_i$, we conclude that for $n$ sufficiently large, there are $2d-1$ distinct preimages of $R_n(1)$ under $R_n\circ L_n$ in $\mathcal{W}_1,\cdots,\mathcal{W}_{2d-1}$.
But for $n$ large, the critical point $L_n^{-1}(1)$ of $R_n\circ L_n$ is close to $\infty$ (i.e., it lies outside $\cup \mathcal{W}_i$) and maps to $R_n(1)$. This gives $2d+1$ preimages of $R_n(1)$ under $R_n\circ L_n$ (counted with multiplicities), which is a contradiction. Hence, $R_n(1)\to A$.

The above discussion and the fact that $R_n(1)\in\partial\mathcal{B}_\infty(F_n)\cap\partial\Omega_n$ together imply that for all small $r>0$, the ball $B(A,r)$ intersects $\Omega_n$ as well as $\mathcal{B}_\infty(F_n)$ non-trivially.}

    Let $\alpha_r = \partial B(A, r)$. By Lemma \ref{lem:2d-2}, {the map $\widehat{G}:=G_2 \circ \eta \circ G_1^{-1}$ has a superattracting fixed point of local degree $2d-1$ at $A$; i.e., for $z$ close to $A$, we have
    $$
    \widehat{G} (z) = A + C(z-A)^{2d-1} + O(|z-A|^{2d}),
    $$    
    for some non-zero constant $C$.}
    Thus for sufficiently small $r$, we have $\widehat{G} (\alpha_r) \subseteq B(A, r/2)$.
    Note also that 
    $$
    \widehat{G} = G_2 \circ L_n^{-1} \circ \eta \circ M_n \circ G_1^{-1}.
    $$
    Since $R_n \circ M_n$ and $R_n \circ L_n$ converge compactly away from holes to $G_1$ and $G_2$ respectively, we conclude that $R_n \circ \eta \circ R_n^{-1}$ converges compactly to $\widehat{G}$ {on $\Omega_n\cap\overline{B(A,r)}$ away from $A$}.
    Thus, for sufficiently small $r>0$, if $n$ is large enough, we have $R_n \circ \eta \circ R_n^{-1}(\Omega_n\cap\alpha_r) \subseteq B(A, r/2)$. Since $F_n\equiv R_n\circ\eta\circ R_n^{-1}$, it follows that $F_n(\Omega_n \cap \partial B(A, r)) \subseteq B(A, r/2)$ for all large $n$.
\end{proof}

\begin{proof}[Proof of Theorem \ref{pre_comp_thm_1}]
Recall that under our normalization, $1 \in \partial \D$ is a repelling fixed point of the Blaschke product $B$ and $\phi_n(1) = R_n(1)$, where $1$ is a critical point of $R_n$.
Let $\gamma \subseteq \D$ be an invariant ray of $B$ landing at the repelling fixed point $1$. More precisely, this means that $\gamma$ is an arc with an end point at $1$, and $\gamma \subseteq \widetilde{\gamma} \coloneqq B(\gamma)$.
Denote $\partial \gamma = \{1, a\}$.
Let $\gamma_n \coloneqq \phi_n(\gamma)$, and let $\widetilde{\gamma}_n  \coloneqq \phi_n(\widetilde{\gamma})$
Let $r> 0$ be a sufficiently small radius as in Lemma \ref{lem:ld} and $\alpha_r = \partial B(A, r)$. We choose $r$ small enough so that $\phi_n(a)$ and $A$ are in different components of $\widehat{\C} - \alpha_r$. Then, {by Lemma~\ref{lem:ld}}, $\gamma_n$ must intersect $\alpha_r$ for all large enough $n$.
Let $\delta_n$ be the component of $\gamma_n - \alpha_r$ that contains $\phi_n(a)$.
Similarly, let $\widetilde{\delta}_n$ be the component of $\widetilde{\gamma}_n - \alpha_r$ that contains $\phi_n(B(a))$.
Since $\phi_n$ is a conjugacy and $\gamma$ is invariant under $B$, we conclude that $F_n(\delta_n) \subseteq \widetilde{\delta}_n$ for all sufficiently large $n$.
On the other hand, by Lemma \ref{lem:ld}, $F_n(\delta_n)$ intersects $B(A, r/2)$. This is a contradiction, and the {first part of the} theorem follows.

{It remains to show that $\overline{\mathfrak{B}(P)}\subset\Rat_{2d}^{reg}(\C)/\!\!\sim$. Each $R\in\mathfrak{B}(P)$ has two simple critical points at the fixed points $\pm 1$ of $\eta$. Hence, $\mathfrak{B}(P)\subset\Rat_{2d}^{reg}(\C)/\!\!\sim$. 

Now suppose that a sequence $R_n\in\mathfrak{B}(P)$ converges to $R\in\partial\mathfrak{B}(P)$. We will use the notation $U, W=\eta(U), \mathcal{B}$ introduced in Lemma~\ref{lem:Rinfty}. By the proof of that lemma, $R:U\to\mathcal{B}$ is a conformal isomorphism, $R:W\to\mathcal{B}$ is a degree $2d-1$ branched covering, and $\mathfrak{b}:=(R\vert_U)^{-1}\circ R\vert_W\circ \eta:U\to U$ is conformally conjugate to $P\vert_{\Int{\cK(P)}}$. 
%It follows that the degree $2d$ limiting rational map $R$ has $2d-2$ critical points (counted with multiplicity) in $W$. 
In particular, $R$ is injective on $B(\pm 1,\epsilon)\cap U$ and $B(\pm 1,\epsilon)\cap W$ for $\epsilon>0$ small, and hence $R$ has simple critical points at $\pm 1$. Therefore, $R\in\Rat_{2d}^{deg}(\C)/\!\!\sim$.}
\end{proof}

\begin{remark}\label{rmk:orbifoldModification}
    We remark that the proof in the genus 0 orbifold case is nearly identical to the manifold case. In the general orbifold setting, additional critical points may appear in the tiling set (cf. \cite[Proposition~4.14]{MM2} or \cite[Proposition~15.6]{LLM24}), requiring a slight modification to the proof of Lemma~\ref{lem:2d-2}. An argument similar to that of Lemma~\ref{lem:2d-2}, combined with a simple counting argument, shows that all cusp critical points, along with at least one critical point from the tiling set, converge to $\infty_L \in \widehat{\C}_L$. This shows that $\infty_L$ is a critical point for $G_2$ (cf.\ Lemma \ref{lem:tworescalinglimits}). This fact allows one to establish Lemma~\ref{lem:ld} and complete the proof of the analog of Theorem~\ref{pre_comp_thm_1} in the orbifold setting.
\end{remark}

\section{Bers boundaries}\label{bers_bdry_sec}

{Recall from \S~\ref{qf_bers_slice_subsec} that for $P\in\cH_{2d-1}$ and $\Gamma\in\mathrm{Teich}(S_{0,d+1})$, there exists a degree $2d$ rational map $R_{\Gamma,P}:\widehat{\C}\to\widehat{\C}$ such that the associated correspondence $\mathfrak{C}_{\Gamma,P}$ (defined by Equation~\eqref{corr_eqn}, where $(\mathscr{T},\widehat{\C}^{\mathscr{V}})$ is a single Riemann sphere) is a mating of $\Gamma$ and $P$. Further, for a fixed polynomial $P$ in $\cH_{2d-1}$, the Bers slice $\mathfrak{B}(P)$ is defined as:
$$
\mathfrak{B}(P)= \{[R_{\Gamma,P}]: \Gamma\in\mathrm{Teich}(S_{0,d+1})\}.
$$
By Theorem~\ref{pre_comp_thm_1}, the closure $\overline{\mathfrak{B}(P)}\subset \mathrm{Rat}_{2d}^{reg}(\C)/\!\!\sim$ is compact. 

We denote the boundary of $\mathfrak{B}(P)$  by $\partial\mathfrak{B}(P)$. For rational maps $R\in\partial\mathfrak{B}(P)$, the associated bi-degree $(2d-1,2d-1)$ algebraic correspondences 
$$
\mathfrak{C} = \mathfrak{C}_{R}:= \{(x,y) \in \widehat\C \times \widehat\C: \frac{R(x) - R(\eta(y))}{x-\eta(y)} = 0\}
$$
are called \emph{Bers boundary correspondences}. Abusing notation, we will identify the equivalence class of $R$ in $\mathrm{Rat}_{2d}^{reg}(\C)/\!\!\sim$ with the correspondence $\mathfrak{C}_R$.}

The goal of this section is to study the relationship between various Bers compactifications $\overline{\mathfrak{B}(P)}$, for $P\in\cH_{2d-1}$. This will involve a surgery construction that replaces the dynamics of the correspondences on their $P$-components, and an analysis of the boundary behavior of the homeomorphism $\chi$ between a pair of Bers slices (constructed in Theorem~\ref{bers_slice_hoemo_thm}.

In light of the discontinuity phenomenon of \cite{KT90}, we conjecture the following:

\begin{conj}
Let $P_1, P_2\in\cH_{2d-1}$. Then, the homeomorphism $\chi:\mathfrak{B}(P_1)\to\mathfrak{B}(P_2)$ does not extend continuously to the boundary.   
\end{conj}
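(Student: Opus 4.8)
The plan is to read $\chi$ as a comparison of two Bers-type compactifications of one and the same Teichm\"uller space. By Proposition~\ref{Xi_homeo_prop}, both $\mathfrak{B}(P_1)$ and $\mathfrak{B}(P_2)$ are homeomorphic copies of $\mathrm{Teich}(S_{0,d+1})$ under $\mathfrak{G}(\cdot,P_j)$, and $\chi$ is induced by the identity on the Teichm\"uller parameter $\Gamma$ (the polynomial, not the group, being swapped). A continuous boundary extension of $\chi$ would thus assert that the two boundary maps $\mathrm{Teich}(S_{0,d+1})\dashrightarrow\partial\mathfrak{B}(P_j)$ are compatible. The role of the fixed polynomial $P_j$ is here exactly that of the fixed second conformal structure in a classical Bers slice, while the varying group $\Gamma$ plays the role of the varying first structure; the conjecture is then the precise analog of Kerckhoff--Thurston \cite{KT90}. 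Note first that the conjecture can only hold when $\dim_\C\mathrm{Teich}(S_{0,d+1})=d-2\geq 2$, i.e. $d\geq 4$, since Theorem~\ref{bers_closure_homeo_thm_intro} proves that $\chi$ \emph{does} extend in the one-dimensional case $d=3$. Consequently, any proof must genuinely exploit the presence of at least two independent pinching/twisting directions.

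The strategy is to produce a \emph{self-bumping} of $\partial\mathfrak{B}(P_1)$ that is separated by the surgery to $P_2$. Concretely, I would fix two disjoint, essential, non-isotopic simple closed curves $\alpha,\beta$ on $S_{0,d+1}$ (these exist as soon as there are at least five punctures), and build two sequences $\Gamma_n,\Gamma_n'\in\mathrm{Teich}(S_{0,d+1})$ by pinching $\alpha$ while inserting relative Dehn twists $\tau_\beta^{k_n}$ and $\tau_\beta^{k_n'}$ about $\beta$ at two different rates, in the spirit of the Kerckhoff--Thurston earthquake construction. By Theorem~\ref{pre_comp_thm_1}, both sequences of correspondences remain on a single sphere and subconverge in $\overline{\mathfrak{B}(P_1)}$. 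The aim is to arrange the rates $k_n,k_n'$ so that
$$
\mathfrak{C}_{\Gamma_n,P_1}\longrightarrow\mathfrak{C}_\infty\quad\text{and}\quad\mathfrak{C}_{\Gamma_n',P_1}\longrightarrow\mathfrak{C}_\infty
$$
converge to the \emph{same} boundary correspondence, recording only that $\alpha$ has become parabolic/accidental, while the $\beta$-twisting contributes a surplus invisible to this algebraic limit. Applying $\chi$, one then studies the images $\mathfrak{C}_{\Gamma_n,P_2}$, $\mathfrak{C}_{\Gamma_n',P_2}$ and shows that the surplus $\beta$-twist, filtered through the conformal welding to the \emph{different} polynomial $P_2$, drives these to \emph{distinct} limits in $\partial\mathfrak{B}(P_2)$. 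Such a pair contradicts continuity of any boundary extension of $\chi$.

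I would organize the technical work in three steps, in order. First, upgrade this qualitative picture to precise rescaling statements via the framework of Section~\ref{degenerations_sec}: describe the limits of the uniformizing maps $R_{\Gamma_n,P_1}$ as $\alpha$ pinches, and isolate the combinatorial gluing datum on the limiting configuration that encodes the $\beta$-twist. Second --- and this is the heart --- show that this datum is invisible to the algebraic limit but recorded in the \emph{geometric} limit of the correspondences; this is exactly the ``geometric limit strictly larger than the algebraic limit'' phenomenon the authors flag as the engine of Kerckhoff--Thurston behavior, and it produces the self-bumping at $\mathfrak{C}_\infty$. Third, transport the datum through the surgery of Theorem~\ref{bers_slice_hoemo_thm}: since that surgery is realized by a uniformly $K$-quasiconformal conjugacy whose domains vary continuously on the interior, one may extract limiting quasiconformal maps, and the task is to verify that for $P_1\neq P_2$ the two twist rates are separated by some internal invariant of $P_2$ (for instance a prescribed landing pattern of dynamical rays, cf. Remark~\ref{fixed_ray_lami_rem}) that changes with $k_n$.

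The main obstacle is the second step. In the classical Kleinian setting the strict inclusion of the algebraic limit in the geometric limit, together with the resulting self-bumping, is established through the hyperbolic geometry of the approximating $3$-manifolds and Thurston's double limit theorem --- tools with no counterpart for correspondences, as the authors repeatedly emphasize. One is therefore forced to prove the geometric/algebraic discrepancy by purely complex-analytic, two-dimensional means: controlling the conformal matings $F_{\Gamma_n,P_1}$ directly and showing that the annuli swept out by the $\beta$-twist persist in the geometric limit as genuine extra dynamics while collapsing away from the algebraic limit. Decoupling the roles of $\alpha$ (pinching, seen algebraically) and $\beta$ (twisting, seen only geometrically), and making the separation by $P_2$ effective, is precisely the construction the paper defers to its sequel, and is where I expect the genuine difficulty to lie.
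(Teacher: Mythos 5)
This statement is a \emph{conjecture} in the paper, not a theorem: the authors offer no proof, and in their ``Future work'' section they explicitly defer the needed construction --- sequences in $\mathfrak{B}(P)$ whose geometric limits are strictly larger than their algebraic limits, which ``should give rise to analogs of the Kerckhoff--Thurston discontinuity result'' --- to a sequel. So there is no proof in the paper to compare yours against, and your proposal should be judged on whether it closes the problem on its own. It does not. What you have written is a program, and you candidly say so: your Step~2 (showing that the $\beta$-twist datum is invisible to the algebraic limit in $\overline{\mathfrak{B}(P_1)}$ yet produces a genuine self-bumping via the geometric limit) and Step~3 (showing that after the surgery of Theorem~\ref{bers_slice_hoemo_thm} the two twist rates are separated by an internal invariant of $P_2$) are precisely the content of the conjecture, and neither is carried out. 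In particular, the central asymmetry your argument needs --- that the surplus twisting is \emph{not} recorded by limits in $\partial\mathfrak{B}(P_1)$ but \emph{is} recorded by limits in $\partial\mathfrak{B}(P_2)$ --- is asserted, not established, and it is exactly here that the classical proof leans on hyperbolic $3$-manifold geometry and the double limit theorem, tools with no current analog for correspondences. Restating the Kerckhoff--Thurston mechanism in the language of Section~\ref{degenerations_sec} is a reasonable opening move, but it is not a proof.

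Two points in your write-up are nevertheless worth keeping. First, your observation that the conjecture, as literally stated, must exclude $d=3$ is correct and substantive: Theorem~\ref{bers_slice_closure_hoemo_thm} shows $\chi$ \emph{does} extend continuously for $P_1,P_2\in\cH_5$, so any proof must use $\dim_\C\mathrm{Teich}(S_{0,d+1})\geq 2$, and your framing (two independent curves $\alpha,\beta$, pinching versus twisting) correctly isolates where that hypothesis should enter. Second, your reduction --- two sequences with a common limit in $\partial\mathfrak{B}(P_1)$ whose $\chi$-images have distinct limits in $\partial\mathfrak{B}(P_2)$ contradicts any continuous extension --- is the right logical skeleton, and it is consistent with the route the authors sketch for their sequel. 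But between the skeleton and a theorem lies the entire analytic difficulty, which remains open both in your proposal and in the paper itself.
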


However, we will show in subsection~\ref{four_punc_sphere_subsec} that the situation is very special when the Teichm{\"u}ller space has $\C$-dimension one (for $4$-times punctured sphere groups). We refer the reader to \cite{Ber81} for the corresponding phenomenon in the Kleinian groups world, and note that our proof is essentially different from that of \cite{Ber81}. Indeed, the proof given there uses Sullivan's theorem on non-existence of invariant line fields on  limit sets of Kleinian groups, while such a result is not available for the correspondences that we are dealing with in this paper.

\subsection{The class $\cG_P$ of correspondences}\label{corr_class_subsec}

It follows from the proof of Lemma~\ref{lem:Rinfty} that all correspondences in $\overline{\mathfrak{B}(P)}$ have the following common feature. If the degree $2d$ rational map $R$ defines $\mathfrak{C}\in\overline{\mathfrak{B}(P)}$ via Formula~\eqref{corr_eqn}, then there exist two topological disks $\cU^+, \cU^-$ such that 
\begin{enumerate}[leftmargin=*]
\item $\mathcal{U}^-=\eta(\mathcal{U}^+)$,
\item $R(\cU^+)=R(\cU^-)$, {(See Figure~\ref{corr_fig}, where {$\cU^\pm=\Int{\widetilde{\cK}^\pm}$} are the two connected components of the interior of $\widetilde{\mathcal K}$ in the left figure {with $\cU^+$ being the bounded component}, and $R(\cU^+)=R(\cU^-)$ is the interior of 
${\mathcal K} (F)$ in the right figure.)}
\item $R:\cU^+\to R(\cU^+)$ is a homeomorphism,
\item $R:\cU^-\to R(\cU^-)$ is a branched covering of degree $2d-1$, and
\item $\cU:=\cU^+\cup\cU^-$ is completely invariant under the correspondence.
\end{enumerate}
It follows from the above that $\mathcal{U}^+$ is backward invariant and $\cU^-$ is forward invariant under $\mathfrak{C}$. Further, there exists a forward branch
$$
\mathfrak{b}:=\left(R\vert_{\cU^+}\right)^{-1}\circ R\vert_{\cU^-}\circ\eta: \cU^+\to\cU^+
$$
(respectively, a backward branch $\eta\circ\mathfrak{b}\circ\eta=\eta\circ\left(R\vert_{\cU^+}\right)^{-1}\circ R\vert_{\cU^-}: \cU^-\to\cU^-$)
of $\mathfrak{C}$ on $\cU^+$ (respectively, on $\cU^-$) that is conformally conjugate to $P\vert_{\Int{\cK(P)}}$. 
We will refer to $\cU^\pm$ as the \emph{$P$-components}, the forward branch $\mathfrak{b}$ as the \emph{distinguished branch}, and the complement $\widehat{\C}-\cU$ of the $P$-components as the \emph{filled limit set} of the correspondence $\mathfrak{C}$. Note that the filled limit set is also completely invariant under the correspondence.
We also note that $R$ has a simple critical point at $1$, and it is a marked boundary fixed point of the distinguished branch $\mathfrak{b}$ preserving $\cU^+$. Under the conformal conjugacy between $\mathfrak{b}$ and $P$, this boundary fixed point corresponds to the landing point of the dynamical $0$-ray of $P$ on {its} Julia set.

We denote the collection of all correspondences defined by degree $2d$ rational maps (via Formula~\eqref{corr_eqn}) satisfying the properties listed in the previous paragraph by $\mathcal{G}$. For $P\in\cH_{2d-1}$, the subspace $\cG_P$ of $\cG$ consists of those correspondences whose distinguished branch $\mathfrak{b}:\cU^+\to\cU^+$ is conformally conjugate to $P\vert_{\Int{\cK(P)}}$ such that the conformal conjugacy sends the marked boundary fixed point of $\mathfrak{b}$ to the landing point of the dynamical $0$-ray of $P$. Clearly, $\overline{\mathfrak{B}(P)}\subset\cG_P$.

Each correspondence $\mathfrak{C}\in\mathcal{G}$ is \emph{reversible} in the sense that the conformal involution $\eta$ conjugates the forward branches of $\mathfrak{C}$ to its backward branches.
{By Lemma~\ref{qc_def_corr_lem}}, the family $\mathcal{G}$ of bi-degree $(2d-1,2d-1)$ algebraic  correspondences is closed under quasiconformal deformations.

\subsection{Some technical results}\label{chi_tools_subsec}

We start with a useful definition that has its roots in the theory of polynomial-like maps in holomorphic dynamics.
 
\begin{defn}\label{hybrid_conj_def}
We say that two correspondences $\mathfrak{C}_1,\mathfrak{C}_2\in\cG$ are \emph{hybrid conjugate} if they are quasiconformally conjugate in some neighborhoods of their filled limit sets where the conjugacy is conformal on the filled limit sets and sends the marked fixed point on $\partial\cU_1^+$ to that on $\partial\cU_2^+$.     
\end{defn}

\begin{lem}\label{conjugacy_extension_lem}
Let $P\in\cH_{2d-1}$, and $\mathfrak{C}_1,\mathfrak{C}_2\in\mathcal{G}_P$ be quasiconformally conjugate in some neighborhoods of their filled limit sets where the conjugacy sends the marked fixed point on $\partial\cU_1^+$ to that on $\partial\cU_2^+$.
Then, $\mathfrak{C}_1$ and $\mathfrak{C}_2$ are quasiconformally conjugate on $\widehat{\C}$, and the conjugacy can be chosen to be conformal on their $P$-components.
\end{lem}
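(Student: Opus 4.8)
The plan is to realize the required extension as a quasiconformal surgery supported on the $P$-components, using the surgery machinery of Section~\ref{qc_surgery_subsec} together with the fact that the internal dynamics of both correspondences is a conformal copy of the \emph{same} polynomial $P$.

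First I would pass from the correspondences $\mathfrak{C}_1,\mathfrak{C}_2\in\cG_P$ to the associated B-involutions (degenerate polynomial-like maps) $F_1,F_2$ and their uniformizing rational maps $\pmb{R}_1,\pmb{R}_2$, so that the $P$-components are $\cU_i^{\pm}=\pmb{R}_i^{-1}(\cK(F_i))$. By Theorem~\ref{conf_mating_bs_poly_thm} the domain of each $F_i$ is a pinched disk, so the non-escaping set $X_i:=\cK(F_i)$ meets $\partial\Omega_i$ in a \emph{finite} set of pinched/landing points. The hypothesis furnishes a quasiconformal conjugacy $\Psi$ between $F_1$ and $F_2$ on the complement of the interiors of the non-escaping sets (i.e. on a neighbourhood of the filled limit sets), carrying the marked fixed point of $F_1$ to that of $F_2$.

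Next, since $\mathfrak{C}_1,\mathfrak{C}_2\in\cG_P$, the distinguished branches $\mathfrak{b}_1,\mathfrak{b}_2$ are conformally conjugate to $P|_{\Int\cK(P)}$ via maps $h_1,h_2$ sending the marked boundary fixed points to the landing point of the $0$-ray; hence $h_2\circ h_1^{-1}$ is a conformal conjugacy between the internal dynamics on $\cU_1^{+}$ and $\cU_2^{+}$, which I extend across $\cU^{-}$ by $\eta$-reversibility. I would then feed $X=X_1$, the target B-involution $F_2$, and $\Psi$ into the surgery of Section~\ref{qc_surgery_subsec}: the quasiregular map $\widecheck{\pmb{R}}$ is set equal to $\Psi\circ\pmb{R}_1$ on $\overline{\pmb{\mathfrak{D}}}$ and to $F_2\circ\Psi\circ\pmb{R}_1\circ\pmb{\eta}$ off it, using the conformal internal model inside $X$. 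Because $\Psi$ conjugates the limit-set dynamics and both internal classes are the fixed polynomial $P$ with matched marked points, the piecewise definitions agree along $\partial\pmb{\mathfrak{D}}$, and the external conjugacy $\Psi$ and the internal conjugacy $h_2\circ h_1^{-1}$ match along the finite interface $X\cap\partial\Omega$. Straightening the $\pmb{\eta}$-invariant structure $\widecheck{\mu}=\widecheck{\pmb{R}}^{*}\mu_0$ by $\widecheck{\Psi}$ (Measurable Riemann Mapping Theorem, as in Sections~\ref{qc_def_corr_subsec} and~\ref{qc_surgery_subsec}) produces the uniformizing rational map of $F_2$ and a global $K$-quasiconformal conjugacy $\widecheck{\Psi}$ between $\mathfrak{C}_1$ and $\mathfrak{C}_2$. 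Since $\widecheck{\mu}$ is the standard structure on $\pmb{R}_1^{-1}(X)=\cU_1^{\pm}$ — we never deformed the holomorphic internal dynamics there — the conjugacy $\widecheck{\Psi}$ is conformal on the $P$-components, as required.

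The main obstacle is the gluing along the boundary between the $P$-components and the limit set: one must check that combining the conformal internal conjugacy with the given quasiconformal external conjugacy yields a genuinely quasiconformal (not merely quasiregular-off-a-bad-set) global map, and that its straightening is exactly $\mathfrak{C}_2$ rather than some other member of $\cG_P$. This is precisely the point at which, for Kleinian groups, one would invoke rigidity or no-invariant-line-field results, which are unavailable in the present setting. The finiteness of $X\cap\partial\Omega$ (a consequence of the finite-lamination structure of Theorem~\ref{conf_mating_bs_poly_thm}) is what rescues the argument: the interface across which the conformal and quasiconformal pieces are joined is a finite set, hence quasiconformally removable, so no line-field input is needed. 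The remaining compatibility — that the internal conformal conjugacy and the external conjugacy induce the same map on the expanding limit-set dynamics near the marked fixed point — follows from uniqueness of a conjugacy fixing a marked point, which pins down $\mathfrak{C}_2$ up to the conformal identification and completes the proof.
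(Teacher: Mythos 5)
Your overall architecture—splice the conformal internal conjugacy on the $P$-components into the given quasiconformal conjugacy near the filled limit sets—is the same as the paper's, but the step on which everything hinges is wrong. You have misidentified the interface along which the two pieces are glued. The conformal map $h_2\circ h_1^{-1}$ lives on $\cU_1^+\cup\cU_1^-$ and the given quasiconformal conjugacy lives on a neighborhood of the filled limit set, so the set across which they must agree is $\partial\cU_1^+\cup\partial\cU_1^-$ (downstairs, the whole limit set $\partial\cK(F_1)$): a continuum, possibly non--locally-connected, and certainly not the finite set $X\cap\partial\Omega$. Finiteness of $X\cap\partial\Omega$ only records where the non-escaping set touches $\partial\Omega$; it has nothing to do with the gluing, so ``finite sets are quasiconformally removable'' proves nothing here. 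Two things genuinely need proof: (i) the piecewise-defined map is \emph{continuous} across $\partial\cU_1^{\pm}$ --- this does not follow from ``uniqueness of a conjugacy fixing a marked point'', since neither piece is known a priori to extend continuously to a non--locally-connected boundary; the correct mechanism is the prime-end argument of \cite[\S 1, Lemma~1]{DH85}: both maps conjugate the expanding branch $\mathfrak{b}_1$ to $\mathfrak{b}_2$ and agree at the marked fixed point, hence act identically on $\mathfrak{b}_1$-invariant prime ends, hence match continuously along the boundary; and (ii) the glued homeomorphism is globally quasiconformal --- this is Rickman's lemma \cite[\S 1, Lemma~2]{DH85}, applied with the continuum $\partial\cU_1^{\pm}$ as interface. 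This is exactly how the paper argues, working directly on the correspondence sphere and gluing $\zeta$ on $\cU_1^+$, $\eta\circ\zeta\circ\eta$ on $\cU_1^-$, and the given conjugacy elsewhere.

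There are two further problems with routing the argument through the surgery of Section~\ref{qc_surgery_subsec}. First, that machinery presupposes a B-involution $F:\overline{\Omega}\to\widehat{\C}$ on an inversive multi-domain together with a globally defined $\Psi$ conjugating $F_1$ to $F_2$ off $X$; but a general $\mathfrak{C}\in\cG_P$ is only assumed to carry the $P$-component structure of Section~\ref{corr_class_subsec} --- this is the very reason the class $\cG_P$ is introduced, since correspondences on $\partial\mathfrak{B}(P)$ are not known to be matings or B-involutions --- so your appeal to Theorem~\ref{conf_mating_bs_poly_thm} and its pinched-disk domain is unavailable; moreover, even when $F_i$ exists, the hypothesis is a conjugacy of the \emph{correspondences} upstairs, and its descent to a conjugacy of $F_1,F_2$ downstairs (compatibility with the fibers of $\pmb{R}_i$) is never established. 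Second, even granting the surgery, its output (end of Section~\ref{qc_surgery_subsec}) is a conjugacy of the correspondences only \emph{outside} $\pmb{R}_1^{-1}(X)$; conformality of $\widecheck{\Psi}$ on the $P$-components does not by itself say it conjugates the correspondences there. One must still check that the glued map intertwines every forward and backward branch on $\cU_1^{\pm}$ --- i.e., the local deck transformations of $R_1\vert_{\cU_1^-}$ and the inverse branches of $\mathfrak{b}_1$ --- which is the entire second half of the paper's proof and is absent from yours.
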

\begin{proof}
Suppose that $\mathfrak{C}_j$ is defined by the rational map $R_j$, $j\in\{1,2\}$.
Let $\phi$ be a quasiconformal homeomorphism from a neighborhood of the filled limit set of $\mathfrak{C}_1$ to a neighborhood of the filled limit set of $\mathfrak{C}_2$, that conjugates the restrictions of the correspondences there, and preserves the marking on $\partial\cU_j^+$. By our hypothesis, the distinguished branches
$$
\mathfrak{b}_j:=\left(R\vert_{\cU^+_j}\right)^{-1}\circ R\vert_{\cU_j^-}\circ\eta:\cU_j^+\to\cU_j^+
$$ 
of $\mathfrak{C}_j$ are conformally conjugate to $P\vert_{\Int{\cK(P)}}$ via conformal maps $\zeta_j:\cU_j^+\to\Int{\cK(P)}$, $j\in\{1,2\}$. Recall that these conjugacies send the marked fixed point on the boundary of $\partial \cU_j^+$ to the marked repelling fixed point on the Julia set of $P$ (this is the landing point of the dynamical $0$-ray). Thus, the conformal map $\zeta:=\zeta_2^{-1}\circ\zeta_1:\cU_1^+\to\cU_2^+$ conjugates the branches $\mathfrak{b}_j$ of the correspondences $\mathfrak{C}_j$, and preserves the boundary marking.
As $\zeta$ and $\psi$ act the same way on the $\mathfrak{b}_1$-invariant prime ends of $\partial\cU_1^+$, the argument of
\cite[\S 1, Lemma~1]{DH85} implies that $\phi$ and $\zeta$ match continuously along $\partial\cU_1^+$.
\begin{figure}[ht]
\captionsetup{width=0.98\linewidth}
\begin{tikzpicture}
\node[anchor=south west,inner sep=0] at (0,0) {\includegraphics[width=1\linewidth]{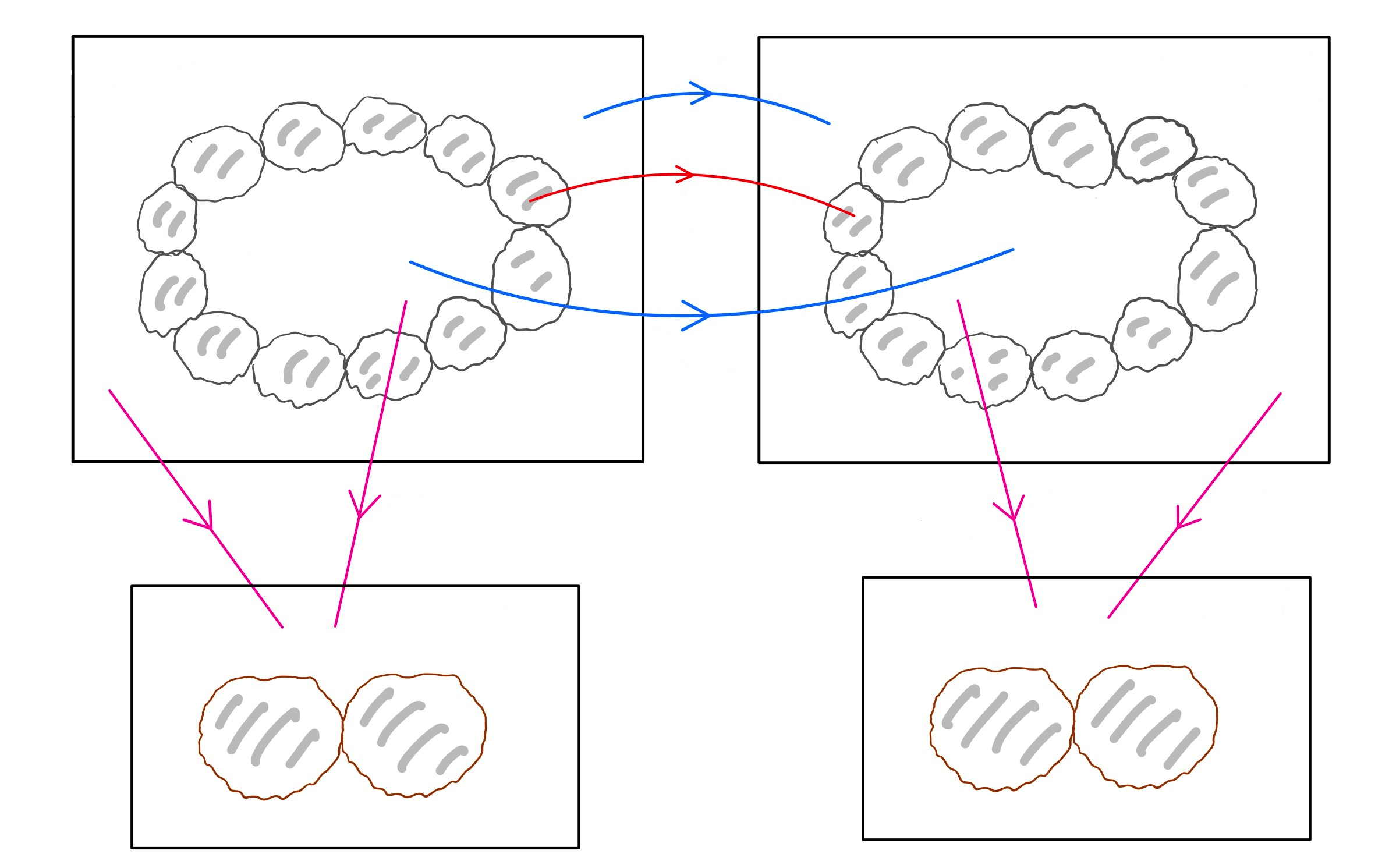}};
\node at (0.36,7.2) {$\mathfrak{C}_1$};
\node at (2,7.32) {\begin{scriptsize}$\cU^+_1=\eta(\cU_1^-)$\end{scriptsize}};
\node at (2.7,5.6) {\begin{small}$\cU_1^-$\end{small}};
\node at (1.4,3.2) {\begin{scriptsize}$1$:$1$\end{scriptsize}};
\node at (2.2,3.2) {\begin{scriptsize}$R_1$\end{scriptsize}};
\node at (3,3.2) {\begin{scriptsize}$R_1$\end{scriptsize}};
\node at (4.1,3.2) {\begin{scriptsize}$(2d-1)$:$1$\end{scriptsize}};
\node at (4.4,2.36) {\begin{tiny}$R_1(\cU_1^+)$\end{tiny}};
\node at (4.32,2.05) {\begin{tiny}$=R_1(\cU_1^-)$\end{tiny}};
\node at (6.28,7.4) {\begin{small}$\zeta$\end{small}};
\node at (6.28,6) {\begin{small}$\phi$\end{small}};
\node at (6.38,4.8) {\begin{tiny}$\eta\circ\zeta\circ\eta$\end{tiny}};
\node at (12.4,7.2) {$\mathfrak{C}_2$};
\node at (11,7.32) {\begin{scriptsize}$\cU^+_2=\eta(\cU_2^-)$\end{scriptsize}};
\node at (10,5.6) {\begin{small}$\cU_2^-$\end{small}};
\node at (11.2,3.2) {\begin{scriptsize}$1$:$1$\end{scriptsize}};
\node at (10.48,3.2) {\begin{scriptsize}$R_2$\end{scriptsize}};
\node at (9.56,3.2) {\begin{scriptsize}$R_2$\end{scriptsize}};
\node at (8.4,3.2) {\begin{scriptsize}$(2d-1)$:$1$\end{scriptsize}};
\node at (11.2,2.42) {\begin{tiny}$R_2(\cU_2^+)$\end{tiny}};
\node at (11.1,2.1) {\begin{tiny}$=R_2(\cU_2^-)$\end{tiny}};
\end{tikzpicture}
\caption{A schematic diagram of the construction of a global quasiconformal conjugacy between $\mathfrak{C}_1$ and $\mathfrak{C}_2$. The shaded regions are the filled limit sets of the correspondences.}
\label{conj_ext_fig}
\end{figure}

As the forward branches of $\mathfrak{C}_j$ are conjugate to their backward branches via $\eta$, we can apply the above argument to the inverse branches of $\mathfrak{C}_j$ on $\cU_j^-$ (which are also conformally conjugate to $P$) and conclude that $\phi$ and $\eta\circ\zeta\circ\eta$ match continuously along $\partial\cU_1^-$.

By Rickman's lemma (cf. \cite[\S 1, Lemma~2]{DH85}), the map 
\begin{align*}
\widecheck{\phi}:\widehat{\C}\to\widehat{\C}, \hspace{1cm}
\widecheck{\phi}=
\begin{cases}
\zeta\hspace{1.5cm} \mathrm{on}\quad \cU_1^+,\\
\eta\circ\zeta\circ\eta\quad \mathrm{on}\quad \cU_1^-,\\
\phi\hspace{1.2cm} \mathrm{elsewhere},
\end{cases}
\end{align*}
is a quasiconformal homeomorphism (see Figure~\ref{conj_ext_fig}). By design, $\widecheck{\psi}$ conjugates $\mathfrak{C}_1$ to $\mathfrak{C}_2$ on the filled limit sets. It remains to show that the same is true on the $P$-components.

By construction, $\widecheck{\phi}$ conjugates the branches $\mathfrak{b}_j:\cU_j^+\to\cU_j^+$, $j\in\{1,2\}$. It also conjugates $\eta\vert_{\cU_1^+\cup\cU_1^-}$ to $\eta\vert_{\cU_2^+\cup\cU_2^-}$. To show that the $\widecheck{\phi}$ conjugates the other forward branches of $\mathfrak{C}_j$ on $\cU_j^+$, it suffices to show that it conjugates the local deck transformations of $R_1:\cU_1^-\to R(\cU_1^-)$ to those of $R_2:\cU_2^-\to R(\cU_2^-)$. But this is a straightforward consequence of the facts that 
\begin{itemize}
\item $\widecheck{\phi}$ conjugates $\left(R_1\vert_{\cU_1^+}\right)^{-1}\circ R_1\vert_{\cU_1^-}$ to $\left(R_2\vert_{\cU_2^+}\right)^{-1}\circ R_2\vert_{\cU_2^-}$, and
\item the local deck transformations of $R_j\vert_{\cU_j^-}$ are also the local deck transformations of $\left(R_j\vert_{\cU_j^+}\right)^{-1}\circ R_j\vert_{\cU_j^-}$, for $j\in\{1,2\}$.
\end{itemize}

We next observe that $\cU_j^+$ is backward invariant under $\mathfrak{C}_j$, and the backward branches are given by the local inverse branches of $\mathfrak{b}_j$, $j\in\{1,2\}$. Since $\widecheck{\psi}$ conjugates $\mathfrak{b}_1$ to $\mathfrak{b}_2$, it also conjugates the local inverse branches $\mathfrak{b}_1$ to those of $\mathfrak{b}_2$. Therefore, $\widecheck{\phi}$ conjugates the backward branches of $\mathfrak{C}_1$ on $\cU_1^+$ to those of $\mathfrak{C}_2$ on $\cU_2^+$. Once again, the $\eta$-symmetry between the forward and backward branches of $\mathfrak{C}_j$ implies that $\widecheck{\phi}$ conjugates the forward branches of $\mathfrak{C}_1$ on $\cU_1^-$ to those of $\mathfrak{C}_2$ on $\cU_2^-$. Thus, $\widecheck{\phi}$ conjugates all the forward branches of $\mathfrak{C}_1$ to those of $\mathfrak{C}_2$.
\end{proof}

\begin{lem}\label{surgery_replace_blaschke_lem}
Let $P_1,P_2\in\cH_{2d-1}$, and $\mathfrak{C}\in\cG_{P_1}$. Then, there exists a unique correspondence $\mathfrak{C}'\in\cG_{P_2}$ that is hybrid conjugate to $\mathfrak{C}$. 
\end{lem}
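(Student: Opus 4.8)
The plan is to produce $\mathfrak{C}'$ by a quasiconformal surgery that excises the $P_1$-dynamics on the $P$-components of $\mathfrak{C}$ and grafts in $P_2$-dynamics, and then to deduce uniqueness from the rigidity encoded in Lemma~\ref{conjugacy_extension_lem}. This parallels the surgery in the proof of Theorem~\ref{bers_slice_hoemo_thm}, the essential new point being that here it must be carried out intrinsically on a correspondence in $\cG_{P_1}$ rather than on a conformal mating.

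For existence, let $B_1,B_2$ be degree $2d-1$ Blaschke products with an attracting fixed point in $\D$ and a repelling fixed point at $1$, with $P_j\vert_{\cK(P_j)}$ conformally conjugate to $B_j\vert_{\overline{\D}}$ (carrying the marked fixed point to $1$), $j\in\{1,2\}$. By Lemma~\ref{blaschke_conjugacy_lem} there is a quasiconformal $\mathfrak{h}:(\overline{\D},1)\to(\overline{\D},1)$ conjugating $B_1$ to $B_2$ on a relative neighborhood of $\mathbb{S}^1$. Let $R$ be the degree $2d$ rational map defining $\mathfrak{C}$ via Formula~\eqref{corr_eqn}, with $P$-components $\cU^\pm$ and distinguished branch $\mathfrak{b}:\cU^+\to\cU^+$; by hypothesis a conformal conjugacy $\zeta:\cU^+\to\D$ sends $\mathfrak{b}$ to $B_1$ and the marked boundary fixed point to $1$. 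Using $\mathfrak{h}$, I would install the new branch $\widecheck{\mathfrak{b}}:=(\mathfrak{h}\circ\zeta)^{-1}\circ B_2\circ(\mathfrak{h}\circ\zeta)$, which is conjugate to $B_2$ and, by the equivariance of $\mathfrak{h}$, agrees with $\mathfrak{b}$ near $\partial\cU^+$. The model modification of the uniformizer replaces $R$ on the forward-invariant component $\cU^-$ by $R\vert_{\cU^+}\circ\widecheck{\mathfrak{b}}\circ\eta$, leaving $R$ unchanged on the filled limit set; this is continuous across $\partial\cU^-$ and quasiregular of degree $2d$. To make the construction compatible with the reversing involution I would carry it out as in Section~\ref{qc_surgery_subsec} (taking $X$ to be the non-escaping set of the $P$-components and $\widecheck{\mathfrak{b}}$ as the holomorphic target dynamics), extending the modification to $\cU^+$ through $\eta$. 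This yields an $\eta$-invariant almost complex structure $\widecheck{\mu}$ that equals the standard structure $\mu_0$ on the filled limit set and satisfies $\Vert\widecheck{\mu}\Vert_\infty<1$. Straightening $\widecheck{\mu}$ by the measurable Riemann mapping theorem, and normalizing the straightening $\widecheck{\Psi}$ so that it conjugates $\eta$ to $\eta$ and preserves the marking, the composite $\widecheck{R}\circ\widecheck{\Psi}^{-1}$ is holomorphic, hence a degree $2d$ rational map $R'$, which defines an algebraic correspondence $\mathfrak{C}'$ via Formula~\eqref{corr_eqn}. By construction $\mathfrak{C}'\in\cG$, and its distinguished branch is conjugate to $B_2\sim P_2\vert_{\Int{\cK(P_2)}}$ with the correct marking, so $\mathfrak{C}'\in\cG_{P_2}$; since $\widecheck{\Psi}$ is conformal where $\widecheck{\mu}=\mu_0$, it restricts to a hybrid conjugacy between $\mathfrak{C}$ and $\mathfrak{C}'$ in the sense of Definition~\ref{hybrid_conj_def}.

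For uniqueness, suppose $\mathfrak{C}''\in\cG_{P_2}$ is also hybrid conjugate to $\mathfrak{C}$. Then $\mathfrak{C}'$ and $\mathfrak{C}''$ are hybrid conjugate to one another, so Lemma~\ref{conjugacy_extension_lem} extends the conjugacy to a global quasiconformal conjugacy $\widecheck{\phi}:\widehat{\C}\to\widehat{\C}$ that is conformal on the $P$-components and, being equal to the given hybrid conjugacy there, conformal on the filled limit set as well. Since the $P$-components $\cU=\cU^+\cup\cU^-$ and the filled limit set $\widehat{\C}-\cU$ exhaust $\widehat{\C}$, the complex dilatation of $\widecheck{\phi}$ vanishes almost everywhere, so $\widecheck{\phi}$ is a M\"obius map. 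As it commutes with $\eta$ and fixes the marked point $1$, it must be the identity, whence $\mathfrak{C}'=\mathfrak{C}''$; this gives uniqueness.

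The main obstacle is the $\eta$-symmetry of the surgery. A modification supported on a single $P$-component produces a Beltrami coefficient that is not $\eta$-invariant, so its straightening would fail to commute with the reversing involution $\eta$, and the resulting $R'$ would not define a correspondence in $\cG$. The device of Section~\ref{qc_surgery_subsec} — propagating the modification to the complementary component through $\eta$ together with the holomorphic target dynamics — is exactly what restores $\eta$-invariance while still changing the conformal conjugacy class of the branch. The care lies in verifying that this machinery applies to a general (possibly boundary) correspondence in $\cG_{P_1}$, that the piecewise definition of the quasiregular uniformizer glues continuously across the limit set, and that no degree drop occurs, so that $R'$ is genuinely a degree $2d$ rational map realizing an element of $\cG_{P_2}$.
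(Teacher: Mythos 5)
Your outline reproduces the paper's architecture (Blaschke models $B_1,B_2$ and the boundary conjugacy $\mathfrak{h}$ of Lemma~\ref{blaschke_conjugacy_lem}, a quasiregular modification supported on $\cU^-$, straightening, uniqueness via Lemma~\ref{conjugacy_extension_lem}), but the step you yourself flag as ``the main obstacle'' is left genuinely unresolved, and your proposed repair does not work. Your model modification $\widecheck{R}:=R\vert_{\cU^+}\circ\widecheck{\mathfrak{b}}\circ\eta$ on $\cU^-$ (with $\widecheck{R}=R$ elsewhere) is indeed continuous and quasiregular of degree $2d$, but $\widecheck{R}^{\,*}(\mu_0)$ is standard on $\cU^+$ and not on $\cU^-$, hence not $\eta$-invariant. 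Invoking Section~\ref{qc_surgery_subsec} ``with $\widecheck{\mathfrak{b}}$ as the holomorphic target dynamics'' is incoherent: $\widecheck{\mathfrak{b}}=(\mathfrak{h}\circ\zeta)^{-1}\circ B_2\circ(\mathfrak{h}\circ\zeta)$ is only quasiregular, and that section takes as \emph{input} an already holomorphic B-involution $\widetilde{F}$ together with a global quasiconformal conjugacy $\Psi$ to it off $X$ (and presupposes a mating plane covered by the correspondence plane, which a general, e.g.\ Bers-boundary, element of $\cG_{P_1}$ does not carry --- exactly the point of the paper's remark before its proof). Producing that input is the heart of the matter, and the paper does it by descending: it sets $\mathscr{U}:=R(\cU^+)=R(\cU^-)$, $F:=R\vert_{\cU^-}\circ\eta\circ(R\vert_{\cU^+})^{-1}:\mathscr{U}\to\mathscr{U}$, transports the surgery downstairs via $\mathfrak{f}:=\mathfrak{h}\circ\mathfrak{g}^{-1}$, and straightens \emph{downstairs first}: a global map $\Psi$ straightening $\mathfrak{f}^*(\mu_0)$ on $\mathscr{U}$ makes $\widetilde{F}:=\Psi\circ(\mathfrak{f}^{-1}\circ B_2\circ\mathfrak{f})\circ\Psi^{-1}$ genuinely holomorphic, and only then is the lift defined as $\widetilde{F}\circ\Psi\circ R\circ\eta$ on $\cU^-$ and $\Psi\circ R$ elsewhere; it is the holomorphy of $\widetilde{F}$ that forces the $\mu_0$-pullback of this lift to be $\eta$-invariant. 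This is not a cosmetic difference: for $\widecheck{R}\circ\widecheck{\Psi}^{-1}$ to be rational you need $\widecheck{\Psi}^*(\mu_0)=\widecheck{R}^{\,*}(\mu_0)$, so with your $\widecheck{R}$ you must either straighten the non-symmetric structure $\widecheck{R}^{\,*}(\mu_0)$ (and then $\widecheck{\Psi}$ cannot be normalized to commute with $\eta$, so Formula~\eqref{corr_eqn} does not reproduce the conjugated correspondence) or straighten the $\eta$-symmetric structure (and then $\widecheck{R}\circ\widecheck{\Psi}^{-1}$ is not holomorphic). The fix is precisely post-composition of your $\widecheck{R}$ by the downstairs straightening $\Psi$, which your proposal never constructs.

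Separately, the last step of your uniqueness argument is false: a M\"obius map commuting with $\eta$ and fixing $1$ need not be the identity --- every map $z\mapsto\frac{az+b}{bz+a}$ commutes with $\eta(z)=1/z$ and fixes both $1$ and $-1$. So you cannot upgrade ``M\"obius conjugate'' to literal equality of subvarieties of $\widehat{\C}\times\widehat{\C}$. This error happens to be harmless only because the lemma's uniqueness is meant (and in the paper proved) up to M\"obius conjugacy, i.e.\ as points of the moduli space: the paper's proof stops exactly at the conclusion you reach one step earlier, namely that Lemma~\ref{conjugacy_extension_lem} upgrades the hybrid conjugacy between $\mathfrak{C}'$ and $\mathfrak{C}''$ to a global quasiconformal conjugacy that is conformal on the $P$-components and on the filled limit set, hence M\"obius.
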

\begin{remark}
The main idea of the proof of this lemma is to replace the conformal dynamics of $P_1$ with that of $P_2$ in the dynamical plane of $\mathfrak{C}$, and is reminiscent of the {proof of Lemma~\ref{qc_surgery_lem}}. However, since we do not have a conformal mating plane (that is covered by the correspondence plane) at our disposal in the current setting, the construction is more intricate.  
\end{remark}
\begin{proof}
\noindent\textbf{Uniqueness.} Suppose that $\mathfrak{C}'$ and $\mathfrak{C}''$ are two correspondences in $\cG_{P_2}$, which are hybrid conjugate to $\mathfrak{C}$. Then, they are hybrid conjugate to each other. Using Lemma~\ref{conjugacy_extension_lem}, we can extend this hybrid conjugacy to a global conformal (i.e., M{\"o}bius) conjugacy between $\mathfrak{C}'$ and $\mathfrak{C}''$.
\medskip

\noindent\textbf{Existence.} 
Recall that for $j\in\{1,2\}$, there exist Blaschke products $B_j$, each having an attracting fixed point in $\D$ and a marked repelling fixed point at $1$, such that $P_j\vert_{\cK(P_j)}$ is conformally conjugate to $B_j\vert_{\overline{\D}}$. Further, such a conjugacy carries the marked repelling fixed point on the Julia set of $P_j$ to the marked fixed point of $B_j$. By Lemma~\ref{blaschke_conjugacy_lem}, there exists a quasiconformal homeomorphism $\mathfrak{h}:(\overline{\D},1)\to(\overline{\D},1)$ that conjugates $B_1:B_1^{-1}(N_1)\to N_1$ to $B_2:B_2^{-1}(N_2)\to N_2$, where $N_1, N_2$ are relative neighborhoods of $\mathbb{S}^1$ in $\overline{\D}$.  

Let $R$ be the rational map defining $\mathfrak{C}$ via Equation~\eqref{corr_eqn}. Define 
$$
\mathscr{U}:=R(\cU^+)=R(\cU^-),\quad \mathrm{and}\quad F:=R\vert_{\cU^-}\circ\eta\circ\left(R\vert_{\cU^+}\right)^{-1}:\mathscr{U}\to\mathscr{U}.
$$
The marked boundary fixed point of $\mathfrak{b}$ on $\cU^+$ determines a marked boundary fixed point on $F$ on $\mathscr{U}$.
Evidently, $R\vert_{\cU^+}$ is a conformal conjugacy between the distinguished branch $\mathfrak{b}$ of $\mathfrak{C}$ and the map $F$. Thus, we have a conformal map $\mathfrak{g}:\D\to\mathscr{U}$ that conjugates $B_1$ to $F$, and respects the boundary markings. 

Set $\mathfrak{f}:=\mathfrak{h}\circ\mathfrak{g}^{-1}:\mathscr{U}\to\D$. We next define an essentially bounded almost complex structure $\mu$ on $\widehat{\C}$, where $\mu$ is given by $\mathfrak{f}^*(\mu_0\vert_{\D})$ on $\mathscr{U}$ and the standard complex structure elsewhere. Then, there exists a quasiconformal homeomorphism $\Psi$ of $\widehat{\C}$ that straightens $\mu$ to the standard complex structure $\mu_0$. Observe that as $B_2$ is holomorphic, $\mu\vert_{\mathscr{U}}$ is $\mathfrak{f}^{-1}\circ B_2\circ\mathfrak{f}$-invariant. Hence, 
$$
\widetilde{F}:=\Psi\circ\left(\mathfrak{f}^{-1}\circ B_2\circ\mathfrak{f}\right)\circ\Psi^{-1}:\Psi(\mathscr{U})\to\Psi(\mathscr{U})
$$ 
is holomorphic and conjugate to $B_2\vert_{\D}$ via $\mathfrak{f}\circ\Psi^{-1}$ (see Figure~\ref{blaschke_surgery_fig}). We note that the equivariance properties of $\mathfrak{g}$ and $\mathfrak{h}$ imply that $\widetilde{F}$ coincides with $\Psi\circ F\circ\Psi^{-1}$ near the boundary of $\Psi(\mathscr{U})$.
\begin{figure}[ht]
\captionsetup{width=0.98\linewidth}
\begin{tikzpicture}
\node[anchor=south west,inner sep=0] at (0,0) {\includegraphics[width=1\linewidth]{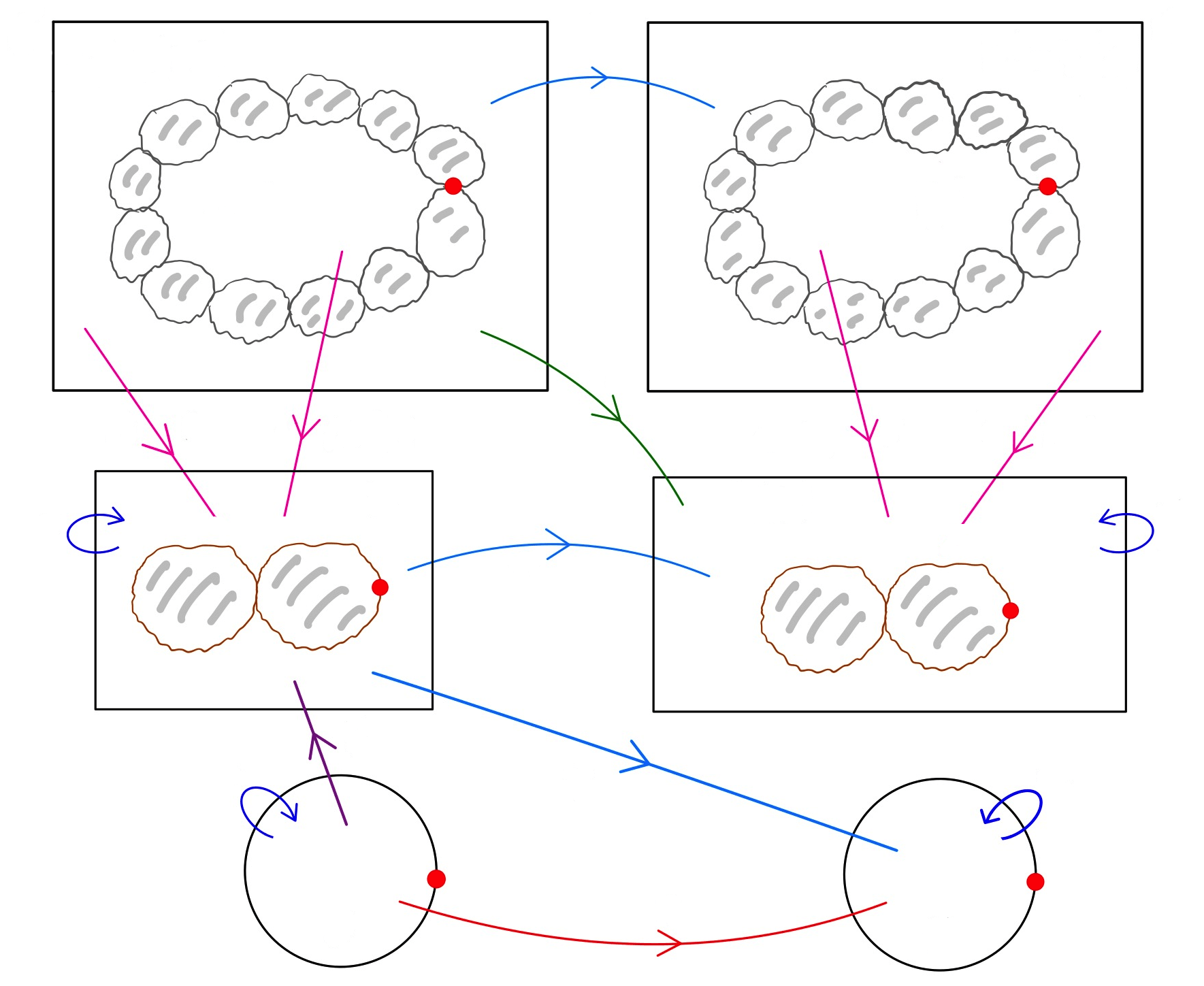}};
\node at (0.28,9.2) {$\mathfrak{C}$};
\node at (1.8,9.8) {\begin{small}$\cU^+=\eta(\cU^-)$\end{small}};
\node at (2.8,8.2) {$\cU^-$};
\node at (1.24,5.84) {\begin{small}$1$:$1$\end{small}};
\node at (2,5.84) {\begin{small}$R$\end{small}};
\node at (2.8,5.84) {\begin{small}$R$\end{small}};
\node at (4.2,5.84) {\begin{small}$(2d-1)$:$1$\end{small}};
\node at (4,5) {$\mathscr{U}$};
\node at (6.28,9.9) {$\widecheck{\Psi}$};
\node at (6.28,6.6) {$\widecheck{R}$};
\node at (5.54,4.36) {$\Psi$};
\node at (12.36,9.2) {$\mathfrak{C}_2$};
\node at (11,9.8) {\begin{small}$\widetilde{\cU^+}=\eta(\widetilde{\cU^-})$\end{small}};
\node at (9.9,8.2) {$\widetilde{\cU^-}$};
\node at (11.2,5.84) {\begin{small}$1$:$1$\end{small}};
\node at (10.4,5.84) {\begin{small}$\widetilde{R}$\end{small}};
\node at (9.42,5.84) {\begin{small}$\widetilde{R}$\end{small}};
\node at (8.12,5.84) {\begin{small}$(2d-1)$:$1$\end{small}};
\node at (8.24,5) {$\Psi(\mathscr{U})$};
\node at (0.5,4.75) {$F$};
\node at (12.28,4.8) {$\widetilde{F}$};
\node at (3.6,0.8) {$\D$};
\node at (10,0.8) {$\D$};
\node at (2.32,2.16) {\begin{small}$B_1$\end{small}};
\node at (11.12,2.16) {\begin{small}$B_2$\end{small}};
\node at (3.7,2.56) {$\mathfrak{g}$};
\node at (6,2.32) {$\mathfrak{f}$};
\node at (6.6,0.72) {$\mathfrak{h}$};
\end{tikzpicture}
\caption{Illustrated is the proof of Lemma~\ref{surgery_replace_blaschke_lem}.}
\label{blaschke_surgery_fig}
\end{figure}

Now define a quasiregular map
\begin{align*}
\widecheck{R}:\widehat{\C}\to\widehat{\C}, \hspace{1cm}
\widecheck{R}=
\begin{cases}
\widetilde{F}\circ\Psi\circ R\circ\eta \quad \mathrm{on}\quad \cU^-,\\
\Psi\circ R\hspace{1.5cm}  \mathrm{elsewhere}.
\end{cases}
\end{align*}
The continuous matching of the piecewise definition of $\widecheck{R}$ follows from the facts that $\widetilde{F}\equiv \Psi\circ F\circ\Psi^{-1}$ near the boundary of $\Psi(\mathscr{U})$ and $F\circ R\circ\eta=R$ on $\cU^-$.
By construction, 
$$
\widetilde{F}\vert_{\Psi(\mathscr{U})}\equiv \widecheck{R}\circ\eta\circ\left(\widecheck{R}\vert_{\cU^+}\right)^{-1}.
$$
Set $\widecheck{\mu}:=\widecheck{R}^*(\mu_0)$ on $\widehat{\C}$. Since $\Psi$ is conformal outside $\mathscr{U}$, it follows that $\widecheck{\mu}$ is the standard complex structure on the filled limit set of $\mathfrak{C}$.
The holomorphicity of $\widetilde{F}$ translates to $\eta$-invariance of $\widecheck{\mu}$ on $\cU^+\cup\cU^-$, and hence $\widecheck{\mu}$ is $\eta$-invariant on all of $\widehat{\C}$.
If $\widecheck{\Psi}$ is a quasiconformal homeomorphism of the sphere straightening $\widecheck{\mu}$ to the standard complex structure, then, 
$$
\widetilde{R}:\widehat{\C}\to\widehat{\C},\quad \widetilde{R}:=\widecheck{R}\circ\widecheck{\Psi}^{-1}
$$ 
is a degree $2d$ rational map. Further, possibly after post-composing $\widetilde{\Psi}$ with a M{\"o}bius map, we can assume that the conformal involution $\widecheck{\Psi}\circ\eta\circ\widecheck{\Psi}^{-1}$ equals $\eta$.

Let $\mathfrak{C}'$ be the correspondence defined by $\widetilde{R}$ via Formula~\eqref{corr_eqn}.
It is easily seen that $\widetilde{\cU^+}:=\widecheck{\Psi}(\cU^+)$ and $\widetilde{\cU^-}:=\widecheck{\Psi}(\cU^-)$ are $\eta$-symmetric topological disks that are mapped with degree $1$ and $2d-1$ onto the same image under $\widetilde{R}$. Further, $\widetilde{\cU^+}\cup\widetilde{\cU^-}$ is completely invariant under $\mathfrak{C}'$, and the distinguished branch $\left(\widetilde{R}\vert_{\widetilde{\cU^+}}\right)^{-1}\circ \widetilde{R}\vert_{\widetilde{\cU^-}}\circ\eta:\widetilde{\cU^+}\to\widetilde{\cU^+}$ of $\mathfrak{C}'$ is conjugate via the conformal map $\mathfrak{f}\circ R\circ\widecheck{\Psi}^{-1}$ to the Blaschke product $B_2$ (in a marking-preserving way). It now follows that $\mathfrak{C}'\in\cG_{P_2}$. Its filled limit set is given by $\widehat{\C}-\left(\widetilde{\cU^+}\cup\widetilde{\cU^-}\right)$.

Moreover, the rational map $\widetilde{R}$ agrees with $\Psi\circ R\circ\widecheck{\Psi}^{-1}$ in a neighborhood of the filled limit set of $\mathfrak{C}'$. Hence, $\mathfrak{C}$ and $\mathfrak{C}'$ are quasiconformally conjugate (via $\widecheck{\Psi}$) in some neighborhoods of their filled limit sets. The marking is preserved by the definition of $\mathfrak{C}'$. Finally, as $\widecheck{\mu}$ is trivial on the filled limit set of $\mathfrak{C}$, we have that the conjugacy $\widecheck{\Psi}$ is conformal on the filled limit sets. In summary, the correspondences $\mathfrak{C}\in\cG_{P_1}$ and $\mathfrak{C}'\in\cG_{P_2}$ are hybrid conjugate.
\end{proof}

\begin{lem}\label{limit_of_str_corr_lem}
Let $P_1,P_2\in\cH_{2d-1}$, and $\chi:\mathfrak{B}(P_1)\longrightarrow\mathfrak{B}(P_2)$ be the homeomorphism of Theorem~\ref{bers_slice_hoemo_thm}.
Let $\mathfrak{C}_\infty\in\partial\mathfrak{B}(P_1)$. Then, all possible subsequential limits of $\chi$-images of sequences in $\mathfrak{B}(P_1)$ converging to $\mathfrak{C}_\infty$ are quasiconformally conjugate on $\widehat{\C}$. 
Moreover, such a conjugacy can be chosen to be conformal on the $P_2$-components.
\end{lem}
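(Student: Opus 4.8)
The plan is to connect any two subsequential limits through the boundary correspondence $\mathfrak{C}_\infty$ itself, and then appeal to the extension Lemma~\ref{conjugacy_extension_lem}. Fix two sequences $\mathfrak{C}_n=\mathfrak{C}_{\Gamma_n,P_1}$ and $\mathfrak{C}_n'=\mathfrak{C}_{\Gamma_n',P_1}$ in $\mathfrak{B}(P_1)$ converging to $\mathfrak{C}_\infty$, and suppose (after passing to subsequences) that $\chi(\mathfrak{C}_n)\to\mathfrak{D}$ and $\chi(\mathfrak{C}_n')\to\mathfrak{D}'$; such limits exist because $\overline{\mathfrak{B}(P_2)}$ is compact by Theorem~\ref{pre_comp_thm_1}, and they lie in $\overline{\mathfrak{B}(P_2)}\subset\cG_{P_2}$ (Section~\ref{corr_class_subsec}). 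Since any two subsequential limits of $\chi$-images arise this way, it suffices to show that $\mathfrak{D}$ and $\mathfrak{D}'$ are quasiconformally conjugate on $\widehat{\C}$ by a map conformal on the $P_2$-components.

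First I would invoke Theorem~\ref{bers_slice_hoemo_thm}: for each $n$ it furnishes a $K$-quasiconformal conjugacy (with $K$ independent of $n$) between $\mathfrak{C}_n$ and $\chi(\mathfrak{C}_n)$ near their filled limit sets, conformal on the filled limit sets and preserving the marked fixed point at $1$. Because the surgery of Section~\ref{qc_surgery_subsec} realizes these conjugacies as restrictions of global, $\eta$-equivariant $K$-quasiconformal straightening maps $\widecheck{\Psi}_n$ (conjugating the correspondences off a compact set deep inside the $P$-components), I may take each conjugacy $\psi_n$ to be a globally defined $K$-quasiconformal self-map of $\widehat{\C}$ commuting with $\eta$ and fixing $1$. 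After a standard normalization — using the marked fixed point together with the fixed points of $\eta$, whose images converge to distinct limits — the compactness of normalized $K$-quasiconformal maps lets me pass to a subsequence with $\psi_n\to\psi$ uniformly, where $\psi$ is a $K$-quasiconformal homeomorphism; similarly $\psi_n'\to\psi'$.

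Next, passing the conjugacy relation $\psi_n\circ\mathfrak{C}_n=\chi(\mathfrak{C}_n)\circ\psi_n$ to the limit on compact subsets of a neighborhood of $\Lambda_\infty$ (the filled limit set of $\mathfrak{C}_\infty$), and using $\mathfrak{C}_n\to\mathfrak{C}_\infty$ and $\chi(\mathfrak{C}_n)\to\mathfrak{D}$, I obtain that $\psi$ conjugates $\mathfrak{C}_\infty$ to $\mathfrak{D}$ there while preserving the marked fixed point; likewise $\psi'$ conjugates $\mathfrak{C}_\infty$ to $\mathfrak{D}'$. Hence $\psi'\circ\psi^{-1}$ is a quasiconformal conjugacy between $\mathfrak{D}$ and $\mathfrak{D}'$ on a neighborhood of the filled limit set of $\mathfrak{D}$, sending the marked fixed point of $\mathfrak{D}$ to that of $\mathfrak{D}'$. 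As $\mathfrak{D},\mathfrak{D}'\in\cG_{P_2}$, Lemma~\ref{conjugacy_extension_lem} then upgrades this to a global quasiconformal conjugacy on $\widehat{\C}$ that is conformal on the $P_2$-components, which is exactly the claim.

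The hard part will be the limiting step: ensuring that $\psi$ is a non-degenerate homeomorphism and that the conjugacy relation persists on a genuine neighborhood of $\Lambda_\infty$, even though $\Gamma_n\to\partial\mathrm{Teich}(S_{0,d+1})$, so that the domains of the $\psi_n$ — which vary continuously only in the interior of Teichm\"uller space — are not controlled a priori at the boundary. To handle this I would use that the Beltrami coefficients of the $\psi_n$ are supported, via the fixed Blaschke surgery of Lemma~\ref{blaschke_conjugacy_lem}, in the portion of $\cU_n^\pm$ separated from $\Lambda_n$ by the fixed collar, together with convergence of the filled limit sets, to conclude that a definite neighborhood of $\Lambda_\infty$ lies in the good region for all large $n$. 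I stress that one must \emph{not} expect $\psi$ to be conformal on $\Lambda_\infty$: were it so, $\mathfrak{D}$ would be hybrid conjugate to $\mathfrak{C}_\infty$ and hence uniquely determined by Lemma~\ref{surgery_replace_blaschke_lem}, which would force continuity of $\chi$ at $\mathfrak{C}_\infty$ and contradict the anticipated Kerckhoff--Thurston discontinuity. The Beltrami coefficients may concentrate onto $\Lambda_\infty$ in the limit, so $\psi'\circ\psi^{-1}$ is in general only quasiconformal there — precisely why the statement asserts conformality only on the $P_2$-components.
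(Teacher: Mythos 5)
Your proposal is correct and takes essentially the same route as the paper's proof: it uses the uniform $K$-quasiconformal global conjugacies furnished by Theorem~\ref{bers_slice_hoemo_thm}, compactness of $K$-quasiconformal maps to extract a limit map conjugating $\mathfrak{C}_\infty$ to each subsequential limit near the filled limit sets (preserving the marked fixed points), and then Lemma~\ref{conjugacy_extension_lem} to upgrade to a global conjugacy that is conformal on the $P_2$-components. The paper resolves your ``hard part'' (persistence of the conjugation domains in the limit) in the way your collar argument gestures at, namely by observing that the uniformizations of the $P_1$-components (respectively, $P_2$-components) of $\mathfrak{C}_n$ (respectively, $\chi(\mathfrak{C}_n)$) converge uniformly on compact sets, so the neighborhoods where the conjugacies hold converge to neighborhoods of the filled limit sets of $\mathfrak{C}_\infty$ and of the limiting correspondence; note only that the Beltrami coefficients of the conjugacies are in fact supported on all of $\overline{\cU_n^\pm}$ (including the collar), not just on the deep part, but this does not affect the argument.
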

\begin{proof}
Suppose that $\{\mathfrak{C}_n\}\subset\mathfrak{B}(P_1)$ converges to $\mathfrak{C}_\infty$, and $\mathfrak{C}'\in\partial\mathfrak{B}(P_2)$ is a subsequential limit of $\chi(\mathfrak{C}_n)$.
By Theorem~\ref{bers_slice_hoemo_thm}, each $\mathfrak{C}_n$ is conjugate to $\chi(\mathfrak{C}_n)\in\mathfrak{B}(P_2)$ in neighborhoods of their filled limit sets via a $K$-quasiconformal homeomorphism $\mathfrak{H}$ of $\widehat{\C}$ (where $K>0$ is independent of $n$). Possibly after passing to a further subsequence, we may assume that $\mathfrak{H}_n$ converges to a $K$-quasiconformal homeomorphism $\mathfrak{H}_\infty$ of $\widehat{\C}$. Further, the neighborhoods of the filled limit sets of $\mathfrak{C}_n$ and $\chi(\mathfrak{C}_n)$ where the conjugation takes place, converge to neighborhoods of the filled limit sets of $\mathfrak{C}_\infty$ and $\mathfrak{C}'$. This follows from the fact that the uniformizations of the $P_1$-components (respectively, $P_2$-components) of $\mathfrak{C}_n$ (respectively, $\chi(\mathfrak{C}_n)$) converge uniformly on compact sets to the uniformizations of the $P_1$-components (respectively, $P_2$-components) of $\mathfrak{C}_\infty$ (respectively, of $\mathfrak{C}'$). 
Therefore, the global quasiconformal map $\mathfrak{H}_\infty$ conjugates $\mathfrak{C}_\infty$ to $\mathfrak{C}'$ in neighborhoods of their filled limit sets. 

We conclude that all subsequential limits of $\{\chi(\mathfrak{C}_n)\}$, where $\{\mathfrak{C}_n\}$ ranges over all sequences in $\mathfrak{B}(P_1)$ converging to $\mathfrak{C}_\infty$, are quasiconformally conjugate to $\mathfrak{C}_\infty$, and hence to each other, in neighborhoods of their filled limit sets. The extension of this conjugacy to the whole sphere in such a way that it is conformal on the $P_2$-components follows from Lemma~\ref{conjugacy_extension_lem}.
\end{proof}

\subsection{The case of the $4$-times punctured spheres}\label{four_punc_sphere_subsec}

The next result is one of the vital ingredients in the proof of the main theorem of this section, where planarity of the ambient parameter space (which contains the Bers slice $\mathfrak{B}(z^5)$) plays a crucial role.

\begin{lem}\label{special_plane_lem}
Let $\mathfrak{C}\in\overline{\mathfrak{B}(z^5)}$. Then the following are true.
\begin{enumerate}[leftmargin=*]
\item Any correspondence $\widetilde{\mathfrak{C}}$ that is quasiconformally conjugate to $\mathfrak{C}$ is defined by a rational map $\widetilde{R}$ of the form
$$
\widetilde{R}(z)=z+\frac{c}{z}-\frac{c}{3z^3}+\frac{1}{5z^5},
$$
for some $c\in\C$, via Formula~\eqref{corr_eqn}.

\item If $\mathfrak{C}\in\partial\mathfrak{B}(z^5)$, then it admits no non-trivial quasiconformal deformation.
\end{enumerate} 
\end{lem}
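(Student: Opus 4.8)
The plan is to prove the second assertion by contradiction, exploiting the one–complex–dimensionality of the ambient character variety recorded in part (1). By part (1) together with the pre-compactness Theorem~\ref{pre_comp_thm_1}, every correspondence quasiconformally conjugate to an element of $\overline{\mathfrak{B}(z^5)}$ is determined by a single parameter $c$ via the normal form $\widetilde R(z)=z+c/z-c/(3z^3)+1/(5z^5)$; write $\mathfrak{C}_c$ for the associated correspondence. Since $\mathfrak{G}$ is a homeomorphism onto its image (Proposition~\ref{Xi_homeo_prop}) and the $c$–coordinate depends holomorphically on the parameters, $\mathfrak{B}(z^5)$ is carried by a univalent map $\beta:\mathrm{Teich}(S_{0,4})\to\C$ onto an open set $B=\beta(\mathrm{Teich}(S_{0,4}))\subset\C$, with $\partial\mathfrak{B}(z^5)$ corresponding to $\partial B$. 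Fix $\mathfrak{C}=\mathfrak{C}_{c_0}$ with $c_0\in\partial B$ and suppose, for contradiction, that it carries a non-trivial invariant Beltrami differential $\mu$.

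I would then run the standard deformation argument. Straightening $t\mu$ for $t$ in a disk about $0$ produces, by the analysis of Section~\ref{qc_def_corr_subsec}, a holomorphic family $\mathfrak{C}_{c_0}^{t\mu}\in\cG_{z^5}$ of mutually quasiconformally conjugate correspondences, hence a holomorphic map $t\mapsto c(t)$ with $c(0)=c_0$. Non-triviality of $\mu$ forces $c(\cdot)$ to be non-constant: a constant value would mean all members of the family are Möbius-conjugate, making $\mu$ trivial. By the open mapping theorem $V:=c(\D)$ is an open neighbourhood of $c_0\in\partial B$, so $V\cap B\neq\emptyset$; choose $c_1\in V\cap B$. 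The interior correspondence $\mathfrak{C}_{c_1}\in\mathfrak{B}(z^5)$, lying in the same deformation family, is quasiconformally conjugate to the boundary correspondence $\mathfrak{C}_{c_0}$.

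The crux is an \emph{interior rigidity} statement: any correspondence in $\cG_{z^5}$ quasiconformally conjugate to an interior point $\mathfrak{C}_{c_1}\in\mathfrak{B}(z^5)$ already lies in $\mathfrak{B}(z^5)$. Granting this, $\mathfrak{C}_{c_0}\in\mathfrak{B}(z^5)$, i.e.\ $c_0\in B$, contradicting $c_0\in\partial B$ since $B$ is open, which finishes the proof. To establish interior rigidity I would analyze the invariant Beltrami differentials of $\mathfrak{C}_{c_1}=\mathfrak{G}(\Gamma_1,z^5)$, the mating of $z^5$ with a Fuchsian group $\Gamma_1\in\mathrm{Teich}(S_{0,4})$: (i) differentials supported on the $P$-components are invisible to $c$, because the distinguished branch is rigidly $z^5$ and, by Lemma~\ref{conjugacy_extension_lem}, the resulting conjugacy may be taken conformal off a neighbourhood of the filled limit set, leaving the tiling-set dynamics — and hence $c=\beta(\Gamma_1)$ — unchanged; (ii) the interior limit set is a quasi-Fuchsian quasicircle of measure zero and so supports no non-trivial invariant line field; (iii) the remaining tiling-set deformations are exactly the quasiconformal deformations of $\Gamma_1$, i.e.\ $\mathrm{Teich}(S_{0,4})$, whose image under $\beta$ is precisely $B$. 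Combining (i)–(iii), the quasiconformal deformation space of $\mathfrak{C}_{c_1}$ inside $\cG_{z^5}$ is $\mathrm{Teich}(S_{0,4})$ with image exactly $B$.

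I expect steps (ii)–(iii) — the interior rigidity — to be the main obstacle, and this is exactly where the hypotheses are used with care. Crucially, the no-invariant-line-field input is invoked \emph{only} for interior correspondences, where it reduces to the classical fact that quasi-Fuchsian punctured-sphere groups have measure-zero limit sets; no such statement is needed for the boundary correspondence $\mathfrak{C}_{c_0}$ itself, in keeping with the absence of a general such theorem in this setting. The one-dimensionality of $\mathrm{Teich}(S_{0,4})$ — equivalently, of the ambient $c$-line furnished by part (1) — enters twice: it makes the deformation image $V$ \emph{open} in the parameter space, so that it necessarily meets $B$, and it pins the interior deformation space down to $\mathrm{Teich}(S_{0,4})=B$. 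Both features fail for $d>3$, mirroring the conjectural breakdown of the Bers/Kerckhoff–Thurston rigidity beyond the four-times punctured sphere.
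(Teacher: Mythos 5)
Your skeleton for part (2) is the same as the paper's: take the Beltrami coefficient of the conjugacy, observe that the whole Beltrami disk $\{\lambda\widetilde{\mu}\}$ is invariant, straighten it using the parametric Measurable Riemann Mapping Theorem, use part (1) to obtain a non-constant holomorphic map $\lambda\mapsto c(\lambda)$ into the one-dimensional normal-form family, and invoke the open mapping theorem to produce parameters arbitrarily close to $c_0\in\partial B$ that lie in $B$ and whose correspondences are quasiconformally conjugate to $\mathfrak{C}_{c_0}$. Where you diverge is at the very end: the paper simply asserts that a correspondence in $\mathfrak{B}(z^5)$ ``hence is not quasiconformally conjugate to $\mathfrak{C}$,'' whereas you isolate this as an \emph{interior rigidity} statement and sketch a proof (make the conjugacy conformal on the $P$-components via Lemma~\ref{conjugacy_extension_lem}, kill the limit-set part of the Beltrami by a measure-zero argument, identify the tiling-set part with motion in $\mathrm{Teich}(S_{0,4})$, i.e.\ inside the slice). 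Flagging and filling this step is a genuine refinement of the written proof, and the decomposition you propose is the right one.

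However, there are two real problems. First, you never prove part (1), and your part (2) depends on it essentially (it is what converts the Beltrami disk into a map to the $c$-line). Part (1) is not a formality: one must normalize $\widetilde{R}=\Psi\circ R\circ\widetilde{\Psi}^{-1}$ using the commutation of $\widetilde{\Psi}$ with $\eta$ (forcing poles at $0,\infty$ of orders $5,1$ and an $\eta$-symmetric critical set containing $\pm1$) and then run the Vieta-type computation of Section~\ref{special_case_subsec}; without this the lemma is at most half proven. Second, step (ii) of your interior rigidity argument is false as stated: the limit set of $\mathfrak{C}_{c_1}=\mathfrak{G}(\Gamma_1,z^5)$ is \emph{not} a quasi-Fuchsian quasicircle, nor a group limit set of any kind. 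It is the welding curve gluing $z^5\vert_{\mathbb{S}^1}$ to $A_{\Gamma_1}\vert_{\mathbb{S}^1}$, and the welding homeomorphism $\mathfrak{h}_{\Gamma_1}$ is never quasisymmetric, because $A_{\Gamma_1}$ has parabolic fixed points at the cusps while $z^5$ is uniformly expanding; this is exactly why the matings in this theory are produced with the David integrability theorem rather than the MRMT. The conclusion you need (zero area of the limit set, hence no invariant line field supported there) is true, but it must be extracted from conformal removability of these limit sets (John-domain/David-circle removability, as in \cite{LMMN20,LLM24}) — the same property underlying the uniqueness clause of Theorem~\ref{conf_mating_bs_poly_thm} — not from quasi-Fuchsian theory.

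Three smaller points you should repair if you write this up: the deformed family $\mathfrak{C}_{c_0}^{t\mu}$ need \emph{not} lie in $\cG_{z^5}$ (quasiconformal deformation can change the internal class; only the larger class $\cG$ of Section~\ref{corr_class_subsec} is qc-invariant) — this is harmless only because all you need is $\mathfrak{C}_{c_0}\in\cG_{z^5}$, which the paper supplies via $\overline{\mathfrak{B}(z^5)}\subset\cG_{z^5}$; the application of Lemma~\ref{conjugacy_extension_lem} requires the conjugacy to send marked fixed point to marked fixed point, which holds here because the Beltrami-disk conjugacies commute with $\eta$ and depend continuously on the parameter, hence fix the critical point $1$; and your step (iii) tacitly uses the uniqueness of conformal matings to identify the straightened correspondence with a point of $\mathfrak{B}(z^5)$. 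Finally, note that one-dimensionality enters only once, in the open mapping theorem step; interior rigidity as you formulate it holds in every degree, so your closing remark that it is a second use of $\dim_{\C}\mathrm{Teich}(S_{0,4})=1$ is inaccurate.
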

{
\begin{remark}
 As we will see shortly, complex one-dimensionality of the Bers slice plays a key role in the proof of the second statement of the lemma. In fact, the proof is an adaptation of the fact that quadratic polynomials lying on the boundary of the Mandelbrot set do not admit quasiconformal deformations (cf. \cite[Chapter~I, \S~6, Proposition~7]{DH85}).
\end{remark}}
\begin{proof}
Let $R$ be a degree $6$ rational map defining the correspondence $\mathfrak{C}\in\overline{\mathfrak{B}(z^5)}$. By the normalization and proof of pre-compactness given in \S~\ref{special_case_subsec}, the map $R$ can be chosen to be $R(z)\equiv R_a(z)=z+a/z-a/(3z^3)+1/(5z^5)$, for some $a\in\C$.

1) Suppose that $\widetilde{\mathfrak{C}}$ is quasiconformally conjugate to $\mathfrak{C}$. {By Lemma~\ref{qc_def_corr_lem}}, there exists a rational map $\widetilde{R}$ which defines the correspondence $\widetilde{\mathfrak{C}}$ (up to M{\"o}bius conjugation) via Formula~\eqref{corr_eqn}. Moreover, $\widetilde{R}=\Psi\circ R\circ\widetilde{\Psi}^{-1}$, where $\Psi,\widetilde{\Psi}$ are quasiconformal maps. Here, $\widetilde{\Psi}$ commutes with $\eta$ and conjugates $\mathfrak{C}$ to $\widetilde{\mathfrak{C}}$. 

{Recall that $C(\eta)$ denotes the centralizer of $\eta(z)=1/z$.} Possibly after post-composing $\Psi$ with a M{\"o}bius map and post-composing $\widetilde{\Psi}$ with an element of $C(\eta)$ (the latter amounts to conjugating $\widetilde{\mathfrak{C}}$ by an element of $C(\eta)$), we can assume that $\Psi(\infty)=\widetilde{\Psi}(\infty)=\infty$. Since $\widetilde{\Psi}$ commutes with $\eta$, it follows that $\widetilde{\Psi}(0)=0$. Hence $\widetilde{R}$ has a simple pole at $\infty$ and a $5$-fold pole at the origin. Therefore, $\widetilde{R}(z)=\frac{R_1(z)}{z^5}$, for a degree $6$ polynomial $R_1$. Post-composing $\Psi$ with an affine map, we may further assume that $R_1$ is monic and centered.
Finally, the commutation of $\widetilde{\Psi}$ with $\eta$ also implies that $\widetilde{R}$ has six of its distinct critical points at $\pm 1, c_1^{\pm 1}$, and $c_2^{\pm 1}$, for some $c_1,c_2\in\C^*$. As in \S~\ref{special_case_subsec}, one can now use Vieta's formulas to deduce that $\widetilde{R}$ has the desired form.

2) Suppose that $\mathfrak{C}\in\partial\mathfrak{B}(z^5)$ is quasiconformally conjugate to some correspondence $\widetilde{\mathfrak{C}}$ (that is not M{\"o}bius equivalent to $\mathfrak{C}$), via a quasiconformal map $\widetilde{\Psi}$. Set $\widetilde{\mu}:=\overline{\partial}\widetilde{\Psi}/\partial\widetilde{\Psi}$. Then, all Beltrami coefficients in the Beltrami disk $\{\lambda\widetilde{\mu}:\vert \lambda\vert<1/\vert\vert\widetilde{\mu}\vert\vert_\infty\}$ are $\mathfrak{C}$-invariant. If $\widetilde{\Psi}_\lambda$ is the (normalized) quasiconformal map straightening $\lambda\widetilde{\mu}$, then by the parametric version of the Measurable Riemann Mapping Theorem and part (1) of this lemma,
$$
\lambda\mapsto \widetilde{\mathfrak{C}}_\lambda:=\widetilde{\Psi}_\lambda\circ\mathfrak{C}\circ\widetilde{\Psi}_\lambda^{-1}
$$
defines a non-constant holomorphic map from the disk $B(0,1/\vert\vert\widetilde{\mu}\vert\vert_\infty)$ into the complex one-dimensional family of rational maps 
$$
\mathfrak{F}:=\bigg\{\widetilde{R}_c(z)=z+\frac{c}{z}-\frac{c}{3z^3}+\frac{1}{5z^5}:c\in\C\bigg\}.
$$
Hence, the rational map $R_a$ that defines $\mathfrak{C}$ lies in a conformal disk $\mathfrak{W}\subset\mathfrak{F}$, such that each rational map in $\mathfrak{W}$ defines (via Formula~\eqref{corr_eqn}) a correspondence which is quasiconformally conjugate to $\mathfrak{C}$.
But this is impossible because there exists $R_c\in\mathfrak{W}$, arbitrarily close to $R_a$, such that the correspondence generated by $R_c$ lies in the Bers slice $\mathfrak{B}(z^5)$ and hence is not quasiconformally conjugate to $\mathfrak{C}$.
\end{proof}

\begin{lem}\label{bers_slice_cont_ext_lem}
Let $P\in\cH_{5}$. Then, the dynamically natural homeomorphism $\chi:\mathfrak{B}(P)\longrightarrow\mathfrak{B}(z^5)$ extends continuously to the closure. 
\end{lem}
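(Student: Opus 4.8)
The plan is to prove that the cluster set of $\chi$ at every point of $\partial\mathfrak{B}(P)$ is a single point, which is exactly the condition for a homeomorphism between the open slices to extend continuously to their closures. Write $P_1=P$ and $P_2=z^5$, and recall that here $d=3$, so the ambient character variety is $\mathrm{Rat}_6(\C)/\sim$ and, crucially, $\dim_{\C}\mathrm{Teich}(S_{0,4})=1$. By Theorem~\ref{pre_comp_thm_1} both $\overline{\mathfrak{B}(P_1)}$ and $\overline{\mathfrak{B}(z^5)}$ are compact in $\mathrm{Rat}_6(\C)/\sim$; hence for any $\mathfrak{C}_\infty\in\partial\mathfrak{B}(P_1)$ and any sequence $\mathfrak{C}_n\to\mathfrak{C}_\infty$ in $\mathfrak{B}(P_1)$, the images $\chi(\mathfrak{C}_n)$ admit convergent subsequences, and the entire problem reduces to showing that any two such subsequential limits coincide as points of $\mathrm{Rat}_6(\C)/\sim$.

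First I would check that every subsequential limit lies on $\partial\mathfrak{B}(z^5)$ rather than in the open slice. Since $\chi$ is a homeomorphism of $\mathfrak{B}(P_1)$ onto $\mathfrak{B}(z^5)$ by Theorem~\ref{bers_slice_hoemo_thm}, a subsequential limit equal to some interior $\mathfrak{C}^\circ\in\mathfrak{B}(z^5)$ would force, via the continuous inverse $\chi^{-1}$, the accumulation $\mathfrak{C}_n=\chi^{-1}(\chi(\mathfrak{C}_n))\to\chi^{-1}(\mathfrak{C}^\circ)\in\mathfrak{B}(P_1)$, contradicting $\mathfrak{C}_\infty\in\partial\mathfrak{B}(P_1)$. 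So all cluster values are boundary correspondences.

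The heart of the argument combines the two preparatory lemmas. By Lemma~\ref{limit_of_str_corr_lem}, all subsequential limits of $\chi$-images of sequences converging to $\mathfrak{C}_\infty$ are quasiconformally conjugate to one another on $\widehat{\C}$. On the other hand, the one-complex-dimensionality of $\mathrm{Teich}(S_{0,4})$ is precisely what powers Lemma~\ref{special_plane_lem}(2): a correspondence lying on $\partial\mathfrak{B}(z^5)$ admits no non-trivial quasiconformal deformation, so any correspondence quasiconformally conjugate to it is M\"obius equivalent to it. Consequently two subsequential limits $\mathfrak{C}',\mathfrak{C}''\in\partial\mathfrak{B}(z^5)$ that are quasiconformally conjugate represent the same point of $\mathrm{Rat}_6(\C)/\sim$. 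This shows the cluster set of $\chi$ at $\mathfrak{C}_\infty$ is a singleton and defines the value $\overline{\chi}(\mathfrak{C}_\infty)$. A routine diagonal argument—approximating an arbitrary boundary point by interior points and invoking compactness—then upgrades the singleton-cluster-set property to genuine continuity of the extension $\overline{\chi}\colon\overline{\mathfrak{B}(P)}\to\overline{\mathfrak{B}(z^5)}$.

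The main obstacle is the rigidity input that quasiconformally conjugate correspondences on the Bers boundary must coincide. In the classical Kleinian setting this role is played by Sullivan's no-invariant-line-field theorem, which is unavailable to us; its substitute is Lemma~\ref{special_plane_lem}(2), whose proof exploits that for $z^5$ the relevant parameter family $\mathfrak{F}$ is complex one-dimensional, so that a non-trivial Beltrami disk of invariant structures would yield a non-constant holomorphic disk of conjugate correspondences necessarily meeting the open slice $\mathfrak{B}(z^5)$, a contradiction. All the remaining ingredients—compactness, the boundary-landing of the cluster values, and their mutual quasiconformal conjugacy—serve only to bring this rigidity to bear.
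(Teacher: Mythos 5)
Your proposal is correct and follows essentially the same route as the paper: compactness of $\overline{\mathfrak{B}(z^5)}$ gives subsequential limits, Lemma~\ref{limit_of_str_corr_lem} makes all such limits globally quasiconformally conjugate, and the rigidity of Lemma~\ref{special_plane_lem}(2) forces them to coincide, defining the continuous extension. Your extra verification that cluster values land on $\partial\mathfrak{B}(z^5)$ (via $\chi^{-1}$) is a sound minor addition that the paper leaves implicit.
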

\begin{proof}
Pick $\mathfrak{C}_\infty\in\partial\mathfrak{B}(P)$. By Lemma~\ref{limit_of_str_corr_lem}, all possible subsequential limits of $\{\chi(\mathfrak{C}_n)\}$ on $\partial\mathfrak{B}(z^5)$ (of which at least one exists due to compactness of $\overline{\mathfrak{B}(z^5)}$), as $\{\mathfrak{C}_n\}$ runs over all sequences in $\mathfrak{B}(P)$ converging to $\mathfrak{C}_\infty$, are globally quasiconformally conjugate. By Lemma~\ref{special_plane_lem}, all such subsequential limits are the same. This allows us to extend $\chi$ to the boundary as this unique limit. This yields the desired continuous extension $\chi:\overline{\mathfrak{B}(P)}\to~\overline{\mathfrak{B}(z^5)}$.
\end{proof}

Recall from Theorem~\ref{bers_slice_hoemo_thm} that the map $\chi:\mathfrak{B}(P)\to\mathfrak{B}(z^{2d-1})$ (for arbitrary $d\geq 3$ and $P\in\cH_{2d-1}$) preserves the conformal class of the dynamics on the filled limit sets. However, the boundary extension of $\chi$ given by Lemma~\ref{bers_slice_cont_ext_lem}, in the special case of $d=3$, a priori only preserves the quasiconformal class of the dynamics on the filled limit sets. This is related to possible discontinuities of filled limit sets as the correspondences approach the boundaries of Bers slices.
We will now exploit the quasiconformal rigidity of parameters on the boundary of $\mathfrak{B}(z^5)$ to demonstrate that the map $\chi:\partial\mathfrak{B}(P)\to\partial\mathfrak{B}(z^5)$ actually preserves the conformal class of the dynamics on the filled limit sets.

\begin{lem}\label{chi_conf_conj_lem}
Let $P\in\cH_{5}$, and $\mathfrak{C}\in\partial\mathfrak{B}(P)$. Then, $\chi(\mathfrak{C})\in\partial\mathfrak{B}(z^5)$ is the unique correspondence in $\cG_{z^5}$ that is hybrid conjugate to $\mathfrak{C}$.  
\end{lem}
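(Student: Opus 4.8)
The plan is to identify $\chi(\mathfrak{C})$ with the unique correspondence $\mathfrak{C}'\in\cG_{z^5}$ that is hybrid conjugate to $\mathfrak{C}$, furnished by Lemma~\ref{surgery_replace_blaschke_lem} applied with $P_1=P$ and $P_2=z^5$. Uniqueness is thus already contained in that lemma, and the only substantive claim to establish is the equality $\chi(\mathfrak{C})=\mathfrak{C}'$. First I would record two facts about $\chi(\mathfrak{C})$: by Lemma~\ref{bers_slice_cont_ext_lem} it lies on $\partial\mathfrak{B}(z^5)\subset\cG_{z^5}$, and, being the (unique) limit of $\chi(\mathfrak{C}_n)$ along any sequence $\mathfrak{C}_n\to\mathfrak{C}$ in $\mathfrak{B}(P)$, it is quasiconformally conjugate to $\mathfrak{C}$ in neighborhoods of their filled limit sets in a marking-preserving way. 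This last point is exactly the conjugacy $\mathfrak{H}_\infty$ extracted in the proof of Lemma~\ref{limit_of_str_corr_lem}.

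Next I would compare $\chi(\mathfrak{C})$ with $\mathfrak{C}'$. Since $\mathfrak{C}'$ is hybrid conjugate to $\mathfrak{C}$ in the sense of Definition~\ref{hybrid_conj_def}, it too is quasiconformally conjugate to $\mathfrak{C}$ near the filled limit sets, respecting the marked boundary fixed points. Composing this with the conjugacy from the previous paragraph shows that $\chi(\mathfrak{C})$ and $\mathfrak{C}'$, both members of $\cG_{z^5}$, are quasiconformally conjugate to each other in neighborhoods of their filled limit sets, again preserving the markings. At this stage the conjugacy is only \emph{quasiconformal} on the filled limit sets, so hybrid conjugacy of $\chi(\mathfrak{C})$ and $\mathfrak{C}'$ cannot be read off directly. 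Instead I would invoke Lemma~\ref{conjugacy_extension_lem} — whose hypotheses require only a marking-preserving quasiconformal conjugacy near the filled limit sets, not conformality on them — to promote this to a \emph{global} quasiconformal conjugacy $\Phi$ between $\chi(\mathfrak{C})$ and $\mathfrak{C}'$ on all of $\widehat{\C}$, conformal on their $z^5$-components.

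The final step is an appeal to the quasiconformal rigidity that is special to the planar parameter space. The map $\Phi$ exhibits $\mathfrak{C}'$ as a quasiconformal deformation of $\chi(\mathfrak{C})$. But $\chi(\mathfrak{C})\in\partial\mathfrak{B}(z^5)$, and part~(2) of Lemma~\ref{special_plane_lem} asserts that such a boundary correspondence admits no non-trivial quasiconformal deformation. Hence $\Phi$ must be trivial: $\mathfrak{C}'$ is M\"obius-conjugate to $\chi(\mathfrak{C})$, so the two agree in the moduli space $\mathrm{Rat}_{6}(\C)/\sim$, giving $\chi(\mathfrak{C})=\mathfrak{C}'$.

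The step I expect to be the genuine obstacle — and the one that truly uses $d=3$ — is this last invocation of rigidity. The conjugacy produced by the limiting and surgery constructions is a priori only quasiconformal on the filled limit set, reflecting the possible discontinuity of filled limit sets as one approaches $\partial\mathfrak{B}(z^5)$; there is no \emph{no invariant line field} theorem available to upgrade it. What saves the four-punctured-sphere case is the one-dimensionality of $\mathrm{Teich}(S_{0,4})$, which forces the ambient parameter family $\mathfrak{F}$ of Lemma~\ref{special_plane_lem} to be a conformal disk and thereby makes every boundary correspondence quasiconformally rigid — allowing us to bypass any direct control of the convergence of filled limit sets. For higher-dimensional Teichm\"uller spaces this argument breaks down precisely because rigidity is unavailable, which is why Theorem~\ref{bers_closure_homeo_thm_intro} is confined to $\cH_5$.
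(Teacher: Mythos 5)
Your proposal is correct and follows essentially the same route as the paper: identify the target as the unique hybrid-conjugate correspondence $\mathfrak{C}'$ from Lemma~\ref{surgery_replace_blaschke_lem}, compose the two marking-preserving quasiconformal conjugacies (to $\mathfrak{C}$) near the filled limit sets, globalize via Lemma~\ref{conjugacy_extension_lem}, and conclude $\mathfrak{C}'=\chi(\mathfrak{C})$ from the quasiconformal rigidity of $\partial\mathfrak{B}(z^5)$ parameters in Lemma~\ref{special_plane_lem}. Your closing remarks on why the argument is confined to $d=3$ accurately reflect the paper's reasoning as well.
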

\begin{proof}
According to Lemma~\ref{surgery_replace_blaschke_lem}, for each $\mathfrak{C}\in\partial\mathfrak{B}(P)$, there exists a unique correspondence $\mathfrak{C}'\in\cG_{z^{5}}$ such that $\mathfrak{C}$ and $\mathfrak{C}'$ are hybrid conjugate. To prove the lemma, it suffices to show that $\mathfrak{C}'=\chi(\mathfrak{C})$.

By definition of $\mathfrak{C}'$ and the continuous extension of the map $\chi$, we have that $\mathfrak{C}',\chi(\mathfrak{C})\in\cG_{z^{5}}$ are quasiconformally conjugate in some neighborhoods of their filled limit sets and the markings are preserved by the conjugacy. 
Lemma~\ref{conjugacy_extension_lem} now implies that $\mathfrak{C}'$ and $\chi(\mathfrak{C})$ are globally quasiconformally conjugate. Since $\chi(\mathfrak{C})\in\partial\mathfrak{B}(z^5)$, it follows from Lemma~\ref{special_plane_lem} that $\mathfrak{C}'=\chi(\mathfrak{C})$.
\end{proof}

\begin{lem}\label{qc_rigidity_gen_lem}
Let $P\in\cH_{5}$. If $\mathfrak{C}\in\partial\mathfrak{B}(P)$, then it admits no non-trivial quasiconformal deformation supported on its filled limit set. In particular, $\mathfrak{C}$ is not quasiconformally conjugate to any other correspondence on $\partial\mathfrak{B}(P)$. 
\end{lem}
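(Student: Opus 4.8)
The plan is to transfer the quasiconformal rigidity already established on the special locus $\partial\mathfrak{B}(z^5)$ (Lemma~\ref{special_plane_lem}(2)) to the general boundary $\partial\mathfrak{B}(P)$ by means of the straightening correspondence $\chi$ and the hybrid-conjugacy machinery. The entry point is Lemma~\ref{chi_conf_conj_lem}: for $\mathfrak{C}\in\partial\mathfrak{B}(P)$, the image $\chi(\mathfrak{C})\in\partial\mathfrak{B}(z^5)$ is the \emph{unique} element of $\cG_{z^5}$ hybrid conjugate to $\mathfrak{C}$. I would fix a hybrid conjugacy $\Phi$ realizing this: a quasiconformal homeomorphism of $\widehat{\C}$ that conjugates $\mathfrak{C}$ to $\chi(\mathfrak{C})$ in neighborhoods of their filled limit sets, is conformal on the filled limit set $\Lambda$ of $\mathfrak{C}$, and carries $\Lambda$ onto the filled limit set of $\chi(\mathfrak{C})$ (preserving the marked fixed point). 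The guiding principle is that a quasiconformal deformation supported on the filled limit set is insensitive to the $P$-component dynamics, so such deformations of $\mathfrak{C}$ and of $\chi(\mathfrak{C})$ are in canonical correspondence through $\Phi$, which is conformal precisely on their common support.

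First I would prove the main assertion by contradiction. Suppose $\mathfrak{C}$ admits a non-trivial quasiconformal deformation supported on $\Lambda$, given by a $\mathfrak{C}$-invariant Beltrami coefficient $\mu$ with support in $\Lambda$ (in the sense of Section~\ref{qc_def_corr_subsec}), and let $\widetilde{\mathfrak{C}}=\Psi_\mu\circ\mathfrak{C}\circ\Psi_\mu^{-1}\in\cG_P$ be the deformed correspondence, where $\Psi_\mu$ straightens $\mu$ and is conformal off $\Lambda$. Transport $\mu$ by $\Phi$ to $\nu:=\Phi_*\mu$, supported on the filled limit set of $\chi(\mathfrak{C})$; since $\Phi$ is conformal on $\Lambda$ one has $\|\nu\|_\infty=\|\mu\|_\infty<1$, and since $\Phi$ conjugates the two correspondences near their (completely invariant) filled limit sets, $\nu$ is $\chi(\mathfrak{C})$-invariant. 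By Lemma~\ref{special_plane_lem}(2), $\chi(\mathfrak{C})$ admits no non-trivial quasiconformal deformation, so $\Psi_\nu\circ\chi(\mathfrak{C})\circ\Psi_\nu^{-1}$ is M\"obius equivalent to $\chi(\mathfrak{C})$. I would then check that $G:=\Psi_\nu\circ\Phi\circ\Psi_\mu^{-1}$ conjugates $\widetilde{\mathfrak{C}}$ to this $\nu$-deformation near their filled limit sets and is conformal on the filled limit set $\Psi_\mu(\Lambda)$ of $\widetilde{\mathfrak{C}}$: there the Beltrami coefficient contributed by $\Psi_\mu^{-1}$ is exactly cancelled by that of $\Psi_\nu$, because $\Phi$ is conformal and $\nu=\Phi_*\mu$. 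Hence $\widetilde{\mathfrak{C}}\in\cG_P$ is hybrid conjugate to $\chi(\mathfrak{C})$; since $\mathfrak{C}$ is too, the uniqueness clause of Lemma~\ref{surgery_replace_blaschke_lem} (applied from $z^5$ to $P$) forces $\widetilde{\mathfrak{C}}=\mathfrak{C}$, contradicting non-triviality of $\mu$.

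For the final (``in particular'') assertion I would show $\chi$ is injective on $\partial\mathfrak{B}(P)$ and transfers quasiconformal conjugacy to the rigid locus. If $\mathfrak{C}_1,\mathfrak{C}_2\in\partial\mathfrak{B}(P)$ are quasiconformally conjugate, then composing the hybrid conjugacy $\mathfrak{C}_1\to\chi(\mathfrak{C}_1)$, the given conjugacy $\mathfrak{C}_1\to\mathfrak{C}_2$, and the hybrid conjugacy $\mathfrak{C}_2\to\chi(\mathfrak{C}_2)$ produces a marking-preserving quasiconformal conjugacy between $\chi(\mathfrak{C}_1)$ and $\chi(\mathfrak{C}_2)$ in neighborhoods of their filled limit sets; since both lie in $\cG_{z^5}$, Lemma~\ref{conjugacy_extension_lem} extends it to a global quasiconformal conjugacy. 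As $\chi(\mathfrak{C}_1),\chi(\mathfrak{C}_2)\in\partial\mathfrak{B}(z^5)$, Lemma~\ref{special_plane_lem}(2) forces them equal in moduli. Finally, $\chi$ is injective on the boundary: if $\chi(\mathfrak{C}_1)=\chi(\mathfrak{C}_2)$, then $\mathfrak{C}_1$ and $\mathfrak{C}_2$ are both the unique element of $\cG_P$ hybrid conjugate to this common image (Lemma~\ref{surgery_replace_blaschke_lem}), so $\mathfrak{C}_1=\mathfrak{C}_2$.

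The hard part, I expect, is the bookkeeping that a deformation supported on the filled limit set transports bijectively and class-preservingly across a hybrid conjugacy, which is conformal only on the filled limit set and genuinely quasiconformal on the $P$-components. The delicate points are: verifying that $\nu=\Phi_*\mu$ is globally $\chi(\mathfrak{C})$-invariant (not merely near the filled limit set), which should follow from complete invariance of the filled limit set together with the reversibility of correspondences in $\cG$; and the Beltrami cancellation identity certifying that $G$ is conformal exactly on $\Psi_\mu(\Lambda)$, so that triviality of the $\nu$-deformation genuinely pulls back to triviality of the $\mu$-deformation. One must also ensure these transports are compatible with the boundary extension of $\chi$ furnished by Lemmas~\ref{bers_slice_cont_ext_lem} and~\ref{chi_conf_conj_lem}. (When $\Lambda$ has zero area the statement is automatic, since any Beltrami coefficient supported there is trivial, so the substantive case is that of positive-area filled limit sets.)
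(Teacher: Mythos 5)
Your proposal is correct and follows essentially the same route as the paper: reduce to the rigidity of $\partial\mathfrak{B}(z^5)$ (Lemma~\ref{special_plane_lem}) through the hybrid conjugacy $\chi(\mathfrak{C})$ of Lemma~\ref{chi_conf_conj_lem}, and conclude via the uniqueness of hybrid conjugates furnished by Lemmas~\ref{surgery_replace_blaschke_lem} and~\ref{conjugacy_extension_lem}. The only difference is bookkeeping: the paper straightens the deformed correspondence $\mathfrak{C}_1$ abstractly via the surgery lemma and identifies its straightening with $\chi(\mathfrak{C})$ by rigidity, whereas you transport the Beltrami coefficient $\mu$ across the hybrid conjugacy and verify the cancellation identity for $G=\Psi_\nu\circ\Phi\circ\Psi_\mu^{-1}$ explicitly --- the two implementations are interchangeable.
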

\begin{proof}
By way of contradiction, assume that $\mathfrak{C}\in\partial\mathfrak{B}(P)$ admits a non-trivial quasiconformal deformation supported on its filled limit set.  Then, there exists a correspondence $\mathfrak{C}_1$, not M{\"o}bius conjugate to $\mathfrak{C}$, that is quasiconformally conjugate to $\mathfrak{C}$ on $\widehat{\C}$ such that the conjugacy is conformal on the $P$-components of $\mathfrak{C}$. Consequently, the distinguished forward branch of $\mathfrak{C}_1$ is also conformally conjugate to $P$, and hence $\mathfrak{C}_1\in\cG_P$.

We apply Lemma~\ref{surgery_replace_blaschke_lem} to the correspondences $\mathfrak{C},\mathfrak{C}_1\in\cG_P$ to obtain unique correspondences $\mathfrak{C}', \mathfrak{C}_1'\in\cG_{z^5}$ that are hybrid conjugate to $\mathfrak{C},\mathfrak{C}_1$, respectively. By Lemma~\ref{chi_conf_conj_lem}, we have $\mathfrak{C}'=\chi(\mathfrak{C})\in\partial\mathfrak{B}(z^5)$. Since $\mathfrak{C}$ and $\mathfrak{C}_1$ are quasiconformally conjugate, it follows from the construction (and Lemma~\ref{conjugacy_extension_lem}) that $\chi(\mathfrak{C})$ and $\mathfrak{C}_1'$ are also quasiconformally conjugate. So Lemma~\ref{special_plane_lem} forces $\mathfrak{C}_1'$ and $\chi(\mathfrak{C})$ to be the same. But this in turn implies that
$\mathfrak{C}$ and $\mathfrak{C}_1$ are hybrid conjugate. Using Lemma~\ref{conjugacy_extension_lem}, we can extend this to a global conformal conjugacy between $\mathfrak{C}$ and $\mathfrak{C}_1$. This is a contradiction.
\end{proof}

\begin{lem}\label{bers_slice_bdry_hoemo_lem}
Let $P\in\cH_{5}$. Then $\chi:\partial\mathfrak{B}(P)\longrightarrow\partial\mathfrak{B}(z^5)$ is a homeomorphism.
\end{lem}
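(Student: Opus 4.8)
The plan is to verify that $\chi$ restricts to a \emph{continuous bijection} of the compact boundary $\partial\mathfrak{B}(P)$ onto the compact boundary $\partial\mathfrak{B}(z^5)$, and then to conclude via the elementary fact that a continuous bijection from a compact space to a Hausdorff space is a homeomorphism. The genuine content has already been deposited in three earlier results: the continuous boundary extension (Lemma~\ref{bers_slice_cont_ext_lem}), the identification of $\chi$ as the hybrid-conjugacy straightening on the boundary (Lemma~\ref{chi_conf_conj_lem}), and the quasiconformal rigidity of boundary correspondences (Lemma~\ref{qc_rigidity_gen_lem}); the remaining argument is essentially topological bookkeeping. First I would record the structure of the extended map: by Lemma~\ref{bers_slice_cont_ext_lem}, $\chi$ extends to a continuous map $\overline{\mathfrak{B}(P)}\to\overline{\mathfrak{B}(z^5)}$, which restricts on interiors to the homeomorphism of Theorem~\ref{bers_slice_hoemo_thm} onto $\mathfrak{B}(z^5)$, and which by Lemma~\ref{chi_conf_conj_lem} sends each $\mathfrak{C}\in\partial\mathfrak{B}(P)$ into $\partial\mathfrak{B}(z^5)$. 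Since $\mathfrak{B}(P)$ and $\mathfrak{B}(z^5)$ are open in their respective closures (invariance of domain, as in Proposition~\ref{Xi_homeo_prop}), both boundaries are closed subsets of the compact spaces $\overline{\mathfrak{B}(P)}$, $\overline{\mathfrak{B}(z^5)}$, hence compact; and $\partial\mathfrak{B}(z^5)$ is Hausdorff as a subspace of the Hausdorff character variety. Thus $\chi|_{\partial\mathfrak{B}(P)}$ is a continuous map between compact Hausdorff spaces, and only bijectivity remains.

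For \textbf{injectivity}, suppose $\mathfrak{C}_1,\mathfrak{C}_2\in\partial\mathfrak{B}(P)$ satisfy $\chi(\mathfrak{C}_1)=\chi(\mathfrak{C}_2)=:\mathfrak{C}'$. By Lemma~\ref{chi_conf_conj_lem} each $\mathfrak{C}_i$ is hybrid conjugate to $\mathfrak{C}'$; since hybrid conjugacy is an equivalence relation (symmetry is immediate, and transitivity holds because a composition of quasiconformal maps that are each conformal on the relevant filled limit sets and marking-preserving retains all three properties), $\mathfrak{C}_1$ and $\mathfrak{C}_2$ are hybrid conjugate to one another. Lemma~\ref{conjugacy_extension_lem} upgrades this to a \emph{global} quasiconformal conjugacy between $\mathfrak{C}_1$ and $\mathfrak{C}_2$. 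But Lemma~\ref{qc_rigidity_gen_lem} states that no correspondence on $\partial\mathfrak{B}(P)$ is quasiconformally conjugate to any other correspondence on $\partial\mathfrak{B}(P)$, forcing $\mathfrak{C}_1=\mathfrak{C}_2$. (Equivalently, one may invoke the uniqueness clause of Lemma~\ref{surgery_replace_blaschke_lem}: both $\mathfrak{C}_1,\mathfrak{C}_2\in\cG_P$ are hybrid conjugate to the single correspondence $\mathfrak{C}'\in\cG_{z^5}$, and there is only one such element of $\cG_P$.)

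For \textbf{surjectivity}, I would use compactness directly: the image $\chi(\overline{\mathfrak{B}(P)})$ is compact, hence closed in $\overline{\mathfrak{B}(z^5)}$, and it contains $\chi(\mathfrak{B}(P))=\mathfrak{B}(z^5)$; therefore it contains $\overline{\mathfrak{B}(z^5)}$, i.e. $\chi$ maps $\overline{\mathfrak{B}(P)}$ onto $\overline{\mathfrak{B}(z^5)}$. Given $\mathfrak{C}'\in\partial\mathfrak{B}(z^5)$, choose a preimage $\mathfrak{C}\in\overline{\mathfrak{B}(P)}$; it cannot lie in $\mathfrak{B}(P)$, for $\chi$ carries $\mathfrak{B}(P)$ homeomorphically onto $\mathfrak{B}(z^5)$ while $\mathfrak{C}'\notin\mathfrak{B}(z^5)$. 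Hence $\mathfrak{C}\in\partial\mathfrak{B}(P)$, so $\chi|_{\partial\mathfrak{B}(P)}$ is onto. Together with the previous step, $\chi|_{\partial\mathfrak{B}(P)}$ is a continuous bijection between compact Hausdorff spaces and is therefore a homeomorphism.

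The only place demanding genuine care, and thus the main obstacle, is injectivity: one must confirm that hybrid conjugacy is transitive and that the promotion of a local hybrid conjugacy to a global quasiconformal conjugacy via Lemma~\ref{conjugacy_extension_lem} is legitimate, so that the rigidity of Lemma~\ref{qc_rigidity_gen_lem} can be brought to bear. Everything else reduces to the standard compact-to-Hausdorff principle and the openness of the Bers slices in their closures.
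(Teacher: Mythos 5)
Your proof is correct in outline but follows a genuinely different route from the paper, and it contains one flawed justification that needs repair. The paper never invokes the compact-to-Hausdorff principle: instead it constructs the reverse straightening map $\widetilde{\chi}:\mathfrak{B}(z^5)\to\mathfrak{B}(P)$, notes that the proof of Lemma~\ref{bers_slice_cont_ext_lem} applies verbatim to $\widetilde{\chi}$ once Lemma~\ref{qc_rigidity_gen_lem} supplies rigidity on $\partial\mathfrak{B}(P)$ (playing the role that Lemma~\ref{special_plane_lem} played for $\partial\mathfrak{B}(z^5)$), and then uses Lemmas~\ref{chi_conf_conj_lem} and~\ref{conjugacy_extension_lem} together with the rigidity lemmas to verify that $\chi\circ\widetilde{\chi}$ and $\widetilde{\chi}\circ\chi$ are the identity on the respective boundaries. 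So the paper exhibits an explicit continuous two-sided inverse, which makes the homeomorphism conclusion immediate without any compactness of the boundaries. Your injectivity step is essentially the same use of rigidity as the paper's, and your soft surjectivity argument (compact image containing $\mathfrak{B}(z^5)$ must contain its closure) is a legitimate substitute for the paper's construction of $\widetilde{\chi}$.

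The gap: your application of the compact-to-Hausdorff principle to $\chi|_{\partial\mathfrak{B}(P)}$ requires $\partial\mathfrak{B}(P)$ to be compact, i.e.\ $\mathfrak{B}(P)$ to be open in $\overline{\mathfrak{B}(P)}$, and the justification you give (``invariance of domain, as in Proposition~\ref{Xi_homeo_prop}'') does not deliver this for general $P\in\cH_5$. Invariance of domain shows the full HSU locus (of dimension $3d-4$) is open in $V_d/\sim$; but $\mathfrak{B}(P)$ is a complex one-dimensional slice of a complex five-dimensional character variety, so it is nowhere open in the ambient space, and openness of an injectively and continuously embedded manifold \emph{in its own closure} is not a formal consequence of invariance of domain (an injectively immersed irrational line dense in a torus is a counterexample to that general principle). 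For $z^5$ the claim does hold, because Lemma~\ref{special_plane_lem} confines $\overline{\mathfrak{B}(z^5)}$ to the one-parameter family $\mathfrak{F}$, inside which invariance of domain genuinely applies; for general $P$ no such normal form is available. The repair is easy with your own ingredients: apply the compact-to-Hausdorff principle to the full extension $\chi:\overline{\mathfrak{B}(P)}\to\overline{\mathfrak{B}(z^5)}$. This map is continuous, its image is compact, hence closed, and contains $\mathfrak{B}(z^5)$, hence equals $\overline{\mathfrak{B}(z^5)}$; it is injective because interior points go to interior points (Theorem~\ref{bers_slice_hoemo_thm}), boundary points go to boundary points (Lemma~\ref{chi_conf_conj_lem}), and it is injective on each piece (on the boundary by your rigidity argument). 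Since $\overline{\mathfrak{B}(P)}$ is compact (Theorem~\ref{pre_comp_thm_1}) and the character variety is Hausdorff, $\chi$ is then a homeomorphism of closures, and restricting to the boundaries yields the lemma --- and, incidentally, proves the compactness of $\partial\mathfrak{B}(P)$ that you had assumed.
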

\begin{proof}
Thanks to the quasiconformal rigidity statement of Lemma~\ref{qc_rigidity_gen_lem}, the proof of Lemma~\ref{bers_slice_cont_ext_lem} applies verbatim to the straightening map $\widetilde{\chi}:\mathfrak{B}(z^5)\to\mathfrak{B}(P)$, and produces a continuous extension $\widetilde{\chi}:\overline{\mathfrak{B}(z^5)}\to\overline{\mathfrak{B}(P)}$. In particular, for each $\mathfrak{C}\in\partial\mathfrak{B}(z^5)$, the correspondences $\mathfrak{C}$ and $\widetilde{\chi}(\mathfrak{C})$ are quasiconformally conjugate to each other on some neighborhoods of their filled limit sets preserving the marking. Thus, $\chi(\widetilde{\chi}(\mathfrak{C})$ is quasiconformally conjugate to $\mathfrak{C}$ on some neighborhoods of their filled limit sets preserving the marking. By Lemma~\ref{conjugacy_extension_lem}, $\chi(\widetilde{\chi}(\mathfrak{C})$ is quasiconformally conjugate to $\mathfrak{C}$ on $\widehat{\C}$. Lemma~\ref{special_plane_lem} now implies that $\chi(\widetilde{\chi}(\mathfrak{C})=\mathfrak{C}$; i.e., $\chi\circ\widetilde{\chi}$ is the identity map on $\mathfrak{B}(z^5)$. Similarly, using Lemma~\ref{qc_rigidity_gen_lem}, one deduces that $\widetilde{\chi}\circ\chi$ is the identity map on $\mathfrak{B}(P)$. Hence, $\chi:\partial\mathfrak{B}(P)\longrightarrow\partial\mathfrak{B}(z^5)$ is a homeomorphism with $\widetilde{\chi}$ as its inverse.
\end{proof}

The main theorem of this section is now immediate.

\begin{theorem}\label{bers_slice_closure_hoemo_thm}
For $P_1, P_2\in\cH_5$, there is a dynamically natural homeomorphism $\chi_{P_1,P_2}:\overline{\mathfrak{B}(P_1)}\longrightarrow\overline{\mathfrak{B}(P_2)}$ between the corresponding Bers slice closures. 
Moreover, if $\mathfrak{C}\in\overline{\mathfrak{B}(P_1)}$, then $\chi_{P_1,P_2}(\mathfrak{C})$ is the unique correspondence in $\overline{\mathfrak{B}(P_2)}$ that is hybrid conjugate to $\mathfrak{C}$.  
\end{theorem}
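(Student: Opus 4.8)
The plan is to use $z^5$ as a universal base point and build $\chi_{P_1,P_2}$ as a composition of the two boundary extensions already established. For each $P\in\cH_5$, Lemma~\ref{bers_slice_cont_ext_lem} furnishes a continuous extension $\chi_P:\overline{\mathfrak{B}(P)}\to\overline{\mathfrak{B}(z^5)}$ of the dynamically natural homeomorphism $\mathfrak{B}(P)\to\mathfrak{B}(z^5)$ of Theorem~\ref{bers_slice_hoemo_thm}, and Lemma~\ref{bers_slice_bdry_hoemo_lem} shows that $\chi_P$ restricts to a homeomorphism $\partial\mathfrak{B}(P)\to\partial\mathfrak{B}(z^5)$ on the boundary. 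First I would record that $\chi_P$ is then a genuine homeomorphism of the closures: it is a continuous bijection (homeomorphism on the interior, homeomorphism on the boundary, interior mapping to interior) from the compact set $\overline{\mathfrak{B}(P)}$ (compact by Theorem~\ref{pre_comp_thm_1}) to a Hausdorff space (the slice closures sit inside the Hausdorff character variety $\mathcal{CV}_{6}$), so it is automatically a homeomorphism. I would then simply \emph{define}
$$
\chi_{P_1,P_2}:=\chi_{P_2}^{-1}\circ\chi_{P_1}:\overline{\mathfrak{B}(P_1)}\longrightarrow\overline{\mathfrak{B}(P_2)},
$$
which is a homeomorphism as a composition of homeomorphisms.

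Next I would verify that $\chi_{P_1,P_2}$ is dynamically natural, i.e.\ that on the interior it is the map $\mathfrak{C}_{\Gamma,P_1}\mapsto\mathfrak{C}_{\Gamma,P_2}$ of Theorem~\ref{bers_slice_hoemo_thm}. This is immediate from the definition of the interior maps: for every $\Gamma\in\mathrm{Teich}(S_{0,4})$ one has $\chi_{P_1}(\mathfrak{C}_{\Gamma,P_1})=\mathfrak{C}_{\Gamma,z^5}=\chi_{P_2}(\mathfrak{C}_{\Gamma,P_2})$, whence applying $\chi_{P_2}^{-1}$ gives $\chi_{P_1,P_2}(\mathfrak{C}_{\Gamma,P_1})=\mathfrak{C}_{\Gamma,P_2}$.

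For the ``moreover'' clause I would argue through transitivity of hybrid conjugacy (Definition~\ref{hybrid_conj_def}), which is an equivalence relation since the relevant quasiconformal conjugacies compose, remain conformal on the filled limit sets, and preserve the markings. Given $\mathfrak{C}\in\overline{\mathfrak{B}(P_1)}$, Lemma~\ref{chi_conf_conj_lem} on the boundary together with Theorem~\ref{bers_slice_hoemo_thm} on the interior shows that $\chi_{P_1}(\mathfrak{C})\in\overline{\mathfrak{B}(z^5)}$ is hybrid conjugate to $\mathfrak{C}$. Applying the same input to $P_2$ and $\mathfrak{D}:=\chi_{P_1,P_2}(\mathfrak{C})$, one gets that $\chi_{P_2}(\mathfrak{D})=\chi_{P_1}(\mathfrak{C})$ is hybrid conjugate to $\mathfrak{D}$, so by transitivity $\mathfrak{D}$ is hybrid conjugate to $\mathfrak{C}$. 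Uniqueness is then handed to me by Lemma~\ref{surgery_replace_blaschke_lem}: there is a \emph{unique} correspondence in $\cG_{P_2}$ hybrid conjugate to $\mathfrak{C}$, and since $\overline{\mathfrak{B}(P_2)}\subset\cG_{P_2}$, the element $\chi_{P_1,P_2}(\mathfrak{C})$ is the unique such correspondence lying in $\overline{\mathfrak{B}(P_2)}$.

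As the phrase ``now immediate'' indicates, there is no real obstacle remaining at the level of the theorem — all the difficulty has been front-loaded into the lemmas, and specifically into the rigidity results that make the boundary extension well defined: the planarity-driven rigidity of Lemma~\ref{special_plane_lem}, which forces all subsequential limits of the straightening maps to coincide, and the quasiconformal rigidity of boundary parameters in Lemma~\ref{qc_rigidity_gen_lem}, which makes the inverse straightening map single-valued on the boundary. The only assembly point requiring attention is precisely the compact-to-Hausdorff upgrade noted above, ensuring that $\chi_{P_2}^{-1}$ is continuous up to the boundary so that the composition is bicontinuous; everything else is formal bookkeeping with the equivalence relation of hybrid conjugacy.
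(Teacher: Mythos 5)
Your proposal is correct and matches the paper's intended argument: the paper declares the theorem ``now immediate'' from exactly the lemmas you invoke, with the homeomorphism obtained by composing the closure extensions through the base slice $\overline{\mathfrak{B}(z^5)}$ (Lemmas~\ref{bers_slice_cont_ext_lem} and~\ref{bers_slice_bdry_hoemo_lem}), and the hybrid-conjugacy characterization coming from Lemma~\ref{chi_conf_conj_lem} together with the uniqueness statement of Lemma~\ref{surgery_replace_blaschke_lem}. Your spelled-out verifications (compact-to-Hausdorff bicontinuity, interior naturality, transitivity of hybrid conjugacy) are precisely the bookkeeping the paper leaves implicit.
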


{
Since the Bers slice $\mathfrak{B}(z^5)$ can be identified with a bounded subset of an explicit copy of the complex plane (see Example~\ref{four_punc_example} and Example~\ref{four_punc_bounded_example}), and $\partial\mathfrak{B}(P)$ is naturally homeomorphic to $\partial\mathfrak{B}(z^5)$ for each $P\in\cH_{5}$, it may be fruitful to study this special slice in detail.
Item (2) of Question~\ref{qn2} reduces to the following question in this setting (cf. \cite{Min99}).

\begin{question}\label{jordan_curve_qstn}
Is $\partial\mathfrak{B}(z^5)$ canonically homeomorphic to the classical Bers boundary of the Teichm{\"u}ller space of $4$-times punctured spheres? In particular, is $\partial\mathfrak{B}(z^5)$ a Jordan curve?    
\end{question}
}

We end with another question inspired by Theorem~\ref{bers_slice_closure_hoemo_thm}.

\begin{question}\label{bdry_bijection_qstn}
Let $P_1,P_2\in\cH_{2d-1}$. Does there exist a dynamically natural bijection $\overline{\mathfrak{B}(P_1)}\longrightarrow\overline{\mathfrak{B}(P_2)}$?
\end{question}

Thanks to Lemma~\ref{surgery_replace_blaschke_lem}, given any $\mathfrak{C}\in\partial\mathfrak{B}(P_1)$, we can construct a unique correspondence $\mathfrak{C}'\in\cG_{P_2}$ that is hybrid conjugate to $\mathfrak{C}$. However, we do not know if $\mathfrak{C}'$ lies on the boundary of the Bers slice $\mathfrak{B}(P_2)$. Indeed, we had used quasiconformal rigidity of Bers boundary correspondences (which relied heavily on one-dimensionality of the parameter spaces) to deduce this fact in the case of $4$-times punctured spheres. In general, this question can be viewed as an analog of the Bers density theorem 
{\cite[p.3]{minsky-elc2}} in the context of correspondences.

\section{Compactness of external fibers}\label{vertical_compact_subsec}

In this section, we prove a counterpart of Theorem~\ref{pre_comp_thm_1} for the external/vertical fibers $\mathscr{B}_\Gamma$, $\Gamma\in\mathrm{Teich}(S_{0,d+1})$ (see Definition~\ref{vertical_fiber_def}). 

\begin{theorem}\label{vert_slice_comp_thm}
 Let $\Gamma\in\mathrm{Teich}(S_{0,d+1})$. Then the external fiber $\mathscr{B}_\Gamma$ is compact. In particular, the principal hyperbolic component $\mathfrak{B}(\Gamma)$ of $\mathscr{B}_\Gamma$ is pre-compact in the { regular character variety $\mathcal{CV}^{reg}_{2d} \subseteq \mathcal{CV}_{2d}$}.
\end{theorem}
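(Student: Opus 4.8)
The plan is to run the rescaling/degeneration machinery of Section~\ref{degenerations_sec} on a divergent sequence in $\mathscr{B}_\Gamma$, exploiting the fact that the external map is pinned to $A_\Gamma$ to keep the escaping dynamics rigid and to confine all degeneration to a \emph{finite} internal tree of spheres. Let $\mathfrak{C}_n\in\mathscr{B}_\Gamma$ be a sequence; via the embedding of Section~\ref{identify_b_inv_corr_subsec} each $\mathfrak{C}_n$ is cut out by a degree $2d$ uniformizing rational map $R_n$ through Formula~\eqref{corr_eqn}, and $F_n=R_n\circ\eta\circ(R_n\vert_{\mathfrak{D}_n})^{-1}$ is a B-involution whose escaping part is conformally conjugate to the fixed map $A_\Gamma$ and whose non-escaping part is a degenerate polynomial-like map with connected filled Julia set. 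First I would normalize $R_n$ using the \emph{external} data rather than an internal (super)attracting cycle: the marked fixed point $1$ of $A_\Gamma$, together with the conformal uniformizations $\mathfrak{X}_{\Gamma,n}$ of the tiling sets $\cT(F_n)$, furnishes a normalization that survives the degeneration of the internal polynomial class.

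The first substantive point is that the tiling set cannot degenerate. Since every $F_n$ has the \emph{same} external map $A_\Gamma$, the escaping dynamics of all the $F_n$ are conformally conjugate to one fixed model; by the Carath\'eodory compactness theorem (Theorem~\ref{thm:cmpc}) the normalized uniformizations $\mathfrak{X}_{\Gamma,n}$ form a normal family, so after passing to a subsequence they converge to conformal uniformizations of a limiting tiling set whose dynamics is again conjugate to $A_\Gamma$. This rigid scaffold pins down the combinatorial type of the limit (cf.\ Remark~\ref{fixed_ray_lami_tree_rem}) and shows that all genuine rescaling is forced into the non-escaping set.

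Next I would extract the rescaling limits governing the non-escaping set. Applying Lemma~\ref{lem:rl} to $R_n$ and passing to a subsequence, one obtains finitely many pairwise independent rescalings together with rescaling limits of degree $\geq 1$ summing to $2d$. Using the $\eta$-symmetry of the $R_n$ and the convergence of proper maps between pointed disks (Proposition~\ref{prop:ConvergenceProperMap}, Corollary~\ref{cor:caratheodorylimitandrescaling}), these organize into a set of rescalings for a tree of Riemann spheres $(\RT,\widehat\C^\RV)$ in the sense of Definition~\ref{defn:cvrationalmap}, with associated rational maps $R_a$ satisfying $\sum_a\deg(R_a)=2d$; since each $\deg(R_a)\geq 1$, the tree has at most $2d$ vertices and is in particular finite, and no bi-degree is lost. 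By Proposition~\ref{prop:convergenceCorrespondence}, the correspondences $\mathfrak{C}_n$ then converge, in the topology of Definition~\ref{char_var_top_def}, to a bi-degree $(2d-1,2d-1)$ correspondence $\mathfrak{C}_\infty=\mathfrak{C}_{\pmb{R}_\infty}$ on $(\RT,\widehat\C^\RV)$, realizing a domination $\kappa_1\prec_\pi\kappa_2$ of signatures. Combined with the previous step, $\mathfrak{C}_\infty$ again has $A_\Gamma$ as its external map and connected non-escaping set, and hence lies in $\mathscr{B}_\Gamma$.

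The hard part will be verifying that the domination $\pi$ is \emph{regular} rather than exceptional (Definition~\ref{reg_dom_def}); this regularity is precisely what is needed for the convergence to take place inside the Hausdorff space $\mathcal{CV}_{2d}$, and hence for $\mathfrak{C}_\infty$ to be a bona fide limit. I would rule out the exceptional configuration — two adjacent vertices each carrying a fully ramified singular point in the direction of the other — by a dynamical argument modeled on the proof of Theorem~\ref{pre_comp_thm_1}: such a node would force the marked boundary fixed point on the limit set to behave as an attracting fixed point of a limiting branch of $\mathfrak{C}_\infty$, contradicting the fact that the limit set is a repeller in the direction of the tiling set. This is the analog for external fibers of the repeller/attractor contradiction used for horizontal Bers slices, and it is where the interaction between the $R_a$, the involution $\eta$, and the fixed $A_\Gamma$ must be handled most carefully (cf.\ Proposition~\ref{limit_corr_prop} and Lemma~\ref{lem_4}). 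Granting regularity, every sequence in $\mathscr{B}_\Gamma$ subconverges to a point of $\mathscr{B}_\Gamma$, so $\mathscr{B}_\Gamma$ is compact; since $\mathfrak{B}(\Gamma)\subseteq\mathscr{B}_\Gamma$, it is pre-compact in $\mathcal{CV}_{2d}$, which gives the final clause.
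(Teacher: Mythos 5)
Your overall skeleton (rescaling limits organized on a finite tree of spheres, regularity of the signature domination, identification of the limit as an element of $\mathscr{B}_\Gamma$) follows the paper's road map, but the mechanism you propose for the step you yourself call the hard part — regularity of $\pi$ — does not work. In the horizontal-slice proof of Theorem~\ref{pre_comp_thm_1}, the attractor/repeller contradiction is powered by a degree \emph{asymmetry} at the node: $G_1$ has degree $1$ while $G_2$ has degree $2d-1$ and is fully ramified there, so $G_2\circ\eta\circ G_1^{-1}$ is locally $z\mapsto A+C(z-A)^{2d-1}+\cdots$, a superattracting germ. In the external fiber, the dangerous exceptional configuration of Definition~\ref{reg_dom_def} is \emph{symmetric}: since local degrees at nodes turn out to be $1$ (Lemma~\ref{lem_2}), an exceptional node consists of two spheres each carrying a M\"obius map, fully ramified toward each other for trivial reasons. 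The branch of the limiting correspondence through such a node is itself M\"obius — neither attracting nor repelling — so no contradiction with the repelling property of the limit set in the direction of the tiling set can be extracted. This is why the paper's Lemma~\ref{lem_4} argues completely differently, by counting critical points: an exceptional node would force $R_{a,n}$ to be a degree-$2$ rational map degenerating to two M\"obius maps, which by the structure theory of B-involutions carries at most one cusp critical point, whereas the counting preceding Lemma~\ref{lem_4} shows that the edge must absorb \emph{two} sequences of cusp critical points.

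Two further steps are asserted rather than proved, and both require real work. First, that the rescalings ``organize into a set of rescalings for a tree of Riemann spheres'' with a \emph{simple} signature is precisely what must be shown: a priori the projections of pairwise independent rescalings can coincide, producing cliques (a graph, not a tree), and nodes could carry local degree $\geq 2$; the paper rules both out in Lemma~\ref{lem_2} via a flat-metric area estimate exploiting the univalence of $\pmb{R}_n$ on $\pmb{\mathfrak{D}}_n$ and the fact that $\eta$ is nearly an isometry at its fixed points. Simplicity is not cosmetic: it is a necessary condition for membership in $\mathscr{B}_\Gamma$. Second, your conclusion that $\mathfrak{C}_\infty$ has $A_\Gamma$ as external map with connected non-escaping set does not follow from Carath\'eodory convergence of the tiling-set uniformizations alone; that controls compact subsets of the tiling set, while one must show that the limits of the pinched polygons $\overline{\Omega_n}$ are again pinched polygons, i.e., that the edges of the limiting fundamental tiles land at single boundary points, so that the B-involutions $F_n$ converge to a B-involution $F_\infty$ restricting to a degenerate polynomial-like map over $\overline{\Omega_\infty}$. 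This is the content of the paper's Lemma~\ref{lem_3}, which occupies a substantial part of the proof. (A smaller point: elements of $\mathscr{B}_\Gamma$ are in general correspondences on nontrivial trees of spheres, so the uniformizing maps are $\pmb{R}_n:(\RT,\widehat{\C}^{\RV})\to\widehat{\C}$ rather than single degree-$2d$ rational maps, and your rescaling analysis must be run vertex by vertex, as the paper does.)
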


Recall from \S~\ref{identify_b_inv_corr_subsec} that an external fiber $\mathscr{B}_\Gamma$ can either be regarded as the collection of degenerate polynomial-like maps $F$ (with connected non-escaping set) admitting the Bowen-Series map $A_\Gamma$ as their external map, or can be thought of as a space of bi-degree $(2d,2d)$ algebraic correspondences $\mathfrak{C}$ of the form Equation~\eqref{corr_eqn} (where $\pmb{R}$ is the uniformizing rational map of the B-involution obtained as a meromorphic extension of $F$). The latter perspective allows us to embed $\mathscr{B}_\Gamma$ into the {regular character variety $\mathcal{CV}^{reg}_{2d}$ (see Proposition~\ref{total_space_regular_lem})}.

\subsection{Connections between various boundedness results}\label{comparison_proof_subsec}

\subsubsection{External fiber vs horizontal Bers slice}
 Although Theorems~\ref{pre_comp_thm_1} and~\ref{vert_slice_comp_thm} are boundedness results in the character variety, their proofs are not quite parallel. Each correspondence in a horizontal Bers slice has the trivial characteristic data associated with a single sphere, and much of the proof of Theorem~\ref{pre_comp_thm_1} was devoted to disallowing degeneration of the associated uniformizing rational maps (defined on a single sphere). In particular, there is no change of characteristic data in the entire closure of a horizontal Bers slice.

On the other hand, a general sequence $\mathfrak{C}_n\in\mathscr{B}_\Gamma$ is defined on a tree of spheres, and as we shall see, it converges to some correspondence $\mathfrak{C}_\infty$, whose characteristic data is potentially `larger' than that of $\mathfrak{C}_n$. In other words, the rational maps defining $\mathfrak{C}_n$ must be allowed to degenerate to rational maps on trees of spheres, and we need to organize these rational maps on various trees of spheres appropriately to define the dominating characteristic data.

Also note that in order to prove compactness of $\mathscr{B}_\Gamma$, we need to justify that $\mathfrak{C}_\infty$ lies in the external fiber $\mathscr{B}_\Gamma$. This demands good control of the dynamical plane of the limiting correspondence $\mathfrak{C}_\infty$ and a significant part of the proof of Theorem~\ref{vert_slice_comp_thm} will be devoted to achieving this goal.

\subsubsection{Holomorphic vs antiholomorphic setting}
In \cite[Theorem~6.4]{LLM24}, an antiholomorphic analog of
Theorem~\ref{vert_slice_comp_thm} was proved; specifically, it was shown that the space of degenerate anti-polynomial-like maps with connected non-escaping set admitting the Nielsen map of a regular ideal polygon reflection group as their external map (such degenerate anti-polynomial-like maps turn out to be Schwarz reflection maps in quadrature domains) is compact.
It is worth pointing out some key differences between the holomorphic and antiholomorphic settings.

The fact that boundaries of quadrature domains are real-algebraic gives immediate control on the geometry of the limiting dynamical plane and allows one to count the number of critical points of the limiting rational maps. Since this is not true for inversive domains, we need to employ a combination of hyperbolic geometric techniques and algebraic arguments to study the limit of the inversive domains and to count the number of critical points.

Further, the rational uniformizations of quadrature domains are injective on the round disk, so one can use normal family arguments to construct the limiting rational maps. In the holomorphic case, the domains of univalence of the uniformizing rational maps may vary (making normal family arguments untenable). Hence, we need to consider rescaling limits on the correspondence planes to manufacture the limiting rational maps.

\subsection{Compactness of $\mathscr{B}_\Gamma$}\label{comp_vert_fiber_subsec}
Throughout this section, we assume that $\mathfrak{C}_n \in \mathscr{B}_\Gamma$ is a sequence of correspondences. After passing to a subsequence, we may assume that all the correspondences $\mathfrak{C}_n$ have the same characteristic data 
$$
\kappa=\left((\pmb{\tau}, \pmb{\eta}): (\RT, \widehat\C^\RV) \longrightarrow (\RT, \widehat\C^\RV), \delta, (\overline{\delta}_a)_{a\in \RV}\right).
$$

\begin{prop}[Pre-compactness of $\mathscr{B}_\Gamma$ in {$\mathcal{CV}^{reg}_{2d}$}]\label{limit_corr_prop}
After possibly passing to a subsequence, $\mathfrak{C}_n$ converges to a correspondence { $\mathfrak{C}_\infty\in\mathcal{CV}^{reg}_{2d}$} with characteristic data $\widetilde{\kappa}$ which dominates $\kappa$.
\end{prop}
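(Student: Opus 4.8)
The plan is to construct $\mathfrak{C}_\infty$ by collecting \emph{all} rescaling limits of the uniformizing rational maps and organizing them on a tree of spheres that dominates $\kappa$. Write $\pmb{R}_n=\bigcup_{a\in\RV}R_{a,n}$ for the uniformizing rational map of $\mathfrak{C}_n$, so that $R_{a,n}\in\Rat_{\delta(a)}(\C)$ and $\sum_{a\in\RV}\delta(a)=2d$ (Theorem~\ref{b_inv_thm}). The strategy is to analyze, vertex by vertex, how each sequence $R_{a,n}$ degenerates in $\Rat_{\delta(a)}(\C)$ and to record every rescaling limit; the total-degree bookkeeping built into Lemma~\ref{lem:rl} is what guarantees that no degree is lost, which is exactly what is needed for $\mathfrak{C}_\infty$ to have the same bi-degree and to lie in $\mathcal{CV}_{2d}$.

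First I would fix, for each $a\in\RV$, a distinguished rescaling adapted to the inversive structure. Recall that $R_{a,n}$ restricts to a univalent map on the Jordan domain $\mathfrak{D}_{a,n}$. Applying Proposition~\ref{prop:ConvergencePointedDisk} and Corollary~\ref{cor:caratheodorylimitandrescaling} to these pointed univalent restrictions (based at the marked fixed point and its $\pmb{\eta}$-image) produces, after passing to a subsequence, a rescaling $M_{a,n}$ for which $M_{a,n}^{-1}(\mathfrak{D}_{a,n})$ converges in the Carath\'eodory topology and the associated rescaling limit has degree $\geq 1$. I would then invoke the second half of Lemma~\ref{lem:rl}: relative to these target rescalings there exist finitely many pairwise independent source rescalings $M_{b,n}$, with $b$ ranging over a finite index set, whose rescaling limits $R_b$ have degrees $\widetilde{\delta}(b)\geq 1$ summing to $\delta(a)$. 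These independent rescalings are declared to be the vertices of the subtree $\RT_a$. Since $\sum_{a}\delta(a)=2d$ and each limit contributes degree $\geq 1$, there are at most $2d$ independent rescalings in total, so they organize into a \emph{finite} tree of spheres $(\widetilde{\RT},\widehat{\C}^{\widetilde{\RV}})$, with markings given by the limiting tangent directions (Definition~\ref{defn:cvrationalmap}). Setting $\pi(b)=a$ whenever $b$ arises from the degeneration of $R_{a,n}$ defines a surjection $\pi:(\widetilde{\RT},\widetilde{\RV})\to(\RT,\RV)$; by construction $\pi^{-1}(a)=\RT_a$ is a subtree and $\delta(a)=\sum_{b\in\RV_a}\widetilde{\delta}(b)$, which are precisely the fiber conditions for $\kappa\prec_\pi\widetilde{\kappa}$, and $\deg(\widetilde{\kappa})=2d=\deg(\kappa)$.

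To complete the domination I must supply the involution $\widetilde{\pmb{\tau}}$ and verify $\pmb{\eta}$-equivariance. Because each $\mathfrak{C}_n$ is $\pmb{\eta}$-reversible and $\pmb{\eta}$ carries $\widehat{\C}_a$ to $\widehat{\C}_{\tau(a)}$ by $z\mapsto 1/z$, I would choose the rescalings for $\tau(a)$ to be $\eta\circ M_{b,n}$ wherever those for $a$ are $M_{b,n}$; this is consistent since $\eta$ sends independent rescalings to independent rescalings and preserves the degree-sum. The induced bijection on independent rescalings is an involution $\widetilde{\pmb{\tau}}$ on $\widetilde{\RT}$ satisfying $\pmb{\tau}\circ\pi=\pi\circ\widetilde{\pmb{\tau}}$, giving the remaining condition of the partial order, and the vertex maps $\widetilde{\eta}_b(z)=1/z$ are inherited from $\pmb{\eta}$. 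Finally, the rescaled maps $R_{a,n}\circ M_{b,n}$ converge compactly away from the singular sets to the restrictions $R_b$ of a rational map $\widetilde{\pmb{R}}:(\widetilde{\RT},\widehat{\C}^{\widetilde{\RV}})\to\widehat{\C}$; this is exactly the convergence $\pmb{R}_n\to\widetilde{\pmb{R}}$ of Definition~\ref{char_var_top_def}, whence the associated coincidence loci converge (cf.\ Proposition~\ref{prop:convergenceCorrespondence}) and $\mathfrak{C}_n\to\mathfrak{C}_\infty$, where $\mathfrak{C}_\infty$ is the correspondence defined by $\widetilde{\pmb{R}}$ through Formula~\eqref{corr_eqn}. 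Thus $\mathfrak{C}_\infty\in\mathcal{CV}_{2d}$ has signature $\widetilde{\kappa}$ dominating $\kappa$.

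The hard part will be the second paragraph: controlling the degenerating inversive domains $\mathfrak{D}_{a,n}$ well enough to extract the distinguished rescalings and, above all, ruling out degree drop. Unlike the antiholomorphic (quadrature-domain) setting of \cite{LLM24}, where real-algebraicity of the boundaries pins down the limiting geometry and the critical-point count directly, here the domains of univalence move, so the argument must pass through rescaling limits on the correspondence plane rather than naive normal-family limits of the uniformizations. Ensuring that the independent rescalings furnished by Lemma~\ref{lem:rl} are genuinely compatible with the $\pmb{\eta}$-symmetry --- so that $\widetilde{\pmb{\tau}}$ is well defined and $\pi$ is $\pmb{\tau}$-equivariant --- and that the resulting domination is the \emph{regular} one needed for Hausdorffness of $\mathcal{CV}_{2d}$, is the most delicate point.
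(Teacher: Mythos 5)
Your skeleton---extract rescaling limits of the $R_{a,n}$, organize them into a limiting tree of spheres, and read off a dominating signature---matches the paper's strategy, and using the second half of Lemma~\ref{lem:rl} for the degree bookkeeping is a plausible substitute for the paper's more dynamical device (the paper instead normalizes the $\delta(a)$ univalent preimages $(\Delta_{a,n}^i,Q_{a,n}^i)$ of the tiling set, each conformally a copy of $(\D,\Pi)$; this simultaneously fixes a \emph{common} target normalization for all $a\in\RV$, something your per-component Carath\'eodory limits of $(\mathfrak{D}_{a,n},\Omega_{a,n})$ do not obviously provide, since the limit must be a single rational map to one sphere up to one global post-composition). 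However, everything you defer to the end as ``the most delicate point'' is precisely the mathematical content of the paper's proof, and none of it is supplied, so the proposal has genuine gaps.

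Concretely: (a) the $\eta$-compatibility is more than declaring the rescalings over $\tau(a)$ to be $\eta\circ M_{b,n}$; in the self-symmetric case $\tau(a)=a$, the involution $\eta$ permutes the rescaling classes within one fiber, and when it fixes a class you must correct the representative inside its bounded-equivalence class so that it conjugates $\eta$ exactly to $\eta$ on the limit sphere (the paper's Lemma~\ref{lem_1}: the limit of $\widehat{M}_{a,n}^i\circ\eta\circ(\widehat{M}_{a,n}^i)^{-1}$ is a conformal involution, which is then conjugated to $\eta$ and its fixed points adjusted). Without this, the limit carries no involution of the prescribed form $z\mapsto 1/z$ and the signature $\widetilde{\kappa}$ is not even defined. (b) You assert that the independent rescalings ``organize into a finite tree of spheres,'' but the natural incidence structure (adjacency via the projections $x_{q\to p}$) is a priori only a graph: three or more spheres could meet at a node. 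Ruling this out, and showing the local degrees at the nodes are $1$ (simplicity), is the paper's Lemma~\ref{lem_2}, proved by an area argument in the flat metric that crucially uses the univalence of $\pmb{R}_n$ on $\pmb{\mathfrak{D}}_n$ and the fact that $\pmb{\eta}$ maps $\pmb{\mathfrak{D}}_n$ inside out; it is not a formal consequence of Lemma~\ref{lem:rl}. (c) Most seriously, regularity of the domination is not a side condition: Definition~\ref{char_var_top_def} only defines convergence in $\mathcal{CV}_{2d}$ when $\pi$ is regular, so without it the conclusion ``$\mathfrak{C}_n\to\mathfrak{C}_\infty$ in $\mathcal{CV}_{2d}$'' has no meaning. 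The paper's Lemma~\ref{lem_4} establishes this using the count of cusp critical points $2(d+1-|\RV|)$ from \cite{LLM24}, together with the geodesic-landing Lemma~\ref{lem_3}, which shows that each edge of $\widetilde{\RT}$ lying over a single vertex of $\RT$ absorbs exactly two cusp critical points; an exceptional fiber would then force a degree-$2$ map with two cusp critical points, a contradiction. Since your outline contains no trace of this machinery, it does not yet prove the proposition.
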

\noindent {Since the proof of the proposition is rather long, we sketch a road map of the proof for the reader's convenience. 
\begin{itemize}[leftmargin=6mm]
    \item We start with the construction of the rescaling limits of the sequences of rational maps $(R_{a, n}=\pmb{R}_n\vert_{\widehat{\C}_a})_n$, for $a\in\RV$ (where $\pmb{R}_n$ is the uniformizing rational map for $\mathfrak{C}_n$). We then arrange these rescaling limits to produce a rational map $\pmb{R}_{a,\infty}$ from a graph of spheres $(\RT_a, \widehat{\C}^{\RV_a})$ to the Riemann sphere $\widehat{\C}$. Under the same rescalings, the involution $\eta(z)=1/z$ of $\widehat{\C}_a$ converges to a conformal involution on $(\RT_a, \widehat{\C}^{\RV_a})$. We also ensure that the associated rescalings commute with this limiting involution (which is guaranteed by Lemma~\ref{lem_1}).

    \item The next step is to organize all the rational maps $\pmb{R}_{a,\infty}:(\RT_a, \widehat{\C}^{\RV_a})\longrightarrow\widehat{\C}$, $a\in\RV$, together to produce a single rational map $\pmb{R}_\infty$ from a graph of spheres $(\widetilde{\RT}, \widehat{\C}^{\widetilde{\RV}})$ to $\widehat{\C}$. The conformal involutions on $(\RT_a, \widehat{\C}^{\RV_a})$ also fit together to yield a conformal involution $(\widetilde{\tau},\widetilde{\eta})$ on $(\widetilde{\RT}, \widehat{\C}^{\widetilde{\RV}})$. 
    
    \item The desired limiting correspondence $\mathfrak{C}_\infty$ is defined via Equation~\eqref{corr_eqn} using the rational map $\pmb{R}_\infty$ and the involution $(\widetilde{\pmb{\tau}},\widetilde{\pmb{\eta}})$.

    \item We proceed to show that the graph $\widetilde{\RT}$ is actually a tree; i.e., more than two spheres cannot touch at a node. This allows us to associate a characteristic data $\widetilde{\kappa}$ with the `limiting' correspondence $\mathfrak{C}_\infty$ such that $\widetilde{\kappa}$ dominates the original characteristic data $\kappa$. We also demonstrate simplicity of the characteristic data $\widetilde{\kappa}$ (see Definition~\ref{simple_sign_def}). This is a necessary condition for a correspondence to lie in the vertical fiber $\mathscr{B}_\Gamma$. These facts are proved in Lemma~\ref{lem_2}.

    \item At this point, it remains to argue that the sequence $\mathfrak{C}_n$ converges to $\mathfrak{C}_\infty$ (or equivalently, $\pmb{R}_n\to\pmb{R}_\infty$) in the topology of the character variety and to obtain sufficient geometric/dynamical control on the limiting correspondence $\mathfrak{C}_\infty$ that will eventually allow us to conclude that $\mathfrak{C}_\infty\in\mathscr{B}_\Gamma$. 

    We address the latter task first by showing that the B-involutions $F_n$ associated with the correspondences $\mathfrak{C}_n$ converge in the Carath{\'e}odory topology to a B-involution $F_\infty$ (see Lemma~\ref{lem_3} and the ensuing discussion).

    Finally, we justify the convergence $\mathfrak{C}_n\to\mathfrak{C}_\infty$ by showing that { limiting characteristic data $\widetilde{\kappa}$ is regular.}
    %the domination of the limiting characteristic data $\widetilde{\kappa}$ over the original characteristic data $\kappa$ is not exceptional (see Definitions~\ref{reg_dom_def} and~\ref{char_var_top_def}). 
    This is done in Lemma~\ref{lem_4}.
\end{itemize} }

\begin{proof}[Proof of Proposition~\ref{limit_corr_prop}]
Let 
\begin{enumerate}[leftmargin=*]
    \item $\pmb{R}_n: (\RT, \widehat{\C}^\RV) \longrightarrow \widehat{\C}$ be the corresponding rational uniformization map in $\Rat_\kappa(\C)$ (we remind the reader that the markings on the tree of Riemann spheres $(\RT, \widehat{\C}^\RV)$ depend on $\pmb{R}_n$),
    \item $F_n:\overline{\Omega_n}=\overline{\displaystyle\bigsqcup_{a\in\RV}\Omega_{a,n}}\to\widehat{\C}$ be the associated B-involution (where $\Omega_n$ is an inversive multi-domain), and
    \item $\pmb{\mathfrak{D}}_n=\displaystyle\bigsqcup_{a\in\RV}\mathfrak{D}_{a,n}\subset\widehat{\C}^\RV$ be the disjoint union of Jordan domains such that
    \begin{itemize}
        \item $\pmb{\eta}(\pmb{\mathfrak{D}}_n)=\widehat{\C}^\RV-\overline{\pmb{\mathfrak{D}}_n}$ and
        \item $\pmb{R}_n:\pmb{\mathfrak{D}}_n\to\Omega_n$ is a conformal isomorphism (see Theorem~\ref{b_inv_thm}).
    \end{itemize} 
\end{enumerate}   
In particular, $F_n\equiv \pmb{R}_n\circ\pmb{\eta}\circ\left(\pmb{R}_n\vert_{\pmb{\mathfrak{D}}_n}\right)^{-1}$.
By definition, the degree of {$R_{a, n}=\pmb{R}_n\vert_{\widehat{\C}_a}$} is equal to $\delta(a)$, for $a\in\RV$.
As in Definition~\ref{b_inv_def}, we denote the singular set of $\partial\Omega_n$ by $\mathfrak{S}_n$, and set $Q_n = \widehat{\C} - \left(\Omega_n\cup\mathfrak{S}_n\right)$. Further, let $\Delta_n \supseteq Q_n$ be the escaping/tiling set of $F_n$. Since the group dynamics is fixed, we have a sequence of uniformization map $\Phi_n:(\D, \Pi) \longrightarrow (\Delta_n, Q_n)$, where $\Pi$ is the preferred fundamental domain for the $\Gamma-$action on $\D$ (see \S~\ref{corr_mating_subsec}). By post-composition with suitable M\"obius transformations and after passing to a subsequence, we may assume that $\Phi_n$ converges compactly to a conformal map $\Phi_\infty:(\D, \Pi) \longrightarrow (\Delta_\infty, Q_\infty)$.

For simplicity of the presentation, we assume that $\tau(a) = a$ (where $\tau$ is the restriction of $\pmb{\tau}$ on the vertex set $\RV$).
Then for each $n$, there are exactly $\delta(a)$ preimages of $(\Delta_n, Q_n)$ under $R_{a,n}$ (each of which is mapped conformally onto $(\Delta_n, Q_n)$ by $R_{a,n}$). We denote them by $(\Delta_{a, n}^i, Q_{a, n}^i)$, $i =1,\cdots, \delta(a)$. Since $(\Delta_{a, n}^i, Q_{a, n}^i)$ is conformally equivalent to $(\D, \Pi)$, we can normalize it by some M\"obius map $\widehat{M}_{a,n}^i \in \PSL_2(\C)$ so that $\widehat{M}_{a,n}^i((\Delta_{a, n}^i, Q_{a, n}^i))$ converges (after possibly passing to a subsequence) to $(\widehat \Delta_{a, \infty}^i, \widehat Q_{a, \infty}^i)$ which is conformally equivalent to $(\D, \Pi)$. We will now arrange that the rescalings commute with the involution on the desired limiting tree of spheres.

Note that $\eta$ induces an involution on the collection $\{\Delta_{a, n}^i: i =1,\cdots, \delta(a)\}$.
Hence, we have a dichotomy 
$$
\eta(\Delta_{a, n}^i) = \Delta_{a, n}^i\qquad \textrm{or}\qquad \eta(\Delta_{a, n}^{i_1}) = \Delta_{a, n}^{i_2}\ \mathrm{and}\ \eta(\Delta_{a, n}^{i_2}) = \Delta_{a, n}^{i_1}.
$$

\begin{lem}[Compatibility between involution and rescalings]\label{lem_1}
\noindent\begin{enumerate}[leftmargin=*]
    \item If $\eta(\Delta_{a, n}^i) = \Delta_{a, n}^i$, then $\widehat{M}_{a,n}^i$ can be replaced by an equivalent M{\"o}bius map $M_{a,n}^i\in C(\eta)$.
    
    \item If $\eta(\Delta_{a, n}^{i_1}) = \Delta_{a, n}^{i_2}$ and $\eta(\Delta_{a, n}^{i_2}) = \Delta_{a, n}^{i_1}$, then $\widehat{M}_{a,n}^{i_1}, \widehat{M}_{a,n}^{i_2}$ can be replaced by equivalent M{\"o}bius maps $M_{a,n}^{i_1}, M_{a,n}^{i_2}$ satisfying $M_{a,n}^{i_1}\circ\eta = \eta\circ M_{a,n}^{i_2}$.
\end{enumerate}
\end{lem}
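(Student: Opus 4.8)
The plan is to dispose of the two cases separately: the swapped case (2) is formal, while the fixed case (1) contains all the analytic content.

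\emph{Case (2).} Here I would simply keep $M_{a,n}^{i_1}:=\widehat{M}_{a,n}^{i_1}$ and \emph{define} $M_{a,n}^{i_2}:=\eta\circ M_{a,n}^{i_1}\circ\eta$. Since $\eta\circ\eta=\mathrm{id}$, the desired relation $M_{a,n}^{i_1}\circ\eta=\eta\circ M_{a,n}^{i_2}$ holds by construction, so it only remains to check that $M_{a,n}^{i_2}$ is an equivalent replacement for $\widehat{M}_{a,n}^{i_2}$. Because $\eta(\Delta_{a,n}^{i_2})=\Delta_{a,n}^{i_1}$ and $\eta(Q_{a,n}^{i_2})=Q_{a,n}^{i_1}$, we have $M_{a,n}^{i_2}(\Delta_{a,n}^{i_2},Q_{a,n}^{i_2})=\eta\bigl(\widehat{M}_{a,n}^{i_1}(\Delta_{a,n}^{i_1},Q_{a,n}^{i_1})\bigr)$, which converges to $\eta(\widehat{\Delta}_{a,\infty}^{i_1},\widehat{Q}_{a,\infty}^{i_1})$, again a pointed hyperbolic disk conformally equivalent to $(\D,\Pi)$. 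Thus $M_{a,n}^{i_2}$ and $\widehat{M}_{a,n}^{i_2}$ are two rescalings for the same sequence of pointed hyperbolic disks $(\Delta_{a,n}^{i_2},Q_{a,n}^{i_2})$, and by Proposition~\ref{prop:ConvergencePointedDisk} they are equivalent after passing to a subsequence.

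\emph{Case (1).} Set $\sigma_n:=\widehat{M}_{a,n}^{i}\circ\eta\circ(\widehat{M}_{a,n}^{i})^{-1}$; each $\sigma_n$ is a M\"obius involution preserving the disk $U_n:=\widehat{M}_{a,n}^{i}(\Delta_{a,n}^i)$, which converges in the Carath\'eodory topology to the hyperbolic disk $U:=\widehat{\Delta}_{a,\infty}^i$. The strategy is to show that, after a subsequence, $\sigma_n$ converges in $\PSL_2(\C)$ to a \emph{nondegenerate} M\"obius involution $\sigma_\infty$ preserving $U$, and then to correct $\widehat{M}_{a,n}^{i}$ by a bounded M\"obius family so as to straighten $\sigma_n$ to $\eta$. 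Concretely, every M\"obius involution with two distinct fixed points is $\PSL_2(\C)$-conjugate to $\eta$ (whose fixed points are $\pm1$), so once $\sigma_n\to\sigma_\infty$ one may choose $N_n\in\PSL_2(\C)$ carrying the ordered fixed-point pair of $\sigma_n$ to $(1,-1)$ (with one further normalization fixed continuously in $n$); then $N_n\to N_\infty$ is bounded and $N_n\circ\sigma_n\circ N_n^{-1}$ is the unique involution fixing $\{1,-1\}$, namely $\eta$. Putting $M_{a,n}^{i}:=N_n\circ\widehat{M}_{a,n}^{i}$ gives a rescaling equivalent to $\widehat{M}_{a,n}^{i}$ (as $N_n$ is bounded) and satisfying $M_{a,n}^{i}\circ\eta\circ(M_{a,n}^{i})^{-1}=\eta$, i.e.\ $M_{a,n}^{i}\in C(\eta)$.

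The step I expect to be the main obstacle is the \emph{nondegeneracy} of the limit $\sigma_\infty$. A holomorphic involution of a disk has a unique interior fixed point, and $\sigma_n$ fails to converge to an involution precisely when this fixed point escapes to $\partial U$, equivalently when the hyperbolic displacement $d_{U_n}\bigl(u_n,\sigma_n(u_n)\bigr)$ of a base point $u_n=\widehat{M}_{a,n}^{i}(q_n)$, $q_n\in Q_{a,n}^i$, blows up. By conformal invariance this displacement equals $d_{\Delta_{a,n}^i}\bigl(q_n,\eta(q_n)\bigr)$, which under the fixed uniformization $(\Delta_{a,n}^i,Q_{a,n}^i)\cong(\D,\Pi)$ becomes the displacement of a base point of $\Pi$ under the induced order-two automorphism $\bar\sigma_n$ of $(\D,\Pi)$. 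Since $\eta$ is a symmetry of the tiling-set dynamics (which is the fixed Bowen--Series dynamics of $\Gamma$), $\bar\sigma_n$ preserves the $\Gamma$-tiling of $\D$ and the marked boundary fixed point $1$; using rigidity of the punctured-sphere group $\Gamma$ (finiteness of its normalizer modulo $\Gamma$) together with the pinned marked point, one bounds the tile $\bar\sigma_n(\Pi)$ at bounded hyperbolic distance from $\Pi$, yielding the required estimate. I regard controlling this displacement uniformly in $n$ as the crux; the remaining assertions are formal consequences of the compactness of pointed disks (Theorem~\ref{thm:cmpc}, Proposition~\ref{prop:ConvergencePointedDisk}).
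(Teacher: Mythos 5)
Your two constructions are in fact the paper's own (your case (2) is the paper's recipe up to which index is kept: unwinding its definitions, the paper takes $M_{a,n}^{i_2}=\widehat{M}_{a,n}^{i_2}$ and $M_{a,n}^{i_1}=\eta\circ\widehat{M}_{a,n}^{i_2}\circ\eta$, and its case (1) also straightens the fixed points of the limiting involution to $\pm 1$). The problem is that both justifications you offer for the single nontrivial analytic step --- that the $\eta$-transported pointed disks still (sub)converge under the given rescalings --- are incorrect. In case (2), the asserted identity $\eta(Q_{a,n}^{i_2})=Q_{a,n}^{i_1}$ is false: every base tile $Q_{a,n}^{j}$ is disjoint from $\pmb{\mathfrak{D}}_n$, because $\pmb{R}_n(\pmb{\mathfrak{D}}_n)=\Omega_n$ while $Q_n\cap\Omega_n=\emptyset$, whereas $\pmb{\eta}$ maps $\widehat{\C}^{\RV}-\overline{\pmb{\mathfrak{D}}_n}$ onto $\pmb{\mathfrak{D}}_n$; hence $\eta(Q_{a,n}^{i_2})$ lies in $\overline{\pmb{\mathfrak{D}}_n}$ and cannot equal any base tile. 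What is true --- and is what both your argument and the paper's actually need --- is weaker: since $F_n\circ\pmb{R}_n=\pmb{R}_n\circ\pmb{\eta}$ on $\pmb{\mathfrak{D}}_n$, the set $\pmb{R}_n(\eta(Q_{a,n}^{i_2}))$ is a single first-generation tile of $\Delta_n$ (a component of $F_n^{-1}(Q_n)\cap\Omega_n$); under $\Phi_n$ such tiles are the translates of $\Pi$ by the finitely many Bowen--Series generators, so $\eta(Q_{a,n}^{i_2})$ sits at uniformly bounded hyperbolic distance from $Q_{a,n}^{i_1}$ inside $\Delta_{a,n}^{i_1}$. With this bounded-displacement statement in place of the false equality, the rescaled base regions stay in a compact part of the limiting disk and your appeal to Proposition~\ref{prop:ConvergencePointedDisk} goes through after passing to a subsequence.

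In case (1) you correctly locate the crux (a displacement bound; the paper compresses it into the unproved clause that $(\widehat{M}_{a,n}^i\circ\eta)((\Delta_{a,n}^i,Q_{a,n}^i))$ ``also converges''), but your mechanism fails on two counts. First, $\bar{\sigma}_n$ is an elliptic involution of a hyperbolic disk, and such a map has no fixed point on the ideal boundary; there is no ``marked boundary fixed point $1$'' for it to preserve. Second, ``involution $+$ preserves the $\Gamma$-tiling $+$ finiteness of the normalizer modulo $\Gamma$'' does not bound displacement: if $s$ is any involutive symmetry of the tiling (for the base group these exist, e.g.\ $z\mapsto -z$ for the regular ideal $2d$-gon), then $\gamma s\gamma^{-1}$, $\gamma\in\Gamma$, are involutive tiling symmetries whose displacement of $\Pi$ is unbounded --- your hypotheses are $\Gamma$-conjugation invariant while the conclusion is not. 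The input that actually closes the gap is again the dynamical fact above: $\eta|_{\Delta_{a,n}^i}$ reads through $R_{a,n}$ as branches of $F_n^{\pm 1}$, so $\bar{\sigma}_n$ agrees on an open set with a single branch of $A_\Gamma^{\pm1}$, i.e.\ with one of the finitely many generators of $\Gamma$, and hence equals that generator; this bounds the displacement by a constant depending only on $\Gamma$. (It shows more: $\bar{\sigma}_n^2=\mathrm{id}$ then forces that generator to be an involution, which is impossible in a torsion-free punctured-sphere group, so case (1) is actually vacuous in the manifold setting and is only realized for orbifold groups with $2$-torsion.) So while your architecture matches the paper's, the two substitute arguments you give for its key compactness claim do not hold as stated.
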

\begin{proof}
Let us first assume that $\eta(\Delta_{a, n}^i) = \Delta_{a, n}^i$. Then $(\widehat{M}_{a,n}^i\circ \eta)((\Delta_{a, n}^i, Q_{a, n}^i))$ also converges, and thus after passing to a subsequence, $L_n:=(\widehat{M}_{a,n}^i)\circ\eta\circ (\widehat{M}_{a,n}^i)^{-1}$ converges to some M\"obius map $L$. Note that $L^2$ is the identity map, so $L$ is conjugate to $\eta$ by some M\"obius map $M$; i.e., $M \circ L \circ M^{-1} = \eta$. Therefore $M\circ L_n\circ M^{-1}$ converges to $\eta$ and hence the fixed point set $M\circ \widehat{M}_{a,n}^i(\{1,-1\})$ of $M\circ L_n\circ M^{-1}$ converges to the fixed point set $\{1,-1\}$ of $\eta$. Thus, we can choose a sequence of M{\"o}bius maps $M_n$ converging  to the identity map such that $M_n$ carries $M\circ \widehat{M}_{a,n}^i(\pm 1)$ to $\pm 1$.
We now set $M_{a,n}^i \coloneqq M_n\circ M \circ \widehat{M}_{a,n}^i$, and conclude that $M_{a,n}^i \in C(\eta)$  gives the required M{\"o}bius map (observe that $M_{a,n}^i$ fixes $\pm 1$).

Next suppose that $\eta(\Delta_{a, n}^{i_1}) = \Delta_{a, n}^{i_2}$ and $\eta(\Delta_{a, n}^{i_2}) = \Delta_{a, n}^{i_1}$. Then a similar argument as above shows that
$$
L_{n}:=(\widehat{M}_{a,n}^{i_1})\circ\eta\circ (\widehat{M}_{a,n}^{i_2})^{-1}\to L\in\PSL_2(\C).
$$
We set $M_n:= \eta\circ L_{n}^{-1}$ and note that $M_n$ is a bounded sequence of M{\"o}bius maps. It is now easily seen that the desired M{\"o}bius maps are given by $M_{a,n}^{i_2}:=\widehat{M}_{a,n}^{i_2}$ and $M_{a,n}^{i_1}:=M_n\circ \widehat{M}_{a,n}^{i_1}$.
\end{proof}

After passing to a subsequence if necessary, we may assume that for $i_1, i_2 \in \{1,\cdots, \delta(a)\}$, $M_{a, n}^{i_1} , M_{a, n}^{i_2}$ are either equivalent or independent. Let $\RV_a$ be the indexing set of equivalence classes of $M_{a, n}^{i}$.
For each $p\in\RV_a$, we pick a representative $M_{a,n}^i$ in the corresponding equivalence class, and define the associated rescaling $\mathscr{M}_{a,n}^p:= \left(M_{a, n}^i\right)^{-1}$.
\smallskip

\noindent\textbf{Constructing a limiting rational map $\pmb{R}_{a,\infty}: (\RT_a, \widehat{\C}^{\RV_a}) \longrightarrow \widehat{\C}$ for each $a\in\RV$.} In order to manufacture a rescaling limit $\pmb{R}_{a,\infty}$ of the rational maps $R_{a,n}$, we need to organize the domain spheres of the rescalings $\mathscr{M}_{a,n}^p$, $p\in\RV_a$, in a tree structure. However, it will be convenient to first arrange them as a graph of spheres and then argue that the graph is indeed a tree.

Consider a collection of Riemann spheres $\widehat{\C}_p$, $p\in\RV_a$. 
By construction, for each $p, q \in\RV_a$ ($p\neq q$), the sequence of M{\"o}bius maps $\left(\mathscr{M}_{a,n}^p\right)^{-1}\circ\mathscr{M}_{a,n}^q$ converges to a constant in $\widehat{\C}_p$, which we denote by $x_{q\to p}$ (we call $x_{q\to p}$ the \emph{projection} of $\widehat{\C}_q$ to $\widehat{\C}_p$). For $p\in\RV_a$, we define 
$$
\widecheck{\Xi}_p:=\{x_{q\to p}: q\in \RV_a-\{p\}\}.
$$
Now for $p, q \in\RV_a$, $p\neq q$, we say they are {\em non-adjacent} if there exists $r\in\RV_a-\{p,q\}$ such that $x_{p,r}\neq x_{q,r}$. They are called {\em adjacent} otherwise.
Let $\RT_a$ be the graph by obtained by adding an edge to every pair of adjacent vertices.
Note that if the maximal cliques in $\RT_a$ (with at least $3$ vertices) are replaced by vertices, then we get a quotient tree.
 
\begin{remark}
Thanks to the last observation, it is equivalent to work with a tree that is obtained by replacing each $m$-vertex clique ($m\geq 3$) in $\RT_a$ with a star-tree that has $m$ tips and a new auxiliary base vertex (which will not correspond to a Riemann sphere).
\end{remark}

Following the notion of a marked tree of spheres (see Definition~\ref{defn:trs} and Definition~\ref{def-tangentdirn}), we construct a marked graph of spheres $(\RT_a,\widehat{\C}^{\RV_a})$, where the marking sends the tangent direction $v\in T_p\RT_a$, $p\in\RV_a$, associated with an edge $[p,q]\subset\RT_a$ to the point $x_{p\to q}$. 
It now follows from the construction and Corollary~\ref{cor:caratheodorylimitandrescaling} that with respect to the rescalings $\mathscr{M}_{a, n}^p$, $p \in \RV_a$, the rational map $R_{a,n}$ converges to a degree $\delta(a)$ rational map $\pmb{R}_{a,\infty}: (\RT_a, \widehat{\C}^{\RV_a}) \longrightarrow \widehat{\C}$ on the (marked) graph of spheres constructed above (the notion of convergence introduced in Definition~\ref{defn:cvrationalmap} applies to this setting). We remark that if $[p,q]$ is an edge in $\RT_a$, then $\pmb{R}_{a,\infty}\vert_{\widehat{\C}_p}(x_{q\to p})=\pmb{R}_{a,\infty}\vert_{\widehat{\C}_q}(x_{p\to q})$.

It also follows from the construction of $(\RT_a, \widehat{\C}^{\RV_a})$ that the action of $\eta$ on $\{\Delta_{a, n}^i: i = 1,\cdots, \delta(a)\}$ induces a map $(\pmb{\tau}^a,\pmb{\eta}^a): (\RT_a, \widehat{\C}^{\RV_a})\longrightarrow (\RT_a, \widehat\C^{\RV_a})$. Here, $\pmb{\tau}^a$ is an involution on the underlying graph (whose restriction to the vertex set $\RV_a$ is denoted by $\tau^a$), and for $p\in\RV_a$, the map $\pmb{\eta}^a$ sends the sphere $\widehat{\C}_p$ in $\widehat{\C}^{\RV_a}$ to the sphere $\widehat{\C}_{\tau^a(p)}$ as $z\mapsto 1/z$. By Lemma~\ref{lem_1}, $\{\mathscr{M}_{a,n}^p: p\in\RV_a\}\in \Aut_{(\pmb{\tau}^a,\pmb{\eta}^a)}(\RT_a, \widehat\C^{\RV_a})$. Further, the map $\eta:\widehat{\C}_a\to\widehat{\C}_a$ converges to the map $(\pmb{\tau}^a,\pmb{\eta}^a): (\RT_a, \widehat{\C}^{\RV_a})\longrightarrow (\RT_a, \widehat\C^{\RV_a})$ with respect to the rescalings $\mathscr{M}_{a, n}^p$, $p \in \RV_a$.
\smallskip

\noindent{\textbf{Piecing various graphs of spheres $(\RT_a, \widehat{\C}^{\RV_a})$ together.}
Our next goal is to attach the various graphs $\RT_a$, $a\in\RV$, appropriately to form a graph of spheres $(\widetilde{\RT},\widehat{\C}^{\widetilde{\RV}})$. This graph of spheres will be the domain of the desired limiting rational map $\pmb{R}_\infty:(\widetilde{\RT},\widehat{\C}^{\widetilde{\RV}})\to\widehat{\C}$ and will also be naturally equipped with a conformal involution. The candidate limiting correspondence will then be defined in terms of this rational map and involution.}

To this end, we start with the disjoint unions 
$$
\widecheck{\RT}:=\bigcup_{a\in\RV}\RT_a,\quad \widetilde{\RV}:=\bigcup_{a\in\RV}\RV_a,\quad \mathrm{and}\quad \widehat{\C}^{\widetilde{\RV}}:=\bigcup_{a\in\RV}\widehat{\C}^{\RV_a}.
$$ 
Now pick an edge $E = [a_1,a_2]$ in $\RT$. We denote the corresponding singular points for $\pmb{R}_n$ by $\xi_{E, a_i}^n \in \widehat{\C}_{a_i}$.
Let $p \in \RV_{a_i}$. After passing to a subsequence, we can define the projection of the edge $E = [a_1, a_2]$ to $\widehat{\C}_p$ as
$$
x_{E \to p} := \lim_n\  (\mathscr{M}_{a_i,n}^p)^{-1}(\xi_{E, a_i}^n).
$$
In this way, we generalize the definition of the projection of $\widehat{\C}_q$ to $\widehat{C}_p$ for $p \in \RV_a, q \in \RV_b$, $a\neq b$, by setting
$$
x_{q \to p} = x_{E, p},
$$
where $E$ is the edge of $\RT$ adjacent to $a$ that is in the same connected component of $\RT - \{a\}$ as $b$.

We say that $p \in \RV_{a_i}$ is {\em non-adjacent} to the edge $E = [a_1, a_2]$ if there exists a vertex $r \in \RV_{a_i} - \{p\}$ such that $x_{p, r} \neq x_{E, r}$. Otherwise, we say that $p$ is {\em adjacent} to $E$.
Let $p \in \RV_{a_1}$ and $q \in \RV_{a_2}$. We say they are {\em adjacent} if $p$ and $q$ are both adjacent to $E$.

Let $\widetilde{\RT}$ be the graph obtained by adding an edge to every pair of adjacent vertices $p \in \RV_{a_1}, q \in \RV_{a_2}$, where $[a_1, a_2]$ ranges  over all edges  of $\RT$.
Once again, if the maximal cliques in $\widetilde{\RT}$ (with at least $3$ vertices) are replaced by vertices, then we get a quotient tree. 
\begin{figure}[ht]
\captionsetup{width=0.98\linewidth}
\begin{tikzpicture}
\node[anchor=south west,inner sep=0] at (0,0) {\includegraphics[width=0.9\textwidth]{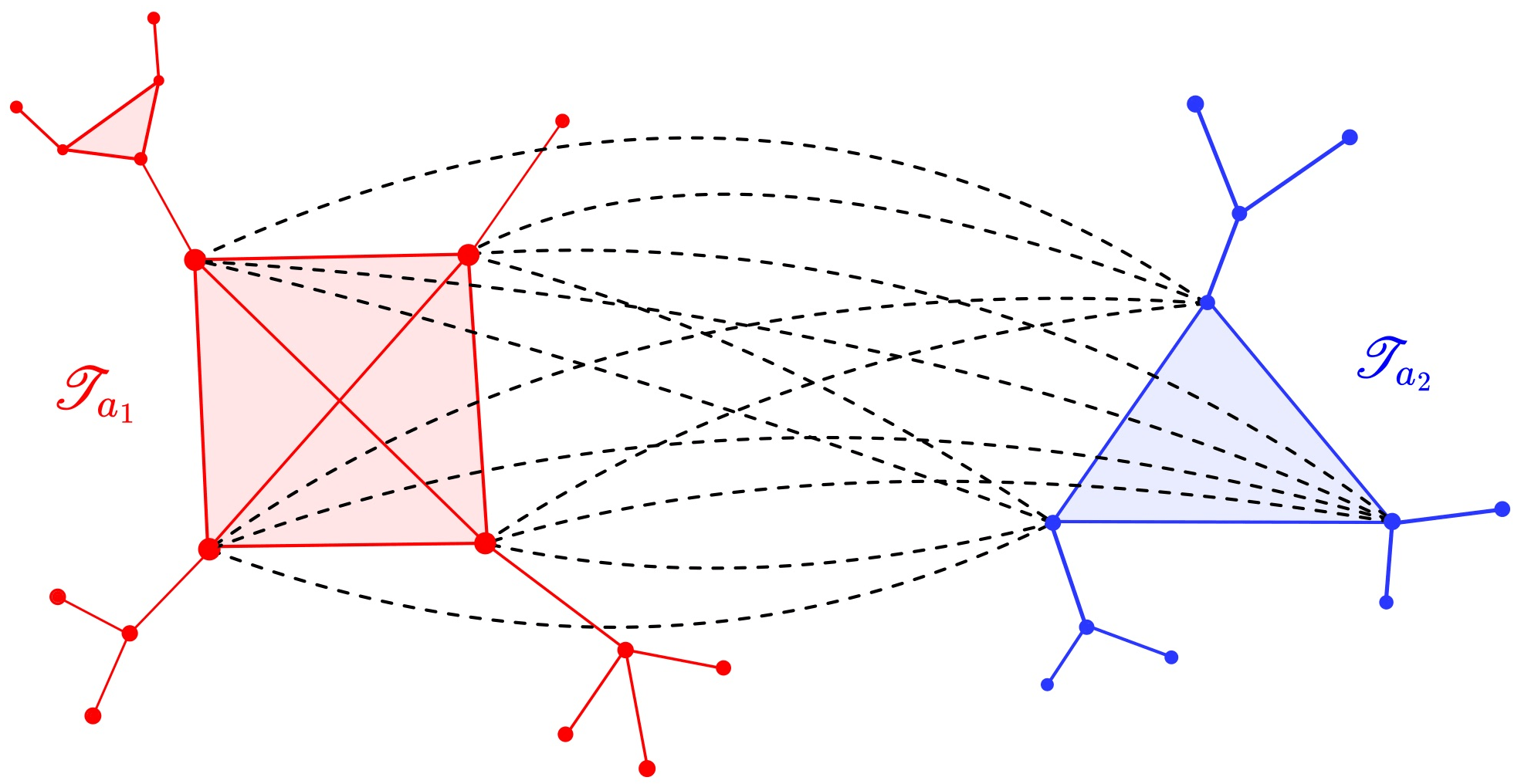}};
\end{tikzpicture}
\caption{Illustrated is an a priori possible configuration for $\widetilde{\RT}$ that is ruled out by Lemma~\ref{lem_2}. The dotted black edges connect vertices in $\RT_{a_1}$ and $\RT_{a_2}$ that are adjacent to each other.}
\label{clique_connection_fig}
\end{figure}
\smallskip

\noindent\textbf{The candidate limiting rational map and correspondence.} We define a rational map $\pmb{R}_\infty:(\widetilde{\RT}, \widehat{\C}^{\widetilde{\RV}})\to\widehat{\C}$ that acts as $\pmb{R}_{a,\infty}$ on $\RT_a$. It is easily checked that the map $\pmb{R}_\infty$ is compatible with the marking on $(\widetilde{\RT}, \widehat{\C}^{\widetilde{\RV}})$ defined above. Further, we have an induced involution $(\widetilde{\pmb{\tau}},\widetilde{\pmb{\eta}})$ on $(\widetilde{\RT}, \widehat{\C}^{\widetilde{\RV}})$ that acts piecewise as the maps $(\pmb{\tau}^a,\pmb{\eta}^a)$, $a\in\RV$. 
{Following Equation~\eqref{corr_eqn}, we define the candidate limiting correspondence $\mathfrak{C}_\infty$ by the equation
\begin{equation}
\mathfrak{C}_{\infty}:= \bigg\{(x,y) \in \widehat\C^{\widetilde{\RV}} \times \widehat\C^{\widetilde{\RV}}: \frac{\pmb{R}_\infty(x) - \pmb{R}_\infty\circ\widetilde{\pmb{\eta}}(y)}{x-\widetilde{\pmb{\eta}}(y)} = 0\bigg\}.
\end{equation}}
\smallskip

\noindent\textbf{$\widetilde{\RT}$ is a tree and the associated characteristic data is simple.}
We will now show that $\widetilde{\RT}$ is a tree.
Once this is proved, we can construct  characteristic data 
$$
\widetilde{\kappa}= \left((\widetilde{\pmb{\tau}}, \widetilde{\pmb{\eta}}): (\widetilde{\RT}, \widehat\C^{\widetilde{\RV}}) \longrightarrow (\widetilde{\RT}, \widehat\C^{\widetilde{\RV}}), \widetilde{\delta}, (\overline{\widetilde{\delta}}_p)_{p\in \widetilde{\RV}}\right)
$$ 
dominating $\kappa$ via $\pi$ (see \S~\ref{singnature_order_subsec}), where the functions $\widetilde{\delta}$, $\overline{\widetilde{\delta}}_p$, $p\in\widetilde{\RV}$, are defined using the global and local degrees of the rational maps {$R_p:=\pmb{R}_\infty\vert_{\widehat{\C}_p}$}, so that they satisfy the conditions of Definition~\ref{rat_kappa_def}.

We will also show that $\widetilde{\kappa}$ satisfies the condition of simplicity introduced in Definition~\ref{simple_sign_def}. The key idea of the proof is that the map $\eta(z)=1/z$ essentially acts as a rotation (and hence an almost isometry) near its fixed points $\pm 1$.

Let $\{p_1, p_2,..., p_m\} \subseteq \widetilde{\RV}$ be a clique in $\widetilde{\RT}$ consisting of pairwise adjacent vertices.
Then for any $i, j, k$, we have $x_{p_i\to p_k} = x_{p_j\to p_k}$. We denote this point by $x_k$ and the local degree of $R_{p_k}$ at $x_k$ by $\delta_k$.

\begin{lem}\label{lem_2}
    Let $\{p_1, p_2,..., p_m\} \subseteq \widetilde{\RV}$ be a clique in $\widetilde{\RT}$ consisting of pairwise adjacent vertices.
    Then $m = 2$ and $\delta_1 = \delta_2 = 1$.

    In particular, $\widetilde{\RT}$ is a tree and the characteristic data $\widetilde{\kappa}$ is simple.
\end{lem}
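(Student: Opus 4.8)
The plan is to read off the two conclusions $m=2$ and $\delta_1=\delta_2=1$ from a single local picture at the common node of the clique, by playing the rigidity of the group (tiling) side against the near-isometric behaviour of $\eta$ at $\pm 1$. \textbf{First I would pin down the tiling-side local degree.} Each sphere $\widehat{\C}_{p_k}$ of the clique arises, via Corollary~\ref{cor:caratheodorylimitandrescaling}, as a rescaling limit of a \emph{single} tiling component $\Delta_{a,n}^{i_k}$ on which the uniformizing map $R_{a,n}$ is a conformal isomorphism onto $(\Delta_n,Q_n)$. Hence the limiting tiling disk $T_k\subset\widehat{\C}_{p_k}$ is carried by $R_{p_k}=\pmb{R}_\infty\vert_{\widehat{\C}_{p_k}}$ with degree one onto $\Delta_\infty$, so $R_{p_k}$ is univalent on a neighborhood of $\overline{T_k}$ minus its singular set. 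By Lemma~\ref{lem_1} the rescalings lie in $C(\eta)$ or occur in $\eta$-swapped pairs, so the limiting involution $(\widetilde{\pmb\tau},\widetilde{\pmb\eta})$ sends $T_k$ to the co-tiling disk $\widetilde{\pmb\eta}(T_k)=T_{\tau^a(p_k)}$, which is again mapped univalently. Thus $\widehat{\C}_{p_k}$ splits into two $\eta$-related halves, each mapped with degree one along the common boundary arc where the node sits.

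\textbf{Next I would exploit that $\eta(z)=1/z$ is an almost-isometry (rotation by $\pi$) near its fixed points $\pm 1$ to control both the location of the node and its local degree.} The node $x_k\in\widehat{\C}_{p_k}$ is the common projection of all the other clique spheres, and the matching condition forces a single node value $y_0=R_{p_k}(x_k)$ for every $k$. I would distinguish two cases according to whether $\widetilde{\pmb\tau}$ fixes the clique setwise. When the clique is $\widetilde{\pmb\tau}$-invariant, the node is $\widetilde{\pmb\eta}$-fixed, hence located at $1$ or $-1$ on each $\tau^a$-fixed sphere, where $\eta$ acts as an order-two rotation; when it is swapped onto a disjoint clique, I may analyze a single representative freely. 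In either case, because each of the two halves of $\widehat{\C}_{p_k}$ is mapped with degree one near $x_k$ by the previous step, the germ of $R_{p_k}$ at $x_k$ is univalent, giving $\delta_k=1$. The near-isometry of $\eta$ at $\pm 1$ then shows that the $m$ attaching directions at the node are permuted by an isometric order-two rotation compatible with the degree-one tiling/co-tiling pieces; a symmetric configuration of independent rescalings carrying degree-one tiling disks and closing up under this rotation can accommodate exactly one tiling direction and its $\eta$-partner, forcing $m=2$. Equivalently, an $m\ge 3$ clique would manufacture a genuine branch direction at the node unaccounted for by any rescaling, exactly as in the proof of Hausdorffness of $\mathcal{CV}_d$, contradicting the degree-one count; this also shows the resulting domination $\pi$ is \emph{regular} rather than exceptional in the sense of Definition~\ref{reg_dom_def}. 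Once $m=2$ and $\delta_1=\delta_2=1$ are established, $\widetilde{\RT}$ has no clique with $\ge 3$ vertices and is therefore a tree, and every singular point carries local degree one, so $\widetilde{\kappa}$ is simple by Definition~\ref{simple_sign_def}.

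\textbf{I expect the main obstacle to be ruling out hidden ramification at the node}, i.e.\ showing $\delta_k=1$ rather than merely bounding it. The danger is that the $2d$ marked critical points of $R_n$ on $\partial\mathfrak{D}_n$ (at $\pm1,\alpha_{j}^{\pm1}$), which are precisely the $\eta$-symmetric ones, could in principle collide with the pinching node and raise its local degree. The decisive input is that the \emph{group side does not degenerate}: since $\Gamma$ is fixed and $\Phi_n\to\Phi_\infty$ conformally, the tiling domains remain non-collapsed and are mapped univalently in the limit, which confines any ramification to the non-escaping side, where the inversive symmetry then pairs it against the univalent tiling piece. Making the isometry heuristic at $\pm1$ quantitative—so that the $\eta$-rotation genuinely forbids a third attaching direction and pins the germ of $R_{p_k}$ to be univalent—is the step I would need to execute with care, and it is where the hyperbolic-geometric control of the limiting inversive domain (as opposed to the real-algebraic control available in the antiholomorphic setting) does the real work.
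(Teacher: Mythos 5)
Your proposal correctly identifies the two inputs the paper also uses (the univalence of $\pmb{R}_n$ on $\pmb{\mathfrak{D}}_n$, and the fact that $\eta$ acts almost isometrically near its fixed points $\pm 1$), but the step that is supposed to deliver the conclusion has a genuine gap. You infer $\delta_k=1$ from the statement that the two $\eta$-related ``halves'' of $\widehat{\C}_{p_k}$ are each mapped with degree one near the node, so ``the germ of $R_{p_k}$ at $x_k$ is univalent.'' This inference is false: $z\mapsto z^2$ is univalent on each of the two half-planes bounded by the imaginary axis, yet has local degree $2$ at the boundary point $0$. Univalence of $R_{p_k}$ on the open tiling disk (or on $\pmb{\mathfrak{D}}$-type regions) says nothing about the local degree at a \emph{boundary} point, and the boundary is exactly where the danger lies: the cusp critical points of $\pmb{R}_n$ sit on $\partial\pmb{\mathfrak{D}}_n$, and in the paper's Lemma~\ref{lem_3} (Case (a)) such critical points genuinely do converge to points where two limiting tiling components touch — the whole content of Lemma~\ref{lem_2} is to show this cannot happen at the \emph{nodes} of the tree. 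Two further problems: a vertex $p\in\RV_a$ is an equivalence class of rescalings and may absorb several tiling components, so $\widehat{\C}_{p_k}$ need not split into just two univalent halves ($\deg R_{p_k}$ can exceed $1$); and your argument for $m=2$ (``a symmetric configuration \dots can accommodate exactly one tiling direction and its $\eta$-partner,'' plus an appeal to the Hausdorffness proof) is an assertion rather than a proof — the barycentric-extension/$\R$-tree argument used for Hausdorffness concerns degenerations of maps on a single sphere and does not transfer to rule out cliques in $\widetilde{\RT}$.

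What the paper does instead, and what your outline is missing, is a quantitative area estimate that handles $m\geq 3$ and $\delta_i\geq 2$ in one stroke. Around the common image $y=R_{p_i}(x_i)$ one takes a round annulus $A=B(y,2\epsilon)-\overline{B(y,\epsilon)}$ with the flat measure induced by $|dz|/|z-y|$, and for each clique vertex an annulus $A_{i,n}\subset \pmb{R}_n^{-1}(A)$ around $x_i$ on which $\pmb{R}_n$ is a degree-$\delta_i$ cover. Since $\eta$ nearly preserves $A_{i,n}$ and its flat metric (here is where the rotation-like behaviour of $\eta$ at $\pm1$ enters, quantitatively), the region $\pmb{\mathfrak{D}}_n\cap A_{i,n}$ carries about half the flat measure of $A_{i,n}\asymp \mu_A(A)/\delta_i$; univalence of $\pmb{R}_n$ on $\pmb{\mathfrak{D}}_n$ makes these images pairwise disjoint inside $\Omega_n\cap A$, while the covering expands flat area by $\delta_i^2$. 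Summing over the clique gives $\mu_A(A)\succeq \sum_i \tfrac{\delta_i}{2}\,\mu_A(A)$, i.e.\ $\sum_{i=1}^m\delta_i\leq 2$, which forces $m=2$ and $\delta_1=\delta_2=1$ simultaneously. Your two intended ingredients are the right ones, but without this (or some equivalent) counting mechanism the proposal does not rule out hidden ramification at the node — the very obstacle you flag at the end.
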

\begin{proof}
    For simplicity of the presentation, we assume that $p_1, ..., p_m$ are fixed under $\widetilde{\pmb{\tau}}$. The other cases can be treated similarly. 
    
    Let $x_i := x_{p_j\to p_i}$ for (any) $i \neq j$, and let $\delta_i$ be the local degree of $R_{p_i}$ at $x_i$.
    Note that by compatibility, we have $\eta(x_i) = x_i$ and $R_{p_i}(x_i) = R_{p_j}(x_j)$. Denote this point by $y := R_{p_i}(x_i)$.
    We will show that $\sum_{i=1}^m \delta_i \leq 2$.

    Let $A = B(y, 2\epsilon) - \overline{B(y, \epsilon)}$ be a small annulus around $y$. Then there exists some small annulus $A_i$ around $x_i$ so that $R_{p_i}: A_i \longrightarrow A$ is a degree $\delta_i$ covering.
    Let us endow $A$ with the measure $\mu_A$ coming from the flat metric $|dz|/|z-y|$. Similarly, we endow $A_i$ with the measure $\mu_{A_i}$ coming from the metric $|dz|/|z-x_i|$. Note that $R_{p_i}$ expands the flat metric by a factor of $\delta_i$.
    
    Recall that $\pmb{R}_n$ converges to $R_{p_i}$ under the appropriate rescaling, which we assume to be the identity map for simplicity of notation. Hence, we can choose a component $A_{i, n}$ of $\pmb{R}_n^{-1}(A)$ that approximates $A_i$ such that $\pmb{R}_n: A_{i,n} \longrightarrow A$ converges uniformly to $R_{p_i}: A_i \longrightarrow A$.
    Since $\pmb{R}_n$ is univalent on $\pmb{\mathfrak{D}}_n \longrightarrow \Omega_n$, we conclude that 
    $$
    \mu_A(A) \geq \mu_A(\Omega_n\cap A) \succeq \sum_{i=1}^m \delta_i^2 \mu_{A_{i,n}}(\pmb{\mathfrak{D}}_n \cap A_{i,n}).
    $$
    On the other hand, since $\eta$ almost preserves $A_{i,n}$ and the corresponding flat metric, we have
    $$
    \mu_{A_{i,n}}(\pmb{\mathfrak{D}}_n \cap A_{i,n}) \asymp \mu_{A_{i,n}}(\eta(\pmb{\mathfrak{D}}_n) \cap A_{i,n}) \asymp \frac{1}{2} \mu_{A_{i,n}}(A_{i,n}) \asymp \frac{1}{2\delta_i}\mu_A(A),
    $$
where the last asymptotics follows from the fact that $\pmb{R}_n: A_{i,n} \longrightarrow A$ expands the flat metric by $\delta_i$ thus the area by $\delta_i^2$, and the fact that its image covers the annulus $A$ $\delta_i$-times.
    Thus,
    $$
    \mu_A(A) \succeq  \sum_{i=1}^m \delta_i^2 \mu_{A_{i,n}}(\pmb{\mathfrak{D}}_n \cap A_{i,n}) \asymp \sum_{i=1}^m \frac{\delta_i}{2}\mu_A(A).
    $$
    This implies that $\sum_{i=1}^m \delta_i \leq 2$. Since each clique contains at least two vertices, we conclude that $m = 2$ and $\delta_1 = \delta_2 = 1$.
\end{proof}

\noindent\textbf{The limit of the B-involutions $F_n$ is a B-involution.}
{We will now study the geometry of the limit of the pinched polygons $\overline{\Omega_n}$, which are the domains of definition of the B-involutions $F_n$ (see \S~\ref{degen_poly_like map_subsubsec} for the notion of a pinched polygon).}

We recall the conformal maps $\Phi_\bullet:(\D, \Pi) \longrightarrow (\Delta_\bullet, Q_\bullet)$, $\bullet=n,\infty$, introduced before Lemma~\ref{lem_1}, and observe that due to conformality of $\Phi_\infty$, the set $\partial Q_\infty\cap \partial\Delta_\infty$ is a disjoint union of non-singular real-analytic arcs. We will show that the ends of each of these arcs land on $\partial\Delta_\infty$. {This will, in particular, imply that the limiting compact set of $\overline{\Omega_n}$ is also a pinched polygon with piecewise real-analytic boundary.}

Let $\gamma$ be one of the edges (in particular, a hyperbolic geodesic) of $\Pi \subseteq \D$. We parametrize the geodesic by arclength: $\gamma(t), t\in (-\infty, \infty)$.
Let $\gamma_n(t) := \Phi_n\circ \gamma(t)$ and $\gamma_\infty(t) := \Phi_\infty\circ \gamma(t)$, so the arcs $\gamma_\bullet\subset\partial Q_\bullet$ are hyperbolic geodesics in $\Delta_\bullet$ ($\bullet=n,\infty)$).
A point $x$ is said to lie in the \emph{(positive) accumulation set} of $(\gamma_n(t))_n$ if there exist subsequences $n_k, t_{n_k} \to \infty$ so that
$$
x = \lim_{k\to\infty} \gamma_{n_k}(t_{n_k}).
$$
We also define a \emph{cusp} of $\partial Q_\bullet$ as a singular point of of $\partial Q_\bullet$ that is not a cut-point.
\begin{lem}[Landing of limiting geodesics]\label{lem_3}
    After passing to a subsequence if necessary, the (positive) accumulation set of $(\gamma_n(t))_n$
    is a single point.
    In particular, $\gamma_\infty$ has a unique limit point on each of its two ends (where the two limit points may coincide) and the Hausdorff limit of $\overline{\gamma_n}$ is $\overline{\gamma_\infty}$.
\end{lem}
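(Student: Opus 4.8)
The plan is to treat the two ends of $\gamma$ separately, so by symmetry I focus on the end $t\to+\infty$, along which $\gamma(t)$ converges to an ideal vertex $v_+\in\mathbb{S}^1$ of the fixed fundamental polygon $\Pi$. Since $v_+$ is a cusp of the fixed group $\Gamma$, there is a parabolic element $g\in\Gamma$ fixing $v_+$ (realized by the relevant branch of the Bowen--Series map $A_\Gamma$) and a $g$-invariant horoball $H$ based at $v_+$ whose tail contains $\gamma([T_0,\infty))$. Because $\Phi_n$ is a conformal isomorphism, $\gamma_n=\Phi_n\circ\gamma$ is a hyperbolic geodesic of $\Delta_n$ entering the cusp $w_n:=\Phi_n(v_+)\in\partial\Delta_n$, so for each fixed $n$ the limit $\lim_{t\to+\infty}\gamma_n(t)=w_n$ exists; after passing to a subsequence I may assume $w_n\to w_\infty$. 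The content of the lemma is that this landing is \emph{uniform}: the real difficulty is the interchange of the limits $n\to\infty$ and $t\to\infty$, because as $t\to+\infty$ the points $\gamma_n(t)$ leave every compact subset of $\Delta_n$ and approach $\partial\Delta_n$, so the compact convergence $\Phi_n\to\Phi_\infty$ gives no direct control and the double-limit accumulation set could a priori smear along $\partial\Delta_\infty$.

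To overcome this I would exploit the rigidity of the \emph{fixed} parabolic/cusp structure. Because $\Phi_n$ conjugates $A_\Gamma$ to $F_n$, it intertwines $g$ with the corresponding parabolic return map $G_n:=\Phi_n\circ g\circ\Phi_n^{-1}$ of $F_n$ fixing $w_n$. Passing to the cusp coordinate $\zeta=e^{2\pi i z}$ in an upper-half-plane model that sends $v_+\mapsto\infty$ and $g$ to $z\mapsto z+1$ turns the horoball into a punctured disk and, via this equivariance, makes $\Phi_n$ descend to a conformal isomorphism $\bar{\Phi}_n$ of the quotient punctured disk $\{0<|\zeta|<\rho_0\}$ onto the quotient cusp $\Phi_n(H)/\langle G_n\rangle$; by the removable singularity theorem $\bar{\Phi}_n$ extends across the puncture with $\bar{\Phi}_n(0)$ corresponding to $w_n$. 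The crucial point of this reduction is twofold: first, the tail of $\gamma$ becomes the \emph{fixed} radial profile $\zeta=\zeta(t)\to0$, with $\zeta(t)$ independent of $n$; second, the entrance of the cusp is anchored, since $\gamma_n(T_0)=\Phi_n(\gamma(T_0))\to\Phi_\infty(\gamma(T_0))$ is a definite interior point of $\Delta_\infty$.

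With this in place the uniform landing should follow from a conformal modulus estimate. The cusp collars $\Phi_n(H_{s_0}\setminus H_{s_1})$ (between horocycles at heights $s_0<s_1$) have quotient modulus $\tfrac{1}{2\pi}(s_1-s_0)$, independent of $n$, and this modulus tends to $\infty$ as $s_1\to\infty$. Combining the growth of these moduli with the anchoring of the entrance point $\gamma_\infty(T_0)$ and a Gr\"otzsch--Teichm\"uller modulus--diameter estimate, I would conclude that the Euclidean diameter of the deep cusp regions $\Phi_n(H_{s_1})$ tends to $0$ as $s_1\to\infty$, uniformly in $n$. In the benign (non-spiraling) situation this is exactly a Koebe distortion bound $|\gamma_n(t)-w_n|\le C\,|\bar{\Phi}_n'(0)|\,|\zeta(t)|$, where $|\bar{\Phi}_n'(0)|$ is controlled by Cauchy's estimate over the circle $\{|\zeta|=\rho_1\}$, which pulls back to a compact horocyclic arc on which $\Phi_n\to\Phi_\infty$ uniformly; the limiting cusp map $\bar{\Phi}_\infty$ is non-degenerate precisely because $\Phi_\infty$ is a genuine conformal isomorphism, so $\bar{\Phi}_\infty'(0)\in\C^*$ and the bound is uniform. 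Either way, $|\gamma_n(t)-w_n|\to0$ as $t\to\infty$ uniformly in $n$, and together with $w_n\to w_\infty$ this shows the positive accumulation set equals $\{w_\infty\}$.

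Finally, the two ``in particular'' assertions fall out directly: applying the same puncture analysis to $\Phi_\infty$ alone shows that $\gamma_\infty$ has a single limit point on each of its ends (the two ideal vertices of $\gamma$, which may be identified), and combining the compact convergence $\gamma_n\to\gamma_\infty$ on bounded $t$-intervals with the uniform control of the tails yields $\overline{\gamma_n}\to\overline{\gamma_\infty}$ in the Hausdorff metric. I expect the genuine obstacle to be the uniform diameter control of the cusp regions near the parabolic point $w_n$; its resolution rests on the fact that $\Gamma$, and hence the cusp coordinate, the profile $\zeta(t)$, and the collar moduli, are all held fixed along the sequence, which is what converts the delicate double limit into a uniform estimate. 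Secondary technical matters the full proof must dispatch are the equivariant (quotient) choice of the cusp-coordinate branch, the case $w_\infty=\infty$ (handled in a fixed M\"obius chart), and the verification that the ideal vertices of $\Pi$ correspond to genuine parabolic cusp points of $F_n$ with embedded non-spiraling cusp neighborhoods, as provided by the mating construction of Theorems~\ref{conf_mating_bs_poly_thm} and~\ref{b_inv_thm}.
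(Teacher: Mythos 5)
Your proposal correctly isolates the difficulty (the interchange of the limits $n\to\infty$ and $t\to\infty$), but the mechanism you propose to resolve it does not work, for a structural reason. The map $\Phi_n$ \emph{intertwines} $g$ with $G_n=\Phi_n\circ g\circ\Phi_n^{-1}$; it is equivariant, not invariant, so it does not descend to a $\C$-valued map $\overline{\Phi}_n$ on the quotient punctured disk. The quotient $\Phi_n(H)/\langle G_n\rangle$ is an \emph{abstract} Riemann surface: $G_n$ is a conformal automorphism of $\Delta_n$ only, it is not a M\"obius map, and there is no reason it extends holomorphically past $\partial\Delta_n$ near $w_n$ (the other side of the limit set carries the polynomial-like dynamics, which is not a continuation of $G_n$). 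Consequently the steps ``extend $\overline{\Phi}_n$ across the puncture'', ``Cauchy estimate for $|\overline{\Phi}_n'(0)|$ over $\{|\zeta|=\rho_1\}$'', and the Koebe bound have no meaning, and the modulus--diameter step fails as well: inside the cusp region there is \emph{no} essential annulus of $\widehat{\C}$ separating the deep part $\Phi_n(H_{s_1})$ from the rest, because $\Phi_n(H_{s_0}\setminus H_{s_1})$ is simply connected (a horostrip image) and the cusp region is pinched at $w_n$. The large-modulus collars exist only in the abstract quotient, and moduli of abstract annuli say nothing about Euclidean diameters.

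The deeper problem is that the data you use — fixed $\Gamma$, fixed cusp coordinate and collar moduli, equivariance, compact convergence, the anchored entrance point — is provably insufficient, even for a \emph{single} map. Let $U\subset\widehat{\C}$ be the complement of the closed unit disk $K$ together with a spiral accumulating on $\partial K$; then $U$ is simply connected and hyperbolic, so $\mathrm{Aut}(U)=\Phi\circ\mathrm{Aut}(\D)\circ\Phi^{-1}$ contains a parabolic automorphism fixing the spiraling prime end. All of your intrinsic cusp structure is present verbatim (the quotient is a punctured disk, the collars have exactly the standard moduli), yet the geodesic toward the parabolic point spirals and accumulates on all of $\partial K$: the cusp impression is a nondegenerate continuum. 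So no argument of the kind you outline can prove the lemma; some extrinsic input is indispensable, and your ``secondary technical matter'' of non-spiraling cusps uniformly in $n$ is in fact the whole content of the statement. The paper's proof supplies that input from the correspondence side, which your proposal never touches: it pulls the arcs $\gamma^{\pm}_n$ and the neighboring arcs $\alpha^{\pm}_n$ back to the correspondence plane via the uniformizing rational maps $\pmb{R}_n$ of degree $2d$, passes to the rescaling limits $R_p$ on the tree of spheres built earlier in the proof of Proposition~\ref{limit_corr_prop}, and uses the $\eta$-symmetry $R_{a,n}\circ\widetilde{\gamma}^{+}_n=\gamma^{+}_n=R_{a,n}\circ\eta\circ\widetilde{\alpha}^{+}_n$ together with the identity theorem: if the (connected) accumulation set were not a single point, one would get $R_p\equiv R_p\circ\eta$ on an entire sphere, contradicting $\gamma^{+}_\infty\neq\gamma^{-}_\infty$. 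It is the bounded degree/algebraicity that excludes the spiral pathology; nothing in your argument plays this role. (Two smaller points: your fixed-$n$ landing claim is true, but because $\partial\Omega_n$ is piecewise real-analytic, not because $\Phi_n$ is a conformal isomorphism; and the final assertion that the ``same puncture analysis applied to $\Phi_\infty$ alone'' gives landing of $\gamma_\infty$ is circular — a priori $\Delta_\infty$ could be spiral-like, and ruling this out is a conclusion of the lemma, not a general principle.)
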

\begin{proof}
Let $\gamma^{+} = \gamma$ and let $\gamma^{-}$ be the adjacent edge of $\Pi \subseteq \D$ such that $\gamma^{-}$ has an endpoint at $\lim_{t\to+\infty}\gamma(t)$. 
Let us assume there exists a parabolic element $g\in \Gamma$ so that $g(\gamma^+) = \gamma^-$. The other case can be proved similarly.
We parametrize the geodesics $\gamma^\pm$ by arclength, $\gamma^{\pm}(t), t\in (-\infty, \infty)$, so that $g(\gamma^{+}(t)) = \gamma^{-}(t)$, and $\lim_{t\to\infty} \gamma^{+}(t) = \lim_{t\to\infty} \gamma^{-}(t) \in \partial \D$.
Similarly, let $\alpha^-(t) = g(\gamma^{-}(t))$ and $\alpha^+(t) = g^{-1}(\gamma^{+}(t))$ be parametrizations of the adjacent edges of the neighboring fundamental domains $g(\Pi), g^{-1}(\Pi)$ (see Figure~\ref{fig:Pi}).
Note that 
\begin{equation}
    \lim_{t\to\infty} d_\D(\alpha^{+}(t), \gamma^{+}(t)) = 
\lim_{t\to\infty} d_\D(\gamma^{+}(t), \gamma^{-}(t)) = \lim_{t\to\infty} d_\D(\gamma^{-}(t), \alpha^{-}(t)) = 0.\label{eqn:hd}
\end{equation}
Let 
$$
\gamma^\pm_\bullet(t) := \Phi_\bullet(\gamma^\pm(t)),\ \alpha^\pm_\bullet(t) := \Phi_\bullet(\alpha^\pm(t)),\quad \bullet= n, \infty.
$$
By equation~\eqref{eqn:hd} and by comparing the Euclidean distance and hyperbolic distance, we conclude that there exists $\delta_t \to 0$ as $t \to \infty$ so that
\begin{equation}
d_\C(\alpha^+_n(t), \gamma^{+}_n(t)), d_\C(\gamma^{+}_n(t), \gamma^{-}_n(t)), d_\C(\gamma^{-}_n(t), \alpha^-_n(t)) \leq \delta_t.\label{eqn:ed}
\end{equation}
\begin{figure}[ht]
\captionsetup{width=0.98\linewidth}
\begin{tikzpicture}
\node[anchor=south west,inner sep=0] at (0,0) {\includegraphics[width=0.96\textwidth]{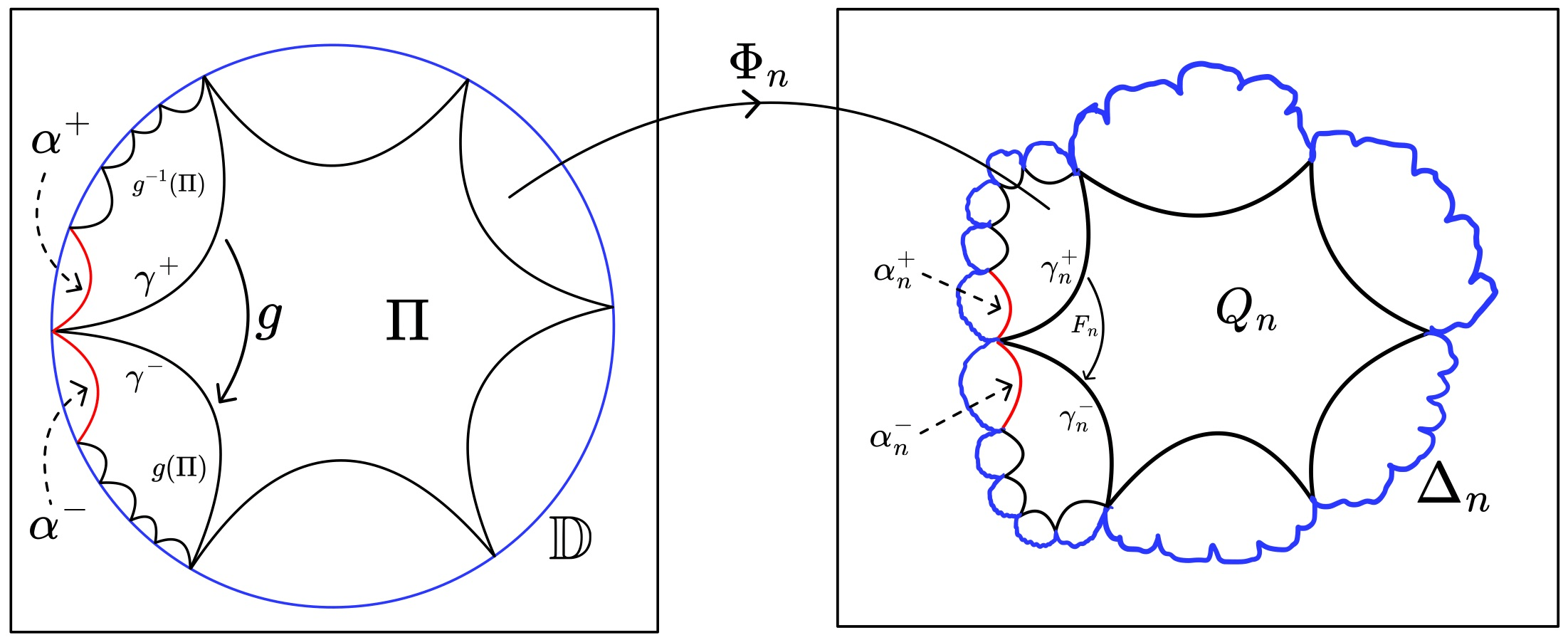}};
\node[anchor=south west,inner sep=0] at (2.8,-6.6) {\includegraphics[width=0.75\textwidth]{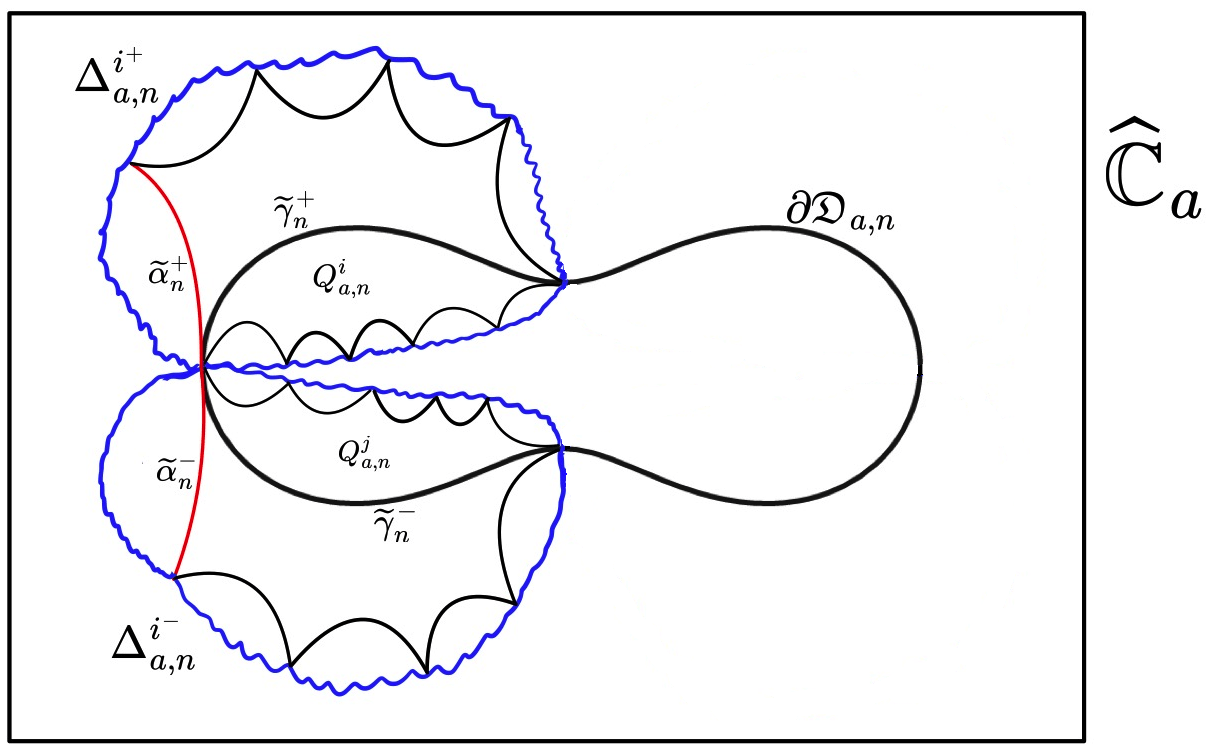}};
\draw [->, line width=0.4mm] (7.2,-1.2) to (8,0.4);
\node at (8.12,-0.4) {$R_{a,n}$};
\end{tikzpicture}
\caption{Illustrated is the proof of Lemma~\ref{lem_3}. The domain $\mathfrak{D}_{a,n}$ is the exterior of the black curve marked as $\partial\mathfrak{D}_{a,n}$.}
\label{fig:Pi}
\end{figure}

By equation~\eqref{eqn:ed}, the accumulation sets of $(\gamma^{\pm}_n(t))_n$ and $(\alpha^\pm_n(t))_n$ are all the same.
After passing to a subsequence, we assume that there exists some $t_n \to \infty$ so that $\gamma^{\pm}_{n}(t_{n})$ converges.

Let $\widetilde{\alpha}^\pm_n(t) = \left(\pmb{R}_n|_{\pmb{\mathfrak{D}}_n}\right)^{-1}(\alpha^\pm_n(t))$.
Similarly, let $\widetilde{\gamma}^\pm_n\subset\partial\pmb{\mathfrak{D}}_n$ be the component of $\pmb{R}_n^{-1}(\gamma^\pm_n)$ that is contained in the same component of $\pmb{R}_n^{-1}(\Delta_n)$ as $\widetilde{\alpha}^\pm_n$ (see Figure~\ref{fig:Pi}).
The arcs $\widetilde{\alpha}^\pm_\infty(t)$ and $\widetilde{\gamma}^\pm_\infty(t)$ are defined similarly.

Let us assume that $\widetilde{\alpha}^\pm_n$ are contained in the same Riemann sphere $\widehat{\C}_a, a\in \RV$. The other cases can be proved similarly.
    Let $\Delta_{a,n}^{i^\pm}$ be the components of $\pmb{R}_n^{-1}(\Delta_n)$ containing $\widetilde{\alpha}^\pm_n$, where $i^\pm\in\{1,\cdots,\delta(a)\}$.
    We consider two cases.
\smallskip

\noindent\textbf{Case (a):} $\widetilde{\alpha}^\pm_\infty$ \textbf{are contained in the same Riemann sphere} $\widehat{\C}_p$, $p \in\RV_a\subset \widetilde{\RV}$. 

\noindent Note that under the rescaling $\mathscr{M}_{a,n}^p$ associated with $\widehat{\C}_p$, the sequence $R_{a,n}$ converges to $R_p\equiv\pmb{R}_\infty\vert_{\widehat{\C}_p}$. To simplify the notation, we assume that $\mathscr{M}_{a,n}^p(z) = z$. 
Denote the accumulation set of $(\widetilde{\alpha}^\pm_n(t))_n$ by $L^\pm$. Then the accumulation sets of $(\widetilde{\gamma}^{\pm}_n(t))_n$ are $L^\pm$ as well. 
    Note that $\eta(L^\pm) = L^\mp$, as $\eta(\widetilde{\gamma}^{\pm}_n(t)) = \widetilde{\gamma}^{\mp}_n(t)$.
    Recall that $A_\Gamma$ is the Bowen-Series map.
    Since $A_\Gamma(\alpha^+) = A_\Gamma(\gamma^-) = \gamma^+$, and hence $F_n(\alpha_n^+) = F_n(\gamma_n^-) = \gamma_n^+$, we conclude that 
    $$
    R_{a,n}(\widetilde{\gamma}^{+}_n(t))= \gamma^+_n(t) = R_{a,n}(\eta(\widetilde{\alpha}^{+}_n(t))).
    $$
Let $x = \lim_{k}\widetilde{\gamma}^{+}_{n_k}(t_{n_k}) = \lim_{k}\widetilde{\alpha}^{+}_{n_k}(t_{n_k}) \in L^+$, and $\eta(x) = \lim_{k}\eta(\widetilde{\alpha}^{+}_{n_k}(t_{n_k}))$. Thus, we have $R_p(x) = R_p(\eta(x))$. 

We now argue by contradiction.
    Suppose  that $L^+$ is not a single point.
    Since $L^+$ is closed and connected, $L^+- (\Xi_p \cup \eta(\Xi_p))$ has an accumulation point. Thus, $R_p(z) = R_p(\eta(z))$ for all $z \in \widehat{\C}_p$. But this is not possible as $\eta(\widetilde{\gamma}^+_\infty) = \widetilde{\gamma}^-_\infty$, but their images $R_p(\widetilde{\gamma}^\pm_\infty) = \gamma_\infty^\pm$ are different.
    Thus, the accumulation set $L^+$ is a singleton set, denoted by $\{c\}$. 
    Since $L^+ \cup L^-$ is connected, $L^+ = L^- = \{c\}$ and $c = \eta(c) \in\partial\Delta_{a,\infty}^{i^+}\cap\partial\Delta_{a,\infty}^{i^-}$, where $\Delta_{a,\infty}^{i^\pm}$ are Carath{\'e}odory limits of $(\mathscr{M}_{a,n}^p)^{-1}(\Delta_{a,n}^{i^\pm})$. Note that $\Delta_{a,\infty}^{i^+}$ and $\Delta_{a,\infty}^{i^-}$ are mapped univalently onto $\Delta_\infty$ under $R_p$. It follows that the local degree of the rational map $R_p$ at $x$ is at least $2$. Thanks to Lemma~\ref{lem_2}, we conclude that $x\notin \Xi_p$.
    Thus, $c= \lim_n c_n$, where $\{c_n\} = \overline{\widetilde{\gamma}^{+}_n} \cap \overline{\widetilde{\gamma}^{-}_n}$ is the critical point associated with the corresponding cusp $R_{a,n}(c_n)$ on $\partial Q_n$.
    Therefore, the accumulation set of $(\gamma^\pm_n(t))_n$ is also a single point; namely, $R_{p}(c)$.
\smallskip

\noindent\textbf{Case (b):} $\widetilde{\alpha}^\pm_\infty$ \textbf{are contained in two different Riemann spheres} $\widehat{\C}_{p^\pm}, {p^\pm} \in \widetilde{\RV}$.

\noindent   Let $x^\pm \in \Xi_{p^\pm}$ be the singular points associated with the spheres $\widehat{\C}_{p^\mp}$ respectively.
    Let $L^\pm$ be the accumulation set of $(\widetilde{\gamma}^{\pm}_n(t))_n$ (with respect to the appropriate rescalings) in $\widehat{\C}_{p^\pm}$ .
    Since $\overline{\widetilde{\gamma}^{+}_n} \cup \overline{\widetilde{\gamma}^{+}_n}$ is connected, $x^\pm \in L^\pm$.

    We again argue by contradiction.
    Suppose  that $L^\pm \neq \{x^\pm\}$. Then a similar argument shows that for all $z \in \widehat{\C}_{p^\pm}$,
    $$
    R_{p^\mp} \circ \eta(z) = R_{p^\pm}(z).
    $$
    But this is a contradiction as $\eta(\widetilde{\gamma}^+_\infty) = \widetilde{\gamma}^-_\infty$, but their images $R_{p^\pm}(\widetilde{\gamma}^\pm_\infty) = \gamma_\infty^\pm$ are different.
    Therefore, the accumulation set of $(\gamma^\pm_n(t))_n$ is also a single point; namely, $R_{p^+}(x^+)=R_{p^-}(x^-)$.
\end{proof}

As a consequence of Lemma~\ref{lem_3}, we have the following facts. Recall that for $a\in\RV$, $p\in\RV_a$, the M{\"o}bius maps $\mathscr{M}_{a,n}^p$ are rescalings associated with~$\RT_a$.
\begin{enumerate}[leftmargin=*]
    \item The sets $\overline{\Omega_n}$ converge in the Hausdorff topology to some pinched polygon $\overline{\Omega_\infty}$ with piecewise (non-singular) real-analytic boundary, where $\Omega_\infty=\Int{\overline{\Omega_\infty}}$ is the union of all Carath{\'e}odory limits of the components of~$\Omega_n$. 

    \item For $a\in\RV$, $p\in\RV_a$, the closed Jordan disks $\left(\mathscr{M}_{a,n}^p\right)^{-1}\left(\overline{\mathfrak{D}_{a,n}}\right)$ converge to a closed Jordan disk $\overline{\mathfrak{D}_{p,\infty}}\subset\widehat{\C}_p\subset\widehat{\C}^{\widetilde{\RV}_a}$ with piecewise (non-singular) real-analytic boundary. The union of these pinched polygons $\overline{\mathfrak{D}_{p,\infty}}$, $p\in\widetilde{\RV}$, is a pinched polygon $\overline{\pmb{\mathfrak{D}}_\infty}\subset\widehat{\C}^{\widetilde{\RV}}$, where $\overline{\mathfrak{D}_{p,\infty}}, \overline{\mathfrak{D}_{q,\infty}}$ ($p\neq q\in\widetilde{\RV}$) can touch only at a singular point of $\widehat{\C}^{\widetilde{\RV}}$.

    \item $\widetilde{\pmb{\eta}}(\pmb{\mathfrak{D}}_\infty)=\widehat{\C}^{\widetilde{\RV}}-\overline{\pmb{\mathfrak{D}}_\infty}$ and $\pmb{R}_\infty:\overline{\pmb{\mathfrak{D}}_\infty}\to\overline{\Omega_\infty}$ is a homeomorphism (recall that $\pmb{\eta}(\pmb{\mathfrak{D}}_n)=\widehat{\C}-\overline{\pmb{\mathfrak{D}_n}}$ and $\pmb{R}_n:\overline{\pmb{\mathfrak{D}}_n}\to\overline{\Omega_n}$ is a homeomorphism).

    \item The B-involutions $F_n\equiv \pmb{R}_n\circ\pmb{\eta}\circ\left(\pmb{R}_n\vert_{\overline{\pmb{\mathfrak{D}}_n}}\right)^{-1}:\overline{\Omega_n}\to\widehat{\C}$ converge compactly to the B-involution $F_\infty:= \pmb{R}_\infty\circ\widetilde{\pmb{\eta}}\circ\left(\pmb{R}_\infty\vert_{\overline{\pmb{\mathfrak{D}}_\infty}}\right)^{-1}:\overline{\Omega_\infty}\to\widehat{\C}$.
\end{enumerate}
\smallskip

\noindent{\textbf{Regularity of $\widetilde{\kappa}$.}} 
Note that the degree of $\kappa$ and $\widetilde{\kappa}$ are both equal to $2d$.
We say that a critical point of $\pmb{R}_\bullet$ is {\em free} if it is not in $\overline{\pmb{\mathfrak{D}}_\bullet}$.
We call the other critical points \emph{cusp critical points} as they are mapped to cusps of $\partial\Omega_\bullet$ under $\pmb{R}_\bullet$ (recall that by Lemma~\ref{lem_2}, the singular points of $\widehat{\C}^{\widetilde{\RV}}$ are not critical points for any of the rational maps $R_p=\pmb{R}_\infty\vert_{\widehat{\C}_p}:\widehat{\C}_p\to\widehat{\C}$, $p\in\widetilde{\RV}$).

Let us list the free critical points of $\pmb{R}_n$ by $c_{1,n}, \cdots, c_{2d-2,n}$, and the cusp critical point by $\widehat{c}_{1,n},\cdots, \widehat{c}_{k,n}$, where $k = 2(d+1 - |\RV|)$ (by the proof of \cite[Proposition~15.6]{LLM24}).
After passing to a subsequence, we may assume that for all $a \in \RV$ and $p\in\RV_a\subset\widetilde{\RV}$, the limits 
$$
\lim_p c_{i,n}\coloneqq \lim_{n\to\infty} \left(\mathscr{M}_{a,n}^p\right)^{-1}(c_{i,n}) \text{ and }\lim_p \widehat{c}_{j,n}\coloneqq \lim_{n\to\infty} \left(\mathscr{M}_{a,n}^p\right)^{-1}(\widehat{c}_{j,n})  
$$
exist in $\widehat{\C}_p$ whenever $c_{i,n}, \widehat{c}_{j,n}\in\widehat{\C}_a$.
Note that by Lemma~\ref{lem_2}, we have for each sequence $c_{i,n}$ (or $\widehat{c}_{j,n}$) that
\begin{enumerate}[leftmargin=*]
    \item either there exists a unique $p \in \widetilde{\RV}$ such that $\displaystyle\lim_p c_{i,n}\in\widehat{C}_p - \Xi_p$; or 
    \item there exists a unique pair of adjacent vertices $p, q \in \widetilde{\RV}$ so that $\displaystyle\lim_p c_{i,n} = \xi_p(v)$ and $\displaystyle\lim_q c_{i,n} = \xi_q(w)$, where $v \in T_p\widetilde{\RT}$ is the tangent vector in the direction of $q$ and $w \in T_q\widetilde{\RT}$ is the tangent vector in the direction of $p$.
\end{enumerate}
We say $c_{i,n}$ (or $\widehat{c}_{j,n}$) converges to a non-singular point in $\widehat{\C}_p$ in the former case, and converges to the edge $[p,q]$ in the latter case.

Let $[p,q] \subseteq \widetilde{\RT}$ be an edge that projects to a single vertex of $\RT$ under~$\pi$. Then by Lemma~\ref{lem_2}, there are exactly two sequences of critical points converging to the edge $[p,q]$. Indeed, setting $y:=R_p(\xi_p(v))=R_q(\xi_q(w))\in\partial\Omega_\infty$, we see by Lemma~\ref{lem_2} that for $\epsilon>0$ small, there is a component $W_p\ni\xi_p(v)$ of $R_p^{-1}(B(y,\epsilon))$ and a component $W_q\ni\xi_q(w)$ of $R_q^{-1}(B(y,\epsilon))$ that map conformally onto $B(y,\epsilon)$ under $R_p$ and $R_q$, respectively. By Lemma~\ref{lem_2}, for $n$ large enough, $R_n^{-1}(B(y,\epsilon))$ has a component that corresponds to $W_p\cup W_q$ (under the rescalings), and this component is a conformal annulus that maps onto $B(y,\epsilon)$ as a $2:1$ branched cover under $R_n$. Hence, this component contains exactly two critical points of $R_n$ that converge to $\xi_p(v), \xi_q(w)$ under the rescalings.
By the proof of Lemma~\ref{lem_3}, these two sequences of critical points must be cusp critical points.
Therefore, we have that the number of free critical points and cusp critical points for $\pmb{R}_\infty$ are $2d-2$ and $2(d+1 - |\widetilde{\RV}|)$ respectively.

\begin{lem}\label{lem_4}
$\widetilde{\kappa}$ is regular in the sense of Definition~\ref{simple_sign_def}.
\end{lem}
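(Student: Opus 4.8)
The plan is to argue by contradiction, ruling out the existence of an exceptional vertex for the domination $\kappa \prec_\pi \widetilde{\kappa}$ constructed in the proof of Proposition~\ref{limit_corr_prop}. First I would use the simplicity of $\widetilde{\kappa}$, just established in Lemma~\ref{lem_2}, to pin down exactly what an exceptional vertex $c\in\RV$ must look like. Since $\overline{\widetilde{\delta}}_p(v)=1$ for every $p\in\widetilde{\RV}$ and every tangent direction $v$, the full-ramification conditions $\overline{\widetilde{\delta}}_a(v_b)=\widetilde{\delta}(a)$ and $\overline{\widetilde{\delta}}_b(v_a)=\widetilde{\delta}(b)$ of Definition~\ref{reg_dom_def} force $\widetilde{\delta}(a)=\widetilde{\delta}(b)=1$. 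Combined with the domination identity $\delta(c)=\widetilde{\delta}(a)+\widetilde{\delta}(b)$, this says that an exceptional vertex is precisely a degree-two vertex $c$ whose fiber $\RV_c=\{a,b\}$ consists of two degree-one spheres joined by the edge $[a,b]$. Hence it suffices to show that a degree-two sphere cannot degenerate into two degree-one spheres across a single node.

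The heart of the argument is a Riemann--Hurwitz obstruction on the degree-two map $R_{c,n}=\pmb{R}_n\vert_{\widehat{\C}_c}$. A degree-two rational map has exactly two distinct (simple) critical points, and its two critical values are necessarily distinct: over a critical value the entire fiber must be a single point of local degree two, so two distinct critical points lying over a common value would force the fiber there to have total multiplicity at least four, which is impossible in degree two. On the other hand, by the critical-point bookkeeping carried out in the discussion preceding this lemma, exactly two sequences of critical points of $\pmb{R}_n$ converge to the edge $[a,b]$; since $R_{c,n}$ possesses only two critical points in all, these two must be both of them. Thus both critical points of $R_{c,n}$ escape into the node, one towards $\xi_a(v_b)$ and the other towards $\xi_b(v_a)$ under the respective rescalings, and by the proof of Lemma~\ref{lem_3} they are cusp critical points.

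To close the argument I would trace these two critical points back through the uniformization $\Phi_n:(\D,\Pi)\longrightarrow(\Delta_n,Q_n)$. Because the two limiting spheres $\widehat{\C}_a,\widehat{\C}_b$ are distinct, the node $[a,b]$ falls into Case~(b) of Lemma~\ref{lem_3}: the two preimage arcs $\widetilde{\gamma}^{+}_n,\widetilde{\gamma}^{-}_n$ land on $\widehat{\C}_a,\widehat{\C}_b$ respectively, with $\widetilde{\gamma}^{-}_n=\eta(\widetilde{\gamma}^{+}_n)$, and $R_{c,n}$ carries them onto the two edges $\gamma^{+}_n,\gamma^{-}_n$ of $\partial Q_n$, which are images of the adjacent edges $\gamma^{+},\gamma^{-}$ of $\Pi$ sharing the ideal vertex $V$. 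The two escaping critical points $c_{1,n},c_{2,n}$ are the endpoints of $\widetilde{\gamma}^{+}_n,\widetilde{\gamma}^{-}_n$ at this shared vertex, so $R_{c,n}(c_{1,n})=R_{c,n}(c_{2,n})=\Phi_n(V)$, the single cusp of $\partial Q_n$ at which $\gamma^{+}_n$ and $\gamma^{-}_n$ meet. Moreover $c_{1,n}\neq c_{2,n}$: if they coincided they would be the single $\eta$-fixed point of the arc configuration, putting us in Case~(a) rather than Case~(b), and leaving $R_{c,n}$ with only one critical point, impossible in degree two. This yields two distinct critical points of $R_{c,n}$ over a common critical value $\Phi_n(V)$, contradicting the degree-two constraint. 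Therefore no exceptional vertex exists and $\pi$ is regular.

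The step I expect to require the most care is the identification of $c_{1,n}$ and $c_{2,n}$ as the endpoints, \emph{at one and the same cusp} $\Phi_n(V)$, of $\eta$-related preimage arcs of two adjacent edges of $Q_n$; that is, confirming that the node genuinely arises from Case~(b) and that both escaping critical points sit over the same ideal vertex of $\Pi$ rather than over two different cusps. This rests on the precise combinatorics of the adjacent geodesics $\gamma^{\pm}$ set up in the proof of Lemma~\ref{lem_3}, together with the fact that at finite $n$ the endpoints of $\widetilde{\gamma}^{\pm}_n$ lying over $\Phi_n(V)$ really are the critical points escaping to the node — the criticality being carried entirely into the node in the limit, while by Lemma~\ref{lem_2} the node itself is \emph{not} a critical point of the limiting maps $R_a,R_b$. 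Once this combinatorial identification is secured, the Riemann--Hurwitz contradiction is immediate.
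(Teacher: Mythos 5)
Your opening reduction is exactly the paper's: using simplicity (Lemma~\ref{lem_2}) you convert an exceptional vertex into a degree-two sphere $\widehat{\C}_c$ degenerating into two degree-one spheres, and you correctly deduce from the discussion preceding the lemma that both critical points of $R_{c,n}$ would then have to be cusp critical points converging to the node. At precisely this point the paper concludes by importing the structural fact from \cite[\S 15.1, \S 15.2]{LLM24} that a degree-two piece of the uniformizing map admits at most one cusp critical point. Your attempt to replace that citation with a self-contained Riemann--Hurwitz contradiction fails, and it fails at the step you yourself flagged as the delicate one.

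The configuration you posit --- $c_{1,n}\neq c_{2,n}$ with $R_{c,n}(c_{1,n})=R_{c,n}(c_{2,n})=\Phi_n(V)$ --- is impossible for a reason internal to the setup. Cusp critical points lie on $\partial\pmb{\mathfrak{D}}_n$ (they are the non-free critical points, and $\pmb{R}_n$ is conformal on the interior $\pmb{\mathfrak{D}}_n$), and $\pmb{R}_n:\overline{\pmb{\mathfrak{D}}_n}\to\overline{\Omega_n}$ is a homeomorphism (Theorem~\ref{b_inv_thm}), hence injective there; so the cusp $\Phi_n(V)$ has exactly \emph{one} preimage on $\partial\pmb{\mathfrak{D}}_n$. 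Concretely, since $\eta(\widetilde{\gamma}^{\pm}_n(t))=\widetilde{\gamma}^{\mp}_n(t)$ and both arcs lie on $\partial\pmb{\mathfrak{D}}_n$, at every finite $n$ the two arcs share a \emph{single} common endpoint $c_n$ with $\eta(c_n)=c_n$. The Case~(a)/Case~(b) dichotomy in Lemma~\ref{lem_3} is a statement about where this one point and the two arcs go under the rescaling limits (into the interior of one sphere, versus into the node between two spheres); it is not a statement that the finite-$n$ picture splits into two distinct critical points over one cusp, so your argument that ``Case~(b) forces $c_{1,n}\neq c_{2,n}$'' has no content at finite $n$. The second critical point in the annulus around the node is attached to a \emph{different} cusp of $\partial\Omega_n$: node formation corresponds to two \emph{distinct} cusps of $\partial\Omega_n$ colliding into a pinched point in the limit --- which is exactly consistent with your (correct) observation that the two critical values of a degree-two map must be distinct. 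Thus your construction produces no contradiction at all; to rule out the exceptional picture one genuinely needs to know that a degree-two component of the uniformizing map can carry at most one cusp on its boundary, which is the lamination-combinatorial input the paper takes from \cite[\S 15.1, \S 15.2]{LLM24} and which your Riemann--Hurwitz observation cannot replace.
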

\begin{proof}
    {Suppose that $\widetilde{\kappa} = \kappa$, then it is regular by Proposition~\ref{total_space_regular_lem}.
    Thus, we assume $\widetilde{\kappa}\neq \kappa$.
    By Remark~\ref{rem:regular} and Lemma~\ref{lem_2}, it suffices to verify  condition~\eqref{reg:item:1} in the definition of regularity. }
    By way of contradiction, suppose that there exists $a \in \RV$ such that $\RV_a\subset\widetilde{\RV}$ consists of exactly two vertices $p, q$; and $\overline{\widetilde{\delta}}_{p}(v_q) = \widetilde{\delta}(p)$ and $\overline{\widetilde{\delta}}_{q}(v_p) = \widetilde{\delta}(q)$, where $v_q \in T_{p}\RT_a$ is the tangent vector in the direction of $q$, and $v_p \in T_{q}\RT_a$ is the tangent vector in the direction of $p$. By Lemma~\ref{lem_2}, $\overline{\widetilde{\delta}}_{q}(v_p)=\overline{\widetilde{\delta}}_{q}(v_p)=1$, and hence $\deg(R_p)=\widetilde{\delta}(p)=\widetilde{\delta}(q)=\deg(R_q)=1$. It follows that $R_{a,n}:\widehat{\C}_a\to\widehat{\C}$ is a degree $2$ rational map that degenerates to a pair of M{\"o}bius maps $\widehat{\C}_p\to\widehat{\C}$ and $\widehat{\C}_q\to\widehat{\C}$. Since $\deg(R_{a,n})=2$, it follows from the analysis of \cite[\S 15.1, \S 15.2]{LLM24} that $R_{a,n}$ has at most one cusp critical point. But this contradicts the discussion preceding this lemma. { It follows that $\widetilde{\kappa}$ is regular.}
\end{proof}

\noindent\textbf{Finishing the proof of Proposition~\ref{limit_corr_prop}.}
{ Since $\widetilde{\kappa}$ is regular (by Lemma~\ref{lem_4}), if $\widetilde{\RT}$ is non-trivial, then $\pmb{R}_\infty$ is regular. Otherwise, both $\RT$ and $\widetilde{\RT}$ are trivial, and the same argument in the last paragraph of the proof of Theorem \ref{pre_comp_thm_1} gives that the two fixed points $\pm 1$ of $\eta$ are simple critical points for the limiting map $\pmb{R}_\infty$, proving regularity of $\pmb{R}_\infty$. Thus, it now follows from the construction of the rational map $\pmb{R}_\infty:(\widetilde{\RT}, \widehat{\C}^{\widetilde{\RV}})\to\widehat{\C}$ that the sequence $\pmb{R}_n\in\Rat_{\kappa}^{reg}(\C)$ converges to $\pmb{R}_\infty\in\Rat_{\widetilde{\kappa}}^{reg}(\C)\subset\mathcal{CV}^{reg}_{2d}$ (see Definition~\ref{char_var_top_def}  for the definition of the convergence). By Proposition~\ref{prop:convergenceCorrespondence} and the construction of the correspondence $\mathfrak{C}_\infty$, we now have that $\mathfrak{C}_n\to\mathfrak{C}_\infty$.}
\end{proof}

{At this point, the main result of this section is only a short distance away.}

\begin{proof}[Proof of Theorem~\ref{vert_slice_comp_thm}]
We continue to use the notation of Proposition~\ref{limit_corr_prop}.
To complete the proof of the theorem, we need to justify that the subsequential limit { $\mathfrak{C}_\infty\in\mathcal{CV}^{reg}_{2d}$} of $\{\mathfrak{C}_n\}$ lies in $\mathscr{B}_\Gamma$.
By definition, this task is equivalent to showing that the B-involution $F_\infty:= \pmb{R}_\infty\circ\widetilde{\pmb{\eta}}\circ\left(\pmb{R}_\infty\vert_{\overline{\pmb{\mathfrak{D}}_\infty}}\right)^{-1}:\overline{\Omega_\infty}\to\widehat{\C}$ lies in $\mathscr{B}_\Gamma$ (see \S~\ref{identify_b_inv_corr_subsec}).

To this end, recall that $F_\infty$ is the limit of the B-involutions
$F_n$ (in the Carath{\'e}odory topology on holomorphic functions). Hence, the conformal conjugacies $\Phi_n:(\D,\Pi)\to(\Delta_n,Q_n)$ between the Bowen-Series map $A_\Gamma$ and $F_n$ converge compactly to a conformal conjugacy $\Phi_\infty:(\D,\Pi)\to(\Delta_\infty,Q_\infty)$ between $A_\Gamma$ and $F_\infty$ (where $\Delta_\bullet$ is the escaping set of $F_\bullet$ and $Q_\bullet$ is the `desingularized' complement of the inversive multi-domain $\Omega_\bullet$). In other words, the restriction $F_\infty:\overline{F_\infty^{-1}(\Omega_\infty)}\to\overline{\Omega_\infty}$ is a degenerate polynomial-like map having $A_\Gamma$ as its external map. This completes the proof.
\end{proof}

\begin{remark}
{Recall from Remark~\ref{fixed_ray_lami_rem} that a finite geodesic lamination, termed as the coarse lamination, models the topology of the domain of definition of a B-involution in $\mathscr{B}_\Gamma$ (cf. \cite[\S 15.1]{LLM24}).} It is easy to see that the tree $\widetilde{\RT}$ (associated with the characteristic data $\widetilde{\kappa}$ of the limiting correspondence $\mathfrak{C}_\infty$) constructed in the proof of Proposition~\ref{limit_corr_prop} is dual to the coarse lamination of the B-involution~$F_\infty$.
\end{remark}

\begin{remark}\label{rmk:orbifoldModification_1}
The proof of Theorem~\ref{vert_slice_comp_thm} goes through unchanged in the genus $0$ orbifold case and yields compactness of the corresponding external fibers (see \cite[\S 15]{LLM24} for the definition of external fibers of general genus $0$ orbifolds).
\end{remark}


\begin{thebibliography}{BLLM24}

\bibitem[Ago04]{Ago04}
I.~Agol.
\newblock {\em Tameness of hyperbolic 3-manifolds}.
\newblock \url{https://arxiv.org/abs/math/0405568}, 2004.

\bibitem[Arf17]{arfeux}
M.~Arfeux. 
\newblock {\em Dynamics on trees of spheres.}
\newblock J. Lond. Math. Soc. (2) 95, no. 1, 177--202, 2017. 

\bibitem[BCM12]{minsky-elc2}
J.~Brock, R.~Canary, Y.~Minsky. 
\newblock {\em The classification of Kleinian surface groups, II: The ending lamination conjecture.}
\newblock Ann. of Math. (2) 176, no. 1, 1--149, 2012. 

\bibitem[BR10]{BR10}
M.~Baker and R.~Rumely.
\newblock {\em Potential theory and dynamics on the {B}erkovich projective line}.
\newblock Math. Surveys Monogr., 159. American Mathematical Society, Providence, RI, 2010. xxxiv+428 pp.

\bibitem[Ber60]{Ber60}
L.~Bers.
\newblock{Simultaneous uniformization}.
\newblock {\em Bull. Amer. Math. Soc.}, 66 94--97, 1960.

\bibitem[Ber70]{Ber70}
L.~Bers.
\newblock{On boundaries of {T}eichm\"{u}ller spaces and on {K}leinian groups. {I}}.
\newblock {\em Ann. of Math. (2)}, 91:570--600, 1970.

\bibitem[Ber81]{Ber81}
L.~Bers.
\newblock The action of the modular group on the complex boundary.
\newblock In {\em Riemann surfaces and related topics: Proceedings of the 1978 Stony Brook Conference (State Univ. New York, Stony Brook, N.Y., 1978)}, pp. 33-–52,
Ann. of Math. Stud., No. 97, Princeton University Press, Princeton, NJ, 1981.

\bibitem[BJ97]{BJ97}
C.~Bishop and P.~Jones.
\newblock Hausdorff dimension and Kleinian groups.
\newblock {\em Acta Math.}, 179:1--39, 1997.

\bibitem[BS79]{BS79}
R.~Bowen and C.~Series.
\newblock Markov maps associated with {F}uchsian groups.
\newblock {\em Inst. Hautes \'{E}tudes Sci. Publ. Math.}, 50:153--170, 1979.

\bibitem[BF14]{BF14} 
B.~Branner and N.~Fagella, 
\newblock \emph{Quasiconformal Surgery in Holomorphic Dynamics}. 
\newblock Cambridge Studies in Advanced Mathematics, vol.\ 141, Cambridge University Press, Cambridge, 2014. 

\bibitem[BCM12]{BCM12}
J.~F. Brock, R.~D. Canary, and Y.~N. Minsky.
\newblock The classification of {K}leinian surface groups {II}: {T}he
  ending lamination conjecture.
\newblock {\em Ann. of Math. (2)}, 176:1--149, 2012.

\bibitem[Br11]{Br11}
K.~Bromberg.
\newblock The space of Kleinian punctured torus groups is not locally connected. 
\newblock {\em Duke Math. J.}, (156) 387--427, 2011. 

\bibitem[BF03]{BF03}
S.~Bullett and M.~Freiberger.
\newblock Hecke groups, polynomial maps and matings.
\newblock {\em Int. J. Mod. Phys. B}, 17:3922--3931, 2003.

\bibitem[BF05]{BF05}
S.~Bullett and M.~Freiberger.
\newblock Holomorphic correspondences mating {C}hebyshev-like maps with {H}ecke groups.
\newblock {\em Ergodic Theory Dynam. Systems}, 25:1057--1090, 2005.

\bibitem[BH07]{BH07}
S.~Bullett and P.~Ha{\"i}ssinsky.
\newblock Pinching holomorphic correspondences. 
\newblock {\em Conform. Geom. Dyn.}, 11:65--89, 2007.

\bibitem[BH00]{BH00}
 S. Bullett, and W. Harvey.
\newblock Mating quadratic maps with Kleinian groups via quasiconformal surgery.
\newblock {\em Electronic Research Announcements of the AMS}, 6: 21--30, 2000.     
     
\bibitem[BL20]{BL20}
S.~Bullett and L.~Lomonaco.
\newblock Mating quadratic maps with the modular group {II}.
\newblock {\em Invent. Math.}, 220:185--210, 2020.

\bibitem[BL24]{BL24} 
S.~Bullett and L. Lomonaco.
\newblock Mating quadratic maps with the modular group III: the modular Mandelbrot set.
\newblock {\em Adv. Math.}, 458:Paper No. 109956, 48 pp., 2024.

\bibitem[BLLM24]{BLLM24}
S.~Bullett, L.~Lomonaco, M.~Lyubich, and S.~Mukherjee.
\newblock Mating parabolic rational maps with Hecke groups.
\newblock {\em Proc. London Math. Soc. (3)}, vol. 132, no. 3, e70132, 2026.

\bibitem[BP94]{BP94}
S.~Bullett and C.~Penrose.
\newblock Mating quadratic maps with the modular group.
\newblock {\em Invent. Math.}, 115:483--511, 1994.

\bibitem[CG06]{CG06}
D.~Calegari and D.~Gabai.
\newblock Shrinkwrapping and the taming of hyperbolic 3-manifolds.
\newblock {\em J. Amer. Math. Soc.}, 19:385--446, 2006.

\bibitem[CT07]{CTpub}
J.~Cannon and W.~P. Thurston.
\newblock Group invariant {P}eano curves.
\newblock {\em Geom. Topol.} 11:1315--1355, 2007.

\bibitem[DeM05]{DeM05}
L.~DeMarco. 
\newblock Iteration at the boundary of the space of rational maps. 
\newblock {\em Duke Math. J.}, 130:169--197, 2005.

\bibitem[DH85]{DH85}
A.~Douady and J.~H.~Hubbard.
\newblock On the dynamics of polynomial-like mappings.
\newblock {\em Ann. Sci. {\'E}cole Norm. Sup. (4)}, 18:287--343, 1985.

\bibitem[DF14]{DF14}
L.~DeMarco and X.~Faber.
\newblock Degenerations of complex dynamical systems.
\newblock {\em Forum Math. Sigma}, 2:e6, 2014.

\bibitem[Fat19-20]{Fat19}
P.~Fatou.
\newblock Sur les \'{e}quations fonctionnelles.
\newblock {\em Bull. Soc. Math. France}, 47:161--271, 1919; 48:33--94 and 48:208--314, 1920.

\bibitem[Fat26]{Fat26}
P.~Fatou.
\newblock Sur l'it\'{e}ration des fonctions transcendantes {E}nti\`eres.
\newblock {\em Acta Math.}, 47(4):337--370, 1926.

\bibitem[Fat29]{Fat29}
P.~Fatou.
\newblock Notice sur les travaux scientifiques de {M}. {P}. {F}atou.
\newblock Astronome titulaire de l'observatoire de Paris, 5--29, 1929,
\url{https://www.math.purdue.edu/~eremenko/dvi/fatou-b.pdf}.

\bibitem[GH78]{GH78}
P.~Griffiths and J.~Harris, Joseph. 
\newblock Principles of algebraic geometry. 
\newblock Pure and Applied Mathematics. Wiley-Interscience [John Wiley \& Sons], New York, 1978. 

\bibitem[HLP25]{HLP25}
Y.~M.~He, H.~Lee, and I.~Park.
\newblock Analytic theory on the space of Blaschke products: simultaneous uniformization and pressure metric.
\newblock \url{https://arxiv.org/abs/2507.17077}, 2025.

\bibitem[Ino09]{Ino09} 
H. Inou. 
\newblock Combinatorics and topology of straightening maps II: Discontinuity.
\newblock \url{http://arxiv.org/abs/0903.4289}, 2009.

\bibitem[IM21]{IM21}
H. Inou and S. Mukherjee. 
\newblock Discontinuity of straightening in anti-holomorphic
dynamics: I. 
\newblock {\em Trans. Amer. Math. Soc.}, 374:6445--6481, 2021.

\bibitem[Jul18]{Jul18}
G.~ Julia.
\newblock M{\'e}moire sur l'it{\'e}ration des fonctions rationnelles.
\newblock {\em Journal de Math{\'e}matiques Pures et Appliqu{\'e}es}, 1:47--246, 1918.

\bibitem[Jul22]{Jul22}
G.~Julia.
\newblock M\'{e}moire sur la permutabilit\'{e} des fractions rationnelles.
\newblock {\em Ann. Sci. \'{E}cole Norm. Sup. (3)}, 39:131--215, 1922.

\bibitem[Kap01]{kapovich-book}
M.~Kapovich.
\newblock Hyperbolic manifolds and discrete groups.
\newblock {\em Progress in Mathematics}, 183. Birkhauser Boston, Inc., Boston, MA. xxvi+467 pp., 2001.

\bibitem[Kiw15]{Kiw15}
J. Kiwi.
\newblock Rescaling limits of complex rational maps.
\newblock {\em Duke. Math. J.}, 164(7): 1437-1470, 2015.

\bibitem[KT90]{KT90}
S.~P.~Kerckhoff and W.~P.~Thurston. 
\newblock Non-continuity of the action of the modular group at Bers' boundary of Teichm{\"u}ller space. 
\newblock {\em Invent. Math.} 100:25--47, 1990.

\bibitem[Kle83]{Kle83}
F.~Klein.
\newblock Neue beitr{\"a}ge zur {R}iemann'schen functionentheorie.
\newblock {\em Math. Ann.}, 21:141--218, 1883.

\bibitem[Lau24]{Lau24}
M.~R.~Laude.
\newblock Continuity of matings of {K}leinian groups and polynomials.
\newblock \url{https://arxiv.org/abs/2411.08748}, 2024.

\bibitem[LMM21]{LMM21}
K.~Lazebnik, N.~G. Makarov, and S.~Mukherjee.
\newblock Univalent polynomials and {H}ubbard trees.
\newblock {\em Trans. Amer. Math. Soc.}, 374(7): 4839--4893, 2021.

\bibitem[LMM22]{LMM22}
K.~Lazebnik, N.~G. Makarov, and S.~Mukherjee.
\newblock Bers slices in families of univalent maps.
\newblock {\em Math. Z.}, 300:2771--2808, 2022.

\bibitem[LLMM21]{LLMM3}
S.-Y. Lee, M.~Lyubich, N.~G. Makarov, and S.~Mukherjee.
\newblock Schwarz reflections and anti-holomorphic correspondences.
\newblock {\em Adv. Math.}, 385:Paper No. 107766, 88 pp., 2021.

\bibitem[LLMM23]{LLMM1}
S.-Y. Lee, M.~Lyubich, N.~G. Makarov, and S.~Mukherjee.
\newblock Dynamics of {S}chwarz reflections: the mating phenomena.
\newblock {\em Ann. Sci. {\'E}c. Norm. Sup{\'e}r. (4)}, 56:1825--1881, 2023.

\bibitem[LLMM25]{LLMM2}
S.-Y. Lee, M.~Lyubich, N.~G. Makarov, and S.~Mukherjee.
\newblock {S}chwarz reflections and the {T}ricorn.
\newblock \emph{Ann. Inst. Fourier (Grenoble)}, 75:1987--2100, 2025.

\bibitem[LM16]{LM16}
S.~Y. Lee and N.~G. Makarov.
\newblock Topology of quadrature domains.
\newblock {\em J. Amer. Math. Soc.}, 29(2):333--369, 2016, arXiv:1307.0487.  



\bibitem[Luo21]{Luo21}
Y. Luo.
\newblock Limits of rational maps, $\R$-trees and barycentric extension.
\newblock {\em Adv. Math.}, 394: 108075, 2021.

\bibitem[Luo22a]{Luo22a}
Y. Luo.
\newblock Trees, length spectra for rational maps via barycentric extensions and Berkovich spaces.
\newblock {\em Duke. Math. J.}, 171(14): 2943--3001, 2022.

\bibitem[Luo22b]{Luo22b}
Y. Luo.
\newblock On geometrically finite degenerations {II}: convergence and divergence.
\newblock {\em Trans. Amer. Math. Soc.}, 375: 3469--3527, 2022.

\bibitem[LLM24]{LLM24}
Y.~Luo, M.~Lyubich, and S.~Mukherjee.
\newblock A general dynamical theory of Schwarz reflections, B-involutions, and algebraic correspondences.
\newblock \url{https://arxiv.org/abs/2408.00204}, 2024.

\bibitem[LMM24]{LMM24}
M.~Lyubich, J.~Mazor, and S.~Mukherjee.
\newblock Antiholomorphic correspondences and mating I: realization theorems.
\newblock \emph{Comm. Amer. Math. Soc.}, 4:495--547, 2024.

\bibitem[LMMN25]{LMMN25}
M.~Lyubich, S.~Merenkov, S.~Mukherjee, and D.~Ntalampekos.
\newblock David extension of circle homeomorphisms, welding, mating, and removability.
\newblock \emph{Mem. Amer. Math. Soc.}, vol. 313, no. 1588, pp. v+110, 2025.


\bibitem[Mar16]{Mar16} 
A.~Marden.
\newblock \emph{Hyperbolic manifolds, an introduction in 2 and 3 dimensions}.
\newblock Cambridge University Press, 2016.

\bibitem[Mas70]{Mas70}
B.~Maskit.
\newblock On boundaries of Teichm{\"u}ller spaces and on Kleinian groups. II.
\newblock {\em Ann. of Math. (2)}, 91:607--639, 1970.

\bibitem[McM88]{McM88}
C.~McMullen,
\newblock Automorphisms of rational maps.
\newblock In {\em Holomorphic functions and moduli, vol. I (Berkeley, CA, 1986)}, 31--60, Math. Sci. Res. Inst. Publ., 10, Springer, New York, 1988. 

\bibitem[McM91]{McM91}
C.~McMullen.
\newblock Rational maps and Teichm{\"u}ller space: analogies and open problems.
\newblock \url{https://people.math.harvard.edu/~ctm/papers/home/text/papers/probs-91/probs-91.pdf}, 1991.

\bibitem[McM94]{McM94}
C. McMullen.
\newblock {\em Complex Dynamics and Renormalization (AM-135)}.
\newblock Princeton University Press, 1994.

\bibitem[McM97]{McM97}
C. McMullen. 
\newblock Complex earthquakes and Teichm{\"u}ller theory, 
\newblock {\em J. Am. Math.
Soc.} 11 no. 2, p. 283--320, 1998.

\bibitem[Mil12]{Mil12}
J.~Milnor.
\newblock Hyperbolic components. With an appendix by A. Poirier. 
\newblock In {\em Conformal dynamics and hyperbolic geometry}, Contemp. Math., 573, 183--232, American Mathematical Society, Providence, RI, 2012.

\bibitem[Min99]{Min99}
Y.~Minsky.
\newblock The classification of punctured-torus groups.
\newblock {\em Ann. of Math. (2)}, 149:559--626, 1999.

\bibitem[Min10]{Min10}
Y.~N. Minsky.
\newblock {The classification of Kleinian surface groups I: Models and bounds}.
\newblock {\em Ann. of Math. (2)}, 171:1--107, 2010.

\bibitem[Mj14]{mahan-split}
M.~Mj.
\newblock {Cannon-Thurston maps for surface groups}.
\newblock {\em Ann. of Math. (2)}, 179:1--80, 2014.

\bibitem[Mj17]{mahan-kl}
M.~Mj.
\newblock {Cannon-Thurston Maps for Kleinian Groups}.
\newblock {\em Forum Math. Pi 5, e1, 49 pp.}, 2017.

\bibitem[MM23]{MM1}
M.~Mj and S.~Mukherjee.
\newblock Combining rational maps and {K}leinian groups via orbit equivalence.
\newblock \emph{Proc. Lond. Math. Soc (3)}, 126:1740--1809, 2023.

\bibitem[MM25]{MM2}
M.~Mj and S.~Mukherjee.
\newblock Matings, holomorphic correspondences, and a Bers slice. 
\newblock \emph{J. {\'E}c. polytech. Math.}, 12:1445--1502, 2025.

\bibitem[Mum65]{Mum65}
D.~Mumford.
\newblock {\em Geometric invariant theory}.
\newblock Ergebnisse der Mathematik und ihrer Grenzgebiete, (N.F.), Band 34
Springer-Verlag, Berlin-New York, 1965. vi+145 pp.

\bibitem[Ota01]{otal-book}
J.~P. Otal.
\newblock {The hyperbolization theorem for fibered 3-manifolds}.
\newblock {\em SMF/AMS Texts and Monographs, 7. American Mathematical Society, Providence,
  RI; Soci{\'e}t{\'e} Math{\'e}matique de France, Paris. xiv+126 pp.}, 2001.

\bibitem[Poi82]{Poi82}
H.~Poincar{\'e}.
\newblock Th{\'e}orie des groupes {F}uchsiens. 
\newblock {\em Acta Math.}, 1:1--76, 1882. 

\bibitem[Poi83]{Poi83}
H.~Poincar{\'e}.
\newblock M{\'e}moire sur les groupes {K}lein{\'e}ens.
\newblock {\em Acta Math.}, 3:49--92, 1883.

\bibitem[Pom75]{Pom75}
C.~Pommerenke.
\newblock \emph{Univalent functions}, with a chapter on quadratic differentials by Gerd Jensen.
\newblock \emph{Studia Math./Math. Lehrb{\"u}cher}, Band XXV,
Vandenhoeck \& Ruprecht, G{\"o}ttingen, 376 pp., 1975.


\bibitem[SW79]{scott-wall}
P.~Scott and T.~Wall. 
\newblock Topological methods in group theory. 
\newblock In {\em Homological group theory (Proc. Sympos., Durham, 1977)}, pp. 137--203, London Math. Soc. Lecture Note Ser., 36, Cambridge Univ. Press, Cambridge-New York, 1979.

\bibitem[Sul81]{Sul81}
D.~Sullivan.
\newblock On the ergodic theory at infinity of an arbitrary discrete group of hyperbolic motions.
\newblock In {\em Riemann surfaces and related topics: Proceedings of the 1978 Stony Brook Conference (State Univ. New York, Stony Brook, N.Y., 1978)}, pp. 465–496, {\em Ann. of Math. Stud.}, No. 97, Princeton University Press, Princeton, NJ, 1981.

\bibitem[Sul85a]{Sul85a}
D.~Sullivan.
\newblock Quasiconformal homeomorphisms and dynamics {I}: Solution of the
{F}atou-{J}ulia problem on wandering domains.
\newblock {\em Ann. of Math. (2)}, 122:401--418, 1985.

\bibitem[Sul85b]{Sul85b}
D.~Sullivan.
\newblock Quasiconformal homeomorphisms and dynamics II: Structural stability implies hyperbolicity for Kleinian groups.
\newblock {\em Acta Math.}, 155:243--260, 1985.

\bibitem[Thu86]{thurston-hypstr2}
W.~P. Thurston.
\newblock Hyperbolic {S}tructures on 3-{M}anifolds,{II}: {S}urface {G}roups and
  3-{M}anifolds which {F}iber over the {C}ircle.
\newblock {\em preprint, arXiv:math.GT/9801045}, 1986.

\bibitem[Thu22]{thurstonnotes}
W.~P. Thurston.
\newblock  The geometry and topology of three-manifolds, Vol. IV. 
\newblock Edited and with a preface by Steven P. Kerckhoff and a chapter by J. W. Milnor. American Mathematical Society, Providence, RI, 2022.

\end{thebibliography}
\end{document}